\newcommand{\TT}{\mathbb{T}}
\newcommand{\RR}{\mathbb{R}}
\newtheorem{theorem}{Theorem}
\newtheorem{proposition}{Proposition}
\newtheorem{lemma}{Lemma}
\newtheorem{remark}{Remark}
\newtheorem{definition}{Definition}
\newcommand{\dd}{\mathrm{d}}
\newcommand{\vertiii}[1]{{\vert\kern-0.25ex\vert\kern-0.25ex\vert #1
\vert\kern-0.25ex\vert\kern-0.25ex\vert}}
\begin{document}

\title[Local regularity and finite-time singularity for generalized SQG patches]
{Local regularity and finite-time singularity for a class of generalized SQG patches on the half-plane}
\author{Qianyun Miao}
\address{School of Mathematics and Statistics, Beijing Institute of Technology, Beijing 100081, P. R. China}
\email{qianyunm@bit.edu.cn}
\author{Changhui Tan}
\address{Department of Mathematics, University of South Carolina, Columbia SC 29208, USA}
\email{tan@math.sc.edu}
\author{Liutang Xue}
\address{School of Mathematical Sciences, Laboratory of Mathematics and Complex Systems (MOE),
Beijing Normal University, Beijing 100875, P.R. China}
\email{xuelt@bnu.edu.cn}
\author{Zhilong Xue}
\address{School of Mathematical Sciences, Laboratory of Mathematics and Complex Systems (MOE),
Beijing Normal University, Beijing 100875, P.R. China}
\email{zhilongxue@mail.bnu.edu.cn}

\date{\today}

\subjclass[2010]{35Q35, 35Q86, 76U05, 35B32, 35P30}
\keywords{Generalized surface quasi-geostrophic equation, patch solution, local regularity, finite-time singularity, half-plane.}

\begin{abstract}
In this paper, we investigate a class of inviscid generalized surface quasi-geostrophic (SQG) equations on the half-plane with a rigid boundary. Compared to the Biot-Savart law in the vorticity form of the 2D Euler equation, the velocity formula here includes an additional Fourier multiplier operator $m(\Lambda)$. When $m(\Lambda) = \Lambda^\alpha$, where $\Lambda = (-\Delta)^{1/2}$ and $\alpha\in (0,2)$, the equation reduces to the well-known $\alpha$-SQG equation. Finite-time singularity formation for patch solutions to the $\alpha$-SQG equation was famously discovered by Kiselev, Ryzhik, Yao, and Zlato\v{s} \cite{KRYZ}.

We establish finite-time singularity formation for patch solutions to the generalized SQG equations under the Osgood condition
\begin{align*}
  \int_2^\infty \frac{1}{r (\log r) m(r)} \dd r < \infty
\end{align*}
along with some additional mild conditions. 
Notably, our result fills the gap between the globally well-posed 2D Euler equation ($\alpha = 0$) and the $\alpha$-SQG equation ($\alpha > 0$). Furthermore, in line with Elgindi's global regularity results for 2D Loglog-Euler type equations \cite{Elg14}, 
our findings suggest that the Osgood condition serves as a sharp threshold that distinguishes global regularity and finite-time singularity in these models.

In addition, we generalize the local regularity and finite-time singularity results for patch solutions to the $\alpha$-SQG equation, as established by Gancedo and Patel \cite{GanP21}, 
extending them to cases where $m(r)$ behaves like $r^\alpha$ near infinity but does not have an explicit formulation.
\end{abstract}

\maketitle

\tableofcontents

\section{Introduction}
In this paper, we study a family of inviscid generalized surface quasi-geostrophic (SQG) equations
\begin{align}
  \partial_t\theta + u\cdot \nabla\theta &= 0, \;\; \quad \qquad \qquad\qquad\quad (t,x)\in \mathbb{R}_+\times \mathbf{D},
  \label{eq:geSQG}\\
  u & = \nabla^\perp (-\Delta)^{-1} m(\Lambda) \theta, \;\;\;\; (t,x)\in \mathbb{R}_+\times \mathbf{D}, \label{eq:geSQG-u} \\
  \theta|_{t=0}  & = \theta_0, \;\;\qquad \qquad \qquad\quad \quad x\in \mathbf{D}, \label{eq:gesQG-data}
\end{align}
where $\mathbf{D}$ is either the whole space $\mathbb{R}^2$ or the half-plane
$\mathbb{R}^2_+ \triangleq \mathbb{R}\times \mathbb{R}_+$,
$\nabla^\perp \triangleq (-\partial_{x_2}, \partial_{x_1})$, $\Lambda \triangleq (-\Delta)^{\frac{1}{2}}$.
When $\mathbf{D} =\mathbb{R}^2_+$, we assume the no-penetration boundary condition
\begin{align}\label{eq:bdr}
  u_2 = 0, \quad\qquad (t,x)\in \mathbb{R}_+\times \partial \mathbb{R}^2_+.
\end{align}
The vector $u=(u_1,u_2)$ is the velocity field and the scalar field $\theta$ can be interpreted as the vorticity or density or temperature of the fluid.
The operator $m(\Lambda)$ is a Fourier multiplier with the symbol
$m(\xi)= m(|\xi|)$, which is a radial function of $\RR^2$ satisfying the following hypotheses:
\begin{enumerate}[label=$(\mathbf{H}\theenumi)$,ref=$\mathbf{H}\theenumi$]
\item\label{H1}
$m(r)\in C^5(\RR_+)$ and
\begin{align}\label{cond:m0}
  \forall\; r> 0,\;\;m(r)> 0,\;\; m'(r)\geq0,\;\;
  \textrm{and}\;\;\lim\limits_{r\to 0^{+}}m(r)<\infty,\;\;\lim\limits_{r\to 0^{+}}r\,m'(r) <\infty,
\end{align}
and $m'(r)$ satisfies the Mikhlin-H\"ormander condition, that is, there exists a constant $C>0$ such that
\begin{align}\label{eq:MH}
  \big|\tfrac{\dd^k}{\dd r^k} m'(r)\big| \leq C \frac{m'(r)}{r^k}\,,\quad \forall\; k=1,2,3,4,\,\forall\; r>0.
\end{align}
\end{enumerate}
The velocity formula \eqref{eq:geSQG-u}, known as the Biot-Savart law, can be expressed as:
\begin{equation}\label{eq:u-exp}
  u(x,t) =
  \begin{cases}
  \int_{\mathbb{R}^2} \frac{(x-y)^\perp}{|x-y|^2}G(|x-y|) \theta(y,t) \dd y,
  \quad & \textrm{if}\;\; \mathbf{D} = \mathbb{R}^2, \\[2mm]
  \int_{\mathbb{R}^2_+} \Big(\frac{(x-y)^\perp}{|x-y|^2}G(|x-y|)
  -\frac{(x-\overline{y})^\perp}{|x-\overline{y}|^2}G(|x-\overline{y}|)\Big)\theta(y,t) \dd y,
  \quad & \textrm{if}\;\; \mathbf{D} = \mathbb{R}^2_+,
  \end{cases}
\end{equation}
where $x^\perp \triangleq (x_2,-x_1)$, $\overline{x} \triangleq (x_1,-x_2)$, and the kernel $G(\cdot)$ is a continuously differentiable function on $(0,+\infty)$ given by \eqref{eq:G-exp1} below.


Considering different forms of $m$ satisfying \eqref{H1}, the equation \eqref{eq:geSQG}-\eqref{eq:gesQG-data} can include several important hydrodynamic models as special cases:
\begin{itemize}
\item $m(r)\equiv1$. In this case, $m(\Lambda) \equiv \mathrm{Id}$ and $G \equiv \frac{1}{2\pi}$. The equation \eqref{eq:geSQG}-\eqref{eq:gesQG-data} becomes the 2D Euler equation in the vorticity form,
which describes the motion of 2D inviscid incompressible fluid and is a fundamental model in fluid dynamics.
\item $m(r)=r^\alpha$, with $\alpha\in(0,2)$. We have
\begin{equation}\label{eq:alphaSQG}
  m(\Lambda) = (-\Delta)^{\frac{\alpha}{2}} = \Lambda^\alpha,\;\; \textrm{and}\;\;
  G(\rho)= \alpha \mathrm{c}_{\alpha}\rho^{-\alpha},\;\; \textrm{with}\;\; 
  \mathrm{c}_{\alpha}=\tfrac{\Gamma(\alpha/2)}{\pi 2^{2-\alpha}\Gamma(1- \alpha/2)}.
\end{equation}
In this case, the equation \eqref{eq:geSQG}-\eqref{eq:gesQG-data} reduces to the inviscid $\alpha$-SQG equation.
For $\alpha=1$, the $\alpha$-SQG equation is the well-known SQG equation, which is a simplified model for tracking atmospheric circulation near the tropopause \cite{HPGS} and ocean dynamics in the upper layers \cite{LK06}. The $\alpha$-SQG equation with $0<\alpha <1$ was introduced by C\'ordoba, Fontelos, Mancho, and Rodrigo \cite{CFMR} as a class of models interpolating between the 2D Euler equation and the SQG equation.
\item  $m(r)=\frac{r^2}{r^2+\varepsilon^2}$, with $\varepsilon>0$. In this case,
\begin{align*} 
  m(\Lambda) = \tfrac{-\Delta}{-\Delta + \varepsilon^2},\;\;\textrm{and}\;\; 
  G(\rho) = \tfrac{1}{2\pi} \varepsilon r\mathbf{K}_0'(\varepsilon \rho),
\end{align*}
where $\mathbf{K}_0$ is the modified Bessel function (see \cite{DHR19}). The equation  \eqref{eq:geSQG}-\eqref{eq:gesQG-data} corresponds to the quasi-geostrophic shallow-water (QGSW) equation. This model is derived asymptotically from the rotating shallow water equations in the limit of fast rotation and small variation of free surface \cite{Val08}.
\item $m(r)=\log^\beta(1+\log(1+r^2))$, with $\beta\in[0,1]$. We have
\begin{equation}\label{eq:loglogEuler}
    m(\Lambda) = \log^\beta(1+\log(1-\Delta)).
\end{equation}
In this case, the equations \eqref{eq:geSQG}-\eqref{eq:gesQG-data} correspond to what is typically referred to as the 2D Loglog-Euler equation. This model was introduced by Chae, Constantin, and Wu \cite{CCW11} as a more general framework connecting the 2D Euler and $\alpha$-SQG equations.
\end{itemize}

\subsection{Global regularity versus finite-time singularity}
The global well-posedness of classical solutions for the 2D Euler equation in the whole space $\RR^2$, the half-plane $\RR^2_+$, or bounded smooth planer domains is well-known,
see for instance \cite{MA02,MP94,KL84,JLZ23}.
However, for the $\alpha$-SQG equation in $\RR^2$ with $\alpha\in (0,2)$, the global well-posedness of smooth solutions remains an open problem.

Local well-posedness for the $\alpha$-SQG equation in Sobolev/H\"older spaces was established in \cite{Wu05,CCCG12} for the whole space, and in \cite{ConN18a} for bounded smooth domains. Some ill-posedness results in Sobolev/H\"older spaces can be found in \cite{Zlat15,Cor-Zo21,Cor-Zo22,JeoK24,CJK24}.
In a recent work \cite{Zlat23}, Zlato\v{s} proved that the $\alpha$-SQG equation on the half-plane is locally well-posed for $\alpha \in (0, \frac{1}{2}]$ and ill-posed for $\alpha > \frac{1}{2}$ in anisotropic weighted spaces (see also \cite{JKY24} for similar ill/well-posedness results). Moreover, he demonstrated that smooth initial data can lead to finite-time singularity in the regime $\alpha \in (0, \frac{1}{2}]$.

Regarding weak solutions to the $\alpha$-SQG equation, global existence was established in works such as \cite{Resnick,Mar08,CCCG12,ConN18b,NHQ18,LX19}, while non-uniqueness was shown in \cite{BSV19,CKL21,Isett21}. 

Chae, Constantin and Wu \cite{CCW11} introduced the Loglog-Euler equation \eqref{eq:geSQG}-\eqref{eq:gesQG-data}, 
where the Fourier multiplier $m$ is slightly more singular than the 2D Euler equation, 
but less singular than the $\alpha$-SQG equation. An intriguing question arising from their work is whether the Loglog-Euler equation 
exhibits global regularity, like the Euler equation, or develops a finite-time singularity, similar to the $\alpha$-SQG equation.  
They showed global regularity when $\mathbf{D}=\RR^2$, assuming the parameter $\gamma\leq1$ in \eqref{eq:loglogEuler}. 
Later, Elgindi \cite{Elg14} and Dabkowski et al. \cite{DKSV14} independently extended the global regularity result of \cite{CCW11} 
to more general multipliers $m(\Lambda)$ in \eqref{eq:geSQG-u}, where $m$ satisfies an Osgood-type condition
\begin{align}\label{eq:m-elgindi}
  \int^{+\infty}_2 \frac{1}{r(\log r)m(r)}\dd r = +\infty,
\end{align}
along with other mild assumptions. It remains unknown whether this Osgood-type condition is \textit{sharp}, 
namely whether the violation of condition \eqref{eq:m-elgindi} leads to finite-time singularity, 
and this is one of the main focuses of this paper.

We would like to highlight a related result for a 1D Burgers-type equation where the question of global regularity versus finite-time singularity has been resolved. The equation is given by:
\begin{align}\label{eq:BurgTEq}
  \partial_t \theta - \theta\, \partial_x \theta + \frac{\Lambda}{m(\Lambda)} \theta = 0,
  \quad \theta|_{t=0} = \theta_0,
\end{align}
where $x\in \mathbb{T}$, and $\theta$ is a scalar. Dabkowski, Kiselev, Silvestre and Vicol \cite{DKSV14} proved that under an Osgood-type condition (different than \eqref{eq:m-elgindi})
\begin{align}\label{eq:cd-DKSV}
 \int_2^{+\infty} \frac{1}{r\, m(r)} \dd r = +\infty,
\end{align}
and some mild assumptions, the equation \eqref{eq:BurgTEq} with smooth initial data $\theta_0$ 
generates a unique global-in-time smooth solution. Conversely, if \eqref{eq:cd-DKSV} is violated, 
there exists a smooth initial datum $\theta_0$ such that the solution to 
\eqref{eq:BurgTEq} develops a finite-time singularity, in the sense that 
$\lim\limits_{t\rightarrow T} \|\partial_x \theta\|_{L^\infty} = +\infty$ at some finite time $T$.
Our goal is to establish a similar argument regarding the sharpness of condition \eqref{eq:m-elgindi} for inviscid generalized SQG equations.

\subsection{Patch solutions}
In recent decades, there has been significant interest and intense study on a class of non-smooth solutions called \textit{patch solutions}. These are weak solutions of the transport equation \eqref{eq:geSQG} associated with patch-like initial data, where the initial condition $\theta|_{t=0} = \theta_0$ is given by either a single patch ($N=1$) or multiple patches ($N>1$): 
\begin{equation}\label{eq:patch-data}
  \theta_0(x)=\sum_{j=1}^N a_j \mathbf{1}_{D_j}(x),
  \quad a_j\in \mathbb{R},\quad \mathbf{1}_{D_j}(x)
  \triangleq \begin{cases}
     1,\quad x\in D_j, \\
     0,\quad x\in \mathbf{D} \setminus D_j,
  \end{cases}
\end{equation}
where $D_j$ ($j=1,\cdots,N$) are disjoint simply connected bounded domains with regular boundaries $\partial D_j$ on $\mathbf{D}$. 

For the 2D Euler equation, Yudovich \cite{Yud63} proved the global existence and uniqueness of weak solutions associated with initial data $\theta_0\in L^1\cap L^\infty(\mathbf{D})$, which implies that the corresponding patch solutions are globally well-defined and keeping the patch structure during evolution. However, Yudovich's result does not provide sufficient regularity information on the evolved patch boundaries.

The \textit{vorticity patch problem} for the 2D Euler equation concerning whether the smoothness of patch boundaries $\partial D_j$ can be globally persisted was raised in the 1980s. Chemin \cite{chemin1993} resolved this question by proving the global persistence of $C^{k,\gamma}$  patch boundaries in whole space with $k\in \mathbb{N}^{\star}$ and $0<\gamma<1$.
Alternative proofs of the same result were provided by Bertozzi and Constantin \cite{bertozzi1993}, and Serfati \cite{Serf94}.

Kiselev and Luo \cite{KL23a} showed the ill-posedness of $C^2$ vortex patches, while proving the global 
$W^{2,p}$-regularity persistence of patch boundaries with $p\in (1,+\infty)$. 
In the half-plane $\mathbf{D}=\RR^2_{+}$ and in smooth bounded domain $\mathbf{D}$,  
Kiselev et al. \cite{KRYZ,Kiselev19} established the similar global $C^{1,\gamma}$-regularity persistence of patch boundaries, 
allowing that the patch boundaries touch the rigid boundary $\partial \mathbf{D}$ (see \cite{Depa99} for previous work in the half-plane).

For the Loglog-Euler equation with $m(\cdot)$ satisfying \eqref{eq:loglogEuler} with $\beta\in[0,1]$, or in general, under the Osgood-type condition \eqref{eq:m-elgindi}, the velocity field for the vorticity patch problem is no longer Lipschitz. As a result, there will be an $\varepsilon$-regularity loss in the evolution. In the whole space $\mathbf{D}=\RR^2$, Elgindi \cite{Elg14} proved the global $C^{1,\gamma-\varepsilon}$ regularity (for arbitrarily small $\varepsilon>0$) of the evolved patch boundaries associated with the initial $C^{1,\gamma}$ patches.

For the patch solutions of the $\alpha$-SQG equation (also called \textit{sharp fronts} in the literature \cite{Gill82}), the situation becomes more intricate due to the velocity field $u$ being only H\"older continuous for $\alpha\in(0,1)$ and not even continuous for $\alpha \in [1,2)$.
However, the normal direction of the velocity field is well-defined. Using the contour dynamics equation, Rodrigo et al. \cite{Rodr05,CFMR} showed the local existence and uniqueness of $C^\infty$ patches for the $\alpha$-SQG equation with $\alpha\in (0,1]$, applying the Nash-Moser iteration.
Through using the cancellations of the curve structures, the local well-posedness of patch solutions for the whole-space  $\alpha$-SQG equation in the $L^2$-based Sobolev spaces $X$ were established in a series of works: 
Gancedo \cite{Gan08} for $\alpha\in (0,1]$, $X= H^n$, $n\geq 3$ (with uniqueness later proved by Kiselev et al. \cite{KYZ17} and C\'ordoba et al. \cite{CCG18});
Chae et al. \cite{CCCG12} for $\alpha\in (1,2)$, $X= H^n$, $n\geq 4$; 
Gancedo and Patel \cite{GanP21} for $X = H^2$ if $\alpha\in(0,1)$ and for $X = H^3$ if $\alpha \in[1,2)$;
and Gancedo et al. \cite{GNP22} for $\alpha=1$ and $X=H^{2+s}$, $s\in (0,1)$. 
Additionally, Kiselev and Luo \cite{KL23} proved the strong ill-posedness of patch solutions for $\alpha$-SQG equation with $\alpha\in(0,1)$ in H\"older space $C^{2,\gamma}$ with $\gamma\in (0,1)$ and Sobolev space $W^{2,p}$ with $p\neq 2$.
The exclusion of splash-like singularities in $\alpha$-SQG patches with $\alpha \in (0,1]$ was addressed in works such as \cite{GanS14, KL23b, JZ24}.

The $\alpha$-SQG equation on the half-plane $\RR^2_+$ with rigid boundary is also of significant interest. Kiselev, Ryzhik, Yao and Zlato\v{s} \cite{KRYZ,KYZ17} showed the local well-posedness of patch solutions for $\alpha\in (0,\frac{1}{12})$, associated with patch-like data \eqref{eq:patch-data}. Moreover, they constructed symmetric patch-like data that leads to finite-time singularity formation in the same range of $\alpha$. 
Subsequently, Gancedo and Patel \cite{GanP21} extended these results to the range $\alpha\in(0,\tfrac{1}{3})$ (see also \cite{Zlat23} for the recent improvement). 
These results, together with the global well-posedness of the 2D half-plane Euler equation in \cite{KRYZ}, indicate that $\alpha=0$ is a critical index, marking a phase transition in the behavior of solutions.

We also mention that, although the global well-posedness problem for smooth or patch solutions of the $\alpha$-SQG equation remains open in general, various non-trivial global-in-time (patch) solutions have been established for both the Euler and $\alpha$-SQG equations. See for instance \cite{ADDMW,BHM23,Burb82,CCG16,CCG16b,CCG20,CkJS24,CGI19,DHH16,DHHM,DHMV16,GHS20,GS19,GS-IP,HH15,HHM23,HHM23b,HMW20,HW22,HM17,HMV13,HXX23a} and reference therein.

\subsection{Main results}
The main goal of this paper is to develop a local well-posedness theory and demonstrate finite-time singularity formation for patch solutions to the generalized SQG equation \eqref{eq:geSQG}-\eqref{eq:bdr} on the half-plane $\mathbf{D} = \mathbb{R}^2_+$ with patch-like initial data \eqref{eq:patch-data}. We focus on two types of multipliers $m$ that satisfy the hypotheses \eqref{H1}.

The first regime of consideration is the critical case when $\alpha=0$, where we assume the multiplier $m(r)$ has a sub-algebraic growth:
\begin{enumerate}[label=$(\mathbf{H}2a)$,ref=$\mathbf{H}2a$]
\item\label{H2a} there exists a constant $\gamma\geq 0$ such that $m$ satisfies that
\begin{align}\label{cond:m1}
  \lim\limits_{r\rightarrow +\infty} m(r)= +\infty,\quad
  \lim_{r\rightarrow +\infty} \frac{r\, (\log r) m'(r)}{m(r)} = \gamma,\quad
  \lim_{r \rightarrow +\infty} \frac{r\, m''(r)}{m'(r)} = -1.
\end{align}
\end{enumerate}

Let us list several examples of the multiplier $m$ that satisfy conditions \eqref{H1}-\eqref{H2a}:
\begin{equation}\label{eq:mex}
  m_1(r)=\log^{\beta_1}(1+r),\quad
  m_2(r) = \log^{\beta_2}\log(e+r),\quad
  m_3(r)= \log\log (e+r)\, \log^{\beta_3} \log\log\,(e^2 + r),    
\end{equation}
where $\beta_1, \beta_2, \beta_3>0$, and the constant $\gamma=\beta_1, 0, 0$, respectively. In particular, the Loglog-Euler equation \eqref{eq:loglogEuler} falls into this category.

In light of Elgindi’s work \cite{Elg14} on the global regularity of patch solutions for the 2D Loglog-Euler type equation in the whole space, the Osgood-type condition \eqref{eq:m-elgindi} is known to be a sufficient condition for ensuring global regularity. Our aim is to demonstrate that this condition is also necessary. Specifically, we show that if \eqref{eq:m-elgindi} is violated, finite-time singularity formation will occur for the patch solutions.

\begin{theorem}\label{thm:blow-upa}
  Consider the inviscid generalized SQG equation \eqref{eq:geSQG}-\eqref{eq:bdr} in  $\mathbf{D} = \mathbb{R}^2_+$.
  Assume that $m(\xi) = m(|\xi|)$ is a radial function satisfying \eqref{H1}-\eqref{H2a}, and the following Osgood condition:
\begin{align}\label{cond:m1c}
  \int_2^{+\infty} \frac{1}{r (\log r) m(r)} \dd r < +\infty.
\end{align}
Then there exist non self-intersecting initial patch data $\theta_0$ given by \eqref{eq:patch-data} such that the half-plane inviscid generalized SQG equation \eqref{eq:geSQG}-\eqref{eq:bdr} generates a unique local-in-time $H^2$ patch solution $\theta$ that develops a singularity in finite time.
\end{theorem}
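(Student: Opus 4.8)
The plan is to realize, in this generalized setting, the hyperbolic-flow singularity scenario of Kiselev--Ryzhik--Yao--Zlato\v{s} \cite{KRYZ} (with the refinements of Gancedo--Patel \cite{GanP21}); the new ingredient is to carry out the kernel estimates with the variable kernel $G$ of \eqref{eq:u-exp} in place of $\alpha\,\mathrm{c}_\alpha\rho^{-\alpha}$, using the relation between $G$ and $m$ in \eqref{eq:G-exp1} together with \eqref{H2a}. I would take initial data of the form $\theta_0=\mathbf{1}_{D_1}-\mathbf{1}_{D_2}$ that is odd across the vertical axis $\{x_1=0\}$, i.e. $D_2$ is the reflection of $D_1$ across $\{x_1=0\}$, with $D_1\subset\{x_1>0\}$ a box-like domain touching the rigid boundary $\{x_2=0\}$ and lying at distance $\delta\in(0,\tfrac12)$ from the axis near the origin; for small $\delta$ this configuration is non self-intersecting with $\partial D_j\in H^2$. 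By the accompanying local well-posedness theory for $H^2$ patches, \eqref{eq:geSQG}--\eqref{eq:bdr} has a unique $H^2$ patch solution on a maximal interval $[0,T^\ast)$, along which the odd symmetry across $\{x_1=0\}$ and the no-penetration condition \eqref{eq:bdr} are propagated; in particular $u_1\equiv0$ on $\{x_1=0\}$ and the flow is tangent to $\{x_2=0\}$, so the leftmost point $x^\ast(t)$ of $D_1(t)$ stays on $\{x_2=0\}$. I denote its first coordinate by $d(t)$, so $d(0)=\delta$ and $d(t)>0$ on $[0,T^\ast)$, and $\dot d(t)=u_1(x^\ast(t),t)$.

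Since $u_1$ vanishes on $\{x_1=0\}$, one has $u_1(x^\ast(t),t)=\int_0^{d(t)}\partial_{x_1}u_1((s,0),t)\,\dd s$, so the problem reduces to a one-sided estimate on $\nabla u$ near the origin expressed through $G$. The key step, following the geometric lemmas of \cite{KRYZ,GanP21}, is to prove that, as long as the portion of $D_1(t)$ near the origin stays in a suitable admissible class of box-like domains, one has
\begin{align*}
  u_1(x^\ast(t),t)\;\le\;-\,c\,d(t)\int_{d(t)}^{c_0}\frac{G(\rho)}{\rho}\,\dd\rho .
\end{align*}
Establishing this requires (i) redoing the velocity/kernel computations of \cite{KRYZ,GanP21} with the general $G$, where the monotonicity $m'\geq 0$ in \eqref{cond:m0} and the Mikhlin--H\"ormander bounds \eqref{eq:MH} provide the pointwise control on $G$ and $G'$ needed so that the ``good'' contributions of $D_1(t),D_2(t)$ and their reflections across $\{x_2=0\}$ dominate; and (ii) a bootstrap argument, again as in \cite{KRYZ}, that the admissible geometry persists up to $T^\ast$, which follows from an $L^\infty$ bound on $u$ and the same monotone-kernel estimates. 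Next, using \eqref{eq:G-exp1} and \eqref{H2a} one shows $G(\rho)\asymp m(1/\rho)$ as $\rho\to0^+$, and then, since by \eqref{cond:m1} $\tfrac{\dd}{\dd r}\big(m(r)\log r\big)=\big(1+\tfrac{r(\log r)m'(r)}{m(r)}\big)\tfrac{m(r)}{r}\sim(1+\gamma)\tfrac{m(r)}{r}$, that
\begin{align*}
  \int_{d}^{c_0}\frac{G(\rho)}{\rho}\,\dd\rho\;\asymp\;m(1/d)\,\log(1/d)\qquad\text{as }d\to0^+ .
\end{align*}

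Combining the last two displays gives $\dot d(t)\le -c\,d(t)\,\log(1/d(t))\,m(1/d(t))$ on $[0,T^\ast)$. Separating variables and substituting $r=1/d$,
\begin{align*}
  T^\ast\;\le\;\frac1c\int_0^{\delta}\frac{\dd d}{d\,\log(1/d)\,m(1/d)}\;=\;\frac1c\int_{1/\delta}^{+\infty}\frac{\dd r}{r\,(\log r)\,m(r)}\;<\;+\infty
\end{align*}
by the Osgood condition \eqref{cond:m1c}. Hence $d(t)\to0$ in finite time, which forces the $H^2$ patch solution to cease to exist at some finite $T\le T^\ast$: either $\|\partial D_j(t)\|_{H^2}\to\infty$, or the patches $D_1(t),D_2(t)$ touch (equivalently $D_1(t)\cup D_2(t)$ self-intersects). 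In either case this is a finite-time singularity of the $H^2$ patch solution, which is the claim.

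\textbf{Main obstacle.} The crux is the one-sided velocity bound in the second step together with the geometric bootstrap: one must show the tip genuinely accelerates toward $\{x_1=0\}$ at rate $\asymp d\int_d^{c_0}G(\rho)\rho^{-1}\dd\rho$ and that the relevant part of $D_1(t)$ retains its box-like shape up to $T^\ast$. In \cite{KRYZ,GanP21} this rests on the exact power-law kernel and its scale invariance; here one must rerun the same sign/cancellation analysis for a kernel $G$ that is only \emph{comparable} to a slowly varying function of $1/\rho$, absorbing the logarithmic-type corrections allowed by \eqref{H2a} while using only the qualitative bounds on $G$ and its derivatives that follow from \eqref{H1}--\eqref{H2a}.
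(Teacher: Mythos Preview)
Your proposal is correct and takes essentially the same approach as the paper: translate the hypotheses on $m$ into kernel bounds $G(\rho)\asymp m(1/\rho)$ (the paper's Lemma~\ref{lem:mD-cond}), then run the KRYZ hyperbolic scenario to obtain the differential inequality $\dot d\le -c\,d\,(\log d^{-1})\,m(1/d)$, whose finite integral is precisely \eqref{cond:m1c}. The paper's implementation of your ``bootstrap'' differs only in that it tracks a moving trapezoid $\mathbb{K}(t)\subset\Omega(t)$ via the flow map on the \emph{complement} of the patch (where $u$ is Lipschitz) rather than the leftmost boundary point $x^\ast(t)$ directly, and accordingly pairs the $u_1$ estimate you state with a matching lower bound on $u_2$ along the hypotenuse (Proposition~\ref{prop:es-u}); your sketch subsumes this under ``the same monotone-kernel estimates.''
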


\begin{remark}[Criticality of the Osgood condition]\label{rmk:crit-cond}
The Osgood condition \eqref{cond:m1c} is precisely the opposite of \eqref{eq:m-elgindi}. Therefore, Theorem \ref{thm:blow-upa} demonstrates that the Osgood condition serves as the critical threshold marking a sharp phase transition between global regularity and finite-time blowup for the patch solutions to the generalized SQG equation \eqref{eq:geSQG}-\eqref{eq:bdr}. 

For the examples in \eqref{eq:mex}, the Osgood condition \eqref{cond:m1c} holds when $\beta_1 > 0$, $\beta_2 > 1$, and $\beta_3 > 1$, respectively. Combined with the global well-posedness results for the 2D Euler and Loglog-Euler equations, we obtain critical exponents $\beta_1 = 0$ and $\beta_2 = 1$ that distinguish the global behavior of the solutions.

\begin{table*}[h]
\renewcommand{\arraystretch}{1.2}
\begin{tabular}{ll}
\hline
\multicolumn{2}{c}{$m_1(r)=\log^{\beta_1}(1+r)$}\\ \hline
$\beta_1=0$ (Euler)& Global regularity \cite{KRYZ}\\ 
$\beta_1>0$ & Finite-time blowup\\ \hline
\end{tabular}
\quad
\begin{tabular}{ll}
\hline
\multicolumn{2}{c}{$m_2(r) = \log^{\beta_2}\log(e+r)$}\\ \hline
$\beta_2\leq 1$ (Loglog-Euler) & Global regularity \cite{Elg14}\\ 
$\beta_2>1$ & Finite-time blowup \\ \hline
\end{tabular}    
\end{table*}

It is worth noting that the well-posedness result in \cite{Elg14} for the Loglog-Euler equation was established in the whole space
$\RR^2$, and it is reasonable to conjecture that a similar result holds in the . 
We leave the validation of this conjecture for future work.
\end{remark}

The second regime of consideration is for $\alpha$-SQG-like equations, where we assume the multiplier $m(r)$ behaves similar as 
$r^\alpha$ near infinity, satisfying:

\begin{enumerate}[label=$(\mathbf{H}2b)$,ref=$\mathbf{H}2b$]
\item\label{H2b} there exists a constant $\alpha \in (0,1/3)$ so that
\begin{align}\label{cond:m2}
  \lim_{r \rightarrow +\infty} \frac{r\, m'(r)}{m(r)} = \alpha, \quad
  \lim_{r \rightarrow +\infty} \frac{(1-\alpha)m'(r) + r\, m''(r)}{m'(r)} = 0,
\end{align}
\begin{align}\label{cond:m2a}
  \lim_{r\rightarrow +\infty} \frac{(2-\alpha) r\, m''(r) + r^2 m'''(r)}{ m'(r)}
  = \lim_{r\rightarrow +\infty} \frac{(3-\alpha) r^2 m'''(r) + r^3 m^{(4)}(r) }{m'(r)} = 0.
\end{align}
\end{enumerate}
Note that \eqref{H2a} corresponds to a reduced case of \eqref{H2b} with $\alpha=0$.
The assumption $\alpha<1/3$ is required to ensure local well-posedness (see \cite{GanP21}).

Clearly, the $\alpha$-SQG equation with $m(r)=r^\alpha$ satisfies \eqref{H2b}. Other examples include
\begin{align*}
  m_4(r) = r^\alpha \log^\beta(C_{\alpha,\beta} + r), \quad\textrm{with}\;\;
  \begin{cases}
  C_{\alpha,\beta}\geq 1, \;\; & \textrm{for}\;\beta\geq 0, \\
  C_{\alpha,\beta}\;\textrm{large enough}, \;\; & \textrm{for}\; \beta<0,
  \end{cases}
\end{align*}
and
\begin{align*}
  m_5(r) = \frac{r^2}{r^2 + \varepsilon_1^2} (r^2 + \varepsilon_2^2)^{\frac{\alpha}{2}},\quad \textrm{with}\;\; 
 \varepsilon_1,\varepsilon_2\geq 0.
\end{align*}

Now we state our second result.
\begin{theorem}\label{thm:blow-upb}
  Consider the inviscid generalized SQG equation \eqref{eq:geSQG}-\eqref{eq:bdr} in  $\mathbf{D} = \mathbb{R}^2_+$.
  Assume that $m(\xi) = m(|\xi|)$ is a radial function satisfying \eqref{H1}-\eqref{H2b}.
  
  Then there exist non self-intersecting initial patch data $\theta_0$ given by \eqref{eq:patch-data} such that the half-plane inviscid generalized SQG equation \eqref{eq:geSQG}-\eqref{eq:bdr} generates a unique local-in-time $H^2$ patch solution $\theta$ that develops a singularity in finite time.
\end{theorem}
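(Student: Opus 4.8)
The plan is to follow the Kiselev--Ryzhik--Yao--Zlato\v{s} strategy \cite{KRYZ} as refined by Gancedo--Patel \cite{GanP21}, replacing the explicit kernel $G(\rho)=\alpha\mathrm{c}_\alpha\rho^{-\alpha}$ of the $\alpha$-SQG equation by the general kernel determined by $m$ via \eqref{eq:G-exp1}, and checking at every step that \eqref{H2b} delivers exactly the structural properties of $G$ that the $\alpha$-SQG proof exploited. First I would record the asymptotics of $G$: from \eqref{H2b}, for large $\rho$ one has $G(\rho)\sim c\,\rho^{-\alpha}$ together with matching estimates on $G'(\rho)$, $G''(\rho)$ and higher derivatives (the three limits in \eqref{cond:m2}--\eqref{cond:m2a} are precisely what is needed to control $\rho^k G^{(k)}(\rho)/G(\rho)$ for $k\le 4$), while near $\rho=0$ the conditions \eqref{cond:m0} give $G$ bounded and smooth enough. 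These asymptotics are the technical engine: they are what make the contour-dynamics velocity have the same leading-order behaviour near the rigid boundary as in the pure $\alpha$-SQG case.

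Next I would set up the local well-posedness part. Parametrize each patch boundary by an $H^2$ (or $H^3$ for the borderline smoothness) periodic curve, write the contour dynamics equation obtained by plugging \eqref{eq:patch-data} into \eqref{eq:u-exp}, and exploit the curve cancellations exactly as in \cite{GanP21}; the reflected-image term $-\frac{(x-\bar y)^\perp}{|x-\bar y|^2}G(|x-\bar y|)$ is handled as a lower-order (in fact regularizing near the interior, and boundary-compatible) perturbation because the no-penetration condition \eqref{eq:bdr} is built into \eqref{eq:u-exp}. The constraint $\alpha<1/3$ enters here in exactly the same place as in \cite{GanP21}: it is the threshold ensuring the normal velocity is controlled in $H^2$. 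Then I would construct the singular initial data: take $\theta_0$ a pair (or family) of patches symmetric about the $x_2$-axis, with boundaries approaching the rigid boundary $\partial\RR^2_+$ near the axis, arranged so that the hyperbolic flow scenario of \cite{KRYZ} applies. Define the distance-type quantity $d(t)$ between the two patch tips on the $x_2$-axis (or the analogous monotone geometric functional) and, using the $G$-asymptotics above, derive a differential inequality of the form $d'(t)\le -c\,d(t)\,m\!\big(d(t)^{-1}\big)\,\text{(log factors)}$, or equivalently $\frac{d}{dt}d(t)\lesssim -\,d(t)\,\log(1/d(t))\,m(1/d(t))$ up to constants. The Osgood condition \eqref{cond:m1c} is exactly the statement that $\int \frac{ds}{s\log(1/s)m(1/s)}$ converges near $s=0$, which forces $d(t)\to 0$ in finite time, contradicting the persistence of an $H^2$ patch and hence proving the singularity.

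The main obstacle, and the step deserving the most care, is proving the lower bound on the closing speed of the patches --- i.e.\ that the velocity pushing the two tips together is at least of order $d\,\log(1/d)\,m(1/d)$ --- uniformly in time while the solution stays regular. In the $\alpha$-SQG case this follows from an almost-explicit integral computation with the $\rho^{-\alpha}$ kernel; here one must run the same computation with only the asymptotic information on $G$, carefully splitting the Biot--Savart integral into a near-field region (where the $G(\rho)\sim c\rho^{-\alpha}$ behaviour and the geometry of the nearly-touching patches give the main contribution, reproducing the log-times-$m$ factor through the integral $\int_d^1 G(\rho)\,\frac{d\rho}{\rho}\sim m(1/d)$ after accounting for the logarithmic boundary geometry) and a far-field remainder (bounded using $L^1\cap L^\infty$ control of $\theta$ and monotonicity of $m$). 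One must also verify that the symmetry and the sign of $\theta$ on the two patches are preserved by the flow for as long as the solution exists, so that the closing mechanism cannot reverse; this is inherited from the odd symmetry of \eqref{eq:u-exp} under $x_1\mapsto -x_1$ together with uniqueness of the $H^2$ patch solution. Once these two points are in place, the blow-up criterion from the local theory (loss of $H^2$ regularity, e.g.\ the curve self-intersects or its curvature blows up) combined with the finite-time vanishing of $d(t)$ completes the proof; Theorem~\ref{thm:blow-upa} is then the special case $\alpha=0$, where \eqref{H2a} replaces \eqref{H2b} and the differential inequality reads $d'(t)\lesssim -d(t)\log(1/d(t))m(1/d(t))$ with the same Osgood conclusion.
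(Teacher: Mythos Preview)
Your overall strategy---translate the hypotheses on $m$ into pointwise properties of the kernel $G$, establish local $H^2$ well-posedness via contour dynamics as in \cite{GanP21}, and then run the KRYZ touching-patches scenario---matches the paper's route. However, several of the concrete steps in your proposal are miscalibrated for the \eqref{H2b} regime and would not go through as written.

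First, you have the asymptotics of $G$ reversed. Under \eqref{H1}--\eqref{H2b} the paper shows (Lemma~\ref{lem:mD-cond}) that $G(\rho)\approx m(\rho^{-1})$ for $\rho$ \emph{near zero}, so $G$ is singular at the origin and behaves like $\rho^{-\alpha}$ there, while $G$ is bounded for $\rho$ away from zero. Your proposal states the opposite (``for large $\rho$ one has $G(\rho)\sim c\rho^{-\alpha}$'' and ``near $\rho=0$ \ldots $G$ bounded''). Since the entire singularity mechanism rests on the behaviour of $G$ near $0$ (where the patches approach), this inversion breaks every quantitative estimate downstream.

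Second, the differential inequality you derive is the wrong one for this theorem. In the \eqref{H2b}/\eqref{A2b} regime the paper obtains $u_1(x)\le -c\,x_1 G(x_1)$ (Proposition~\ref{prop:es-u}) and hence $X'(t)=-\tfrac{c}{2}X(t)G(X(t))$, with \emph{no} logarithmic factor; the log enters only in the \eqref{H2a}/\eqref{A2a} case of Theorem~\ref{thm:blow-upa}. Correspondingly, the Osgood condition \eqref{cond:m1c} that you invoke is not a hypothesis of Theorem~\ref{thm:blow-upb} at all---finiteness of $T_*=\int_0^{3\epsilon}\frac{d\rho}{\rho G(\rho)}$ follows instead from $G(\rho)\gtrsim \rho^{-\alpha+\epsilon}$, which is a consequence of \eqref{cond:m2}. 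Your final sentence, treating Theorem~\ref{thm:blow-upa} as the ``$\alpha=0$ special case'' with the same inequality, confirms that the two regimes have been conflated.

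Third, the velocity lower bound in the \eqref{A2b} case is more delicate than a single near-field integral. The paper decomposes the contribution from the triangle $\mathbb{A}(x)$ into pieces $U_1,\dots,U_4$ (respectively $V_1,V_2$), computes each asymptotically via $\rho^\beta G(\rho)\to$ const as $\rho\to 0^+$ (which is \eqref{cond:M0}--\eqref{cond:M1}), and arrives at the same constants $\Pi_1(\beta),\Pi_2(\beta)$ as in \cite{GanP21}; the choice $\mathrm{k}=5$ for the trapezoid slope is what forces $\Pi_1<0<\Pi_2$ for all $\beta\in(0,\tfrac13)$. A sketch that stops at ``$\int_d^1 G(\rho)\rho^{-1}d\rho\sim m(1/d)$'' misses this balance of terms and would not yield a sign.
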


\begin{remark}
The local well-posedness and finite-time singularity formation for the $\alpha$-SQG equation with $\alpha \in (0, 1/3)$ were studied in \cite{KRYZ,KYZ17,GanP21}. Their analysis relied heavily on the explicit form of the kernel  $G$  in \eqref{eq:alphaSQG}. In Theorem \ref{thm:blow-upb}, we make non-trivial extensions of these results to the generalized SQG equation, where the kernel  $G$  may not have an explicit expression. This requires a careful study of the behavior of  $G$  near the origin and adapting it to the existing theories.
\end{remark}

\subsection{Organization of the paper}
Now, we outline our approach to proving Theorems \ref{thm:blow-upa} and \ref{thm:blow-upb}.

In Section \ref{sec:multiplier}, we derive the integral expression \eqref{eq:u-exp} for the velocity field $u$ from the relation 
\eqref{eq:geSQG-u}. This allows us to express the generalized SQG equation \eqref{eq:geSQG}-\eqref{eq:bdr} 
equivalently as equation \eqref{eq:geSQG}$\&$\eqref{eq:u-exp}, which will be the form used in our analysis.

For general choice of the multiplier $m$, the corresponding kernel function $G$ does not have an explicit expression. In Lemma \ref{lem:mD-cond}, we derive several key properties of $G(\cdot)$ near the origin, under the hypotheses either \eqref{H1}-\eqref{H2a}-\eqref{cond:m1c} or \eqref{H1}-\eqref{H2b} on $m$. Roughly speaking, we obtain
\[G(\rho)\sim m(\rho^{-1})\]
when $\rho$ is close to zero. These properties are essential in establishing both local well-posedness and finite-time singularity formation. Notably, the Osgood condition \eqref{cond:m1c} translates into the following condition on $G$:
\begin{equation}\label{eq:OsgoodG}
    \int_0^{1/2} \frac{1}{\rho(\log \rho^{-1})G(\rho)}\dd r<+\infty.
\end{equation}
The properties of $G$ are summarized in \eqref{A1}-\eqref{A2a}, and \eqref{A1}-\eqref{A2b} for the two regimes, respectively.

Section \ref{sec:loc-reg} is devoted to proving the local regularity of $H^2$ patch solutions for the equation 
\eqref{eq:geSQG}$\&$\eqref{eq:u-exp}. Our strategy builds on the work in \cite{KYZ17,GanP21} for the $\alpha$-SQG equation. 
We first derive the contour dynamics equation (see \eqref{eq:contour} or \eqref{eq:main-eq-GP}-\eqref{eq:lambda-def}) 
for the patch solution (see Definition \ref{def:patch-sol}) of the form
\begin{align*}
  \theta(x,t) = \sum\limits_{j=1}^N a_j \mathbf{1}_{D_j(t)}(x).
\end{align*}
A major challenge, compared to the explicit form in the 2D Euler or $\alpha$-SQG patches, is effectively handling the implicit terms, such as $R(|z_k(\zeta,t)-\overline{z}_j(\zeta-\eta,t)|)$ in \eqref{eq:contour}, where $R(\rho)$ is the primitive function of $-\frac{G(\rho)}{\rho}$. Under the general assumption \eqref{A1} on $G$, we manage to bound these implicit terms in Lemma \ref{lem:G-lemma}. This then leads to the local well-posedness of $H^2$ patch solutions for the equation \eqref{eq:geSQG}$\&$\eqref{eq:u-exp}, as presented in Theorem \ref{thm:loc-reg}. 

Additionally, we explore the relationship between the $C^{1,\sigma}$ patch solution and the flow map $\Phi_t$, 
which satisfies \eqref{eq:flow_map}, in Proposition \ref{prop:flow-map}. 
This relation plays a pivotal role in proving finite-time singularity. 
For a detailed proof of local regularity result, refer to Section \ref{subsec:loc-reg} and Appendix \ref{sec:appendix}.

In Section \ref{sec:singu}, we construct patch-like initial data in the form of \eqref{eq:patch-data} and show that the patch solution to the equation \eqref{eq:geSQG}$\&$\eqref{eq:u-exp} develops a finite-time singularity. Our construction follows the scheme introduced in \cite{KRYZ}. 
The core idea is to show that a locally well-posed $H^2$ patch solution, consisting of two distinct patches with odd symmetry in $x_1$-variable and whose boundaries touch $\partial \mathbb{R}^2_+$, will develop a finite-time singularity when the two patches touch at the origin. A key element in this approach is constructing a moving trapezoidal region $\mathbb{K}(t)\subset (\mathbb{R}_+)^2\triangleq \mathbb{R}_+\times \mathbb{R}_+$ inside the patch that drives it towards the origin. 
Unlike the explicit kernels \eqref{eq:alphaSQG} of the $\alpha$-SQG equation studied in \cite{KRYZ,GanP21}, the implicit form of $G(\cdot)$ in our considered equation \eqref{eq:geSQG}$\&$\eqref{eq:u-exp} presents a significant challenge.

Since the key properties \eqref{A1}-\eqref{A2} of $G$ are concentrated near the origin, our initial data is carefully designed to support the patch in a small neighborhood around the origin.
We then proceed to estimate the horizontal velocity $u_1(x,t)$ and vertical velocity $u_2(x,t)$ of the “front” in Proposition \ref{prop:es-u}. 
Finally, we argue that the Osgood condition \eqref{eq:OsgoodG} ensures that the two patches will collide at the origin in finite time, concluding with the main singularity result stated in Theorem \ref{thm:blow-up2}.

\section{The kernel function for a class of Fourier multiplier operators}\label{sec:multiplier}

In this section, 
we shall derive the expression formula of vector field
$u = \nabla^\perp(-\Delta)^{-1} m(\Lambda) \theta$ 
in $\mathbf{D} =\RR^2$ or $ \mathbf{D} = \mathbb{R}^2_+$,
which provides a foundation for relating the generalized SQG equation 
\eqref{eq:geSQG}-\eqref{eq:bdr} to equation \eqref{eq:geSQG}$\&$\eqref{eq:u-exp}. 
Then, we shall carefully study some useful properties of $G(\cdot)$ 
given by \eqref{eq:G-exp1} under the hypotheses either \eqref{H1}-\eqref{H2a} 
or \eqref{H1}-\eqref{H2b}, which exactly fulfill the needs in Sections \ref{sec:loc-reg} 
and \ref{sec:singu}.

The first result is about the expression formula of velocity $u$.
\begin{lemma}\label{lem:int-exp}
Let $u(x) = \nabla^{\perp}(-\Delta)^{-1}m(\Lambda) \theta(x)$,
where $\nabla^\perp = (-\partial_{x_2}, \partial_{x_1})$, 
$m(\Lambda)$ is a Fourier multiplier operator with the symbol
$m(\xi)=m(|\xi|)$ a radial function satisfying that
$m(\xi)\in C^2(\RR^2\setminus \{0\})$ and $\lim\limits_{r\to 0^{+}}m(r)$, $\lim\limits_{r\to 0^{+}}rm'(r)$ exist, and also
\begin{align}\label{cond:m-add}
  \lim\limits_{r\to +\infty}r^{-\frac{1}{2}}m(r)=0,\quad \lim\limits_{r\rightarrow +\infty} r^{\frac{1}{2}} m'(r) =0.
\end{align}
Then the following statements hold true.
\begin{enumerate}
\item
For every $x\in \RR^2$ we have
\begin{align}\label{eq:u-exp1}
  u(x) = K\ast \theta(x) = \int_{\RR^2} K(x-y) \theta(y) \dd y,\quad
  K(x)  \triangleq  \frac{x^{\perp}}{|x|^2} G(|x|),
\end{align}
where  $x^\perp = (x_2,-x_1)$,
\begin{equation}\label{eq:G-exp1}
\begin{split}
  G(|x|) & \triangleq  \frac{1}{2\pi}m(0^+) + \frac{1}{2\pi} \int_0^\infty J_0(|x| r)\, m'(r) \dd r  \\
  & =  \frac{1}{2\pi} m(0^+) +  \frac{1}{(2\pi)^2} \int_{\RR^2} e^{i x\cdot \xi} \frac{m'(|\xi|)}{|\xi|} \dd \xi ,
\end{split}
\end{equation}
with $m(0^+)\triangleq \lim\limits_{r\rightarrow 0^+}m(r)$ and $J_0(\cdot)$ the zero-order Bessel function given by \eqref{def:BessJ0}.
\item Let $u=(u_1,u_2)$ and $\theta$ be functions defined on $\mathbb{R}^2_+$
satisfying the rigid boundary condition $u_2|_{\partial \mathbb{R}^2_+} =0$.
For every $x\in \mathbb{R}^2_+$, we have
\begin{align}\label{eq:u-exp2}
  u(x) = \int_{\mathbb{R}^2_+} \big(K(x-y) - K(x -\overline{y}) \big) \theta(y) \dd y,
\end{align}
where $y=(y_1, y_2)$, $\overline{y} = (y_1, -y_2)$, and $K(\cdot)$ is given by \eqref{eq:u-exp1}.
\end{enumerate}
\end{lemma}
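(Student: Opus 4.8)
The plan is to establish the two representation formulas by direct Fourier computation, treating the multiplier $m(\Lambda)$ carefully near $\xi = 0$. First I would handle the whole-space case \eqref{eq:u-exp1}. The natural starting point is the Fourier-side identity $\widehat{u}(\xi) = \tfrac{i\xi^\perp}{|\xi|^2} m(|\xi|) \widehat\theta(\xi)$, so that $u = K * \theta$ with $\widehat{K}(\xi) = \tfrac{i\xi^\perp}{|\xi|^2} m(|\xi|)$. The subtlety is that $m(0^+)$ may be nonzero, so the symbol $\tfrac{i\xi^\perp}{|\xi|^2}m(|\xi|)$ is genuinely singular of order $-1$ at the origin and is not integrable there; one cannot naively invert the Fourier transform. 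The device is to split $m(|\xi|) = m(0^+) + \bigl(m(|\xi|) - m(0^+)\bigr)$. The piece with the constant $m(0^+)$ contributes the 2D Euler Biot--Savart kernel $\tfrac{1}{2\pi}\tfrac{x^\perp}{|x|^2}$ (whose symbol is $m(0^+)\tfrac{i\xi^\perp}{|\xi|^2}$ in the principal-value sense). For the remaining piece, write $m(|\xi|) - m(0^+) = \int_0^{|\xi|} m'(r)\,dr$ and observe that this vanishes at $\xi = 0$ with the rate controlled by $\lim_{r\to 0^+} r\,m'(r) < \infty$, which kills the singularity and makes $\tfrac{i\xi^\perp}{|\xi|^2}\bigl(m(|\xi|)-m(0^+)\bigr)$ locally integrable. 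Its decay at infinity is ensured exactly by the hypotheses \eqref{cond:m-add}: $r^{-1/2}m(r)\to 0$ and $r^{1/2}m'(r)\to 0$ guarantee $L^2$-type control so the inverse Fourier transform makes sense as a tempered distribution and in fact as a function.

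The cleanest route from here is to compute the scalar kernel: since $\widehat{K}(\xi) = \tfrac{i\xi^\perp}{|\xi|^2}m(|\xi|)$ is (componentwise) of the form $i\xi^\perp$ times the radial function $|\xi|^{-2}m(|\xi|)$, one has $K(x) = \nabla^\perp_x \Psi(x)$ where $\widehat\Psi(\xi) = |\xi|^{-2}m(|\xi|)$, i.e. $\Psi$ is radial, $\Psi(x) = \Psi(|x|)$. A radial function in $\RR^2$ has Fourier transform given by a Hankel transform with the Bessel kernel $J_0$, so
\begin{align*}
  \Psi(|x|) = \frac{1}{2\pi}\int_0^\infty J_0(|x|r)\,\frac{m(r)}{r}\,dr,
\end{align*}
(again interpreted after the constant-subtraction regularization), and differentiating, using $\frac{d}{d\rho}J_0(\rho) = -J_1(\rho)$ and the identity relating $\int_0^\infty J_1(|x|r)m(r)\,dr$ to $\int_0^\infty J_0(|x|r)m'(r)\,dr$ via integration by parts (with boundary terms vanishing thanks to $m(0^+)<\infty$, $rm'(r)\to 0$ at $0$, and the decay \eqref{cond:m-add}), one arrives at $K(x) = \tfrac{x^\perp}{|x|^2}G(|x|)$ with $G$ as in \eqref{eq:G-exp1}. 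The equivalence of the two lines in \eqref{eq:G-exp1} is just the statement that the 2D Fourier transform of the radial function $\tfrac{m'(|\xi|)}{|\xi|}$ is $2\pi$ times its zero-order Hankel transform. I would be careful to justify every integration by parts and interchange of integrals by first working with $m(|\xi|) - m(0^+)$ truncated to an annulus $\varepsilon \le |\xi| \le R$ and then passing to the limit using the stated growth/decay bounds and standard oscillatory-integral decay of $J_0, J_1$.

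For part (ii), the half-plane formula \eqref{eq:u-exp2}, the method is the standard reflection/method of images. Given $\theta$ on $\RR^2_+$, extend it to $\RR^2$ by odd reflection in $x_2$, i.e. set $\widetilde\theta(x) = \theta(x)$ for $x_2 > 0$ and $\widetilde\theta(x) = -\theta(\overline x)$ for $x_2 < 0$, and let $\widetilde u = K * \widetilde\theta$ from part (i). The point is that $K(x) = \tfrac{x^\perp}{|x|^2}G(|x|)$ has the symmetry that its second component $K_2(x) = -\tfrac{x_1}{|x|^2}G(|x|)$ is even in $x_2$ while its first component $K_1(x) = \tfrac{x_2}{|x|^2}G(|x|)$ is odd in $x_2$; combined with the odd extension of $\theta$ this forces $\widetilde u_2(x_1, 0) = 0$, so $\widetilde u$ restricted to $\RR^2_+$ satisfies the no-penetration boundary condition \eqref{eq:bdr}. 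Splitting the convolution integral over $\RR^2$ into the contributions from $y_2 > 0$ and $y_2 < 0$ and changing variables $y \mapsto \overline y$ in the latter, using $\widetilde\theta(\overline y) = -\theta(y)$ and $K(x - \overline y)$, yields precisely \eqref{eq:u-exp2}. One should also remark that the solution to $u = \nabla^\perp(-\Delta)^{-1}m(\Lambda)\theta$ with $u_2|_{\partial\RR^2_+} = 0$ is unique in the relevant class, so that this image construction indeed produces \emph{the} velocity field — this follows from the corresponding uniqueness for the stream function with Dirichlet data, or can be absorbed into the definition of the operator on the half-plane.

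The main obstacle, and the step deserving the most care, is the low-frequency singularity in part (i): the symbol of the Biot--Savart-type operator is homogeneous of degree $-1$ near $\xi = 0$ when $m(0^+) \ne 0$, so $K$ is not an $L^1$ function and the inverse Fourier transform must be understood in a principal-value / tempered-distribution sense. Isolating the constant $m(0^+)$ and reducing to the classical 2D Euler kernel plus a genuinely convergent remainder is the key maneuver, and all the interchange-of-limits and integration-by-parts steps in deriving the $J_0$-representation of $G$ must be justified against the two-sided hypotheses on $m$ (finiteness of $m(0^+)$ and $\lim_{r\to0^+}rm'(r)$ at the origin; the $r^{-1/2}$ and $r^{1/2}$ decay rates at infinity). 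The half-plane part, by contrast, is essentially bookkeeping once the reflection symmetry of $K$ is observed.
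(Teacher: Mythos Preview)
Your proposal is correct and follows essentially the same route as the paper: compute the radial stream function $\Psi=\phi$ via the Hankel/Bessel representation, differentiate and integrate by parts (using exactly the decay hypotheses \eqref{cond:m-add} and the finiteness of $m(0^+)$, $\lim_{r\to 0^+}rm'(r)$ for the boundary terms) to obtain $G$, and then handle the half-plane case by odd reflection/method of images. The only cosmetic differences are that the paper regularizes the low-frequency singularity by subtracting $1$ on the unit ball rather than splitting off $m(0^+)$, and in part (ii) the paper phrases the reflection at the level of the stream function (Dirichlet data $\psi|_{\partial\RR^2_+}=0$) rather than checking the parity of the kernel components directly.
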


\begin{remark}\label{rmk:phi-exp}
  If the condition $\lim\limits_{r\to +\infty}r^{-\frac{1}{2}}m(r)=0$ in \eqref{cond:m-add} does not hold, 
we rewrite the formula  \eqref{eq:phi-exp1} of $\phi(\rho)$ as
\begin{align*}
  \phi(\rho) = \frac{1}{2\pi} \int_0^\rho \big(J_0(r) - 1\big) \frac{m(\frac{r}{\rho})}{r} \dd r 
  + \int_\rho^\infty J_0(r) \frac{m(\frac{r}{\rho})}{r} \dd r,
\end{align*}
which also yields
\begin{align*}
  \phi'(\rho) &= - \frac{1}{2\pi \rho^2} \int_0^\infty J_0(r) m'\big(\tfrac{r}{\rho}\big) \dd r 
  +\frac{1}{2\pi \rho^2}\int_{0}^{\rho}m'(\tfrac{r}{\rho})\dd r-\frac{1}{2\pi}\frac{m(1)}{\rho} \\ 
  & =- \frac{1}{2\pi \rho} \int_0^\infty J_0(\rho r) m'(r) \dd r-\frac{1}{2\pi \rho}m(0^{+}),
\end{align*}
which is exactly the same with \eqref{eq:phi'-exp}. In this way, the formula \eqref{eq:phi'-exp} 
will hold under the conditions \eqref{H1}-\eqref{H2b} with more general scope $\alpha\in (0,1)$ or even $\alpha\in(0,2)$.
\end{remark}

\begin{proof}[Proof of Lemma \ref{lem:int-exp}]
\textbf{(i)} Denote by
\begin{align*}
  f(|\xi|) \triangleq \frac{m(|\xi|)}{|\xi|^2},
\end{align*}
then our aim is to compute $\mathcal{F}^{-1} (f) (x) \triangleq \phi(|x|) = \phi(\rho)$, which also solves that
\begin{align}\label{eq:ellpEq-fund}
  \frac{- \Delta}{m(\Lambda)} \phi(|x|) = \delta_0 \quad \textrm{in}\;\;\mathbb{R}^2,
\end{align}
with $\delta_0$ the Dirac measure on $\mathbb{R}^2$ centered at the point $0$.
Note that due to that $f$ is radial, $\mathcal{F}^{-1}(f)$ will also be radial.
It follows from the definition of inverse Fourier transform that
\begin{align}\label{eq:phi-exp0}
  \phi(\rho) & = \mathcal{F}^{-1}(f)(x)
= \frac{1}{(2\pi)^2} \int_{\RR^2} e^{ix\cdot \xi} \frac{m(|\xi|)}{|\xi|^2} \dd \xi   = \frac{1}{(2\pi)^2} \int_{\RR^2} e^{i \rho  \xi_2} \frac{m(|\xi|)}{|\xi|^2} \dd \xi \nonumber\\
 & = \frac{1}{(2\pi)^2} \int_0^\infty \Big(\int_{-\pi}^\pi e^{i \rho\, r \sin \eta}\dd \eta\Big) \frac{m(r)}{r} \dd r = \frac{1}{2\pi} \int_0^\infty J_0(\rho r) \frac{m(r)}{r} \dd r,
\end{align}
where $J_0$ is the zero-order Bessel function of the first kind defined as follows (e.g. see \cite[Sec. 4.9]{Asmar05}):  	
\begin{align}\label{def:BessJ0}
  J_0(x)  = \frac{1}{2\pi}\int_{-\pi}^{\pi}e^{-ix\sin \eta}\dd\eta.
\end{align}
In the above, we may encounter a problem with the above integral at $r=0$.
To circumvent this problem,
we define the distribution $\frac{1}{|\cdot|^2}$ more properly as:
\begin{align*}
  \Big\langle \frac{m(\cdot)}{|\cdot|^2}, g \Big\rangle \triangleq
  \int_{B_1} \frac{(g(\xi)-g(0))m(|\xi|)}{|\xi|^2}\dd\xi
  +\int_{B_1^c} \frac{g(\xi)m(|\xi|)}{|\xi|^2}\dd\xi.
\end{align*}
From this we get 
\begin{align}\label{eq:phi-exp1}
  \phi(\rho)
  & = \frac{1}{(2\pi)^2} \int_{B_1} \frac{(e^{ix\cdot \xi } -1) m(|\xi|)}{|\xi|^2} \dd \xi
  + \frac{1}{(2\pi)^2} \int_{B_1^c} \frac{e^{ix\cdot \xi} m(|\xi|)}{|\xi|^2} \dd \xi \nonumber \\
  & = \frac{1}{2\pi} \bigg( \int_0^1 \big(J_0(\rho r)-1\big) \frac{m(r)}{r} \dd r
  + \int_1^\infty J_0(\rho r) \frac{m(r)}{r} \dd r \bigg).
\end{align}
Noting that 
\begin{align*}
  \frac{\phi(\rho_1)-\phi(\rho_2)}{\rho_1-\rho_2} 
  = \frac{1}{2\pi}\int_0^\infty \int_0^1 J'_0\big((s\rho_1+(1-s)\rho_2) r\big) m(r)\dd s \dd r,
\end{align*}
we now differentiate $\phi(\rho)$ in $\rho$:
\begin{align*}
  \phi'(\rho)= \frac{1}{2\pi} \int_0^\infty J_0'(\rho r) m(r) \dd r.
\end{align*}
Using the asymptotics of Bessel function 
$J_0(x) \sim \sqrt{\frac{2}{\pi x}} \cos(x -\frac{\pi}{4})$
and under the assumption that $\lim\limits_{r\rightarrow\infty} r^{-\frac{1}{2}} m(r) =0$, we integrate by parts to deduce that
\begin{equation}\label{eq:phi'-exp}
\begin{split}
  \phi'(\rho)
  & = - \frac{1}{2\pi \rho} \bigg( \lim_{r\rightarrow 0^+} m(r) + \int_0^\infty J_0(\rho r) m'(r) \dd r \bigg) \\
  & = - \frac{1}{2\pi \rho} \bigg( m(0^+)
  + \frac{1}{2\pi} \int_{\RR^2} e^{i x\cdot \xi} \frac{m'(|\xi|)}{|\xi|} \dd \xi \bigg) .
\end{split}
\end{equation}
Notice that if $m \equiv 1$, the above expression formulas \eqref{eq:phi-exp0}, \eqref{eq:phi-exp1}-\eqref{eq:phi'-exp}
imply that $\phi(\rho) = -\frac{1}{2\pi} \textrm{log} \,\rho$,
and $\phi'(\rho) = - \frac{1}{2\pi \rho}$,
which coincide with the 2D Euler case; while if $m(r) = r^\alpha$, $\alpha\in (0,\frac{1}{2})$,
we have $\phi(\rho) = \mathrm{c}_\alpha \rho^{-\alpha}$ and $\phi'(\rho) = -\alpha \mathrm{c}_\alpha \rho^{-\alpha -1}$
with $\mathrm{c}_\alpha =\frac{1}{2\pi} \int_0^\infty J_0(r) r^{\alpha-1} \dd r
= \frac{\Gamma(\frac{\alpha}{2})}{\pi 2^{2-\alpha}\Gamma(1-\frac{\alpha}{2})}$ (see \cite[sec. 6.561]{GR15}),
which coincide with the $\alpha$-SQG case.
	
Moreover, we have
\begin{align}\label{eq:u-exp1.2}
  u(x) & = \nabla^\perp  \big(\phi(|\cdot|)\ast \theta\big) (x)
   = \Big( - \phi'(|x|)\frac{x^{\perp}}{|x|} \Big)\ast \theta(x) \nonumber\\
  & = \int_{\RR^2} \frac{(x-y)^\perp}{|x-y|^2} G(|x-y|) \theta(y) \dd y  = K \ast \theta(x),
\end{align}
where
\begin{align}\label{def:G}
  G(\rho)\triangleq  - \rho\, \phi'(\rho).
\end{align}
This gives the formula \eqref{eq:u-exp1}-\eqref{eq:G-exp1}.
By using \eqref{eq:G-exp1} and the integration by parts,
we infer that for every $\rho>0$,
\begin{equation}\label{eq:G'-exp}
\begin{split}
  G'(\rho) = G'(|x|) & = \frac{1}{2\pi} \int_0^\infty J_0'(\rho r) r m'(r)\dd r  \\
  & = - \frac{1}{2\pi\rho}\lim_{r\rightarrow 0^+} r m'(r) - \frac{1}{2\pi\rho} \int_0^\infty J_0(\rho r)
  \big(m'(r) + r m''(r)\big) \dd r \\
  & = - \frac{1}{2\pi\rho}\lim_{r\rightarrow 0^+} r m'(r) - \frac{1}{(2\pi)^2\rho} \int_{\RR^2}  e^{ix\cdot\xi}
  \frac{m'(|\xi|) + |\xi| m''(|\xi|)}{|\xi|} \dd \xi ,
\end{split}
\end{equation}
where we have used the fact that
$\lim\limits_{r\rightarrow +\infty} r^{\frac{1}{2}} m'(r) =0$ and the decaying property of $J_0(\cdot)$.

\textbf{(ii)} Under the rigid boundary condition $u_2|_{\partial \mathbb{R}^2_+} =0$, we see that $u(x)= \nabla^\perp \psi(x)$
with $\psi$ the stream function solving the following equation
\begin{align*}
  \frac{-\Delta}{m(\Lambda)} \psi = \theta,\quad
  \textrm{in}\,\, \mathbb{R}^2_+,\quad\quad \psi|_{\partial \mathbb{R}^2_+} = 0.
\end{align*}
For a function $g$ defined on $\mathbb{R}^2_+$, denote by $e_o[g]$ the following extension operator
\begin{align}\label{def:ext-op}
  e_o[g](x) \triangleq
  \begin{cases}
    g(x),\quad & \textrm{for}\;\, x_2\geq 0, \\
    - g(x_1,-x_2),\quad &\textrm{for}\;\, x_2<0.
  \end{cases}
\end{align}
Note that
\begin{align*}
  \frac{-\Delta}{m(\Lambda)} e_o[\psi] = e_o[\theta],\quad \textrm{in}\;\, \mathbb{R}^2.
\end{align*}
In view of \eqref{eq:ellpEq-fund}, we find that for every $x\in \mathbb{R}^2_+$,
\begin{align*}
  \psi(x) = e_o[\psi](x)
  & = \int_{\mathbb{R}^2} \phi(|x-y|) e_o[\theta](y) \dd y \\
  & = \int_{\mathbb{R}^2_+} \phi(|x-y|) \theta(y) \dd y - \int_{\mathbb{R}\times (-\infty,0)} \phi(|x-y|)
  \theta(y_1,-y_2) \dd y \\
  & = \int_{\mathbb{R}^2_+} \Big(\phi(|x-y|) - \phi(|x-\overline{y}|)\Big) \theta(y) \dd y.
\end{align*}
Hence, applying the differential operator $\nabla^\perp$ to the above formula and estimating as \eqref{eq:u-exp1.2}
lead to \eqref{eq:u-exp2}, as desired.
\end{proof}


Next, we show some crucial properties of $G(\rho)$ given by \eqref{eq:G-exp1} under suitable assumptions on $m$.
\begin{lemma}\label{lem:mD-cond}
Assume that $m(\xi)= m(|\xi|)$ is a radial function of $\RR^2$ satisfying either
\eqref{H1}-\eqref{H2a} or \eqref{H1}-\eqref{H2b}
(see \eqref{cond:m0}-\eqref{eq:MH} and \eqref{cond:m1}-\eqref{cond:m2a} in introduction).
Then there exist constants $\bar{c}>0$,  $\bar{c}_0 >0$, and $C>0$ such that $G(\rho)=G(|x|)$ defined by \eqref{eq:G-exp1} 
verifies the following statements:
\begin{align}\label{eq:G-prop1}
  \bar{c}\, m(\rho^{-1}) \leq G(\rho) \leq C m(\rho^{-1}), \quad \forall \rho \in(0, \bar{c}_0],
\end{align}
and
\begin{align}\label{eq:G-prop1b}
  \textrm{on}\;(0,\bar{c}_0),\quad \quad |G'(\rho)| \leq \frac{C m(\rho^{-1})}{\rho},
  \quad |G''(\rho)| \leq \frac{C m(\rho^{-1})}{\rho^2},
\end{align}
and
\begin{equation}\label{eq:G-prop1c}
\begin{cases}
  \textrm{the function $\tfrac{G(\rho)}{\rho}$ is decreasing on $(0,\bar{c}_0]$},\quad &\textrm{if\; \eqref{H2a} is assumed}, \\
  \textrm{the function $G(\rho)$ is decreasing on $(0,\bar{c}_0]$},\quad &\textrm{if\; \eqref{H2b} is assumed}.
\end{cases}
\end{equation}	
and
\begin{align}\label{eq:G-prop2}
  |G(\rho)| + |G'(\rho)| +  |G''(\rho)| \leq C,\quad \forall \rho\geq \bar{c}_0.
\end{align}
Besides, we also have
\begin{align}\label{eq:G-prop3}
  \lim_{\rho\rightarrow 0^+} \frac{\rho\,G'(\rho) + \alpha\, G(\rho)}{G(\rho)} = 0,\quad \textrm{if \eqref{H2b} is assumed},
\end{align}
and for every $l >0$,
\begin{equation}\label{eq:G-prop4}
\begin{cases}
  \lim\limits_{\rho\rightarrow 0^+} \frac{m(l \rho^{-1})}{m(\rho^{-1})} = 1, \quad & \textrm{if\; \eqref{H2a} is assumed}, \\
  \lim\limits_{\rho \rightarrow 0^+} \frac{l^\alpha G(l \rho)}{ G(\rho)} = 1, \quad & \textrm{if\; \eqref{H2b} is assumed}.
\end{cases}
\end{equation}
\end{lemma}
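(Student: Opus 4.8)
The plan is to work entirely with the integral representation \eqref{eq:G-exp1}, namely
$G(\rho) = \frac{1}{2\pi}m(0^+) + \frac{1}{2\pi}\int_0^\infty J_0(\rho r)\,m'(r)\,\dd r$,
and reduce all estimates to the behavior of $m$ at infinity (which controls $G$ near $\rho=0$). The first step is a change of variables $s = \rho r$, which turns the integral into $\frac{1}{2\pi\rho}\int_0^\infty J_0(s)\,m'(s/\rho)\,\dd s$ — wait, this is actually cleaner to phrase via the formula for $G'$ in \eqref{eq:G'-exp}; but since we want $G$ itself, I would instead write $G(\rho) = -\rho\phi'(\rho)$ and use the auxiliary function $\psi(\rho)\triangleq \int_0^\infty J_0(s)\,m'(s/\rho)\,\dd s$. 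The key heuristic, to be made rigorous, is that $J_0$ is $O(1)$ near $0$ and oscillates-and-decays like $s^{-1/2}$ at infinity, so the integral $\int_0^\infty J_0(s)m'(s/\rho)\,\dd s$ is dominated by the region $s\lesssim 1$, i.e. $r\lesssim \rho^{-1}$, where one expects $\int_0^{\rho^{-1}} m'(r)\,\dd r \approx m(\rho^{-1})$ (using $m'\geq 0$ and $m(0^+)<\infty$). To control the tail $s\gtrsim 1$ rigorously one integrates by parts using $J_0(s) = \frac{1}{s}\frac{\dd}{\dd s}(sJ_1(s))$ and the Mikhlin–Hörmander bounds \eqref{eq:MH} together with the asymptotic relations \eqref{cond:m1}/\eqref{cond:m2}, which say precisely that $r\,m''(r)\sim -m'(r)$ (in regime \eqref{H2a}) or $r\,m''(r)\sim(\alpha-1)m'(r)$ (in regime \eqref{H2b}); these let one show the tail contribution is a lower-order perturbation of $m(\rho^{-1})$.

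Concretely, the steps I would carry out in order are: (1) establish the two-sided bound \eqref{eq:G-prop1} by splitting $\int_0^\infty J_0(\rho r)m'(r)\,\dd r = \int_0^{1/\rho} + \int_{1/\rho}^\infty$; on the first piece bound $J_0$ between positive constants on $[0,1]$ after substitution — more carefully, use $1-J_0(s)\geq 0$ and $J_0(s)\geq c_0>0$ on $[0,1]$ plus monotonicity of $m$ to get $\int_0^{1/\rho}m'\asymp m(\rho^{-1})-m(0^+)$, which dominates for $\rho$ small since $m\to\infty$; on the second piece integrate by parts once and use \eqref{eq:MH} to bound it by $C\rho^{1/2}\int_{1/\rho}^\infty r^{-1/2}|m'(r)+rm''(r)|\frac{\dd r}{\cdots}$ — here the asymptotics \eqref{cond:m1}/\eqref{cond:m2} are essential because $m'+rm''$ has a sign/size that makes this integral $o(m(\rho^{-1}))$, giving both the upper and (after absorbing) the lower bound. (2) Derive \eqref{eq:G-prop1b} the same way starting from \eqref{eq:G'-exp} and its once-more-differentiated analogue for $G''$, using \eqref{eq:MH} for the $k=2,3$ derivatives of $m'$ and the asymptotic identities to keep everything comparable to $m(\rho^{-1})/\rho$ and $m(\rho^{-1})/\rho^2$. (3) For the monotonicity statements \eqref{eq:G-prop1c}: in regime \eqref{H2b} show $G'(\rho)<0$ on $(0,\bar c_0]$ — this should follow from \eqref{eq:G-prop3}, which gives $\rho G'(\rho) = -(\alpha+o(1))G(\rho)<0$ since $\alpha>0$ and $G>0$; in regime \eqref{H2a} ($\alpha=0$) one instead differentiates $G(\rho)/\rho$ and shows $\rho G'(\rho)-G(\rho) = -(1+o(1))G(\rho)<0$, which needs the refined estimate $\rho G'(\rho) = o(G(\rho))$ — this is the $\alpha=0$ case of \eqref{eq:G-prop3}-type behavior and uses the second limit in \eqref{cond:m1}, $\frac{rm''}{m'}\to -1$. (4) The far-field bounds \eqref{eq:G-prop2} are easy: for $\rho\geq\bar c_0$ bounded away from $0$, split the defining integral at $r=1$, use $|J_0|\leq 1$ on the compact part (with $m'$ integrable near $0$ by \eqref{cond:m0}) and integrate by parts / use decay of $J_0$ plus \eqref{cond:m-add}-type decay on the tail; one should verify \eqref{cond:m-add} follows from \eqref{H2b} with $\alpha<1/3<1/2$ and from \eqref{H2a} since $m$ is sub-algebraic. (5) For \eqref{eq:G-prop3}: write $\rho G'(\rho)+\alpha G(\rho)$ from \eqref{eq:G'-exp} and \eqref{eq:G-exp1} as a single integral $\frac{1}{2\pi}\int_0^\infty J_0(\rho r)\big((\alpha-1)m'(r)-rm''(r)\big)\dd r + (\text{boundary terms})$, and apply the same splitting; the integrand's symbol is exactly $o(m'(r))$ at infinity by the second limit in \eqref{cond:m2}, so the whole thing is $o(m(\rho^{-1})) = o(G(\rho))$. (6) Finally \eqref{eq:G-prop4}: in regime \eqref{H2a}, $\log\frac{m(l\rho^{-1})}{m(\rho^{-1})} = \int_{\rho^{-1}}^{l\rho^{-1}}\frac{m'(r)}{m(r)}\dd r$ and by \eqref{cond:m1} the integrand is $\frac{\gamma}{r\log r}(1+o(1))$, whose integral over $[\rho^{-1},l\rho^{-1}]$ is $\gamma\log\frac{\log(l\rho^{-1})}{\log(\rho^{-1})}\to 0$; in regime \eqref{H2b} one uses $\frac{l^\alpha G(l\rho)}{G(\rho)}$, takes logarithms, and combines $\frac{rm'}{m}\to\alpha$ with the relation $G(\rho)\asymp m(\rho^{-1})$ and a similar integration of $\frac{G'}{G}$ using \eqref{eq:G-prop3}, which gives $\log\frac{G(l\rho)}{G(\rho)} = \int_\rho^{l\rho}\frac{G'(r)}{G(r)}\dd r = \int_\rho^{l\rho}\frac{-\alpha+o(1)}{r}\dd r = -\alpha\log l + o(1)$.

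The main obstacle I anticipate is making the tail estimate $\int_{1/\rho}^\infty J_0(\rho r)m'(r)\,\dd r = o(m(\rho^{-1}))$ genuinely rigorous rather than heuristic: the oscillation of $J_0$ is what saves the day, but one cannot just bound $|J_0|\leq 1$ (that would give $\int_{1/\rho}^\infty m'(r)\,\dd r$, which is only $O(m(\infty)) = O(\infty)$ in the unbounded case and $O(1)$ otherwise but not obviously $o(m(\rho^{-1}))$ — actually in regime \eqref{H2a}, $m(\infty)=\infty$, so this crude bound fails entirely). One genuinely needs to integrate by parts, and the delicate point is that each integration by parts trades $m'$ for $m'+rm''$ (or higher combinations), and one must know that these combinations are \emph{smaller} than $m'$ at infinity — this is exactly the content of the second and third limits in \eqref{cond:m1}/\eqref{cond:m2}/\eqref{cond:m2a}, so those hypotheses are not decoration but the crux. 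A secondary technical nuisance is handling the $\alpha=0$ versus $\alpha>0$ cases in a unified way, especially in \eqref{eq:G-prop1c} and \eqref{eq:G-prop4} where the logarithmic corrections in regime \eqref{H2a} require one extra order of precision compared to regime \eqref{H2b}; I would treat \eqref{H2a} as the $\alpha=0$ specialization of \eqref{H2b} wherever possible but isolate the logarithmic estimates where it genuinely differs.
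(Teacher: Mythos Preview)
Your overall strategy matches the paper's, and for the upper bounds, for \eqref{eq:G-prop2}--\eqref{eq:G-prop4}, and for the entire \eqref{H2a} case, your sketch is essentially correct. But there is a genuine gap in Step~(1) for the \emph{lower} bound $G(\rho)\gtrsim m(\rho^{-1})$ under \eqref{H2b}. You claim that after integration by parts the tail involves $|m'+rm''|$ and that the asymptotics \eqref{cond:m2} make this $o(m(\rho^{-1}))$. But the second limit in \eqref{cond:m2} only says $(1-\alpha)m'+rm''=o(m')$; since $\alpha>0$, this gives $m'+rm''=\alpha m'+o(m')$, which is the \emph{same} order as $m'$. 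The tail is therefore $O(\rho^{-1}m'(\rho^{-1}))\sim \alpha\,m(\rho^{-1})$, comparable to the main term with an uncontrolled constant, and further integrations by parts do not help: the paper computes that $\Delta_\xi\big(\tfrac{m'(|\xi|)}{|\xi|}\big)$ has leading behavior $(\alpha-2)^2\tfrac{m'}{|\xi|^3}$, again nonvanishing. This blocks the lower bound, and then your derivations of \eqref{eq:G-prop3} and of the \eqref{H2b} case of \eqref{eq:G-prop1c}---both of which presuppose $G\gtrsim m(\rho^{-1})$ in the denominator---collapse as well.

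The paper's repair, borrowed from \cite{DKSV14}, is a \emph{positivity} argument rather than a smallness one. Using a fixed (not $\rho$-scaled) cutoff and one $\Delta_\xi$-integration by parts, the high-frequency piece becomes $\rho^{-2}\int_{\RR^2}(1-\cos(\rho\xi_1))\,P(\xi)\,\dd\xi$ with $P$ radial and mean-zero. The conditions \eqref{cond:m2}--\eqref{cond:m2a} are then used not to make $P$ small but to show $P(\xi)\ge c\,\tfrac{m'(|\xi|)}{|\xi|^3}>0$ for large $|\xi|$ (this is the computation yielding the constant $(\alpha-2)^2$). Since $1-\cos\ge 0$ as well, one bounds the large-$|\xi|$ contribution from \emph{below} by restricting to an annulus $1\le|\eta|\le 2$ after rescaling, producing a positive multiple of $\rho^{-1}m'(\rho^{-1})\sim m(\rho^{-1})$; the remaining pieces are $O(1)$ and are dominated since $m(\rho^{-1})\to\infty$. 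The same device applied to $G'$ gives $G'(\rho)\le -C\rho^{-1}m(\rho^{-1})<0$, which yields the \eqref{H2b} monotonicity in \eqref{eq:G-prop1c} directly. This is precisely where \eqref{cond:m2a} earns its keep.
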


\begin{remark}\label{rmk:m-grow}
\begin{enumerate}
\item Note that the conditions \eqref{cond:m1} and \eqref{cond:m2} respectively imply that for $r>0$ large enough,
\begin{align}\label{cond:m2-deduc}
  0<\frac{r\,m'(r)}{m(r)}\leq \epsilon,\;\;\forall \epsilon>0,\quad \textrm{and}\quad
  \alpha -\epsilon \leq \frac{r\, m'(r)}{m(r)} \leq \alpha+\epsilon,\;\;\forall \epsilon\in(0,\alpha).
\end{align}
By some direct calculation (e.g. see \cite[Lemma 2.2]{MX19}), the following holds that under the condition \eqref{cond:m1},
\begin{align}\label{eq:m-grow0}
  C^{-1} \leq m(r) \leq C r^\epsilon,\;\; \forall \epsilon\in(0,\tfrac{1}{2}),\qquad
  \textrm{for \;$r>0$ large enough};
\end{align}
and under the condition \eqref{cond:m2},
\begin{align}\label{eq:m-grow-alp}
  C^{-1} r^{\alpha -\epsilon} \leq m(r) \leq C r^{\alpha+\epsilon},\;\;
  \forall \epsilon \in (0,\alpha), \qquad \textrm{for \;$r > 0$ large enough}.
\end{align}
As a consequence of the above inequalities, the condition \eqref{cond:m-add} in both cases and \eqref{cond:m1c}
in the case of \eqref{H1}-\eqref{H2b} will naturally hold.  
\item 
From \eqref{eq:MH}, we see that 
$- C \frac{m'(r)}{r} \leq\frac{\dd m'(r)}{\dd r} \leq C\frac{m'(r)}{r}$,
which implies that $r^{C} m'(r)$ is non-decreasing for every $r>0$ and $r^{-C}m'(r)$ is non-increasing for every $r>0$, 
thus consequently,  
\begin{align}\label{ineq:bound-1-MH}
  l^{-C} m'(r)  \leq m'(lr) \leq l^C m'(r) ,\qquad \forall r>0,\,l\geq 1.
\end{align}
Moreover, the non-decreasing property of $m$ (by \eqref{cond:m0}) and \eqref{ineq:bound-1-MH} 
guarantee that for every $C_0 \geq 1$ and $r>0$,
\begin{equation}\label{ineq:m-bound}
\begin{split}
  m(r) \leq m(C_0r) & \leq  m(r) + C_0 \int_{\frac{r}{C_0}}^r m'(C_0 \tau) \dd \tau \\
  & \leq m(r) + C_0^{C+1} \int_{\frac{r}{C_0}}^r m'(\tau) \dd \tau \leq (C_0^{C+1} +1) m(r).
\end{split}
\end{equation}
\end{enumerate}
\end{remark}

\begin{proof}[Proof of Lemma \ref{lem:mD-cond}]
We first consider the case under the conditions \eqref{H1}-\eqref{H2a}.
Let $\chi(\xi)= \chi(|\xi|)\in C^\infty_c(\RR^2)$ be a smooth radial function such that
\begin{align}\label{eq:chi-prop}
  \chi\equiv 1,\;\; \textrm{on}\;\{|\xi|\leq 1\},\qquad \chi\equiv 0,\;\; \textrm{on}\;\{|\xi|\geq 2\},\qquad
  0\leq \chi\leq 1.
\end{align}
For $G(\rho)= G(|x|)$ given by \eqref{eq:G-exp1}, we have
\begin{align}\label{eq:G-decom}
  G(\rho) & = \frac{1}{2\pi}m(0^+) + \frac{1}{(2\pi)^2} \int_{\RR^2} e^{ix\cdot\xi} \chi(\rho |\xi|) \frac{m'(|\xi|)}{|\xi|} \dd \xi
  + \frac{1}{(2\pi)^2} \int_{\RR^2} e^{ix\cdot\xi} \big(1-\chi(\rho |\xi|)\big)
  \frac{m'(|\xi|)}{|\xi|}  \dd \xi \nonumber \\
  & = \frac{1}{2\pi}m(0^+) + \frac{1}{2\pi} 
  \int_0^\infty J_0(\rho r) \chi(\rho r) m'(r) \dd r
  + \frac{1}{(2\pi)^2} \int_{\RR^2} e^{ix\cdot\xi} \big(1-\chi(\rho |\xi|)\big)
  \frac{m'(|\xi|)}{|\xi|}  \dd \xi \nonumber \\
  & \triangleq \frac{1}{2\pi}m(0^+) + I_1  + I_2.
\end{align}
Noting that $\frac{1}{2} \leq J_0(r) \leq 1$ for $0\leq r\leq 1$ (e.g. see \cite[Sec. 4.7]{Asmar05}), we have
\begin{align*}
  I_1 \geq \frac{1}{2\pi} \int_0^{\rho^{-1}} J_0(\rho r) m'(r) \dd r 
  \geq \frac{1}{4\pi} \,\int_0^{\rho^{-1}} m'(r) \dd r
  = \frac{1}{4\pi}\, \big(m(\rho^{-1}) - m(0^+) \big),
\end{align*}
and
\begin{align}\label{eq:I1-es}
  I_1  \leq \frac{1}{2\pi} \int_0^{2\rho^{-1}} J_0(\rho r) m'(r) \dd r 
  \leq \frac{1}{2\pi} \int_0^{2\rho^{-1}} m'(r) \dd r
  = \frac{1}{2\pi} \big( m(2\rho^{-1}) - m(0^+) \big).
\end{align}
For $I_2(\rho)$, taking advantage of the integration by parts, \eqref{eq:MH}, \eqref{ineq:bound-1-MH} 
and the fact that (owing to \eqref{cond:m1})
\begin{align}\label{eq:fact-m}
  \textrm{$r\mapsto m'(r)$ is non-increasing for $r>0$ sufficiently large}, 
\end{align}
we find that for $\rho>0$ small enough (i.e. $0<\rho\leq \bar{c}_0$),
\begin{align}\label{eq:I2-es}
  |I_2 | & = \frac{1}{(2\pi)^2} \frac{1}{\rho^2}\Big| \int_{\RR^2} e^{ix\cdot\xi} \Delta_\xi
  \Big( \big(1-\chi(\rho|\xi|)\big)
  \frac{m'(|\xi|)}{|\xi|}\Big) \dd \xi \Big| \nonumber  \\
  & \leq \frac{C}{\rho^2} \bigg(\int_{\rho^{-1}\leq |\xi|\leq 2\rho^{-1}}
  \big(\rho^2 + \rho |\xi|^{-1}\big) \frac{m'(|\xi|)}{|\xi|}  \dd \xi
  + \int_{|\xi|\geq \rho^{-1}} \frac{m'(|\xi|)}{|\xi|^3}\dd \xi \bigg) \nonumber \\
  & \leq \frac{C}{\rho^2} \Big( \rho\, m'(\rho^{-1}) + m'(\rho^{-1}) \int_{\rho^{-1}}^\infty \frac{1}{r^2} \dd r\Big) 
  \leq C \rho^{-1} m'(\rho^{-1}).
\end{align}
Hence we get that for every $\rho>0$ small enough,
\begin{align}\label{eq:G-bdd}
  \frac{1}{4\pi}m(0^+) + \frac{1}{4\pi}\, m(\rho^{-1}) - C \rho^{-1} m'(\rho^{-1})
  \leq G(\rho)\leq \frac{1}{2\pi} m(2\rho^{-1}) + C \rho^{-1} m'(\rho^{-1}) .
\end{align}
Thus under the conditions in \eqref{cond:m1}, and noticing \eqref{ineq:m-bound},
we infer that $G(\rho)\approx m(\rho^{-1})$	for $\rho>0$ sufficiently small, that is, \eqref{eq:G-prop1} holds.
The bound $|G(\rho)|\leq C$ ($\forall\rho\in [\bar{c}_0,\infty)$) can be easily deduced from
\eqref{eq:G-decom}-\eqref{eq:I2-es} and the following estimate that
\begin{equation}\label{eq:fact-m2}
\begin{split}
  \frac{1}{\rho^2}\int_{|\xi|\geq \rho^{-1}} \frac{m'(|\xi|)}{|\xi|^3} \dd \xi 
  & \leq \frac{C}{\rho^2} \int_{\rho^{-1}}^{\bar{c}_0^{\,-1}} 
  \frac{m'(r)}{r^2} \dd r + \frac{C}{\rho^2} m'(\bar{c}_0^{\,-1}) \int_{\bar{c}_0^{\,-1}}^\infty \frac{1}{r^2} \dd r \\
  & \leq \frac{C}{\rho^2} \Big(\bar{c}_0^2 m(\bar{c}_0^{\,-1}) 
  + 2 \int_{\rho^{-1}}^{\bar{c}_0^{\,-1}} \frac{m(r)}{r^3} \dd r + \bar{c}_0 m'(\bar{c}_0^{\,-1})\Big) 
  \leq C m(\bar{c}_0^{\,-1}).
\end{split}
\end{equation}
	
Now we consider the properties of $G'(\rho)$ and $G''(\rho)$ as in \eqref{eq:G-prop1b} and \eqref{eq:G-prop2}.
We start from the expression formula \eqref{eq:G'-exp} and its derivative
\begin{align*}
  & G''(\rho) = -\frac{1}{\rho}G'(\rho)-\frac{1}{2\pi\rho}\int_0^\infty J'_0(\rho r)\big(r\,m'(r)+r^2\,m''(r)\big)\dd r \nonumber \\
  & = -\frac{G'(\rho) }{\rho}+  \frac{1}{2\pi\rho^2}\lim_{r\to  0^+} 
  \big(r \,m'(r) + r^2\, m''(r)\big)
  +\frac{1}{2\pi\rho^2} \int_0^{\infty} J_0(\rho r) 
  \Big(m'(r)+3r\,m''(r)+r^2\,m'''(r)\Big) \dd r  ,
\end{align*}
where in the last line we have used the integration by parts and the fact that 
$\lim\limits_{r\rightarrow \infty} \big(r^{\frac{1}{2}} m'(r) + r^{\frac{3}{2}} |m''(r)|\big) =0$.
Similarly as \eqref{eq:G-decom}, we have
\begin{align}\label{eq:G'-decom}
  G'(\rho) & = - \frac{1}{2\pi\rho}\lim_{r\rightarrow 0^+} r m'(r) 
  -\frac{1}{2\pi\rho} \int_0^\infty J_0(\rho r) \chi(\rho r) \big(m'(r) + r m''(r) \big) \dd r \nonumber \\
  &\quad - \frac{1}{(2\pi)^2\rho} \int_{\RR^2}  e^{ix\cdot\xi} \big(1- \chi(\rho|\xi|) \big)
  \frac{m'(|\xi|) + |\xi| m''(|\xi|)}{|\xi|} \dd \xi \nonumber \\
  & \triangleq - \frac{1}{2\pi \rho} \Big(\lim_{r\rightarrow 0^+} r m'(r)\Big) + \widetilde{I}_1
  + \widetilde{I}_2,
\end{align}
and
\begin{align*}
  G''(\rho) = & -\frac{1}{\rho}G'(\rho) + \frac{1}{2\pi\rho^2}\lim_{r\to  0^+}\big(r \,m'(r)+r^2m''(r)\big) \\
  & + \frac{1}{2\pi\rho^2} \int_0^{\infty}J_0(\rho r)\chi(\rho r) \big(m'(r)+3rm''(r)+r^2m'''(r)\big) \dd r\\
  &+\frac{1}{(2\pi)^2 \rho^2}\int_{\RR^2}e^{ix\cdot \xi}(1-\chi(\rho |\xi|))
  \frac{m'(|\xi|)+3|\xi|m''(|\xi|)+|\xi|^2 m'''(|\xi|)}{|\xi|}\dd \xi \\
  \triangleq & -\frac{1}{\rho}G'(\rho)
  + \frac{1}{2\pi\rho^2} \lim_{r\to  0^+}\big(m'(r)r+r^2m''(r)\big)
  +\mathbf{I}_1+\mathbf{I}_2.
\end{align*}
By using \eqref{eq:MH}, we obtain
\begin{align*}
  |\widetilde{I}_1| & \leq \frac{1}{2\pi \rho} \int_0^\infty J_0(\rho r) \chi(\rho r)
  \big(m'(r) + r |m''(r)| \big) \dd r \\
  & \leq \frac{C}{\rho} \int_0^{2\rho^{-1}} J_0(\rho r) m'(r) \dd r
  \leq \frac{C }{\rho} \Big(m(2\rho^{-1})  - m(0^+) \Big),
\end{align*}
and similarly,
\begin{align*}
  |\mathbf{I}_1|\leq \frac{C}{\rho^2} \big(m(2\rho^{-1})-m(0^+)\big).
\end{align*}
For the terms $\widetilde{I}_2$ and $\mathbf{I}_2$,
arguing as getting \eqref{eq:I2-es} we find that for $\rho>0$ small enough,
\begin{align}\label{eq:tild-I2-es}
  |\widetilde{I}_2| = \frac{1}{(2\pi)^2} \frac{1}{\rho^3}\Big| \int_{\RR^2} e^{ix\cdot\xi} \Delta_\xi
  \Big( \big(1-\chi(\rho|\xi|)\big)
  \frac{m'(|\xi|) + |\xi| m''(|\xi|)}{|\xi|}\Big) \dd \xi \Big| \leq C \frac{m'(\rho^{-1})}{\rho^2},
\end{align}
and
\begin{align*}
  |\mathbf{I}_2|\leq \frac{1}{(2\pi)^2 \rho^4}\Big|\int_{\RR^2}e^{ix\cdot \xi}\Delta_{\xi}
  \Big((1-\chi(\rho |\xi|))\frac{m'(|\xi|)+3|\xi|m''(|\xi|+|\xi|^2m'''(|\xi|))}{|\xi|}\Big)\dd \xi \Big|
  \leq C\frac{m'(\rho^{-1})}{\rho^3}.
\end{align*}
Collecting the above estimates leads to that for every $\rho >0$ small enough,
\begin{align}\label{eq:G'-bdd}
  |G'(\rho)| \leq \frac{1}{2\pi\rho} \Big(\lim\limits_{r\rightarrow 0^+} r m'(r) \Big)
  + \frac{C}{\rho}\big(m(2\rho^{-1}) - m(0^+)\big) + C \frac{ m'(\rho^{-1})}{\rho^2},
\end{align}
and
\begin{align}\label{eq:G''-extra}
  |G''(\rho)|\leq \frac{1}{2\pi\rho^2} \Big( 2\lim\limits_{r\rightarrow 0^+} r m'(r) + \lim_{r\to  0^+} r^2|m''(r)|\Big)
  +\frac{C}{\rho^2} \big(m(2\rho^{-1})-m(0^+)\big) + C \frac{m'(\rho^{-1})}{\rho^3}.
\end{align}
Hence, arguing as the above treating of $G(\rho)$, 
the corresponding properties of $|G'(\rho)|$ and  $|G''(\rho)|$ in \eqref{eq:G-prop1b} and \eqref{eq:G-prop2} 
can be easily deduced.
	
Next, as a consequence of the condition $\lim\limits_{r\rightarrow +\infty} \frac{r\,m'(r)}{m(r)} =0$ (which can be deduced from \eqref{cond:m1}), we can show that $\lim\limits_{\rho\rightarrow 0^+} \frac{m(l \rho^{-1})}{m(\rho^{-1})} = 1$ for every $l>0$.
Indeed, without loss of generality we assume $l> 1$, then we see that $m(l\rho^{-1}) \geq m(\rho^{-1})$ and 
(using \eqref{ineq:bound-1-MH})
\begin{align*}
  m(l \rho^{-1}) - m(\rho^{-1}) = \int_{\rho^{-1}}^{l \rho^{-1}} m'(r) \dd r \leq  l^C(l-1) \rho^{-1} m'(\rho^{-1});
\end{align*}
which immediately leads to
\begin{align*}
  0 \leq \lim_{\rho\rightarrow 0^+} \Big(\frac{m(l \rho^{-1})}{m(\rho^{-1})} - 1\Big)
  \leq l^C (l-1) \lim_{\rho\rightarrow 0^+} \frac{\rho^{-1} m'(\rho^{-1})}{m(\rho^{-1})} =0,
\end{align*}
as desired.

It remains to show the decreasing property of $\rho\mapsto\frac{G(\rho)}{\rho}$ for $\rho>0$ small enough.
To this end, it suffices to prove that $G'(\rho) < \frac{1}{\rho}G(\rho)$ for $\rho>0$ small enough.
We also use the same splitting of $G'(\rho)$ as in \eqref{eq:G'-decom}.
From the limit $\lim\limits_{r \rightarrow +\infty} \frac{m'(r) + r\, m''(r)}{m'(r)} = 0$ (see \eqref{cond:m1}), 
for every $\varepsilon>0$, there exists a constant $R_1=R_1(\varepsilon)>0$ such that
\begin{align*}
  |m'(r) + r m''(r)| \leq \varepsilon\, m'(r),\quad \forall r\geq R_1;
\end{align*}
then by using the property of Bessel function $J_0$ and \eqref{ineq:m-bound},
we find that for every $0<\rho \leq (2R_1)^{-1}$ (small enough if necessary),
\begin{align*}
  |\widetilde{I}_1| & \leq \frac{1}{2\pi\rho} \int_0^\infty J_0(\rho r) \chi(\rho r) \big| m'(r) + r m''(r)\big| \dd r \\
  & \leq \frac{1}{2\pi\rho} \int_0^{R_1} J_0(\rho r) \big(m'(r) + r |m''(r)|\big) \dd r
  + \frac{\varepsilon}{2\pi\rho} \int_{R_1}^{2 \rho^{-1}} J_0(\rho r) m'(r) \dd r \\
  & \leq \frac{C}{\rho} \big(m(R_1) - m(0^+)\big) + \frac{\varepsilon}{\rho} m(2\rho^{-1}) \\
  & \leq \frac{C}{\rho} m(R_1) + \frac{\varepsilon}{\rho} 2^{C+2} m(\rho^{-1}).
\end{align*}
In view of \eqref{eq:tild-I2-es} and \eqref{cond:m1}, we obtain that for $\varepsilon>0$,
there exists a constant $R_2= R_2(\varepsilon)>0$ so that $C r m'(r) \leq \varepsilon m(r) $ for every $r\geq R_2$,
and also for every $0<\rho\leq R_2^{-1}$,
\begin{align*}
  |\widetilde{I}_2| \leq C \rho^{-2} m'(\rho^{-1}) \leq \frac{\varepsilon}{\rho} m(\rho^{-1}).
\end{align*}
Gathering the above estimates yields that for every $0<\rho \leq \min\{(2R_1)^{-1}, R_2^{-1}\}$,
\begin{align*}
  |G'(\rho)|\leq \frac{1}{\rho} \Big(\lim_{r\rightarrow 0^+} r m'(r) + C m(R_1) \Big) 
  + \frac{\varepsilon}{\rho} (2^{C+2} + 1) m(\rho^{-1}).
\end{align*}
Since $\lim\limits_{r\rightarrow +\infty} m(r)=+\infty$, there exists a constant $R_3=R_3(R_1,\varepsilon)>0$ such that
$\lim\limits_{r\rightarrow 0^+} r m'(r) + C m(R_1) \leq \varepsilon m(r)$ for every $r\geq R_3$, so that for every
$0<\rho\leq \rho_0\triangleq \min\{(2R_1)^{-1},R_2^{-1},R_3^{-1}\} $,
\begin{align*}
  |G'(\rho)| \leq \frac{ \varepsilon}{\rho} 2^{C+3} m(\rho^{-1}).
\end{align*}
Recalling that $G(\rho)\approx m(\rho^{-1})$ for every $\rho >0$ small enough, we can choose $\varepsilon>0$
to be a fixed small constant so that the desired result $|G'(\rho)| < \frac{1}{\rho} G(\rho)$ holds for every $0<\rho\leq \rho_0$
($\rho_0$ is now fixed). Hence we verify the statement \eqref{eq:G-prop1c}
and complete the proof under the conditions \eqref{H1}-\eqref{H2a}.
\vskip2mm

Now we turn to the proof of \eqref{eq:G-prop1}-\eqref{eq:G-prop4} 
under hypotheses \eqref{H1}-\eqref{H2b}.
The upper bounds of $G(\rho)$, $|G'(\rho)|$ and $|G''(\rho)|$ in \eqref{eq:G-prop1}-\eqref{eq:G-prop2} can be easily deduced: indeed,
in the same way as above we obtain the upper bound of \eqref{eq:G-bdd}, \eqref{eq:G'-bdd} and \eqref{eq:G''-extra}  
for every $\rho >0$ small enough (the fact \eqref{eq:fact-m} can be ensured by \eqref{cond:m2}), then the desired result follows by combining these inequalities with \eqref{cond:m0}, \eqref{cond:m2} 
and the facts \eqref{ineq:m-bound}, \eqref{eq:fact-m2}.
	
The lower bound of $G(\rho)$ in the considered case is more delicate.
We borrow some idea from \cite[Lemma 5.2]{DKSV14}.
First, we claim that there exists a constant $c>0$ depending only on $\alpha$ such that
\begin{align}\label{cond:m2b}
  \lim_{|\xi|\rightarrow +\infty} \Big(\frac{m'(|\xi|)}{|\xi|^3}\Big)^{-1}\,
  \Delta_\xi \Big( \frac{m'(|\xi|)}{|\xi|}\Big) \geq c,
\end{align}
\begin{align}\label{cond:m2c}
  \lim_{|\xi|\rightarrow +\infty} \Big(\frac{m'(|\xi|)}{|\xi|^3}\Big)^{-1}\,
  \Delta_\xi \Big(\frac{m'(|\xi|) + |\xi| m''(|\xi|)}{|\xi|} \Big) \geq c.
\end{align}
Indeed, due to that $\Delta_\xi \big(g(|\xi|)\big) = g''(|\xi|) + \frac{1}{|\xi|} g'(|\xi|)$,
direct computation implies that the estimates \eqref{cond:m2b}-\eqref{cond:m2c}
are respectively equivalent with the following inequalities
\begin{align}
  \lim_{r\rightarrow +\infty} \frac{r^2\, m'''(r) - r\, m''(r) + m'(r)}{m'(r)} & \geq c, \quad \label{cond:m2b-rep}\\
  \lim_{r\rightarrow +\infty} \frac{r^3\, m^{(4)}(r) + 2r^2\, m'''(r) - r\,m''(r)+ m'(r)}{m'(r)} & \geq c;
  \label{cond:m2c-rep}
\end{align}
then according to the hypotheses \eqref{cond:m2}-\eqref{cond:m2a}, the desired result
\eqref{cond:m2b-rep}-\eqref{cond:m2c-rep} can be justified as follows
\begin{align*}
  & \lim_{r\rightarrow +\infty} \frac{r^2\, m'''(r) - r\, m''(r) + m'(r)}{m'(r)} \\
  & = \lim_{r\rightarrow +\infty} \frac{r^2 m'''(r) + (2-\alpha) r\, m''(r)}{m'(r)}
  + (\alpha-3)\lim_{r\rightarrow +\infty} \frac{r\, m''(r) + (1-\alpha) m'(r)}{m'(r)}
  + (3-\alpha)(1-\alpha) +1 \\
  & = (3-\alpha)(1-\alpha) + 1 = (\alpha-2)^2,
\end{align*}
and
\begin{align*}
  & \lim_{r\rightarrow +\infty} \frac{r^3\, m^{(4)}(r) + 2r^2\, m'''(r) - r\,m''(r)+ m'(r)}{m'(r)} \\
  & = \lim_{r\rightarrow + \infty} \frac{r^3 m^{(4)}(r) + (3-\alpha) r^2 m'''(r)}{m'(r)}
  + (\alpha -1) \lim_{r\rightarrow +\infty} \frac{r^2 m'''(r) + (2-\alpha) r\,m''(r)}{m'(r)} \\
  & \quad - \big((\alpha-1)(2-\alpha) + 1\big) \lim_{r\rightarrow +\infty}
  \frac{r m''(r) + (1-\alpha) m'(r)}{m'(r)} + \big((\alpha-1)(2-\alpha) +1\big)(1-\alpha) +1 \\
  & = \alpha (\alpha-2)^2 .
\end{align*}

Let $\chi(\xi)=\chi(|\xi|)\in C^\infty_c(\RR^2)$ be a radial function satisfying \eqref{eq:chi-prop}, and let $\Xi_0>0$ be a large constant such that for every $|\xi|\geq \Xi_0$,
\begin{align}\label{cond:m'-b}
  \Delta_\xi \Big(\frac{m'(|\xi|)}{|\xi|}\Big) \geq \frac{c}{2} \frac{m'(|\xi|)}{|\xi|^3},
  \quad \Delta_\xi \Big(\frac{m'(|\xi|) + |\xi| m''(|\xi|)}{|\xi|} \Big) \geq \frac{c}{2} \frac{m'(|\xi|)}{|\xi|^3}.
\end{align}
Denote by $\chi_\mathrm{R}(\xi) \triangleq \chi(\frac{|\xi|}{\mathrm{R}})$ with $0<\mathrm{R}< \frac{\Xi_0}{2}$ a fixed constant.
From \eqref{eq:G-exp1}, we get
\begin{align*}
  G(\rho) & = \frac{1}{2\pi} m(0^+) + \frac{1}{(2\pi)^2} \int_{\RR^2} e^{ix\cdot\xi} \chi_\mathrm{R}(|\xi|) 
  \frac{m'(|\xi|)}{|\xi|} \dd \xi
  + \frac{1}{(2\pi)^2} \int_{\RR^2} e^{ix\cdot\xi} \big( 1- \chi_\mathrm{R}(|\xi|)\big) \frac{m'(|\xi|)}{|\xi|} \dd \xi \\
  & \triangleq \frac{1}{2\pi} m(0^+) + L_1 + L_2. 
\end{align*}
For every $\rho =|x|>0$ small enough so that $\rho\leq \Xi_0^{-1}$, we find
\begin{align*}
  L_1 = \frac{1}{2\pi} \int_0^\infty J_0(\rho r) \chi_\mathrm{R}(r) m'(r) \dd r 
  \geq \frac{1}{4\pi}\,\int_0^\mathrm{R} m'(r) \dd r = \frac{1}{4\pi}\,\big(m(\mathrm{R}) - m(0^+)\big).
\end{align*}
For $L_2$, the integration by parts gives
\begin{align}\label{def:Pxi}
  L_2 = - \frac{1}{(2\pi)^2} \frac{1}{\rho^2} \int_{\RR^2} e^{ix\cdot\xi} P(\xi) \dd \xi,\quad
  \textrm{with}\quad P(\xi) \triangleq \Delta_\xi \Big( \big(1-\chi_\mathrm{R}(|\xi|)\big) \frac{m'(|\xi|)}{|\xi|}\Big) .
\end{align}
Note that from \eqref{cond:m0}-\eqref{eq:MH} and the fact that $r\mapsto m'(r)$ is decreasing for $r>0$ sufficiently large,
\begin{align*}
  \int_{\RR^2} |P(\xi)| \dd \xi
  & \leq C \bigg( \int_{\mathrm{R}\leq |\xi|\leq 2\mathrm{R}} \big(\mathrm{R}^{-1} |\xi|^{-1} + \mathrm{R}^{-2} \big)
  \frac{m'(|\xi|)}{|\xi|} \dd \xi
  + \int_{\mathrm{R}\leq |\xi|} \frac{m'(|\xi|)}{|\xi|^3} \dd \xi \bigg) \\
  & \leq C \mathrm{R}^{-1} m'(\mathrm{R}) < \infty.
\end{align*}
Since $P(\xi)$ is a radial function belonging to $L^1$ and it is mean-free $\int_{\RR^2} P(\xi) \dd \xi =0$, we infer that
\begin{align*}
  L_2 & = -\frac{1}{(2\pi)^2\rho^2}  \int_{\RR^2} \cos (x\cdot \xi) P(\xi) \dd \xi \\
  & = \frac{1}{(2\pi)^2\rho^2}  \int_{\RR^2} \big(1- \cos(x\cdot\xi)\big) P(\xi) \dd \xi
  = \frac{1}{(2\pi)^2\rho^2} \int_{\RR^2} \big(1- \cos(\rho \xi_1)\big) P(\xi) \dd \xi .
\end{align*}
In order to use \eqref{cond:m'-b}, we decompose it as
\begin{align*}
  L_2 = \frac{1}{(2\pi)^2 \rho^2}\int_{|\xi|\leq \Xi_0} \big(1- \cos(\rho \xi_1)\big) P(\xi) \dd \xi
  + \frac{1}{(2\pi)^2 \rho^2} \int_{|\xi|\geq \Xi_0} \big(1-\cos(\rho \xi_1)\big) P(\xi) \dd \xi.
\end{align*}
For the first integral, direct computation gives that for every $0<\rho \leq \Xi_0^{-1}$,
\begin{align*}
  & \frac{1}{(2\pi)^2 \rho^2} \Big|\int_{|\xi| \leq\Xi_0} \big(1-\cos(\rho \xi_1)\big) P(\xi)\dd\xi\Big| \\
  &\leq \frac{C}{\rho^2} \int_{|\xi|\leq \Xi_0} |\rho \xi_1|^2 |P(\xi)| \dd \xi \\
  & \leq C \bigg( \int_{\mathrm{R}\leq |\xi|\leq 2\mathrm{R}} \big(\mathrm{R}^{-1} |\xi|^{-1}
  + \mathrm{R}^{-2}\big) |\xi|\, m'(|\xi|) \dd \xi
  + \int_{\mathrm{R}\leq |\xi|\leq \Xi_0} \frac{m'(|\xi|)}{|\xi|} \dd \xi \bigg) \\
  & \leq C \mathrm{R} m'(\mathrm{R}) + C m(\Xi_0).
\end{align*}
By virtue of \eqref{cond:m'-b} and the support property of $\chi_\mathrm{R}$, we obtain that for every $0<\rho \leq \Xi_0^{-1}$ small enough,
\begin{align*}
  L_2 & \geq - C \big(\mathrm{R} m'(\mathrm{R}) + m(\Xi_0)\big) + \frac{1}{4\pi^2 \rho^2}
  \int_{|\xi|\geq \Xi_0} \big(1-\cos(\rho \xi_1) \big) P(\xi) \dd \xi \\
  & \geq -C  \big(\mathrm{R} m'(\mathrm{R}) + m(\Xi_0)\big) + \frac{1}{4\pi^2 \rho^2} \int_{|\xi|\geq \Xi_0} \big(1- \cos(\rho \xi_1)\big)
  \Delta_\xi \Big(\frac{m'(|\xi|)}{|\xi|}\Big) \dd \xi \\
  & \geq - C  \big(\mathrm{R} m'(\mathrm{R}) + m(\Xi_0)\big) + \frac{c}{8\pi^2 \rho^2} \int_{|\xi|\geq \Xi_0} \big(1- \cos(\rho \xi_1) \big)
  \frac{m'(|\xi|)}{|\xi|^3} \dd \xi \\
  & \geq - C \big(\mathrm{R} m'(\mathrm{R}) + m(\Xi_0)\big) + \frac{c}{8\pi^2 \rho } \int_{|\eta|\geq \rho \Xi_0} \big(1- \cos (\eta_1)\big)
  \frac{m'(\rho^{-1} |\eta|)}{|\eta|^3} \dd \eta \\
  & \geq - C  \big(\mathrm{R} m'(\mathrm{R}) + m(\Xi_0)\big) 
  + \frac{c}{8\pi^2 \rho} \int_{1\leq |\eta|\leq 2} 
  \big(1-\cos(\eta_1)\big) \frac{m'(\rho^{-1} |\eta|)}{|\eta|^3} \dd \eta \\
  & \geq - C \big(\mathrm{R} m'(\mathrm{R}) + m(\Xi_0)\big) + \frac{C_0 c}{\rho}  m'(\rho^{-1}) \\
  & \geq - C  \big(\mathrm{R} m'(\mathrm{R}) + m(\Xi_0)\big) + \frac{C_0 c\,\alpha}{2} m(\rho^{-1}),
\end{align*}
where in the last line we have used that $m'(\rho^{-1}) \geq \frac{\alpha}{2} \rho\, m(\rho^{-1})$ 
for $\rho>0$ small enough (from \eqref{cond:m2-deduc}).
In view of \eqref{eq:m-grow-alp} and the above estimates, we conclude the desired lower bound that
\begin{align*}
  G(\rho)\geq L_2 \geq \frac{C_0 c\, \alpha}{4} m(\rho^{-1}),\qquad \textrm{for $0<\rho \leq \Xi_0^{-1}$ small enough}.
\end{align*}
	
In an analogous way as above, we can prove that for $0<\rho \leq \Xi_0^{-1}$ small enough,
\begin{align}\label{eq:G'-uppbdd}
  G'(\rho) \leq -  C \frac{m(\rho^{-1})}{\rho}.
\end{align}
Indeed, notice that from \eqref{eq:G'-exp},
\begin{align*}
  G'(\rho) & = - \frac{1}{2\pi\rho} \Big(\lim_{r\rightarrow 0^+} r\, m'(r)\Big) - \frac{1}{(2\pi)^2\rho} \int_{\RR^2}
  e^{ix\cdot\xi} \chi_{\mathrm{R}}(|\xi|)  \frac{m'(|\xi|) + |\xi| m''(|\xi|)}{|\xi|} \dd \xi \\
  &\quad - \frac{1}{(2\pi)^2\rho} \int_{\RR^2}  e^{ix\cdot\xi} \big(1- \chi_{\mathrm{R}}(|\xi|) \big)
  \frac{m'(|\xi|) + |\xi| m''(|\xi|)}{|\xi|} \dd \xi \\
  & = - \frac{1}{2\pi \rho} \Big(\lim_{r\rightarrow 0^+} r m'(r)\Big)
  - \frac{1}{2\pi \rho} \int_0^\infty J_0(\rho r) \chi_{\mathrm{R}}(r) \big(r m'(r) \big)' \dd r \\
  & \quad + \frac{1}{(2\pi)^2 \rho^3} \int_{\RR^2}  e^{ix\cdot\xi} Q(\xi) \dd \xi,\quad \textrm{with}\;\;
  Q(\xi)\triangleq \Delta_\xi \bigg(\big(1- \chi_{\mathrm{R}}(|\xi|) \big)
  \frac{m'(|\xi|) + |\xi| m''(|\xi|)}{|\xi|} \bigg),
\end{align*}
and
\begin{align*}
  & \frac{1}{(2\pi)^2\rho^3} \int_{\RR^2} e^{ix\cdot\xi} Q(\xi) \dd \xi = - \frac{1}{(2\pi)^2 \rho^3}
  \int_{\RR^2} \big(1- \cos(\rho \xi_1) \big) Q(\xi) \dd \xi \\
  & = - \frac{1}{(2\pi)^2 \rho^3}\int_{|\xi|\leq \Xi_0} \big(1- \cos(\rho \xi_1)\big) Q(\xi) \dd \xi
  - \frac{1}{(2\pi)^2 \rho^3} \int_{|\xi|\geq \Xi_0} \big(1-\cos(\rho \xi_1)\big) Q(\xi) \dd \xi,
\end{align*}
due to that \eqref{cond:m'-b} is satisfied and 
\begin{align*}
\Big|\frac{1}{2\pi \rho} \int_0^\infty J_0(\rho r) \chi_{\mathrm{R}}(r) \big(r m'(r) \big)' \dd r\Big|\le \frac{C}{2\pi \rho}\int_0^{\mathrm{R}}m'(r)\dd r\le C\frac{m(R)}{\rho},
\end{align*}
the desired result \eqref{eq:G'-uppbdd}
can be obtained via the similar deduction as above.
As a direct consequence of \eqref{eq:G'-uppbdd}, the function $\rho\mapsto G(\rho)$ is decreasing
for $\rho>0$ small enough.

Next, in view of \eqref{eq:G-exp1} and \eqref{eq:G'-exp}, we see that
\begin{align*}
  \rho\, G'(\rho) + \alpha\, G(\rho) & =
   - \frac{1}{2\pi}\lim_{r\rightarrow 0^+} r m'(r) + \frac{\alpha}{2\pi}m(0^+)
   - \Psi(\rho),
\end{align*}
with
\begin{align*}
  \Psi(\rho) \triangleq \frac{1}{2\pi} \int_0^\infty J_0(\rho r) \big( (1-\alpha) m'(r) + r m''(r)\big) \dd r 
  = \frac{1}{(2\pi)^2} \int_{\mathbb{R}^2} e^{i x \cdot \xi}
  \frac{(1-\alpha) m'(|\xi|) + |\xi| m''(|\xi|)}{|\xi|} \dd\xi.
\end{align*}
Below let us consider the upper bound of $|\Psi(\rho)|$.
Recalling that $\chi\in C_c^\infty (\mathbb{R}^2)$ is a cut-off function satisfying \eqref{eq:chi-prop}, we have
\begin{align*}
  \Psi(\rho) & =  \frac{1}{2\pi} \int_0^\infty J_0(\rho r) \chi(\rho r) \big((1-\alpha) m'(r) + r\, m''(r)\big) \dd r \\
  & \quad + \frac{1}{(2\pi)^2} \int_{\mathbb{R}^2} e^{i x\cdot\xi} \big(1-\chi(\rho |\xi|) \big)
  \frac{(1-\alpha)m'(|\xi|) + |\xi| m''(|\xi|)}{|\xi|} \dd \xi
  \triangleq \Psi_1(\rho) + \Psi_2(\rho).
\end{align*}
Denote by $M(r) \triangleq (1-\alpha) m'(r) + r\, m''(r)$. For every $\epsilon >0$, according to \eqref{cond:m2}-\eqref{cond:m2a}, there exists a constant
$\mathrm{R}_0 = \mathrm{R}_0(\epsilon)>0$ such that
\begin{align}\label{eq:Mr-fact}
  \frac{|M(r)|}{m'(r)} + \frac{|r\,M'(r)|}{m'(r)}  + \frac{|r^2\, M''(r)|}{m'(r)} \leq \epsilon,\quad \forall r> \mathrm{R}_0.
\end{align}
Let $\rho>0$ be small enough such that $0< \rho \leq \mathrm{R}_0^{-1}$. For $\Psi_1(\rho)$, by \eqref{eq:MH} and \eqref{eq:Mr-fact}, we have
\begin{align*}
  |\Psi_1(\rho)| & = \frac{1}{2\pi} \Big|\int_0^\infty J_0(\rho r) \chi(\rho r)
  M(r) \dd r \Big| \\
  & \leq C \Big( \int_0^{\mathrm{R}_0} |M(r)| \dd r + \int_{\mathrm{R}_0}^{2 \rho^{-1}} |M(r)| \dd r \Big) \\
  & \leq C \Big( \int_0^{\mathrm{R}_0} m'(r) \dd r + \epsilon \int_{\mathrm{R}_0}^{2 \rho^{-1}} m'(r) \dd r \Big) \\
  & \leq C m(\mathrm{R}_0)  + C\, \epsilon\, m(2 \rho^{-1}) .
\end{align*}
For $\Psi_2(\rho)$, using \eqref{eq:Mr-fact} and the integration by parts, we find
\begin{align*}
  |\Psi_2(\rho)| & = \frac{1}{(2\pi)^2} \Big|\int_{\mathbb{R}^2} e^{i x\cdot\xi} \big(1- \chi(\rho|\xi|)\big)
  \frac{M(|\xi|)}{|\xi|} \dd \xi \Big| \\
  & = \frac{1}{(2\pi)^2} \frac{1}{\rho^2} \Big| \int_{\mathbb{R}^2} e^{i x\cdot \xi} \Delta_\xi \Big( \big(1- \chi(\rho |\xi|) \big)
  \frac{M(|\xi|)}{|\xi|} \Big) \dd \xi \Big| \\
  & \leq \frac{1}{2\pi} \frac{1}{\rho^2} \int_0^\infty \Big| \Big(\partial_r^2 + \frac{\partial_r}{r} \Big)
  \Big(\big(1-\chi(\rho r) \big) \frac{M(r)}{r} \Big) \Big| r\, \dd r \\
  & \leq \frac{C}{\rho^2} \int_{\rho^{-1}}^{2 \rho^{-1}}\Big( \rho^2 \frac{|M(r)|}{r}
  + \rho \frac{|r \, M'(r) - M(r)|}{r^2} \Big) r \dd r \\
  & \quad + \frac{C}{\rho^2} \int_{\rho^{-1}}^\infty \frac{| r^2 M''(r) + r \,M'(r) + M(r) |}{r^2} \dd r \\
  & \leq C \, \epsilon \Big( \int_{\rho^{-1}}^{2 \rho^{-1}} m'(r) \dd r
  + \frac{1}{\rho^2} \int_{\rho^{-1}}^\infty \frac{m'(r)}{r^2} \dd r  \Big) \\
  & \leq C\, \epsilon\, \Big( m(2 \rho^{-1}) 
  + \frac{2}{\rho^2} \int_{\rho^{-1}}^\infty \frac{m(r)}{r^3} \dd r \Big) \\
  & \leq C \, \epsilon \Big( m(\rho^{-1}) + \frac{m(\rho^{-1}) }{\rho} 
  \int_{\rho^{-1}}^\infty \frac{1}{r^2} \dd r \Big)\leq C \,\epsilon\, m(\rho^{-1}),
\end{align*}
where in the last line we have used \eqref{ineq:m-bound} and the fact that $r\mapsto r^{-1}m(r) $ 
is decreasing for $r>0$ sufficiently large.
Gathering the above estimates yields that for every $0<\rho \leq \mathrm{R}_0^{-1}$ and for any $\epsilon>0$,
\begin{align*}
  |\rho G'(\rho) + \alpha G(\rho)| \leq C + C m(\mathrm{R}_0) + C\,\epsilon\, m(2 \rho^{-1}).
\end{align*}
In combination with \eqref{eq:G-prop1} and \eqref{eq:m-grow-alp}, we get
\begin{align*}
  \lim_{\rho\rightarrow 0^+} \frac{|\rho G'(\rho) + \alpha G(\rho)|}{G(\rho)} \leq C\, \epsilon,
\end{align*}
which implies the estimate \eqref{eq:G-prop3}, as desired.

Finally, by using the property \eqref{eq:G-prop3}, we prove that $\lim\limits_{\rho\rightarrow 0^+} \frac{l^\alpha G(l\rho)}{G(\rho)} = 1$
for every $l>0$. In fact, we also assume $l>1$ without loss of generality, and from \eqref{eq:G-prop3} we deduce that for every
$\epsilon>0$ there exists a small constant $\rho_0>0$ such that
\begin{align*}
  |\rho G'(\rho) + \alpha G(\rho)| \leq \epsilon \, G(\rho),\quad \forall 0<\rho \leq \rho_0,
\end{align*}
then noting that
\begin{align*}
  (l\rho)^\alpha G(l\rho) - \rho^\alpha G(\rho) = \int_\rho^{l\rho} \big(\alpha s^{\alpha-1} G(s) + s^\alpha G'(s) \big) \dd s,
\end{align*}
we find that for every $0<\rho < \min\{\frac{\rho_0}{l}, \frac{\bar{c}_0}{l} \}$,
\begin{align*}
  \Big|\frac{l^\alpha G(l\rho)}{G(\rho)} -1 \Big|
  \leq \frac{1}{\rho^\alpha G(\rho)} \int_\rho^{l\rho} s^{\alpha-1} | s G'(s) + \alpha G(s)| \dd s 
  \leq \frac{\epsilon}{\rho^\alpha} \int_\rho^{l\rho} s^{\alpha-1} \dd s = \frac{l^\alpha -1}{\alpha} \epsilon,
\end{align*}
where in the second inequality we have used the property \eqref{eq:G-prop1c}. 
Hence, the desired equality in \eqref{eq:G-prop4} follows immediately.

Therefore, the wanted properties of $G(\rho)$ in \eqref{eq:G-prop1}-\eqref{eq:G-prop4}
have been verified in both cases and we complete the proof of Lemma \ref{lem:mD-cond}.
\end{proof}

\section{Local regularity for the generalized SQG patches}\label{sec:loc-reg}
In this section, we establish a local-in-time well-posedness theory for the generalized SQG equation \eqref{eq:geSQG}$\&$\eqref{eq:u-exp}. This theory has been previously developed in the works \cite{KYZ17,GanP21} for $\alpha$-SQG equation with $\alpha\in(0,\frac13)$. Our goal is to extend this theory to general kernels $G$.

Let us proceed by stating the following assumptions on $G$.

\begin{enumerate}[label=$(\mathbf{A}1)$,ref=$\mathbf{A}1$]
\item\label{A1} Assume that $G: \mathbb{R}_+\rightarrow \mathbb{R}$ is a continuously differentiable function, 
and there exists $\alpha\in (0,\frac{1}{3})$ when $\mathbf{D} = \mathbb{R}^2_+$ or $\alpha\in (0,1)$ when $\mathbf{D} =\mathbb{R}^2$, such that 
\begin{align}\label{conds:G-s}
  \textrm{on}\;\,(0, c_0),\qquad |G(\rho)|\leq C_0 \rho^{-\alpha},\quad |G'(\rho)|\leq C_0 \rho^{-1-\alpha},
  \quad
\end{align}
and
\begin{align}\label{conds:G-l}
	\textrm{on}\;\,[c_0,+\infty),\qquad |G(\rho)|\leq C_0,\quad |G'(\rho)|\leq C_0,
\end{align}
with some constants $c_0,C_0>0$. 
\end{enumerate}

In Section \ref{subsec:patch-sol}, we define the patch solution for the equation \eqref{eq:geSQG}$\&$\eqref{eq:u-exp}, 
and establish a useful relationship between the patch solution and the flow map $\Phi_t$. 
In Section \ref{subsec:contour}, we derive a contour dynamics equation associated with the patch solution for the equation 
\eqref{eq:geSQG}$\&$\eqref{eq:u-exp}. Then, in Section \ref{subsec:loc-reg}, by examining the $H^2$-solvability of the contour equation, 
we prove the local-in-time existence and uniqueness of the patch solution in $H^2$ for the equation \eqref{eq:geSQG}$\&$\eqref{eq:u-exp}, 
under the assumptions \eqref{A1} on $G$.

\subsection{Patch solution}\label{subsec:patch-sol}

We first introduce the definition of patch solution the equation \eqref{eq:geSQG}$\&$\eqref{eq:u-exp}.
\begin{definition}\label{def:patch-sol}
  Let $\mathbf{D} = \mathbb{R}^2$ or $\mathbb{R}^2_+$. Denote by $d_\mathcal{H}(\Gamma,\widetilde{\Gamma})$
the Hausdorff distance between two sets $\Gamma,\widetilde{\Gamma}\subseteq \mathbb{R}^2$,
and for a set $\Gamma\subseteq \mathbb{R}^2$, a vector field $v:\Gamma\rightarrow \mathbb{R}^2$, and $h\in\mathbb{R}$,
denote $X_v^h[\Gamma] \triangleq \{x+ h v(x) \,:\, x\in \Gamma \}$.
Let $a_1, \cdots, a_N\in \mathbb{R}\setminus \{0\}$, and let $D_1(t)$, $\cdots, D_N(t) \subseteq \mathbf{D}$ 
for every $t\in [0,T]$ be pairwise-disjoint bounded open sets where each boundary 
$\partial D_j(t)$ is a simple closed curve
and is also continuous in $t\in [0,T]$ with respect to $d_\mathcal{H}$. Denote $D(t) \triangleq \cup_{j=1}^N D_j(t)$.
Let 
\begin{align}\label{eq:the-patch-sol}
  \theta(x,t) = \sum_{j=1}^N a_j \mathbf{1}_{D_j(t)}(x) . 
\end{align}
and $u$ be defined from \eqref{eq:u-exp}.
If for each $t\in (0,T]$ we have 
\begin{align}\label{eq:def-patch}
  \lim_{h\rightarrow 0}  \frac{d_\mathcal{H}\big(\partial D(t+h), X_{u(\cdot,t)}^h[\partial D(t)] \big)}{h}
  =0,
\end{align}
then $\theta$ is called a patch solution to the equation \eqref{eq:geSQG}$\&$\eqref{eq:u-exp} on $\mathbf{D}\times [0,T]$.
If $\partial D_j(t)$ also belongs to $C^{n,\sigma}$ (resp. $H^n$) for each $j\in\{1,\cdots,N\}$ and $t\in [0,T]$,
then $\theta$ is a $C^{n,\sigma}$ (resp. $H^n$) patch solution to the equation 
\eqref{eq:geSQG}$\&$\eqref{eq:u-exp} on $\mathbf{D}\times [0,T]$. 
\end{definition}

\begin{remark}
Let $\Phi_t: \mathbf{D} \rightarrow \mathbb{R}^2$ be the flow map generated by the velocity $u$ which solves
\begin{align}\label{eq:flow_map}
  \frac{\dd }{\dd t} \Phi_t(x) = u(\Phi_t(x),t),\quad \Phi_t(x)|_{t=0} = x.
\end{align}
If $\theta$ given by \eqref{eq:the-patch-sol} satisfies that each patch
$D_j(t)$ has pairwise disjoint closure with its boundary 
$\partial D_j(t)$ has simple closed curve
and also $\partial D_j(t) = \Phi_t(\partial D_j(0))$ 
for each $j\in \{1,\cdots,N\}$ 
and $t\in [0,T]$, then by virtue of the H\"older continuity of $u$ 
(from Lemma \ref{lem:u-point-es} below) and compactness of 
$\partial D_j(t)$, we have that $\theta$ is a patch solution to the equation 
\eqref{eq:geSQG}$\&$\eqref{eq:u-exp} on $\mathbf{D}\times [0,T]$.
Moreover, if $\partial D(t)$ belongs to $C^1$ and $n(x,t)$ is the outer unit normal vector at $x\in \partial D(t)$,
then \eqref{eq:def-patch} is equivalent to the motion of $\partial D(t)$ with the normal velocity $u(x,t)\cdot n(x,t)$
at each $x\in \partial D(t)$.
\end{remark}

Next, motivated by \cite{KYZ17}, we build some important relationship of patch
solution to the flow map $\Phi_t$ for the equation 
\eqref{eq:geSQG}$\&$\eqref{eq:u-exp}, which will play a key role 
in the finite-time singularity part.
\begin{proposition}\label{prop:flow-map}
Let $\theta$ given by \eqref{eq:the-patch-sol} be the patch solution on $[0,T]$ 
satisfying the assumptions in Definition \ref{def:patch-sol}.
Let $x\in \overline{\mathbf{D}}\setminus \partial D(0)$ and $t_{\theta,x}\in [0,T)$ 
be the maximal time such that the solution of  \eqref{eq:flow_map} 
with $u$ defined by \eqref{eq:u-exp} 
satisfies $\Phi_t(x)\in \overline{\mathbf{D}}\setminus \partial D(t)$ for each $t\in [0,t_{\theta,x})$. 
\begin{enumerate}
\item[(i)] 
If the assumptions \eqref{A1} with $\alpha\in (0,\tfrac{1}{2})$ are assumed, $\sigma\in (\frac{\alpha}{1-\alpha},1]$, 
and $\theta$ is a $C^{1,\sigma}$ patch solution to the equation
\eqref{eq:geSQG}$\&$\eqref{eq:u-exp} on $[0,T)$, 
then $t_{\theta,x}=T$ for each $x\in \overline{\mathbf{D}}\setminus \partial D(0)$ and 
\begin{align*}
  \quad\Phi_t: \overline{\mathbf{D}} \setminus \partial D(0) \to \overline{\mathbf{D}}\setminus \partial D(t)
  \;\textrm{is a bijection for each $t\in [0,T)$}.
\end{align*}

\item[(ii)] If the assumptions \eqref{A1} with $\alpha\in (0,1)$ are assumed, 
$t_{\theta,x}=T$ for each $x\in \overline{\mathbf{D}}\setminus \partial D(0)$, 
and $\Phi_t:\overline{\mathbf{D}}\setminus \partial D(0) \to \overline{\mathbf{D}}\setminus \partial D(t)$ 
is a bijection for each $t\in [0,T)$, then $\theta$ is a patch solution to the equation \eqref{eq:geSQG}$\&$\eqref{eq:u-exp} on $[0,T)$. 
Moreover, $\Phi_t$ is measure preserving on $\overline{\mathbf{D}}\setminus \partial D(0)$ and it also maps each $D_j(0)$ to $D_j(t)$ 
as well as $\overline{\mathbf{D}}\setminus \overline{D(0)}$ onto $\overline{\mathbf{D}}\setminus \overline{D(t)}$. 
At last, for each $j\in\{1,\cdots,N\}$ and $t\in [0,T)$ we have 
\begin{align*}
  \Phi_t(\partial D_j(0))=\partial D_j(t),
\end{align*}
in the sense that any solution of \eqref{eq:flow_map} with $x\in \partial D_j(0)$ 
satisfies $\Phi_t(x)\in \partial D_j(t)$, and for each $y\in \partial D_j(t)$, 
there is $x\in \partial D_j(0)$ and a solution of \eqref{eq:flow_map} such that $\Phi_t(x)=y$. 
\end{enumerate}
\end{proposition}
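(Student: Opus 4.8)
The plan is to handle the two parts separately; part (i) carries the real content, part (ii) being essentially a soft consequence of the hypotheses together with the elliptic regularity of $u$ away from $\partial D$.

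\textbf{Part (i).} Fix $x\in\overline{\mathbf{D}}\setminus\partial D(0)$ and set $d(t)\triangleq\mathrm{dist}(\Phi_t(x),\partial D(t))$, which is positive and locally Lipschitz on $[0,t_{\theta,x})$. Since $\partial D(t)\in C^{1,\sigma}$, for $d(t)$ small the nearest point $\pi_t\triangleq\pi(\Phi_t(x))\in\partial D(t)$ is unique, with outer unit normal $n_t$ there; differentiating $d$ (a Danskin-type argument) and using that $\partial D(t)=\Phi_t(\partial D(0))$ evolves with normal velocity $u\cdot n$ yields
\[
  \tfrac{\dd}{\dd t}\,d(t)\;\ge\;-\big|\big(u(\Phi_t(x),t)-u(\pi_t,t)\big)\cdot n_t\big|.
\]
The crucial input is a sharp bound on this \emph{normal} velocity increment: using the Biot–Savart law \eqref{eq:u-exp}, the kernel bounds \eqref{conds:G-s}--\eqref{conds:G-l}, and — decisively — the exact cancellation of the flat (tangent-plane, resp. half-plane) contribution in the normal direction, so that only the $C^{1,\sigma}$-deviation of $\partial D(t)$ from its tangent line at $\pi_t$ survives, I would prove
\[
  \big|\big(u(z,t)-u(\pi(z),t)\big)\cdot n\big|\;\le\;C\,\omega\big(\mathrm{dist}(z,\partial D(t))\big)\qquad\text{for }z\text{ near }\partial D(t),
\]
with a modulus $\omega$ satisfying the Osgood divergence $\int_0^1\frac{\dd s}{\omega(s)}=+\infty$ precisely in the stated range $\alpha\in(0,\tfrac12)$, $\sigma>\frac{\alpha}{1-\alpha}$; on $\mathbb{R}^2_+$ the reflected kernel $K(x-\bar y)$ is treated identically, its singularity on $\partial\mathbb{R}^2_+$ being controlled through the odd symmetry that makes $u_2$ vanish there. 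Osgood's lemma applied to $\dot d\ge-C\omega(d)$ then forces $d(t)>0$ on every $[0,T']$ with $T'<T$, so $t_{\theta,x}=T$; moreover $d$ is bounded below on $[0,T']$ by a constant depending only on $d(0)$ and $T'$.

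\textbf{Part (i), bijectivity.} Since $\theta$ is constant on $\overline{\mathbf{D}}\setminus\partial D(t)$, the field $u$ is smooth there with $\|\nabla u\|_{L^\infty(\{\mathrm{dist}(\cdot,\partial D(t))\ge\delta\})}\le C(\delta)$; together with the lower bound on $d$ just obtained, each trajectory stays in such a region over compact time intervals. Injectivity then follows: two distinct points of one connected component of $\overline{\mathbf{D}}\setminus\partial D(0)$ have trajectories obeying $|\Phi_t(x_1)-\Phi_t(x_2)|\ge|x_1-x_2|e^{-Ct}$ by Grönwall, while trajectories from distinct components remain in distinct components of $\overline{\mathbf{D}}\setminus\partial D(t)$ (no crossing of $\partial D$, and $u$ globally continuous since $\alpha<1$). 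Surjectivity: given $z\in\overline{\mathbf{D}}\setminus\partial D(t_0)$, solve \eqref{eq:flow_map} backward from $(z,t_0)$; the same Osgood estimate run in reverse time keeps the backward trajectory off $\partial D(s)$, so it reaches at $s=0$ some $x\in\overline{\mathbf{D}}\setminus\partial D(0)$, and forward uniqueness (again from the off-boundary Lipschitz bound) gives $\Phi_{t_0}(x)=z$.

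\textbf{Part (ii).} Here the delicate off-boundary estimate is \emph{assumed}, so the argument is softer. From $u=\nabla^\perp\psi$ one has $\mathrm{div}\,u=0$, and $u$ is smooth on $\overline{\mathbf{D}}\setminus\partial D(t)$; hence on any set at positive distance from $\partial D(t)$ the flow $\Phi_t$ is a $C^1$ diffeomorphism with unit Jacobian, so the bijection $\Phi_t$ of the full-measure set $\overline{\mathbf{D}}\setminus\partial D(0)$ is measure preserving. Next, $D_j(0)$ being open and connected and $\Phi_t$ continuous and injective, $\Phi_t(D_j(0))$ is a connected open subset of $\overline{\mathbf{D}}\setminus\partial D(t)$, hence lies in a single one of the pieces $D_k(t)$ or in a component of $\overline{\mathbf{D}}\setminus\overline{D(t)}$; tracking from $t=0$ via continuity of $t\mapsto\Phi_t(x)$ and of the family $\partial D_k(t)$ in $d_\mathcal{H}$ shows $\Phi_t(D_j(0))\subseteq D_j(t)$, and surjectivity plus disjointness upgrade this to $\Phi_t(D_j(0))=D_j(t)$, while the exterior is sent onto the exterior. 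Finally, for $x\in\partial D_j(0)$ and any solution $\Phi_t(x)$ of \eqref{eq:flow_map}, a standard stability argument for ODEs with continuous right-hand side — approximating $x$ by points of $D_j(0)$ and of $\mathbf{D}\setminus\overline{D_j(0)}$ and passing to a limit in their (unique, interior) trajectories — places $\Phi_t(x)$ on $\partial D_j(t)$, with every point of $\partial D_j(t)$ so attained; the remark following Definition \ref{def:patch-sol} then turns $\partial D_j(t)=\Phi_t(\partial D_j(0))$ into the statement that $\theta$ is a patch solution.

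\textbf{Main obstacle.} The single hard point is the sharp normal-velocity estimate with Osgood modulus $\omega$: one splits the Biot–Savart integral into dyadic annuli centred at $\pi(z)$, exploits the exact vanishing of the half-plane/tangent-plane contribution in the normal direction so that only the $C^{1,\sigma}$ correction of $\partial D$ remains, and balances the resulting powers of $\mathrm{dist}(z,\partial D)$ against $|G|\lesssim\rho^{-\alpha}$ and $|G'|\lesssim\rho^{-1-\alpha}$; this is where — and only where — the thresholds $\alpha<\tfrac12$ and $\sigma>\frac{\alpha}{1-\alpha}$ are consumed, and the reflected half-plane kernel, singular on $\partial\mathbb{R}^2_+$, demands the same bookkeeping via the odd reflection structure.
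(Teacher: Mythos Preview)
Your plan is essentially correct and follows the same core idea as the paper --- the cancellation of the tangent-half-plane contribution in the \emph{normal} direction --- but the paper's execution differs from yours in two noteworthy ways.

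First, the paper does \emph{not} settle for an Osgood modulus: it proves the sharper \emph{linear} bound $\big|(u(x,t)-u(P,t))\cdot n_P\big|\le C\,|x-P|$, obtained from a pointwise directional gradient estimate $\big|(n_P\cdot\nabla)u(x,t)\cdot n_P\big|\le C\|\mathbf z\|_{W_\sigma}$. The trick is to subtract from the Biot--Savart integral the contribution of a fixed rectangle $S_P$ aligned with the tangent at $P$; by odd symmetry $n_P\cdot u_{S_P}\equiv 0$ along the normal ray, so after differentiation only the symmetric difference $D_k(t)\triangle S_P$ survives, and its smallness near $P$ (measured by the $C^{1,\sigma}$ regularity of $\partial D_k(t)$) closes the estimate. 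Gr\"onwall then gives $d_t(\Phi_t(x))\ge e^{-Bt}d_0(x)$. So your sentence ``Osgood divergence \emph{precisely} in the stated range'' slightly misframes the mechanism: the range $\alpha<\tfrac12$, $\sigma>\tfrac{\alpha}{1-\alpha}$ is what makes the \emph{constant} $C$ finite (through the integrability of $|x-z_j(\eta)|^{-\frac{1}{1+\sigma}-\alpha}$ over $\mathbb T$), not what makes an otherwise non-Osgood modulus become Osgood.

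Second, in the half-plane case the paper isolates a separate estimate you only allude to: $|u_2(x,t)|\le B\,x_2$, hence $\Phi_t^{(2)}(x)\ge e^{-Bt}x_2$. This is proved by writing $u_2$ as a contour integral over $\partial D_j(t)$ and invoking the pointwise inequality $|\partial_\eta z_j^{(2)}(\eta)|\le C\|z_j\|_{C^{1,\sigma}}^{1/(1+\sigma)}\big(z_j^{(2)}(\eta)\big)^{\sigma/(1+\sigma)}$ (from \cite{KYZ17}); the exponent bookkeeping here is where the threshold $\sigma>\tfrac{\alpha}{1-\alpha}$ is consumed most transparently. Your treatment of the reflected kernel (``controlled through the odd symmetry that makes $u_2$ vanish'') is morally right but hides this step.

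Your bijectivity argument and your Part~(ii) match the paper's (the latter is simply referred to \cite[Proposition~1.3(b)]{KYZ17}).
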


In the $\alpha$-SQG case, which corresponds to $G(\rho) = \alpha \mathrm{c}_\alpha \rho^{-\alpha}$, this result was proved by Proposition 1.3 in \cite{KYZ17}.  
Hence, Proposition \ref{prop:flow-map} can be viewed as a suitable generalization to general $G$ satisfying conditions 
\eqref{conds:G-s}-\eqref{conds:G-l}. For the proof of Proposition \ref{prop:flow-map}, one can see the section \ref{subsec:flow-map}.

\subsection{Contour dynamics equation}\label{subsec:contour}

Let $z_k(\zeta,t)$ ($k=1,\cdots,N$) with $\zeta\in \TT$ be a parametrization of the patch boundary $\partial D_k(t) $,
where each $z_k(\cdot,t)$ is running counterclockwise along $\partial D_k(t)$.
We assume that $z_k(\zeta,0)$ with each $k=1,\cdots,N$ belongs to $H^2(\mathbb{T})$ and is non-degenerate.

We mainly consider the case $\mathbf{D} =\mathbb{R}^2_+$. 
For $x\in \partial D_k(t)$, denote by $n(x,t)$ the outer unit normal vector of $D_k(t)$ at $x$.
From \eqref{eq:u-exp}, and using Gauss-Green's theorem and the following simple facts 
$u^\perp \cdot v = - u\cdot v^\perp$, $u\cdot \overline{v} = \overline{u}\cdot v$ and $\overline{n}^\perp = - \overline{n^\perp}$, 
we have
\begin{align}
  u_n(x,t) & = u(x,t) \cdot n(x,t) \notag\\
  & = - \sum_{j=1}^N a_j \int_{D_j(t)} \left(G(|x-y|)\frac{(x-y)\cdot n(x,t)^\perp }{|x-y|^2}
  - G(|x-\overline{y}|) \frac{(x-\overline{y})\cdot n(x,t)^\perp}{|x-\overline{y}|^2}\right) \dd y \notag\\
  & =  - \sum_{j=1}^N a_j \int_{D_j(t)} \bigg(G(|x-y|)\frac{(x-y)\cdot n(x,t)^\perp }{|x-y|^2}
  - G(|\overline{x}- y|) \frac{(\overline{x}- y)\cdot \overline{n(x,t)^\perp}}{|\overline{x}-y|^2}\bigg) \dd y \notag\\
  & = - \sum_{j=1}^N a_j \int_{D_j(t)} \nabla_y \cdot
  \Big( R(|x-y|) n(x,t)^\perp + R(|\overline{x} -y|) \overline{n(x,t)}^\perp \Big) \dd y \notag\\
  & = \sum_{j=1}^N a_j \int_{\partial D_j(t)} \Big( R(|x-y|) n(y,t)^\perp + R(|x - \overline{y}|)
  \overline{n(y,t)^\perp}\Big) \cdot n(x,t)\, \dd \sigma(y) \label{eq:velocity_normal},
\end{align}
where $R(\rho)$ is the primitive function of $-\frac{G(\rho)}{\rho}$ (recalling that $G(\cdot)$ is given by
\eqref{def:G}), i.e.
\begin{equation}\label{def:R}
  R(\rho)  \triangleq  \int_\rho^1 \frac{G(s)}{s} \dd s + C
  = \phi(\rho) - \phi(1) + C,
\end{equation}
with $\phi(\rho)$ given by \eqref{eq:phi-exp1} and $C\in \mathbb{R}$ some constant chosen for convenience.
For the 2D Euler equation, it was chosen as
$R(\rho) = - \frac{1}{2\pi} \log \rho$ in \cite[Eq. (8.62)]{MA02};
while for the $\alpha$-SQG equation, it was selected as $R(\rho)= \mathrm{c}_\alpha \rho^{-\alpha}$ in \cite[Eq. (2.5)]{KRYZ}
(the authors have dropped the positive constant $\mathrm{c}_\alpha$).

By using the parametrization of $\partial D_k(t)$, we find
\begin{align*}
  u_n(x,t) & = - \sum_{j=1}^N a_j \int_\mathbb{T} \Big( R\big(|z_k(\zeta,t) - z_j(\zeta-\eta,t)| \big) \partial_\zeta z_j(\zeta -\eta,t) \\
  & \qquad \qquad \qquad \quad + R\big(|z_k(\zeta,t) - \overline{z}_j(\zeta-\eta,t)|\big) \partial_\zeta \overline{z}_j(\zeta-\eta,t)\Big) \cdot n(x,t) \dd \eta.
\end{align*}
Since the evolution of patches is solely governed by the normal velocity and one can add any multiple of the tangent vector
$\partial_\zeta z_k(\zeta,t)$ to the velocity $u(x,t)$, 
we will write the \textit{contour dynamics equation} for $\partial D_k(t)$ as follows
\begin{equation}\label{eq:contour}
\begin{aligned}
  \partial_t z_k(\zeta,t) = &\sum_{j=1}^N a_j
  \int_{\mathbb{T}}	\big(\partial_\zeta z_k(\zeta,t)
  - \partial_\zeta z_j(\zeta - \eta, t)\big) R\big(| z_k(\zeta,t)  - z_j(\zeta-\eta, t)|\big)  \dd\eta \\
  & + \sum_{j=1}^N a_j
  \int_{\mathbb{T}}	\big(\partial_\zeta z_k(\zeta,t)
  - \partial_\zeta \overline{z}_j(\zeta - \eta, t)\big) R\big(| z_k(\zeta,t)  - \overline{z}_j(\zeta-\eta, t)|\big) \dd\eta.
\end{aligned}
\end{equation}
Since $z_k$ is periodic in $\TT$, the choice of $C$ in \eqref{def:R} does not change the equation \eqref{eq:contour}.

If in the whole space $\mathbf{D}=\mathbb{R}^2$, we analogously get the \textit{contour dynamics equation} as
\begin{equation}\label{eq:contourR2}
\begin{aligned}
  \partial_t z_k(\zeta,t) =  \sum_{j=1}^N a_j
  \int_{\mathbb{T}}	\big(\partial_\zeta z_k(\zeta,t)
  - \partial_\zeta z_j(\zeta - \eta, t)\big) R\big(| z_k(\zeta,t)  - z_j(\zeta-\eta, t)|\big)  \dd\eta .
\end{aligned}
\end{equation}

Moreover, we will use the contour parametrization depending only on time, i.e. $|\partial_\zeta z_j(\zeta,t)|^2 = A_j(t)$
for each $j=1,2,\cdots, N$, and exactly arguing as in \cite[Sec. 2]{GanP21},
we find that the \textit{contour dynamics equation} in $\mathbf{D}=\mathbb{R}^2_+$ now becomes
\begin{align}\label{eq:main-eq-GP}
  \partial_t z_k(\zeta,t) = \mathrm{NL}_k(\zeta,t) + \lambda_k(\zeta,t) \partial_{\zeta} z_k(\zeta,t),
\end{align}
where
\begin{equation}\label{eq:NL-k-GP}
\begin{aligned}
  \mathrm{NL}_k(\zeta,t) \triangleq &\sum_{j=1}^N a_j \int_{\mathbb{T}} \big(\partial_\zeta z_k(\zeta,t)
  - \partial_\zeta z_j(\zeta - \eta, t)\big) R\big(| z_k(\zeta,t)  - z_j(\zeta-\eta, t)|\big)  \dd\eta \\
  & + \sum_{j=1}^N a_j \int_{\mathbb{T}} \big(\partial_\zeta z_k(\zeta,t)
  - \partial_\zeta \overline{z}_j(\zeta - \eta, t)\big) R\big(| z_k(\zeta,t)  - \overline{z}_j(\zeta -\eta, t)|\big)  \dd\eta,
\end{aligned}
\end{equation}
and
\begin{align}\label{eq:lambda-def}
  \lambda_k(\zeta,t) \triangleq \frac{\zeta+\pi}{2\pi} \int_{\TT}\frac{\partial_\eta z_k(\eta,t)\cdot \partial_\eta
  \mathrm{NL}_k(\eta,t)}{A_k(t)}\dd \eta - \int_{-\pi}^\zeta \frac{\partial_\eta z_k(\eta,t)\cdot
  \partial_\eta \mathrm{NL}_k(\eta,t)}{A_k(t)} \dd \eta,
\end{align}
and $A_k(t)= |\partial_\zeta z_k(\zeta,t)|^2$, $k=1,2,\cdots,N$. If $\mathbf{D} = \mathbb{R}^2 $, 
the contour equation is also \eqref{eq:main-eq-GP} with keeping only the first integral in $\mathrm{NL}_k(\zeta,t)$.

Now we introduce some notations used in this whole section. Denote by $\mathbf{z} \triangleq (z_1,z_2,\cdots,z_N)$
and define the arc-chord term that
\begin{equation}\label{def:F[z_k]}
\begin{split}
  F[z_k](\zeta,\eta,t) \triangleq \,& \frac{|\eta|}{|z_k(\zeta,t) - z_k(\zeta-\eta,t)|},\quad \zeta,\eta\in \TT,\\
  F[z_k](\zeta,0,t) \triangleq \, & |\partial_\zeta z_k(\zeta,t)|^{-1};
\end{split}
\end{equation}
and also
\begin{align}\label{def:del-z}
  \delta[\mathbf{z}](t) \triangleq \min_{i\ne j} \min_{\zeta,\eta\in \TT}|z_i(\zeta,t) - z_j(\eta,t)|.
\end{align}
In the sequel, we denote by $\mathbf{W}$ the following set
\begin{align}\label{def:norm-whole}
  \mathbf{W} \triangleq \Big\{ \mathbf{z}=(z_1,\cdots,z_N)\, :\, 
  \lVert \mathbf{z}\rVert_{W} \triangleq \lVert \mathbf{z} \rVert^2_{H^2(\TT)}
  +  \sum_{k=1}^N \Big( \lVert F[z_k]\rVert_{L^\infty(\TT^2)} \Big) + \frac{1}{\delta[\mathbf{z}]} <\infty\Big\} .
\end{align}
In addition, we suppose that the initial data $\mathbf{z}|_{t=0} =\mathbf{z}_0$
satisfy that $\lVert \mathbf{z}_0\rVert_W < +\infty$.

\subsection{Local \texorpdfstring{$H^2$}{H2}-solvability for the contour equation}\label{subsec:loc-reg}

By applying the similar arguments as in \cite{GanP21,KYZ17}, we can prove the following local-in-time regularity result
for the patch solution of the equation \eqref{eq:geSQG}$\&$\eqref{eq:u-exp}.
\begin{theorem}\label{thm:loc-reg}
Let $\mathbf{D}=\mathbb{R}^2_+$. 
Suppose that $G(\rho)$ satisfies the assumptions \eqref{A1} with $\alpha\in (0,\frac{1}{3})$.
Then for each non self-intersect $H^2$ patch-like initial data $\theta_0$ given by \eqref{eq:patch-data},
there exists a unique local $H^2$ patch solution $\theta$ to the inviscid generalized SQG equation
\eqref{eq:geSQG}$\&$\eqref{eq:u-exp} associated with $\theta(\cdot,0)=\theta_0$.
\end{theorem}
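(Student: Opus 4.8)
The plan is to realize a patch solution $\theta(\cdot,t)=\sum_{j=1}^N a_j\mathbf{1}_{D_j(t)}$ through the contour dynamics system \eqref{eq:main-eq-GP}--\eqref{eq:lambda-def} for parametrizations $\mathbf{z}=(z_1,\dots,z_N)$ in the time-dependent gauge $|\partial_\zeta z_k(\zeta,t)|^2=A_k(t)$, and to run an energy method for this system in the space $\mathbf{W}$ of \eqref{def:norm-whole}. The preliminary step is to convert the hypotheses \eqref{A1} on $G$ into pointwise bounds on the primitive $R$ of $-G(\rho)/\rho$ entering \eqref{eq:NL-k-GP}. From \eqref{def:R} and \eqref{conds:G-s}--\eqref{conds:G-l} one obtains $|R(\rho)|\le C(1+\rho^{-\alpha})$, $|R'(\rho)|=|G(\rho)|/\rho\le C\rho^{-1-\alpha}$, $|R''(\rho)|\le C\rho^{-2-\alpha}$ near the origin, with $|R|,|R'|,|R''|$ at worst logarithmically bounded on bounded sets of $\rho$. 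Since each $z_k(\cdot,t)$ is bounded and satisfies $|z_k(\zeta,t)-z_k(\zeta-\eta,t)|\gtrsim|\eta|/\|\mathbf{z}\|_W$ by the arc-chord control built into $\mathbf{W}$, these bounds yield that $R\big(|z_k(\zeta)-z_j(\zeta-\eta)|\big)$ behaves, for nearby points on the same curve, like $|\eta|^{-\alpha}$ up to powers of $\|\mathbf{z}\|_W$; and the reflected kernels $R\big(|z_k(\zeta)-\overline{z}_j(\zeta-\eta)|\big)$ obey the same bounds, since the patches remaining in $\overline{\mathbb{R}^2_+}$ forces $|z_k(\zeta)-\overline{z}_j(\zeta-\eta)|\ge|z_k(\zeta)-z_j(\zeta-\eta)|$, which is $\gtrsim\delta[\mathbf{z}]$ when $j\ne k$ and $\gtrsim|\eta|/\|\mathbf{z}\|_W$ when $j=k$. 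Assembling these estimates is exactly the content of Lemma \ref{lem:G-lemma}, and is where the essential new input beyond the explicit $\alpha$-SQG kernel lies.

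With these kernel bounds available, I would establish the a priori differential inequality $\frac{\dd}{\dd t}\|\mathbf{z}(t)\|_W\le\Gamma(\|\mathbf{z}(t)\|_W)$ for a continuous increasing function $\Gamma$, following the scheme of \cite{GanP21,KYZ17}. The delicate estimate is that of $\frac{\dd}{\dd t}\|\partial_\zeta^2 z_k\|_{L^2}^2$: differentiating \eqref{eq:main-eq-GP} twice in $\zeta$ and pairing with $\partial_\zeta^2 z_k$, every contribution except the ones carrying all three $\zeta$-derivatives on the singular factor is closed directly by H\"older's inequality together with the $\eta$-decay bounds above. For the top-order contribution one integrates by parts in $\eta$ (and, where needed, once more in $\zeta$) and uses the gauge identity $\partial_\zeta|\partial_\zeta z_k|^2=0$ to remove the genuine leading-order piece, leaving bounded singular-integral operators whose $L^2$-boundedness is where the hypothesis $\alpha\in(0,\tfrac13)$ is used, exactly as in \cite{GanP21}. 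The contribution of $\lambda_k\partial_\zeta z_k$ is controlled via \eqref{eq:lambda-def} as in \cite{GanP21}, and the time derivatives of $\|F[z_k]\|_{L^\infty}$ and of $\delta[\mathbf{z}]^{-1}$ are bounded by inserting $\partial_t z_k=\mathrm{NL}_k+\lambda_k\partial_\zeta z_k$ into \eqref{def:F[z_k]} and \eqref{def:del-z} and using $\|\mathrm{NL}_k\|_{W^{1,\infty}}+\|\lambda_k\|_{W^{1,\infty}}\le\Gamma(\|\mathbf{z}\|_W)$. The differential inequality then produces a time $T=T(\|\mathbf{z}_0\|_W)>0$ on which any solution stays bounded in $\mathbf{W}$.

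Existence of an actual solution follows by regularization: mollifying the kernel (and, if necessary, the data) gives smooth approximate solutions obeying the same bound uniformly in the regularization parameter, from which Aubin--Lions compactness extracts a limit $\mathbf{z}\in C([0,T];H^2)\cap\mathrm{Lip}([0,T];H^1)$ solving \eqref{eq:main-eq-GP}, strong $H^2$-continuity in time being recovered by a Bona--Smith type argument. Uniqueness comes from a lower-order estimate $\frac{\dd}{\dd t}\|\mathbf{z}^{(1)}-\mathbf{z}^{(2)}\|_{L^2}^2\le C\big(\|\mathbf{z}^{(1)}\|_W,\|\mathbf{z}^{(2)}\|_W\big)\|\mathbf{z}^{(1)}-\mathbf{z}^{(2)}\|_{L^2}^2$ and Gr\"onwall. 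Finally, since the open sets $D_k(t)$ enclosed by $z_k(\cdot,t)$ keep simple closed $H^2$ boundaries and stay pairwise disjoint on $[0,T]$ (from the control of $F[z_k]$ and $\delta[\mathbf{z}]$), and \eqref{eq:contour} was derived precisely so that the normal speed of $\partial D_k(t)$ equals $u\cdot n$ with $u$ from \eqref{eq:u-exp}, the remark following Definition \ref{def:patch-sol} together with Proposition \ref{prop:flow-map}(ii) identifies $\theta(\cdot,t)=\sum_j a_j\mathbf{1}_{D_j(t)}$ as the unique local-in-time $H^2$ patch solution.

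The main obstacle is the preliminary step together with the top-order estimate: the whole argument rests on replacing the explicit $\alpha$-SQG kernel $R(\rho)=\mathrm{c}_\alpha\rho^{-\alpha}$ by an implicitly given $R$, so one must check, using only \eqref{A1} and \eqref{def:R}, that every singular kernel entering \eqref{eq:NL-k-GP}--\eqref{eq:lambda-def} --- including the reflected ones, whose singularity as a curve approaches $\partial\mathbb{R}^2_+$ is of the same order as that of the direct ones --- satisfies the same $\eta$-decay bounds (up to powers of $\|\mathbf{z}\|_W$) that drive the Gancedo--Patel energy estimates; this quantitative control is precisely Lemma \ref{lem:G-lemma}.
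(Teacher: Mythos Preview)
Your proposal is correct and follows essentially the same route as the paper: kernel bounds via Lemma~\ref{lem:G-lemma}, the a~priori inequality of Proposition~\ref{pro:ap-es}, existence by mollification and compactness, and an $L^2$ stability estimate (Proposition~\ref{pro:unique}) for uniqueness of the contour system. The one step you leave implicit is that uniqueness of \emph{patch} solutions (as opposed to contour solutions in a fixed gauge) requires showing that any $H^2$ patch solution reparametrizes to a solution of \eqref{eq:contour} lying in $C([0,T];\mathbf{W})$; the paper supplies this via Proposition~\ref{prop:reg-param}, which controls the $H^2$ regularity of the change of variables between the gauged system \eqref{eq:main-eq-GP}--\eqref{eq:lambda-def} and the ungauged equation \eqref{eq:contour}.
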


\begin{remark}\label{rem:locregR2}
If $\mathbf{D} = \mathbb{R}^2$, since the contour dynamics equation becomes \eqref{eq:contourR2} or  
\eqref{eq:main-eq-GP} with removing the second integral term in $\mathrm{NL}_k(\zeta,t)$, 
Theorem \ref{thm:loc-reg} will hold true under the assumptions \eqref{A1} with $0<\alpha <1$.
If additionally we consider the $H^3$ patch solution, 
analogously as \cite[Theorem 4]{GanP21}, we can show the local regularity result 
for the equation \eqref{eq:geSQG}$\&$\eqref{eq:u-exp} under assumptions
\eqref{A1} with $1\leq \alpha <2$.
\end{remark}


The following simple properties of the function $R(\cdot)$ defined by \eqref{def:R} 
appearing in \eqref{eq:NL-k-GP} will be repeatedly used in this section.
\begin{lemma}\label{lem:G-lemma}
If $G(\rho)$ is a continuously differentiable function satisfying \eqref{conds:G-s} and \eqref{conds:G-l} 
with $\alpha>0$, then we have that for every $r>0$,
\begin{align}\label{es:R-deri}
  |R(\rho)|\leq C \max\{r^{-\alpha}, |\log \rho| \} ,\quad
  R'(\rho) \leq C \max\big\{ \rho^{-1-\alpha}, \rho^{-1} \big\},
\end{align}
and
\begin{align}\label{es:R-deri2}
  |R''(\rho)| \leq C \max\{\rho^{-2-\alpha}, \rho^{-1} \} ,
\end{align}
where $C>0$ is a constant depending only on $G$.
\end{lemma}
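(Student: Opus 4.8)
The plan is to obtain all three bounds directly from the definition
$R(\rho)=\int_\rho^1 \frac{G(s)}{s}\,\dd s + C$
by splitting the integration range at the threshold $c_0$ from \eqref{A1} and using the two regimes of the pointwise bounds \eqref{conds:G-s}--\eqref{conds:G-l} separately. Since $R'(\rho)=-\frac{G(\rho)}{\rho}$ and $R''(\rho)=-\frac{G'(\rho)}{\rho}+\frac{G(\rho)}{\rho^2}$, the estimates for $R'$ and $R''$ are immediate consequences of \eqref{conds:G-s}--\eqref{conds:G-l}: for $\rho\in(0,c_0)$ one has $|R'(\rho)|=\frac{|G(\rho)|}{\rho}\le C_0\rho^{-1-\alpha}$ and $|R''(\rho)|\le C_0\rho^{-1}\cdot\rho^{-1-\alpha}+C_0\rho^{-\alpha}\rho^{-2}\le C\rho^{-2-\alpha}$, while for $\rho\ge c_0$ one has $|R'(\rho)|\le C_0/c_0$ and $|R''(\rho)|\le C_0/c_0^2 + C_0/c_0^2$; combining the two ranges gives exactly the $\max\{\cdot,\cdot\}$ form claimed in \eqref{es:R-deri} (second inequality) and \eqref{es:R-deri2}. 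Here I note the harmless typo that the first displayed bound in \eqref{es:R-deri} should read $|R(\rho)|\le C\max\{\rho^{-\alpha},|\log\rho|\}$.

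For the bound on $R(\rho)$ itself I would integrate. When $\rho\ge c_0$ (so $\rho$ could also exceed $1$, in which case the integral is over $[1,\rho]$ with a sign), $|R(\rho)-C|=\big|\int_\rho^1\frac{G(s)}{s}\,\dd s\big|\le C_0\big|\int_\rho^1\frac{\dd s}{s}\big|=C_0|\log\rho|$, using \eqref{conds:G-l} on the whole range $[\min(\rho,1),\max(\rho,1)]\subseteq[c_0,\infty)$ when $\rho\ge c_0$; absorbing the constant $C$ this gives $|R(\rho)|\le C(1+|\log\rho|)\le C\max\{\rho^{-\alpha},|\log\rho|\}$ after also noting $|\log\rho|$ is bounded below away from $0$ only away from $\rho=1$, so one keeps the additive constant inside $C$. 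When $\rho\in(0,c_0)$ I split $\int_\rho^1=\int_\rho^{c_0}+\int_{c_0}^1$; the second piece is a fixed constant bounded by \eqref{conds:G-l}, and the first piece obeys $\big|\int_\rho^{c_0}\frac{G(s)}{s}\,\dd s\big|\le C_0\int_\rho^{c_0}s^{-1-\alpha}\,\dd s\le \frac{C_0}{\alpha}\rho^{-\alpha}$ by \eqref{conds:G-s}. Adding the constant $C$ and the fixed piece, and bounding $1\le \rho^{-\alpha}$ on $(0,c_0)$ (enlarging the constant if $c_0>1$), yields $|R(\rho)|\le C\rho^{-\alpha}\le C\max\{\rho^{-\alpha},|\log\rho|\}$.

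There is no serious obstacle here; the lemma is a routine bookkeeping exercise in matching the two regimes in \eqref{A1} to the three derivatives of $R$. The only point requiring mild care is the treatment of $\rho$ near $1$ and $\rho$ large in the bound for $R$: one must make sure the constant $C$ in \eqref{es:R-deri} can absorb both the free constant $C$ in \eqref{def:R} and the additive $1$ coming from $\int_{c_0}^1$, which is legitimate because on any region where $|\log\rho|$ fails to dominate $1$ (a neighborhood of $\rho=1$) we have $\rho\asymp 1$ and $R$ is manifestly bounded there. Hence the final writeup is essentially: (1) record $R'=-G/\rho$, $R''=-G'/\rho+G/\rho^2$; (2) read off \eqref{es:R-deri}$_2$ and \eqref{es:R-deri2} from \eqref{conds:G-s}--\eqref{conds:G-l} on each of the two ranges; (3) integrate, splitting at $c_0$, to get \eqref{es:R-deri}$_1$.
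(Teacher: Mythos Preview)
Your approach is essentially identical to the paper's, which also splits into the two regimes $\rho\in(0,c_0)$ and $\rho\ge c_0$ and reads off the bounds directly from \eqref{conds:G-s}--\eqref{conds:G-l}. One small correction: for $\rho\ge c_0$ you should write $|R'(\rho)|=|G(\rho)|/\rho\le C_0/\rho$ (not $C_0/c_0$) and likewise $|R''(\rho)|\le C_0/\rho+C_0/\rho^2\le C/\rho$, since a mere constant bound does not recover the $\rho^{-1}$ decay in the $\max$ for large $\rho$.
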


\begin{proof}
Recalling the formula of $R(\rho)$ given by \eqref{def:R}, and under the assumptions \eqref{conds:G-s}-\eqref{conds:G-l},
it is straightforward to check that for every $\rho\in (0,c_0)$,
\begin{equation*}
  |R(\rho)|\leq C \rho^{-\alpha}, \quad |R'(\rho)|\leq C \rho^{-1-\alpha},\quad |R''(\rho)|\leq C \rho^{-2-\alpha},
\end{equation*}
and for every $r\in [c_0,+\infty)$,
\begin{equation*}
  |R(\rho)|\leq C |\log \rho|,\quad |R'(\rho)|\leq  \frac{C}{\rho},\quad |R''(\rho)|\leq \frac{C}{\rho}.
\end{equation*}
Combining the above estimates leads to the desired inequalities \eqref{es:R-deri}-\eqref{es:R-deri2}.
\end{proof}

The next proposition, whose proof is placed in the appendix section, 
is concerned with the \textit{a priori} estimates of $\mathbf{z}=(z_1,\cdots,z_N)$
solving the contour dynamics equation \eqref{eq:main-eq-GP}-\eqref{eq:lambda-def}.
\begin{proposition}\label{pro:ap-es}
Under the assumptions of Theorem \ref{thm:loc-reg}, then there exists a polynomial function $\mathcal{P}(\cdot)$ such that
\begin{align}\label{ineq:main-estimate}
  \frac{\dd }{\dd t}\lVert \mathbf{z}\rVert_{W}\leq \mathcal{P}(\lVert \mathbf{z}\rVert_{W}).
\end{align}
\end{proposition}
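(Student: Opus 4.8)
The plan is to differentiate each of the three pieces of $\lVert\mathbf z\rVert_W = \lVert\mathbf z\rVert_{H^2}^2 + \sum_k \lVert F[z_k]\rVert_{L^\infty} + \delta[\mathbf z]^{-1}$ along the flow \eqref{eq:main-eq-GP}--\eqref{eq:lambda-def} and bound each time derivative by a polynomial in $\lVert\mathbf z\rVert_W$, using Lemma \ref{lem:G-lemma} to control all the implicit $R$-terms exactly as the explicit $\mathrm c_\alpha\rho^{-\alpha}$-terms are controlled in \cite{GanP21,KYZ17}. The point is that the singular behavior $|R(\rho)|\lesssim \rho^{-\alpha}$, $|R'(\rho)|\lesssim\rho^{-1-\alpha}$, $|R''(\rho)|\lesssim\rho^{-2-\alpha}$ near $\rho=0$ is precisely that of the $\alpha$-SQG kernel, while for $\rho\geq c_0$ one only picks up logarithmic/bounded contributions that are harmless after using the arc-chord bound and $\delta[\mathbf z]^{-1}$ to keep arguments away from zero; hence the Gancedo--Patel machinery transfers almost verbatim.

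First I would record the bookkeeping lemmas: from $\lVert F[z_k]\rVert_{L^\infty}<\infty$ one gets the lower arc-chord bound $|z_k(\zeta,t)-z_k(\zeta-\eta,t)|\geq c\,|\eta|$ (with $|\eta|$ meaning the distance on $\TT$), and from $\delta[\mathbf z]^{-1}<\infty$ one gets $|z_k(\zeta)-z_j(\eta)|\geq\delta[\mathbf z]$ for $k\neq j$ and, crucially on the half-plane, $|z_k(\zeta)-\overline z_j(\eta)|\geq 2\,\mathrm{dist}(z_k,\partial\RR^2_+)$ is controlled by $F$-type quantities as well (this "reflected" separation is the one genuinely new feature versus the whole space, but it is handled in \cite{GanP21} and the kernel estimates of Lemma \ref{lem:G-lemma} are all one needs). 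With these in hand, the estimate of $\frac{\dd}{\dd t}\lVert\mathbf z\rVert_{H^2}^2$ splits $\partial_\zeta z_k = \mathrm{NL}_k + \lambda_k\partial_\zeta z_k$ into the nonlinear contour term and the tangential reparametrization term: for the former one differentiates \eqref{eq:NL-k-GP} twice in $\zeta$, and the top-order term $\int_\TT (\partial_\zeta^2 z_k(\zeta)-\partial_\zeta^2 z_j(\zeta-\eta))\,R'(|\cdots|)\,\partial_\zeta(\cdots)\,\dd\eta$ is where one must extract a commutator/cancellation — exactly the mechanism by which $\alpha<1/3$ enters, since the worst kernel power after differentiating twice is $\rho^{-2-\alpha}$ against a double difference of size $|\eta|^2$, leaving $|\eta|^{-\alpha}$, integrable in one dimension for $\alpha<1$; the restriction $\alpha<1/3$ comes from the $H^2$ (rather than $H^3$) setting, exactly as in \cite{GanP21}. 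For $\lambda_k$ I would differentiate \eqref{eq:lambda-def}, using $\lVert\partial_\eta\mathrm{NL}_k\rVert_{H^1}\lesssim\mathcal P(\lVert\mathbf z\rVert_W)$ and $A_k(t)^{-1}=\lVert F[z_k](\cdot,0,t)\rVert_{L^\infty}^2\lesssim\lVert\mathbf z\rVert_W^2$.

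For $\frac{\dd}{\dd t}\lVert F[z_k]\rVert_{L^\infty}$ I would compute $\partial_t F[z_k](\zeta,\eta,t) = -\,|\eta|\,\frac{(z_k(\zeta)-z_k(\zeta-\eta))\cdot(\partial_t z_k(\zeta)-\partial_t z_k(\zeta-\eta))}{|z_k(\zeta)-z_k(\zeta-\eta)|^3}$, bound the numerator by $\lVert\partial_\zeta\partial_t z_k\rVert_{L^\infty}|\eta| \lesssim \lVert\partial_t z_k\rVert_{H^2}|\eta|$, and absorb the cube in the denominator using $F[z_k]^3$, giving $\frac{\dd}{\dd t}\lVert F[z_k]\rVert_{L^\infty}\lesssim \lVert F[z_k]\rVert_{L^\infty}^2\,\lVert\partial_t z_k\rVert_{H^2}\lesssim\mathcal P(\lVert\mathbf z\rVert_W)$ once $\lVert\partial_t z_k\rVert_{H^2}\lesssim\mathcal P(\lVert\mathbf z\rVert_W)$ is established from \eqref{eq:main-eq-GP}. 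Similarly $\frac{\dd}{\dd t}\delta[\mathbf z]^{-1}\lesssim \delta[\mathbf z]^{-2}\,\sup_k\lVert\partial_t z_k\rVert_{L^\infty}\lesssim\mathcal P(\lVert\mathbf z\rVert_W)$, using that the minimizing pair in \eqref{def:del-z} is attained by compactness and differentiating the distance there. Summing the three bounds yields \eqref{ineq:main-estimate}. The main obstacle is the top-order $H^2$ estimate of $\mathrm{NL}_k$: one must organize the double $\zeta$-derivative so that the most singular piece is a true commutator $\int_\TT\big(R'(|z_k(\zeta)-z_k(\zeta-\eta)|)-R'(|\partial_\zeta z_k(\zeta)||\eta|)\big)(\cdots)\,\dd\eta$ plus a principal-value term that is bounded on $H^2$ by the $\alpha<1/3$ Sobolev embedding and the boundedness of the relevant singular integral operator — this is the heart of \cite{GanP21} and the place where the explicit form of $G$ is replaced by the qualitative bounds \eqref{es:R-deri}--\eqref{es:R-deri2}; everything else, including all half-plane "reflected" contributions $R(|z_k(\zeta)-\overline z_j(\zeta-\eta)|)$, is strictly less singular because the arguments stay bounded below, so those terms only contribute lower-order polynomial dependence. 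I would relegate the full computation to Appendix \ref{sec:appendix}, indicating precisely which lemmas of \cite{GanP21} are invoked with $\rho^{-\alpha}$, $\rho^{-1-\alpha}$, $\rho^{-2-\alpha}$ replaced by the right-hand sides of Lemma \ref{lem:G-lemma}.
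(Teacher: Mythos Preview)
Your proposal contains a genuine gap in the treatment of the self-reflected term $\mathrm{N}_k$ (the $j=k$ piece of the second integral in \eqref{eq:NL-k-GP}). You write that all reflected contributions are ``strictly less singular because the arguments stay bounded below,'' but this misidentifies the difficulty. It is true that $|Z_{k,k}|=|z_k(\zeta)-\overline z_k(\zeta-\eta)|\geq |z_k(\zeta)-z_k(\zeta-\eta)|\geq |\eta|/\lVert F[z_k]\rVert_{L^\infty}$, so the \emph{argument} of $R$ behaves no worse than in the unreflected case. The problem is the \emph{derivative}: whereas $|\partial_\zeta(z_k(\zeta)-z_k(\zeta-\eta))|\lesssim|\eta|^{1/2}$ by $H^2\hookrightarrow C^{1,1/2}$, the reflected increment $\partial_\zeta Z_{k,k}=\partial_\zeta z_k(\zeta)-\partial_\zeta\overline z_k(\zeta-\eta)$ does \emph{not} vanish as $\eta\to0$; at $\eta=0$ it equals $(0,2\partial_\zeta z_k^{(2)}(\zeta))$, which is $O(1)$ wherever the curve meets $\partial\RR^2_+$ transversally. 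Since the $\dot H^2$ energy produces terms like $\int_\TT\int_\TT|\partial_\zeta Z_{k,k}|^5\,|Z_{k,k}|^{-3-\alpha}\,\dd\eta\,\dd\zeta$ (the pieces $I_{32},I_{33}$ in the paper), one cannot simply trade $|\partial_\zeta Z_{k,k}|$ for a power of $|\eta|$. The paper's fix is the half-plane inequality \eqref{eq:fact-half-plane} applied to $g=z_k^{(2)}\geq0$, giving $|\partial_\zeta z_k^{(2)}|\lesssim(z_k^{(2)})^{1/3}$, which combined with \eqref{eq:Zkk-fact}--\eqref{eq:Zkk-fact2} yields the crucial bound $|\partial_\zeta Z_{k,k}|\lesssim|Z_{k,k}|^{1/3}$ (Eq.~\eqref{eq:diff-pointwise-es}); the resulting integrals are then closed via the Gancedo--Patel lemma \eqref{eq:f-fact} on $\int_\TT|g'|^4/g^\beta\,\dd\mu$. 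It is precisely this exponent $\tfrac13$ that forces $\alpha<\tfrac13$ on the half-plane --- not the $H^2$ versus $H^3$ distinction you cite (compare Remark~\ref{rem:locregR2}: on $\RR^2$ the $H^2$ theory works for all $\alpha<1$).

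A secondary issue: your shortcut $\tfrac{\dd}{\dd t}\lVert F[z_k]\rVert_{L^\infty}\lesssim\lVert F[z_k]\rVert_{L^\infty}^2\lVert\partial_t z_k\rVert_{H^2}$ presupposes a bound on $\lVert\mathrm{NL}_k\rVert_{H^2}$, but $\partial_\zeta^2\mathrm{NL}_k$ contains $\partial_\zeta^3 z_k$ and is only controlled \emph{after} pairing with $\partial_\zeta^2 z_k$ and symmetrizing (see \eqref{eq:I1-symm}--\eqref{eq:GP-I_1}). The paper does not attempt to bound $\lVert\partial_t z_k\rVert_{H^2}$; instead it decomposes $\partial_t z_k(\zeta)-\partial_t z_k(\zeta-\eta)$ into seven explicit pieces $N_1,\dots,N_7$ and estimates each directly, again relying on \eqref{eq:diff-pointwise-es} for the reflected contributions.
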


The following result is about the $H^2$-regularity for the change of variables between two contour parametrizations.
For the detailed proof, one can see the section \ref{subsec:reg-param} below. 
\begin{proposition}\label{prop:reg-param}
Suppose that $\mathbf{z} = (z_1,\cdots,z_N)$ is a solution of contour dynamics equation
$\mathrm{\eqref{eq:main-eq-GP}\text{-}\eqref{eq:lambda-def}}$
with $\mathbf{z}\in C([0,T];\mathbf{W})$. Let $\mathbf{y}=(y_1,\cdots,y_N) \in C([0,T]; H^2)$ be
a contour parametrization satisfying \eqref{eq:contour} and for every $k\in\{1,2,\cdots,N\}$,
\begin{align}\label{eq:zkyk-exp}
  z_k(\zeta,t) = y_k\big(\phi_k(\zeta,t),t\big).
\end{align}
Then the change of parametrization $\phi_k(\zeta,t)-\zeta\in C\big([0,T];H^2(\TT)\big)$.
\end{proposition}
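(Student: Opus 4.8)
The plan is to prove the assertion time-slice by time-slice and then add continuity in $t$; the fixed-time part is a purely static statement about reparametrizations of $H^2$ Jordan curves, and running it this way avoids the derivative loss one would incur through a transport-equation argument, as I explain below. \textbf{Step 1 (regularity at a fixed time).} Fix $t\in[0,T]$ and put $\Gamma:=\partial D_k(t)$. Since $\mathbf{z}(\cdot,t)\in\mathbf{W}$, the map $z_k(\cdot,t)$ is a regular, non-self-intersecting $H^2$ parametrization of the simple closed curve $\Gamma$, while $y_k(\cdot,t)\in H^2$ is a regular contour parametrization of the same $\Gamma$; in particular $y_k(\cdot,t)\colon\TT\to\Gamma$ is a homeomorphism, so $\phi_k(\cdot,t)=\big(y_k(\cdot,t)\big)^{-1}\circ z_k(\cdot,t)\colon\TT\to\TT$ is a well-defined orientation-preserving homeomorphism (both parametrizations run counterclockwise), and $\phi_k(\cdot,t)-\zeta$ makes sense in $C(\TT)$. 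To upgrade its regularity I would pass to arc length: let $s_z(\zeta):=\int_0^\zeta|\partial_\tau z_k(\tau,t)|\,\dd\tau$ and $s_y(\sigma):=\int_0^\sigma|\partial_\tau y_k(\tau,t)|\,\dd\tau$ be the (lifted) arc-length functions of the two parametrizations, so that $\phi_k(\cdot,t)$ equals $s_y^{-1}(s_z(\cdot)+\mathrm{const})$, the constant accounting for the base points. Using $H^2(\TT)\hookrightarrow C^1(\TT)$ and the regularity of $z_k(\cdot,t),y_k(\cdot,t)$, one has $\tfrac{\dd}{\dd\zeta}|\partial_\zeta z_k|=\tfrac{\partial_\zeta z_k\cdot\partial_\zeta^2 z_k}{|\partial_\zeta z_k|}\in L^2(\TT)$ and likewise for $y_k$, hence $s_z'=|\partial_\zeta z_k(\cdot,t)|$ and $s_y'=|\partial_\sigma y_k(\cdot,t)|$ lie in $H^1(\TT)$ and are bounded below by a positive constant. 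Consequently $s_z$ and $s_y$ are $H^2$-diffeomorphisms of $\TT$ onto a circle of length $L(t)$, their inverses are again $H^2$ (for $g$ with $g-\mathrm{id}\in H^2$ and $g'\geq c>0$ one has $(g^{-1})'=1/(g'\circ g^{-1})\in H^1$ since $g^{-1}$ is bi-Lipschitz), and the composition of two such diffeomorphisms is again $H^2$ — via $(f\circ g)''=(f''\circ g)(g')^2+(f'\circ g)g''$, the bi-Lipschitz change of variables (keeping $f''\circ g\in L^2$), and the algebra property $H^1(\TT)\cdot H^1(\TT)\subset H^1(\TT)$ in one dimension. Therefore $\phi_k(\cdot,t)-\zeta\in H^2(\TT)$ for every $t\in[0,T]$.

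\textbf{Step 2 (continuity in time).} The construction of $\phi_k(\cdot,t)$ from the pair $\big(z_k(\cdot,t),y_k(\cdot,t)\big)$ — forming $|\partial z_k|,|\partial y_k|$, integrating, inverting $s_y$, and composing — is a continuous map in the $H^2$-topology, \emph{uniformly} over curves whose $H^2$-norms are bounded and whose arc-length densities are bounded below by a fixed positive constant. Such uniform bounds hold on $[0,T]$: from $\mathbf{z}\in C([0,T];\mathbf{W})$ we get $\sup_t\lVert z_k(\cdot,t)\rVert_{H^2}<\infty$ and, because $F[z_k](\zeta,0,t)=|\partial_\zeta z_k(\zeta,t)|^{-1}$, also $\inf_{\zeta,t}|\partial_\zeta z_k(\zeta,t)|\geq\big(\sup_t\lVert F[z_k](\cdot,\cdot,t)\rVert_{L^\infty}\big)^{-1}>0$; from $\mathbf{y}\in C([0,T];H^2)$ we get $\sup_t\lVert y_k(\cdot,t)\rVert_{H^2}<\infty$, and since $y_k(\cdot,t)$ is a regular contour parametrization for each $t$, the map $t\mapsto\min_\sigma|\partial_\sigma y_k(\sigma,t)|$ is continuous and positive on the compact set $[0,T]$, hence bounded below. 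Combining these uniform bounds with the continuity of $t\mapsto z_k(\cdot,t)$ and $t\mapsto y_k(\cdot,t)$ in $H^2$, I would conclude $\phi_k-\zeta\in C\big([0,T];H^2(\TT)\big)$.

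\textbf{Main obstacle.} The only real content is the one-dimensional Sobolev calculus invoked above — that composition and inversion of $H^2$ self-maps with uniformly two-sided bounded derivatives preserve $H^2$ and are continuous in the $H^2$-topology — together with the bookkeeping needed to keep all non-degeneracy constants uniform in $t$ on $[0,T]$; this is routine but must be carried out carefully. I would also stress why one should \emph{not} try to prove the statement dynamically: differentiating $z_k(\zeta,t)=y_k(\phi_k(\zeta,t),t)$ in $t$ and using that the nonlocal terms of \eqref{eq:main-eq-GP} and \eqref{eq:contour} differ only by a tangential vector field produces a transport equation $\partial_t\phi_k-b_k(\zeta,t)\,\partial_\zeta\phi_k=c_k(\zeta,t)$ for $\phi_k$ along $\partial D_k(t)$; but $c_k$ is built out of $\mathrm{NL}_k[\mathbf{y}]$, and because $|\partial_\sigma y_k|$ is merely $H^1$ while the kernel $R$ from \eqref{def:R} is singular at the origin (see Lemma \ref{lem:G-lemma}), $c_k$ is in general only $H^1$ in $\zeta$. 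Thus a direct transport argument yields merely $\phi_k-\zeta\in C([0,T];H^1)$ and still has to be completed by Step 1, which is the essential ingredient.
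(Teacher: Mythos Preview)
Your static, fixed-time argument via arc length is correct and genuinely different from the paper's proof. The paper proceeds \emph{dynamically}: it differentiates $z_k(\zeta,t)=y_k(\phi_k(\zeta,t),t)$, derives an evolution equation for $\phi_k$, and closes an $H^2$ energy estimate with Gr\"onwall. Your route is more elementary — it needs only one-dimensional Sobolev calculus and the observation that $s_z$ is affine (since $|\partial_\zeta z_k|^2=A_k(t)$) while $s_y'=|\partial_\sigma y_k|$ lies in $H^1$ with a positive lower bound; this last point is implicit in the hypothesis that $\mathbf{y}$ is a contour parametrization for which \eqref{eq:contour} makes sense.

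Where you go astray is in your ``main obstacle'' paragraph. The transport equation for $\phi_k$ is \emph{not} of the form $\partial_t\phi_k-b_k\partial_\zeta\phi_k=c_k$ with a source $c_k$ built from $\mathrm{NL}_k[\mathbf{y}]$. After the change of variables $\nu=\phi_k(\zeta)-\phi_j(\zeta-\eta)$ in the $\mathbf{y}$-equation, all appearances of $\mathbf{y}$ cancel and one obtains the closed, linear, nonlocal equation
\[
\partial_t\phi_k(\zeta)=\sum_{j}a_j\int_{\TT}\bigl(\partial_\zeta\phi_k(\zeta)-\partial_\zeta\phi_j(\zeta-\eta)\bigr)\,R\bigl(|z_k(\zeta)-z_j(\zeta-\eta)|\bigr)\,\dd\eta+\text{(conj.)}+\lambda_k(\zeta)\,\partial_\zeta\phi_k(\zeta),
\]
in which the kernel is expressed through $\mathbf{z}$, not $\mathbf{y}$. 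Because $\mathbf{z}\in C([0,T];\mathbf{W})$ has constant speed and controlled arc-chord, this equation has exactly the same structure as the contour equation itself, and the $H^2$ energy estimate closes just as in Proposition~\ref{pro:ap-es}. So the dynamic approach does yield $\phi_k-\zeta\in C([0,T];H^2)$ directly; your assertion that it is limited to $H^1$ is incorrect.

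In short: your proof is valid and arguably cleaner for the proposition as stated, but the paper's dynamical argument also works, and its advantage is that it produces quantitative bounds on $\lVert\phi_k-\zeta\rVert_{H^2}$ in terms of $\lVert\mathbf{z}\rVert_W$ alone, without ever invoking non-degeneracy of $\mathbf{y}$.
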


The uniqueness result of the contour dynamics equation \eqref{eq:contour} in $C([0,T];\mathbf{W})$ 
is exhibited as follows.
\begin{proposition}\label{pro:unique}
Let $G(\rho)$ be a continuously differentiable function satisfying \eqref{conds:G-s} and \eqref{conds:G-l}.
Suppose that $\{z_k(\zeta,t)\}_{k=1,2,\cdots,N}$ and $\{y_k(\zeta,t)\}_{k=1,2,\cdots,N}$
are both solutions to the contour dynamics equation \eqref{eq:contour} in $C([0,T];\mathbf{W})$ with initial data
$z_k(\zeta,0) = y_k(\zeta,0)$. Define $w_k(\zeta,t)\triangleq z_k(\zeta,t) - y_k(\zeta,t)$.
Then we have
\begin{align*}
  \frac{\dd }{\dd t}\Big(\sum_{k=1}^N \lVert w_k \rVert^2_{L^2}\Big)
  \leq C \sum_{k=1}^N \lVert w_k \rVert^2_{L^2},
\end{align*}
where the constant $C>0$ depends continuously on $\delta[\mathbf{z}]^{-1}$, $\delta[\mathbf{y}]^{-1}$,
$\lVert (F[z_k],F[y_k])\rVert_{L^\infty}$ and $\lVert (\mathbf{z},\mathbf{y} )\rVert_{H^2}$.
The above inequality together with Gronwall's inequality 
provides the desired uniqueness $z_k\equiv y_k$ on $\mathbb{T} \times [0,T]$ 
for every $k\in \{1,\cdots,N\}$.
\end{proposition}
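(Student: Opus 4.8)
The plan is to run an $L^2$-stability (weak-norm) estimate directly on the difference of the two contour solutions, following the scheme used for the $\alpha$-SQG patch equation in \cite{GanP21,KYZ17}, with the explicit kernel $\mathrm{c}_\alpha\rho^{-\alpha}$ replaced by the implicit primitive $R(\rho)$ controlled by Lemma \ref{lem:G-lemma}. Write $w_k = z_k - y_k$ and differentiate $\sum_k \|w_k\|_{L^2}^2$ in time. Using the contour equation \eqref{eq:contour} for both $\mathbf{z}$ and $\mathbf{y}$, each term $\partial_t z_k - \partial_t y_k$ is a sum over $j$ of integrals of the form $\int_\TT \big(\partial_\zeta w_k(\zeta)-\partial_\zeta w_j(\zeta-\eta)\big)R(|z_k(\zeta)-z_j(\zeta-\eta)|)\,\dd\eta$ plus $\int_\TT \big(\partial_\zeta y_k(\zeta)-\partial_\zeta y_j(\zeta-\eta)\big)\big(R(|z_k(\zeta)-z_j(\zeta-\eta)|)-R(|y_k(\zeta)-y_j(\zeta-\eta)|)\big)\,\dd\eta$, and similarly for the reflected ($\overline{z}_j$, $\overline{y}_j$) contributions. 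The first type of term carries a derivative on $w$, which after pairing against $w_k$ in $L^2$ must be handled either by integration by parts in $\zeta$ (exploiting the curve cancellation structure, i.e. writing $\partial_\zeta w_k(\zeta)-\partial_\zeta w_j(\zeta-\eta) = \partial_\zeta(w_k(\zeta)-w_j(\zeta-\eta)) + \partial_\eta w_j(\zeta-\eta)$ and integrating the $\partial_\eta$ piece by parts in $\eta$ to move the derivative onto $R$) or, on the diagonal $j=k$, by the standard commutator trick that turns $\partial_\zeta z_k(\zeta)-\partial_\zeta z_k(\zeta-\eta)$ into a finite-difference quotient times $\eta$.

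Concretely, I would split each integral into a near-diagonal region $|\eta|\le \delta_0$ and a far region $|\eta|\ge\delta_0$. On the far region both $|z_k(\zeta)-z_j(\zeta-\eta)|$ and $|y_k(\zeta)-y_j(\zeta-\eta)|$ are bounded below: for $j\ne k$ by $\delta[\mathbf{z}]$ resp. $\delta[\mathbf{y}]$, and for $j=k$, $|\eta|\ge\delta_0$, by $\delta_0/(C\|F[z_k]\|_{L^\infty})$ using the arc-chord bound \eqref{def:F[z_k]}; there $R$, $R'$, $R''$ are all bounded (by \eqref{es:R-deri}–\eqref{es:R-deri2} with the $\rho^{-1}$, $|\log\rho|$ branches), and the difference $R(|z_k(\zeta)-z_j(\zeta-\eta)|)-R(|y_k(\zeta)-y_j(\zeta-\eta)|)$ is estimated by the mean value theorem as $\le \|R'\|_{L^\infty(\text{far})}\big(|w_k(\zeta)|+|w_j(\zeta-\eta)|\big)$, so everything closes with a constant depending on the quantities listed in the statement. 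On the near-diagonal region I use the arc-chord hypothesis to write $|z_k(\zeta)-z_k(\zeta-\eta)|\ge c|\eta|$ and likewise for $y_k$, so $R(|z_k(\zeta)-z_k(\zeta-\eta)|)$ is controlled by $|\eta|^{-\alpha}$ (the singular branch of \eqref{es:R-deri}), $R'$ by $|\eta|^{-1-\alpha}$, $R''$ by $|\eta|^{-2-\alpha}$; then the finite-difference structure of $\partial_\zeta z_k(\zeta)-\partial_\zeta z_k(\zeta-\eta)$ gives an extra factor $|\eta|$ (using $z_k\in H^2$), and the difference of the two kernels via the mean value theorem along the segment joining the $z$- and $y$-configurations produces an extra factor $|w_k|_{C^1}$-type quantity — but since we are only after an $L^2$ bound on $w$ and not $H^2$, I instead keep one copy of $w$ in $L^2(\dd\zeta)$ and estimate the rest in $L^\infty$ using the $H^2\hookrightarrow C^1$ embedding, so that after integrating in $\eta$ over $|\eta|\le\delta_0$ (the power $-\alpha$, being $<1$, is integrable; after the commutator gain even $-1-\alpha$ becomes $-\alpha$, still integrable since $\alpha<1$) the net contribution is $\le C\sum_k\|w_k\|_{L^2}^2$. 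The reflected terms involving $\overline{z}_j,\overline{y}_j$ are genuinely easier: $|z_k(\zeta)-\overline{z}_j(\zeta-\eta)|$ never vanishes (the reflection keeps the arguments on opposite sides of $\partial\RR^2_+$ unless the patch touches the boundary, which is excluded by $\mathbf{z}\in\mathbf{W}$ via $1/\delta[\mathbf{z}]<\infty$ controlling also the distance to the axis — more precisely one uses $|z_k(\zeta)-\overline{z}_j(\zeta-\eta)|\ge |z_{k,2}(\zeta)+z_{j,2}(\zeta-\eta)|$ together with a lower bound away from zero that is part of the $\mathbf{W}$-norm being finite), so $R$ and its derivatives are bounded there and these terms contribute $\le C\sum_k\|w_k\|_{L^2}^2$ by the mean value theorem exactly as in the far region.

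Collecting all contributions gives $\frac{\dd}{\dd t}\sum_k\|w_k\|_{L^2}^2 \le C\sum_k\|w_k\|_{L^2}^2$ with $C$ depending continuously on $\delta[\mathbf{z}]^{-1},\delta[\mathbf{y}]^{-1},\|(F[z_k],F[y_k])\|_{L^\infty}$ and $\|(\mathbf{z},\mathbf{y})\|_{H^2}$, all of which are finite and bounded on $[0,T]$ since $\mathbf{z},\mathbf{y}\in C([0,T];\mathbf{W})$. Since $w_k(\cdot,0)=0$, Gronwall's inequality forces $\sum_k\|w_k\|_{L^2}^2\equiv 0$ on $[0,T]$, hence $z_k\equiv y_k$ on $\TT\times[0,T]$ for every $k$. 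The main obstacle is the bookkeeping for the near-diagonal singular terms: one must be careful that after using the curve cancellation and the arc-chord bounds, the worst remaining power is $|\eta|^{-\alpha}$ (or at worst $|\eta|^{-1-\alpha}$ before a commutator gain), which is exactly where the restriction $\alpha<1$ (and in the half-plane $\alpha<1/3$, though $\alpha<1$ suffices for this particular $L^2$-uniqueness estimate) enters — this is the one place where the implicit nature of $G$ could bite, but Lemma \ref{lem:G-lemma} shows $R$ obeys precisely the same pointwise bounds as the $\alpha$-SQG kernel, so the argument of \cite{GanP21,KYZ17} transfers verbatim.
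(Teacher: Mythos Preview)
Your overall strategy (an $L^2$ stability estimate on the difference, splitting each contour term into a piece with $\partial_\zeta w$ handled by integration by parts and a kernel-difference piece handled by the mean value theorem) matches the paper's, and your treatment of the \emph{non-reflected} terms $\mathbf{Z}_{k,j}=z_k(\zeta)-z_j(\zeta-\eta)$ is essentially correct. However, your treatment of the \emph{reflected} terms $Z_{k,j}=z_k(\zeta)-\overline{z}_j(\zeta-\eta)$ contains a genuine gap.

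You assert that ``$|z_k(\zeta)-\overline{z}_j(\zeta-\eta)|$ never vanishes'' because ``$1/\delta[\mathbf{z}]<\infty$ control[s] also the distance to the axis''. This is false: by definition \eqref{def:del-z}, $\delta[\mathbf{z}]=\min_{i\ne j}\min_{\zeta,\eta}|z_i(\zeta)-z_j(\eta)|$ measures only the separation between \emph{distinct} patches and says nothing about the distance of $\partial D_k$ to $\partial\mathbb{R}^2_+$. Indeed the patches in this paper are allowed to touch the $x_1$-axis (this is exactly the setting of the blow-up construction in Section \ref{sec:singu}), so for $j=k$ the quantity $Z_{k,k}(\zeta,\eta)$ can vanish as $\eta\to 0$ at points where $z_k^{(2)}(\zeta)=0$. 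The reflected $j=k$ term is therefore the \emph{hardest} one, not the easiest. The paper handles it via the crucial pointwise bound \eqref{eq:diff-pointwise-es},
\[
|\partial_\zeta Z_{k,k}(\zeta,\eta)|\;\le\;C\big(\|z_k\|_{H^2}\|F[z_k]\|_{L^\infty}^{1/3}+\|z_k\|_{H^2}^{2/3}\big)\,|Z_{k,k}(\zeta,\eta)|^{1/3},
\]
which comes from \eqref{eq:fact-half-plane} (the KYZ observation that $|\partial_\zeta g|\lesssim g^{\sigma/(1+\sigma)}$ for nonnegative $g$) combined with \eqref{eq:Zkk-fact}--\eqref{eq:Zkk-fact2}. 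This $1/3$-gain converts $|R'(|Z_{k,k}|)|\,|\partial_\zeta Z_{k,k}|$ into $|Z_{k,k}|^{-2/3-\alpha}\lesssim |\eta|^{-2/3-\alpha}$, which is integrable precisely when $\alpha<1/3$. Your remark that ``$\alpha<1$ suffices for this particular $L^2$-uniqueness estimate'' is therefore also incorrect in the half-plane setting: without \eqref{eq:diff-pointwise-es} you cannot close the reflected $j=k$ terms at all, and with it you need $\alpha<1/3$.
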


\begin{proof}[Proof of Proposition \ref{pro:unique}]
For the sake of convenience, we define that for every $k,j\in \{1,2,\cdots,N\}$,
\begin{align}\label{def:Zkj0}
  Z_{k,j}(\zeta,\eta,t) \triangleq z_k(\zeta,t) - \overline{z}_j(\zeta - \eta,t),\quad
  \mathbf{Z}_{k,j}(\zeta,\eta,t) \triangleq z_k(\zeta,t) - z_j(\zeta - \eta, t ),
\end{align}
and
\begin{align*}
  Y_{k,j}(\zeta,\eta,t) \triangleq y_k(\zeta,t) - \overline{y}_j(\zeta-\eta,t),
  \quad \mathbf{Y}_{k,j}(\zeta,\eta,t)  \triangleq y_k(\zeta,t)- y_j(\zeta-\eta,t),
\end{align*}
where $\overline{z}_j(\zeta,t) = \big(z_j^{(1)}, -z_j^{(2)} \big)(\zeta,t)$.
In the sequel, if the variables $(\zeta,\eta,t)$ are clear in the contexts,
we also abbreviate $Z_{k,j}(\zeta,\eta,t)$ and $Y_{k,j}(\zeta,\eta,t)$ as $Z_{k,j}$ and $Y_{k,j}$, respectively.

Using the contour dynamics equation \eqref{eq:contour}, we have
\begin{align*}
  & \frac{1}{2}\frac{\dd }{\dd t}\lVert w_k(t)\rVert^2_{L^2}
  = \int_{\TT} \partial_t w_k(\zeta,t)\cdot w_k(\zeta,t) \dd \zeta \\	
  & = \sum_{j=1}^N a_j \int_{\TT}\int_{\TT} \Big(R(|Z_{k,j}(\zeta,\eta,t)|)
  \partial_\zeta Z_{k,j}(\zeta,\eta,t) - R(|Y_{k,j}(\zeta,\eta,t)|) \partial_\zeta Y_{k,j}(\zeta,\eta,t)
  \Big)\cdot w_k(\zeta) \dd \eta\dd \zeta \\		
  & \quad + \sum_{j=1}^N a_j \int_{\TT}\int_{\TT}
  \Big(R(|\mathbf{Z}_{k,j}(\zeta,\eta,t)|)\partial_\zeta \mathbf{Z}_{k,j}(\zeta,\eta,t)
  - R(|\mathbf{Y}_{k,j}(\zeta,\eta,t)|) \partial_\zeta\mathbf{Y}_{k,j}(\zeta,\eta,t) \Big)
  \cdot w_k(\zeta)\dd \eta\dd \zeta \\
  & \triangleq \mathbf{L}_1 + \mathbf{L}_2 .
\end{align*}
For the term $\mathbf{L}_1$, we further split it as follows
\begin{align*}
  \mathbf{L}_1 
  & = \sum_{j=1}^N a_j \int_{\TT} \int_{\TT}  R(|Z_{k,j}|)(\partial_\zeta z_k(\zeta)
  - \partial_\zeta y_k(\zeta))\cdot w_k(\zeta)\dd \eta\dd \zeta \\
  & \quad  -\sum_{j=1}^N a_j \int_{\TT} \int_{\TT} R(|Z_{k,j}|)(\partial_\zeta \overline{z}_j(\zeta - \eta)
  - \partial_\zeta \overline{y}_j(\zeta - \eta))\cdot w_k(\zeta) \dd \eta\dd \zeta \\
  & \quad + \sum_{j=1}^N a_j \int_{\TT} \int_{\TT} \Big(\big(R(|Z_{k,j}|) - R(|Y_{k,j}|)
  \big)\partial_\zeta Y_{k,j} \Big)\cdot w_k(\zeta)\dd \eta \dd \zeta  \\
  & \triangleq \mathbf{L}_{11} + \mathbf{L}_{12} + \mathbf{L}_{13} .
\end{align*}
For $\mathbf{L}_{11}$, via the integrating by parts, and using Lemma \ref{lem:G-lemma} 
and \eqref{eq:Zkk-fact}, \eqref{eq:diff-pointwise-es}, we obtain that
\begin{align*}
  |\mathbf{L}_{11}| & = \frac{1}{2} \Big|\sum_{j=1}^N a_j \int_{\TT} \int_{\TT}
  \partial_\zeta \big(|w_k(\zeta)|^2\big) R(|Z_{k,j}(\zeta,\eta)|) \dd \eta\dd \zeta \Big| \\
  & = \frac{1}{2} \Big| \sum_{j=1}^N \int_{\TT} \int_{\TT} |w_k(\zeta)|^2 R'(|Z_{k,j}|)\frac{Z_{k,j}\cdot \partial_\zeta Z_{k,j}}{|Z_{k,j}|}
  \dd \eta\dd \zeta \Big| \\
  & \leq C \big(\|z_k\|_{H^2} \|F[z_k]\|_{L^\infty}^{\frac{1}{3}} + \|z_k\|_{H^2}^{\frac{2}{3}} \big)
  \int_{\TT}\int_{\TT} |w_k(\zeta)|^2 \Big(\frac{1}{|Z_{k,k}|^{\alpha+ 2/3}} + \frac{1}{|Z_{k,k}|^{2/3}} \Big) \dd \eta \dd \zeta  \\
  & \quad + C \|\mathbf{z}\|_{L^\infty} \sum_{j\neq k} \int_{\TT} \int_{\TT} |w_k(\zeta)|^2
  \Big(\frac{1}{|Z_{k,j}|^{1+\alpha}} + \frac{1}{|Z_{k,j}|} \Big) \dd \eta \dd \zeta \\
  & \leq C \lVert w_k\rVert_{L^2}^2 \big(\|\mathbf{z}\|_{H^2} + 1 \big) \big(\|F[z_k]\|_{L^\infty}^{1+\alpha}
  + \delta[\mathbf{z}]^{-1-\alpha} + 1 \big) .
\end{align*}
For $\mathbf{L}_{12}$, we separately consider the $j=k$ case and $j\neq k$ case in the summation: by performing the change of variables and using the fact  $x\cdot \overline{y} = \overline{x} \cdot y$, we find
\begin{align*}
  \mathbf{L}_{12}|_{j=k} & = - a_k \int_{\TT} \int_{\TT} R(|Z_{k,k}|) \partial_\zeta \overline{w}_k(\zeta-\eta) \cdot w_k(\zeta) \dd \eta\dd \zeta \\
  & = - a_k \int_{\TT^2} R(|Z_{k,k}|) \partial_\zeta w_k(\zeta) \cdot \overline{w}_k(\zeta - \eta) \dd \eta \dd \zeta \\
  & = - \frac{a_k}{2} \int_{\TT^2} R(|Z_{k,k}|) \partial_\zeta \big(\overline{w}_k(\zeta-\eta) \cdot w_k(\zeta) \big) \dd \eta \dd \zeta \\
  & = \frac{a_k}{2} \int_{\TT} \int_{\TT} \overline{w}_k(\zeta-\eta) \cdot w_k(\zeta) R'(|Z_{k,k}|) \frac{Z_{k,k} \cdot
  \partial_\zeta Z_{k,k}}{|Z_{k,k}|} \dd \eta\dd \zeta \\
  & \leq C  \big(\|z_k\|_{H^2} + 1 \big) \big(\|F[z_k]\|_{L^\infty}^{1+\alpha} + 1  \big)
  \int_{\TT} \int_{\TT} \overline{w}_k(\zeta-\eta) \cdot w_k(\zeta)
  \Big(\frac{1}{|\eta|^{\alpha+2/3}} + \frac{1}{|\eta|^{2/3}} \Big) \dd \eta \dd \zeta \\
  & \leq C \lVert w_k\rVert^2_{L^2}  \big(\|z_k\|_{H^2} + 1 \big) \big(\|F[z_k]\|_{L^\infty}^{1+\alpha} + 1  \big);
\end{align*}
on the other hand,
 noting that $\partial_\zeta w_j(\zeta -\eta) = - \partial_{\eta} w_j(\zeta - \eta)$ and integrating by parts,
we deduce that
\begin{align*}
  \mathbf{L}_{12}|_{j\neq k} & = - \sum_{j\neq k} a_j \int_{\TT} \int_{\TT}
  R(|Z_{k,j}|)\partial_\zeta \overline{w}_j(\zeta-\eta)\cdot w_k(\zeta) \dd \eta \dd \zeta \\
  & = - \sum_{j\neq k} a_j \int_{\TT} \int_{\TT} w_k(\zeta) \cdot w_j(\zeta - \eta) R'(|Z_{k,j}|)
  \frac{Z_{k,j} \cdot \partial_\eta Z_{k,j}}{|Z_{k,j}|} \dd \eta \dd \zeta \\
  & \leq C \Big(\sum_{k=1}^N \lVert w_k\rVert^2_{L^2}\Big) \|\mathbf{z}\|_{H^2} \big(\delta[\mathbf{z}]^{-1-\alpha} + 1 \big) .
\end{align*}
Hence, collecting the above estimates yields
\begin{align*}
  |\mathbf{L}_{12}| \leq C \Big(\sum_{k=1}^N \lVert w_k\rVert^2_{L^2}\Big) \big(\|\mathbf{z}\|_{H^2} + 1 \big)
  \big(\|F[z_k]\|_{L^\infty}^{1+\alpha} +  \delta[\mathbf{z}]^{-1-\alpha} + 1 \big).
\end{align*}
For $\mathbf{L}_{13}$ in the $j=k$ case of the summation, using \eqref{eq:Zkk-fact}, \eqref{eq:diff-pointwise-es}
and the fact that
\begin{align}\label{eq:mean-value-theorem-R}
  R(|Z_{k,j}|) - R(|Y_{k,j}|) = \big(|Z_{k,j}|-|Y_{k,j}|\big) \int_0^1 R'\big(\tau|Z_{k,j}| + (1-\tau)|Y_{k,j}|\big) \dd \tau,
\end{align}
we have
\begin{align*}
  & \Big|\big(R(|Z_{k,k}|) - R(|Y_{k,k}|)\big) \partial_\zeta Y_{k,k}\Big| \\
  & \leq C \big(\|y_k\|_{H^2} \|F[y_k]\|_{L^\infty}^{\frac{1}{3}} + \|y_k\|_{H^2}^{\frac{2}{3}} \big)
  |Z_{k,k} - Y_{k,k}| \int_0^1 |R'(\tau|Z_{k,k}| + (1-\tau) |Y_{k,k}|)| \dd \tau \, |Y_{k,k}|^{\frac{1}{3}} \\
  & \leq C \big(\|y_k\|_{H^2} \|F[y_k]\|_{L^\infty}^{\frac{1}{3}} + \|y_k\|_{H^2}^{\frac{2}{3}} \big)
  |Z_{k,k} - Y_{k,k}| \times \\
  & \quad \times \int_0^1 \frac{1}{(1-\tau)^{\frac{1}{3}}} \Big(\frac{1}{\big(\tau |Z_{k,k}|
  + (1-\tau) |Y_{k,k}|\big)^{2/3 + \alpha}} + \frac{1}{\big(\tau |Z_{k,k}| + (1-\tau)|Y_{k,k}|\big)^{2/3}} \Big) \dd \tau \\
  & \leq C \big(\|y_k\|_{H^2}+1\big) \big(\|(F[y_k], F[z_k]) \|_{L^\infty}^{1+\alpha} +1 \big)
  |Z_{k,k} - Y_{k,k}| \Big(\frac{1}{|\eta|^{\alpha + 2/3}} + \frac{1}{|\eta|^{2/3}} \Big),
\end{align*}
and
\begin{align*}
  \mathbf{L}_{13}|_{j=k} & = a_k \int_{\TT} \int_{\TT} \Big(\big(R(|Z_{k,k}|) - R(|Y_{k,k}|)
  \big)\partial_\zeta Y_{k,k} \Big)\cdot w_k(\zeta)\dd \eta \dd \zeta \\
  & \leq C \int_{\TT}\int_{\TT} |w_k(\zeta)| |w_k(\zeta)-\overline{w}_k(\zeta-\eta)|
  \Big(\frac{1}{|\eta|^{\alpha + 2/3}} + \frac{1}{|\eta|^{2/3}} \Big)\dd \eta \dd \zeta  \\
  & \leq C \lVert w_k\rVert^2_{L^2}.
\end{align*}
For $\mathbf{L}_{13}$ in the $j\neq k$ case of the summation, using \eqref{eq:mean-value-theorem-R} and the fact that for any 
$\tau\in [0,1]$,
\begin{align*}
  \big|R'(\tau|Z_{k,j}| + (1-\tau)|Y_{k,j}|)\big| & \leq C \Big( \frac{1}{(\tau|Z_{k,j}| + (1-\tau)|Y_{k,j}|)^{1+\alpha}}
  + \frac{1}{\tau |Z_{k,j}| + (1-\tau)|Y_{k,j}|} \Big) \\
  & \leq C \big( \min(\delta[\mathbf{z}], \delta[\mathbf{y}])^{-1-\alpha} + 1 \big),
\end{align*}
it follows that
\begin{align*}
  \mathbf{L}_{13}|_{j\neq k} & = \sum_{j\neq k} a_j \int_{\TT} \int_{\TT}
  \Big(\big(R(|Z_{k,j}|) - R(|Y_{k,j}|)
  \big)\partial_\zeta Y_{k,j} \Big)\cdot w_k(\zeta)\dd \eta \dd \zeta \\
  & \leq C \lVert \mathbf{y}\rVert_{C^1(\TT)} \big( \min(\delta[\mathbf{z}],\delta[\mathbf{y}])^{-1-\alpha} + 1 \big)
  \sum_{j\neq k}\int_{\TT} \int_{\TT}  |w_k(\zeta) - \overline{w}_j(\zeta - \eta)| |w_k(\zeta)| \dd \eta\dd \zeta \\
  & \leq C  \sum_{k=1}^N \lVert w_k \rVert^2_{L^2}  .
\end{align*}
Hence, gathering the above estimates leads to
\begin{align*}
  |\mathbf{L}_1| \leq |\mathbf{L}_{11}| + |\mathbf{L}_{12}| + |\mathbf{L}_{13}| \leq C \sum_{k=1}^N \|w_k\|_{L^2}^2 .
\end{align*}
The estimation of $\mathbf{L}_2$ can be done in a similar way as that of $\mathbf{L}_1$ (indeed it is easier since $\mathbf{L}_2$ contains more cancellation), thus we omit the details. Therefore, the proof of Proposition \ref{pro:unique} is completed.
\end{proof}

Now, based on Propositions \ref{pro:ap-es}, \ref{prop:reg-param} and \ref{pro:unique}, 
we can give the proof of Theorem \ref{thm:loc-reg}.
\begin{proof}[Proof of Theorem  \ref{thm:loc-reg}]
  Relied on the \textit{a priori} control of $\lVert \mathbf{z}\rVert_W$ followed from \eqref{ineq:main-estimate} 
in Proposition \ref{pro:ap-es}, the existence of $H^2$-regular solutions to the contour dynamics equation 
\eqref {eq:main-eq-GP}-\eqref{eq:lambda-def} will be obtained by taking the limit of approximate solutions
to an appropriate family of mollified equations. We refer to \cite{KYZ17,GNP22} for the detailed process. 
Besides, exactly arguing as \cite{KYZ17}, one can prove that for $D_k(t)$ ($k=1,\cdots,N$) the interior domain governed by the contour $z_k(\cdot,t)$ 
constructed above, $\theta(\cdot,t) = \sum_{k=1}^N a_k \mathbf{1}_{D_k(t)} $ is an $H^2$ patch solution (in the sense of Definition \ref{def:patch-sol})
to the generalized SQG equation \eqref{eq:geSQG}$\&$\eqref{eq:u-exp}. This finishes the existence part.

Next, we treat the uniqueness issue. Consider any patch solution $\theta(x,t)$ given by \eqref{eq:the-patch-sol} 
to the equation \eqref{eq:geSQG}$\&$\eqref{eq:u-exp}
with $\partial D_j(t) \in C([0,T]; \mathbf{W} )$, $j=1,\cdots,N$ non self-intersecting
and $D_j(t) \cap D_k(t) = \emptyset$ for $j\neq k$. For any parameterization of the boundary of patches,
we can change of variables to deduce that $\partial D_j(t) = \{z_j(\zeta,t) \,:\, \zeta\in \mathbb{T} \}$
with $|\partial_\zeta z_j(\zeta,t)|^2 = A_j(t)$ ($j=1,\cdots,N$) depending only on time
and $\mathbf{z} =(z_1,\cdots,z_N)$ solves the contour dynamics equation \eqref{eq:main-eq-GP}-\eqref{eq:lambda-def} 
(see \cite{CCG18} for more details).
By using the change of variables $\phi_j(\cdot,t)$ in Proposition \ref{prop:reg-param}, 
it recasts $\partial D_j(t) = \{y_j(\mu,t)\,:\,\mu\in \mathbb{T}\}$ with $y_j(\mu,t)$ ($j=1,\cdots,N$)
solving the contour dynamics equation \eqref{eq:contour}. Moreover, Proposition \ref{prop:reg-param} 
guarantees that $\mathbf{y} = (y_1,\cdots, y_N) \in C([0,T];\mathbf{W})$. 
Hence, the desired uniqueness result follows from Proposition \ref{pro:unique}
concerning the uniqueness of solutions to the contour equation \eqref{eq:contour} in $C([0,T];\mathbf{W})$.
\end{proof}

\section{Finite-time singularity for the generalized SQG patches}\label{sec:singu}

In this section, we demonstrate the finite-time singularity formation for the patch solution of the generalized SQG equation 
\eqref{eq:geSQG}$\&$\eqref{eq:u-exp}, associated with patch-like initial data \eqref{eq:patch-data}. Our focus is on the half-plane case, where $\mathbf{D}=\mathbb{R}^2_+$. The kernel $G$ is assumed to satisfy \eqref{A1} to ensure the local well-posedness of the solutions.

Additionally, we introduce a second set of assumptions, \eqref{A2}, concerning the behavior of $G$ near the origin. These assumptions feature two types of Biot-Savart laws that are central to our analysis.



\begin{enumerate}[label=$(\mathbf{A}2)$,ref=$\mathbf{A}2$]
\item \label{A2} There exists a constant $c_0 >0$ (it can be chosen the same constant as in \eqref{A1} without loss of generality) 
such that \textit{either} one of the following conditions is satisfied:
\begin{enumerate}[label=$(\mathbf{A}2\alph*)$,ref=$\mathbf{A}2\alph*$]
\item\label{A2a}
$G(\rho)$ satisfies that
\begin{align}\label{cond:G1}
  \textrm{$\frac{G(\rho)}{\rho}$ is non-increasing on $(0,c_0)$,}
\end{align}
\begin{align}\label{cond:G7}
  \int_0^{c_0} \frac{1}{\rho (\log \rho^{-1}) G(\rho)} \dd \rho < +\infty,
\end{align}
and
\begin{align*}
  \textrm{$\mathcal{G}(\rho) \leq G(\rho) \leq C_1 \mathcal{G}(\rho)$  \;on $r\in (0,c_0)$},
\end{align*}
where $C_1\geq 1$ and $\mathcal{G}(\rho)$ is a positive smooth function on $(0,c_0)$ such that
\begin{align}\label{cond:G5}
  \lim_{\rho\rightarrow 0^+} \mathcal{G}(\rho) = +\infty,\quad
  \lim_{\rho\rightarrow 0^+} \frac{\mathcal{G}(l \rho)}{\mathcal{G}(\rho)} =1, \;\;\forall\; l>0,
\end{align}
\begin{align}\label{cond:G6}
  \lim_{\rho\rightarrow 0^+} \frac{\rho (\log \rho^{-1}) (-\mathcal{G}'(\rho))}{\mathcal{G}(\rho)} = \gamma,
  \quad \textrm{with}\;\;\gamma \geq 0.
\end{align}

\item\label{A2b} $G(\rho)$ satisfies that
\begin{align}\label{eq:G2a}
  \textrm{on $(0,c_0)$,\quad $G(\rho)>0$\; and\; $G(\rho)$ is non-increasing},
\end{align}
and for some $0<\beta <\frac{1}{3}$,
\begin{align}\label{cond:M0}
  \lim_{\rho\rightarrow 0^+} \frac{l^\beta G(l \rho)}{ G(\rho)} = 1, \;\;\forall\; l>0,
\end{align}
\begin{align}\label{cond:M1}
  \lim_{\rho\rightarrow 0^+} \frac{r  G'(\rho) + \beta G(\rho)}{G(\rho)} = 0.
\end{align}
\end{enumerate}
\end{enumerate}

The conditions in \eqref{A2} can be derived from the assumptions \eqref{H1}-\eqref{H2a}-\eqref{H2b}  on $m$ using Lemma \ref{lem:mD-cond}.

In particular, \eqref{A2a} addresses the borderline scenarios between the Loglog-Euler equation and the $\alpha$-SQG equation. Condition \eqref{cond:G7} is equivalent to the Osgood condition \eqref{cond:m1c}, while conditions \eqref{cond:G5} and \eqref{cond:G6} further describe the behavior of $G$ near the origin. A typical example of the function $\mathcal{G}$ is
\[\mathcal{G}(\rho)=(\log \rho^{-1})^{\gamma},\, \gamma>0.\]
There may also be additional log-log terms. From \eqref{cond:G6}, we can deduce that for any $\varepsilon>0$, there exists some $\rho_\varepsilon >0$ such that 
\begin{align}\label{coro:cond-G6}
  \mathcal{G}(\rho)\le C_{\varepsilon}(\log \rho^{-1})^{\gamma+\varepsilon},\quad \forall \rho\in (0,\rho_\varepsilon].
\end{align}
Indeed, this follows from the fact that the function $(\log \rho^{-1})^{-\gamma-\varepsilon}\mathcal{G}(\rho)$ 
is increasing on some interval $(0,\rho_{\varepsilon})$, 
which can be seen by \eqref{cond:G6},
\begin{align*}
  \frac{\dd }{\dd \rho}\Big(\frac{\mathcal{G}(\rho)}{(\log \rho^{-1})^{\gamma+\varepsilon}}\Big) 
  = \frac{(\gamma+\varepsilon)\mathcal{G}(\rho) - \rho\log \rho^{-1}(-\mathcal{G}'(\rho))}{\rho(\log \rho^{-1})^{\gamma+\varepsilon+1}}>0
\end{align*}
for $\rho>0$ small enough. 

The assumptions \eqref{A2b} contain $\alpha$-SQG-like equations. Note that the kernel $G(\rho)= \rho^{-\beta}$ satisfies conditions \eqref{cond:M0} and \eqref{cond:M1}. The singularity formation for the $\alpha$-SQG equation has been established in \cite{KRYZ,GanP21}. Here, we extend the results to general kernels $G$ that behaves like $\rho^{-\beta}$ near the origin. The extension is non-trivial due to the lack of an explicit form for $G$.

We now state the main result of this section concerning the finite-time singularity formation of the patch solution for the equation \eqref{eq:geSQG}$\&$\eqref{eq:u-exp}.
\begin{theorem}\label{thm:blow-up2}
  Let $\mathbf{D} = \mathbb{R}^2_+$. Suppose that $G(\cdot)$ is a continuously differentiable function satisfying assumptions either
\eqref{A1}-\eqref{A2a} or \eqref{A1}-\eqref{A2b}.
Then there exist non self-intersecting $H^2$ patch initial data \eqref{eq:patch-data} for the generalized SQG equation
\eqref{eq:geSQG}$\&$\eqref{eq:u-exp} so that the corresponding unique local-in-time $H^2$ patch solution develops
a singularity in finite time.
\end{theorem}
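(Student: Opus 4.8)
The plan is to adapt the blow-up scheme of Kiselev, Ryzhik, Yao and Zlato\v{s} \cite{KRYZ}, as refined by Gancedo and Patel \cite{GanP21}, to the implicit kernel $G$, using \emph{only} the structural properties collected in \eqref{A1}--\eqref{A2} in place of the exact homogeneity $G(\rho)=\mathrm{c}\,\rho^{-\alpha}$ exploited in those works. I would first construct odd-in-$x_1$ patch data supported near the origin: take $\theta_0=\mathbf{1}_{\Omega_0}-\mathbf{1}_{\widetilde\Omega_0}$, where $\Omega_0\subset\{x_1>0\}\cap B_{c_0/2}(0)$ is a bounded simply connected domain with $H^2$ boundary touching $\partial\mathbb{R}^2_+$ at the origin, and $\widetilde\Omega_0=\{(-x_1,x_2):(x_1,x_2)\in\Omega_0\}$ is its reflection; this is a non self-intersecting datum of the form \eqref{eq:patch-data} with $N=2$. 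I would arrange that $\Omega_0$ contains a closed trapezoid $\mathbb{K}(0)\subset(\mathbb{R}_+)^2$ whose left edge is a vertical segment $\{x(0)\}\times[0,h]$ with small $x(0)>0$ and whose slanted edges have fixed slopes. Since everything stays in $B_{c_0}(0)$, only the near-origin properties \eqref{A2} of $G$ enter. By Theorem \ref{thm:loc-reg} there is a unique local $H^2$ patch solution $\theta(\cdot,t)=\mathbf{1}_{\Omega(t)}-\mathbf{1}_{\widetilde\Omega(t)}$ with contour functions $\mathbf{z}(t)\in C([0,T^*);\mathbf{W})$ on a maximal interval $[0,T^*)$; since the Biot--Savart law \eqref{eq:u-exp} commutes with the reflection $x\mapsto(-x_1,x_2)$, uniqueness forces the odd-in-$x_1$ symmetry to persist, so $\{x_1=0\}$ is invariant under the flow $\Phi_t$ of \eqref{eq:flow_map} and, by Proposition \ref{prop:flow-map}, $\partial\Omega(t)=\Phi_t(\partial\Omega_0)$ with $\Phi_t$ a measure-preserving bijection mapping $\Omega_0$ to $\Omega(t)$ and $\{x_1>0\}$ into itself.

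\textbf{A moving trapezoid trapped in the patch.} Next I would let $\mathbb{K}(t)$ be obtained from $\mathbb{K}(0)$ by keeping the slanted edges' slopes fixed and moving the left vertical edge to $\{x(t)\}\times[0,h(t)]$, where $x(t)$ (decreasing) and $h(t)$ solve auxiliary ODEs driven by the velocity bounds produced below. The claim is $\mathbb{K}(t)\subseteq\Omega(t)$ for all $t\in[0,T^*)$ with $x(t)>0$. This is a barrier argument of maximum-principle type: if $t_0$ were the first time $\partial\Omega(t)$ meets $\overline{\mathbb{K}(t)}$, the offending point is $\Phi_{t_0}(q_0)$ with $q_0\in\partial\Omega_0$, and comparing the trajectory velocity $u(\Phi_t(q_0),t)$ with the outward velocity of $\partial\mathbb{K}(t)$ on each edge yields a contradiction, provided on the left edge $u_1\le\dot x(t)$ (the fluid outruns the edge toward the axis), on the slanted edges the outward normal velocity is dominated by the edge motion, and on the top edge $u_2$ is controlled. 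These inequalities are exactly what the velocity estimates must deliver.

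\textbf{Velocity estimates near the origin (the main obstacle).} This is the heart of the proof, playing the role of Proposition \ref{prop:es-u}. For $x=(x_1,x_2)$ with $0<x_1\le x(t)$ and $x_2$ in the height range of the trapezoid, and assuming $\mathbb{K}(t)\subseteq\Omega(t)$, I would establish a lower bound of the form
\[
  -\,u_1(x,t)\ \ge\ c\,x_1\,(\log x_1^{-1})\,G(x_1)
\]
and matching upper bounds $|u_1(x,t)|+|u_2(x,t)|\le C\,x_1^{1-\alpha}$. The sign and the lower bound come from the negative patch $-\mathbf{1}_{\widetilde\Omega(t)}$, whose contribution to $u_1(x,t)$ in \eqref{eq:u-exp} has a definite sign (pulling $x$ toward the axis) and whose magnitude is bounded below by the contribution of the reflected trapezoid $-\mathbb{K}(t)$, a region of fixed geometry; estimating $\int_{-\mathbb{K}(t)}\frac{(x-y)^\perp_1}{|x-y|^2}G(|x-y|)\,\dd y$ using the scaling limits \eqref{cond:G5}--\eqref{cond:G6} (resp. \eqref{cond:M0}--\eqref{cond:M1}), the monotonicity \eqref{cond:G1} (resp. \eqref{eq:G2a}), and the growth bound $G(\rho)\le C_0\rho^{-\alpha}$ from \eqref{A1} to absorb error terms, gives the stated estimate — the factor $\log x_1^{-1}$ arising from integrating $1/|x-y|$ across the vertical extent of the trapezoid down to the rigid boundary. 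The hard part is precisely that, lacking the exact homogeneity of $G$, one must extract the scaling behaviour from limits rather than identities, which forces a careful main-term/error decomposition and repeated use of the monotonicity hypotheses in \eqref{A2} where \cite{KRYZ,GanP21} could simply compute; it also requires the $u_2$ bound to guarantee that material points in the channel do not leave the region where these near-origin estimates are valid, and that $h(t)$ can be kept bounded below.

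\textbf{Finite-time collision and conclusion.} With these estimates, pick a material point $X(t)=\Phi_t(q_0)$ on the left part of $\partial\Omega(t)$ near the axis, with $X_1(0)<x(0)$; as long as it stays in the channel, $\dot X_1(t)=u_1(X(t),t)\le -\,c\,X_1(t)(\log X_1(t)^{-1})G(X_1(t))$. Since
\[
  \int_0^{x(0)}\frac{\dd\rho}{c\,\rho\,(\log\rho^{-1})\,G(\rho)}\ <\ +\infty
\]
by the Osgood condition \eqref{cond:G7} in case \eqref{A2a} (and automatically in case \eqref{A2b}, where $G(\rho)$ behaves like $\rho^{-\beta}$ with $\beta>0$, so the integrand is $\sim\rho^{\beta-1}/\log\rho^{-1}$), a comparison of ODEs forces $X_1(t)\to0$ as $t\to T^{**}$ for some finite $T^{**}$. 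Consequently $\delta[\mathbf{z}](t)\le 2X_1(t)\to0$, hence $\lVert\mathbf{z}(t)\rVert_W\ge 1/\delta[\mathbf{z}](t)\to\infty$ as $t$ approaches $\min\{T^*,T^{**}\}$; by the continuation criterion behind Theorem \ref{thm:loc-reg} (the a priori bound of Proposition \ref{pro:ap-es}) the solution cannot be continued past this time, so $T^*\le T^{**}<\infty$ and the $H^2$ patch solution develops a singularity in finite time — geometrically, the two patches touch at the origin. Finally, since Lemma \ref{lem:mD-cond} shows that \eqref{H1}-\eqref{H2a}-\eqref{cond:m1c} implies \eqref{A1}-\eqref{A2a} and \eqref{H1}-\eqref{H2b} implies \eqref{A1}-\eqref{A2b}, Theorem \ref{thm:blow-up2} yields Theorems \ref{thm:blow-upa} and \ref{thm:blow-upb}.
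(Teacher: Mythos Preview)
Your overall architecture is right and matches the paper: odd-in-$x_1$ patch data near the origin, a moving trapezoid barrier $\mathbb{K}(t)$, velocity estimates playing the role of Proposition~\ref{prop:es-u}, and an ODE comparison against an Osgood-type integral. Two points deserve correction.

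\textbf{The source of the $u_1$ lower bound.} You attribute the negative sign of $u_1$ to the reflected negative patch $-\mathbf{1}_{\widetilde\Omega(t)}$ alone and propose to estimate $\int_{-\mathbb{K}(t)}\frac{(x-y)^\perp_1}{|x-y|^2}G(|x-y|)\,\dd y$. This is not how the argument runs and, as written, would not close: that single term has no definite sign (the integrand changes sign with $y_2-x_2$), and the contributions from $\Omega(t)$ and from the rigid-boundary images $\bar y$ are of comparable magnitude, not lower-order errors. What the paper does (following \cite{KRYZ}) is use the odd symmetry to rewrite $u_1(x)=-\int_{(\mathbb{R}_+)^2}K_1(x,y)\theta(y)\,\dd y$ with a \emph{combined} four-term kernel $K_1$ (see \eqref{def:K1}), then exploit the monotonicity of $\rho\mapsto G(\rho)/\rho$ (resp.\ of $G$) from \eqref{A2} to show $K_1\ge K_{11}-K_{12}\ge 0$ on $\{y_2>x_2\}$ (Lemma~\ref{lem:lmm_K}). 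The dominant contribution then comes from integrating $K_1$ over a fixed triangle $\mathbb{A}(x)\subset\Omega(t)$ sitting \emph{inside the positive patch} (Lemma~\ref{lem:es-good}), while the ``bad'' region $\{y_2<x_2\}$ is handled separately (Lemma~\ref{lem:bound_u}). The log factor in case \eqref{A2a} emerges from $\int_{2x_1}^{c_*}\frac{G(\sqrt5\,s)}{s}\,\dd s$, not from integrating down to the boundary. In case \eqref{A2b} there is \emph{no} log factor: the correct bound is $u_1(x)\le -c\,x_1 G(x_1)$, and the paper needs a delicate balance of five explicit terms (the quantities $U_1,\dots,U_4$ and $\Pi_1(\beta)$) together with a specific slope $\mathrm{k}=5$ to make $\Pi_1(\beta)<0$ for all $\beta\in(0,\tfrac13)$; your uniform claim with the log factor is too optimistic there.

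\textbf{The $u_2$ bound and the slanted edge.} You mention only that $u_2$ is ``controlled'', but the barrier argument requires a genuine \emph{lower} bound $u_2(x)\ge \mathbf{F}(x_2/\mathrm{k})$ on the hypotenuse $\{x_2=\mathrm{k}x_1\}$, derived by the same combined-kernel mechanism applied to $K_2$. Without it you cannot rule out the first touching occurring on the slanted edge of $\mathbb{K}(t)$; this is exactly the $\mathcal{I}_2$ case in the proof of Theorem~\ref{thm:main_blowup}, where both the $u_1$ and $u_2$ bounds are invoked to get $u\cdot\mathbf{n}>-\tfrac{\mathrm{k}+1}{\sqrt{\mathrm{k}^2+1}}X'(t_0)$. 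Finally, the paper does not track a material boundary point; it runs the auxiliary ODE $X'(t)=-\tfrac12\mathbf{F}(X(t))$ abstractly and derives a contradiction from $\mathbb{K}(T_*)\subset\Omega(T_*)$ forcing the two patches to meet at the origin. Your material-point version can be made to work, but only after the trapezoid containment is already established, at which point it is redundant.
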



Based on Lemma \ref{lem:mD-cond}, Theorems \ref{thm:loc-reg} and \ref{thm:blow-up2}, 
one can immediately conclude the proof of Theorem \ref{thm:blow-upa} and Theorem \ref{thm:blow-upb}.
\begin{proof}[Proof of Theorem \ref{thm:blow-upa} and Theorem \ref{thm:blow-upb}]
  Under the hypotheses either \eqref{H1}-\eqref{H2a}-\eqref{cond:m1c}
or \eqref{H1}-\eqref{H2b}, according to Lemma \ref{lem:mD-cond}, 
the function $G(\cdot)$ given by \eqref{eq:G-exp1} in \eqref{eq:u-exp}
satisfies all the assumptions in Theorem \ref{thm:loc-reg} and Theorem \ref{thm:blow-up2}; 
in particular, if hypotheses \eqref{H1}-\eqref{H2a}-\eqref{cond:m1c} are assumed, 
\eqref{cond:G7} follows from \eqref{cond:m1c} and \eqref{eq:G-prop1} that (assuming $c_0\leq \min \{\bar{c}_0,\tfrac{1}{2}\}$ without loss of generality), 
\begin{align*}
  \int_0^{c_0}\frac{1}{\rho\log \rho^{-1}G(\rho)}\dd \rho 
  \leq \frac{1}{\bar{c}}\int_0^{\frac{1}{2}}\frac{1}{\rho\log \rho^{-1} m(\rho^{-1})}\dd \rho < +\infty.
\end{align*}
Hence, Theorem \ref{thm:blow-upa} and Theorem \ref{thm:blow-upb} follow as a direct consequence. 
\end{proof}

The remainder of this section is devoted to proving Theorem \ref{thm:blow-up2}. We construct an  $H^2$  patch initial data and demonstrate that it develops a singularity at the origin in finite time. The construction is primarily based on the approach in \cite{KRYZ}, but we must derive refined and implicit estimates to effectively handle the general kernels $G$.


\subsection{Set-up and mechanism for finite-time singularity}\label{subsec:equa}

In this subsection, we shall demonstrate our setting for the patch-like initial data 
and the mechanism of finite-time singularity, and we also present a more precise statement of Theorem \ref{thm:blow-up2}.


Recall that $c_0>0$ is defined in \eqref{A1} and \eqref{A2}. 
Let 
\[c_*\triangleq\frac{c_0}{4},\]
and $\epsilon$ be a small constant satisfying~$0<\epsilon \ll c_*$ to be determined later.
Denote by
\begin{align*}
  \Omega_1 \triangleq (\epsilon,c_0)\times (0,c_0) = (\epsilon,4c_*)\times (0,4c_*),
  \quad \textrm{and} \quad \Omega_2 \triangleq (2\epsilon,3c_*)\times(0,3c_*).
\end{align*}
Let $\Omega_0 $ be a domain with smooth boundary such that $\Omega_2 \subseteq \Omega_0 \subseteq \Omega_1$ 
(see Figure \ref{fig:K_0}). 
The reason that we choose the size of $\Omega_0$ small is to use the good local property of 
the kernel function $G(\rho)$ around 0. 
We let $\theta_0$ in $\overline{\mathbb{R}^2_+} = \RR \times \overline{\RR}_+$ be defined as follows,
\begin{equation}\label{def:w(0)}
  \theta_0(x) = \mathbf{1}_{\Omega_0}(x) - \mathbf{1}_{\widetilde{\Omega}_0}(x),
\end{equation}
where $\widetilde{\Omega}_0$ is the reflection of $\Omega_0$ with respect to the $x_2$-axis.

The oddness of $\theta_0$ in $x_1$-variable and the local uniqueness result in Theorem \ref{thm:loc-reg}
imply that for every $t\in [0,T_{\theta_0})$ with $T_{\theta_0}>0$ the maximal existence time in Theorem \ref{thm:loc-reg},
\begin{equation}\label{def:w(t)}
  \theta(x,t) = \mathbf{1}_{\Omega(t)}(x) - \mathbf{1}_{\widetilde{\Omega}(t)}(x),
\end{equation}
where $\Omega(t)=\Phi_t(\Omega_0)$ and $\widetilde{\Omega}(t) = \Phi_t(\widetilde{\Omega}_0)$
(which is the reflection of $\Omega(t)$ with respect to the $x_2$-axis), and $\Phi_t(\cdot)$
is the particle trajectory mapping generated by the velocity $u$ 
which satisfies \eqref{eq:flow_map}.

Below we shall show that $T_{\theta_0} < \infty$ provided that $G(\rho)$ satisfies the assumptions \eqref{A1}-\eqref{A2};
that is, the patch solution $\theta$ develops singularity in finite time.

More specifically, let
\begin{align}\label{def:X(t)}
  X'(t) = - \frac{1}{2} \mathbf{F}(X(t)),\quad X(0)=3\epsilon,
\end{align}
where
\begin{align}\label{def:F}
  \mathbf{F}(\rho) \triangleq
  \begin{cases}
    c\, \rho (\log \rho^{-1}) G(\rho),\quad & \textrm{if \eqref{A2a} is assumed}, \\
    c\, \rho\, G(\rho), \quad & \textrm{if \eqref{A2b} is assumed},
  \end{cases}
\end{align}
with some absolute constant $c>0$ (depending only on $G$).
Equivalently,
\begin{align*}
  \int_{X(t)}^{3\epsilon}\frac{2}{\mathbf{F}(\rho)}\dd \rho = t.
\end{align*}
Define
\begin{align}\label{def:T*}
  T_* \triangleq \int_0^{3\epsilon}\frac{2}{\mathbf{F}(\rho)}\dd \rho
  = \begin{cases}
    \frac{2}{c} \,\int_0^{3\epsilon} \frac{1}{\rho (\log \rho^{-1}) G(\rho)} \dd \rho,
    \quad & \textrm{if \eqref{A2a} is assumed}, \\
    \frac{2}{c}\, \int_0^{3\epsilon} \frac{1}{\rho\, G(\rho)} \dd \rho, 
    \quad & \textrm{if \eqref{A2b} is assumed}.
  \end{cases}
\end{align}
It follows from \eqref{cond:G7} and \eqref{cond:M1} that $T_*<\infty$ and $X(T_*)=0$. Indeed, if \eqref{A2b} is assumed,
denote by $ G(\rho) = \widetilde{G}(\frac{1}{\rho})$, $\rho>0$, then \eqref{cond:M1}
is equivalent to $\lim\limits_{r\rightarrow \infty} \frac{r \widetilde{G}'(r)}{\widetilde{G}(r)} 
= \beta$, thus arguing as Remark \ref{rmk:m-grow}-($\mathrm{i}$), 
we get 
\begin{align*}
  G(\rho) = \widetilde{G}(\tfrac{1}{\rho}) \geq C \rho^{-\frac{\beta}{2}}, \quad
  \textrm{for $\rho>0$ small enough}, 
\end{align*}
which directly leads to $\int_0^{3\epsilon} \frac{1}{\rho\, G(\rho)} \dd \rho <\infty$.
\par

Let $\mathrm{k} \in \mathbb{N}^\star$ be fixed later, which is the slope of the following trapezoid. 
For every $t\in [0,T_{\theta_0})$, denote by (see Figure \ref{fig:K_0})
\begin{equation}\label{def:K(t)}
  \mathbb{K}(t) \triangleq \{x\in (\mathbb{R}_+)^2\, :\, x_1\in (X(t),\tfrac{2c_{*}}{\mathrm{k}}) 
  \text{ and } x_2\in (0, \mathrm{k} x_1) \},
\end{equation}
with $(\mathbb{R}_+)^2 \triangleq \mathbb{R}_+ \times \mathbb{R}_+$.
We shall show that
\begin{align*}
  \textrm{if\; $T_{\theta_0}>T_*$,\;\;\; then\;\; $\mathbb{K}(t)\subset \Omega(t)$\; for all $t\in [0, T_*]$}.
\end{align*}
Since $u_1(x_1,x_2)=-u_1(-x_1,x_2)$ and $u_2(x_1,x_2)=u_2(-x_1,x_2)$ for any $x\in \mathbb{R}^2_+$,
this yields a contradiction because then $\Omega(T_*)$ and $\widetilde{\Omega}(T_*)$ touch at the origin,
and so the solution can not remain regular on the whole $[0,T_{\theta_0})$.
\par

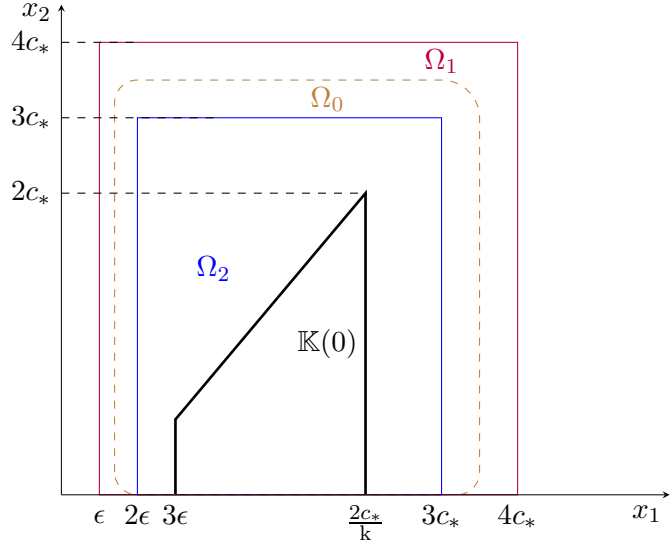
\begin{figure}[htbp]
	\begin{tikzpicture}
		
		\draw[blue]  (1,0) rectangle (5,5);
		\draw[purple]  (0.5,0) rectangle (6,6);
		\draw[line width=1pt,black] (1.5,0)--(1.5,1)--(4,4)--(4,0);

		\draw[dashed,brown] (0.7,0)[rounded corners=.3cm] -- (0.7,5.5)[rounded corners=.5cm] 
		 -- (5.5,5.5)[rounded corners=.3cm]  -- (5.5,0)
		  -- cycle;
		
		\draw[dashed] (0,5)--(2,5);
		\draw[dashed] (0,6)--(1,6);
		\draw[dashed] (0,4)--(4,4);
		
		\draw[-stealth,line width=0.2pt] (0,0) -- (8,0);
		\draw[-stealth,line width=0.2pt] (0,0) -- (0,6.5);
		
		\node[anchor=center] () at (3.5,2) {$\mathbb{K}(0)$}; 
		\node[anchor=center] () at (2,3) {\color{blue}{$\Omega_2$}};
		\node[anchor=center] () at (3.5,5.25) {\color{brown}{$\Omega_0$}};
		\node[anchor=center] () at (5,5.75) {\color{purple}{$\Omega_1$}};
		
		\node[anchor=north,below=2pt] () at (0.5,0) {$\epsilon$}; 
		\node[anchor=north] () at (1,0) {$2\epsilon$}; 
		\node[anchor=north] () at (1.5,0) {$3\epsilon$}; 
		\node[anchor=north] () at (4,0) {$\frac{2c_{\ast}}{\mathrm{k}}$}; 
		\node[anchor=north] () at (5,0) {$3c_{\ast}$}; 
		\node[anchor=north] () at (6,0) {$4c_{\ast}$}; 
		
		\node[anchor=east] () at (0,4) {$2c_{\ast}$}; 
		\node[anchor=east] () at (0,5) {$3c_{\ast}$}; 
		\node[anchor=east] () at (0,6) {$4c_{\ast}$};
		\node[anchor=north] () at (7.7,0) {$x_1$}; 
		\node[anchor=east] () at (0,6.4) {$x_2$}; 
		
	\end{tikzpicture}
	\caption{The domains $\Omega_1$, $\Omega_2$, $\Omega_0$ and $\mathbb{K}(0)$.}\label{fig:K_0}
\end{figure}

As a result, we exhibit the more precise version of Theorem \ref{thm:blow-up2}.
\begin{theorem} \label{thm:main_blowup}
Assume that $G(\rho)$ satisfies the assumptions \eqref{A1}-\eqref{A2}.
Let $\epsilon>0$ be a sufficiently small constant.  
Assume that $\theta_0(x)$ is an odd-in-$x_1$ function given by \eqref{def:w(0)},
with a bounded open domain $\Omega_0\subseteq (\mathbb{R}_+)^2$
such that $(2\epsilon,3c_*)\times(0,3c_*)\subseteq \Omega_0 \subseteq (\epsilon,4c_*)\times(0,4c_{*})$ and $\partial\Omega_0$
is a smooth simple closed curve.
Then there is no $H^2$ patch solution $\theta$ to the generalized SQG equation \eqref{eq:geSQG}$\&$\eqref{eq:u-exp}
on any interval $[0,T)$ with $T> \int_0^{3\epsilon} \frac{2}{\mathbf{F}(\rho)}\dd \rho$ and $\mathbf{F}(\rho)$ given by \eqref{def:F}.
\end{theorem}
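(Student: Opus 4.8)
The strategy follows the scheme of Kiselev--Ryzhik--Yao--Zlato\v{s} \cite{KRYZ}: we argue by contradiction, assuming an $H^2$ patch solution $\theta$ exists on $[0,T)$ with $T>T_*\triangleq\int_0^{3\epsilon}\frac{2}{\mathbf{F}(\rho)}\dd\rho$, and show that the trapezoidal region $\mathbb{K}(t)$ defined by \eqref{def:K(t)} remains inside $\Omega(t)=\Phi_t(\Omega_0)$ for all $t\in[0,T_*]$. Since $X(T_*)=0$ (guaranteed by \eqref{cond:G7} or \eqref{cond:M1}, as recorded in the excerpt), this forces $\Omega(T_*)$ and its reflection $\widetilde{\Omega}(T_*)$ to touch at the origin, contradicting the regularity of the patch on $[0,T_{\theta_0})$ with $T_{\theta_0}>T_*$. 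The containment $\mathbb{K}(0)\subset\Omega_0$ is built into the hypothesis $\Omega_2\subseteq\Omega_0$, so the whole matter reduces to a dynamical confinement argument controlling the motion of the four faces of $\mathbb{K}(t)$ under the flow map $\Phi_t$.

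The first block of work is the velocity estimates — this is Proposition \ref{prop:es-u}, which I would invoke rather than reprove, but whose role I must set up carefully. For $x$ on the relevant portions of $\partial\mathbb{K}(t)$ (the ``front'' near $x_1=X(t)$, the slanted top face, the right vertical face), I need: (a) a \emph{lower} bound on $-u_1(x,t)$ of the form $-u_1(x,t)\gtrsim \mathbf{F}(X(t))$ on the left face, pushing the front toward the origin — this is where the odd symmetry in $x_1$ is essential, since the reflected patch $\widetilde{\Omega}(t)$ contributes with the sign that reinforces the inward push, and where the key properties \eqref{eq:G-prop1c}/\eqref{cond:G1}/\eqref{eq:G2a} (monotonicity of $G(\rho)/\rho$ or $G(\rho)$) and the scaling/Osgood inputs \eqref{cond:G5}--\eqref{cond:G6} or \eqref{cond:M0}--\eqref{cond:M1} enter; (b) an \emph{upper} bound on $|u_2(x,t)|$ ensuring the slanted top face cannot be pushed up faster than slope $\mathrm{k}$ allows, i.e. the cone $\{x_2<\mathrm{k}x_1\}$ is preserved; (c) control near the corners and near the right/top boundary of $\Omega_0$ showing $\mathbb{K}(t)$ stays away from $\partial\Omega_0\setminus\partial\mathbb{R}^2_+$. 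The smallness of $c_0$ (hence of the support) is what makes the local behavior of $G$ near $0$ — the only place where \eqref{A2} gives information — the dominant contribution.

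With these estimates in hand, the confinement is a barrier/continuity argument. Define $X(t)$ by the ODE \eqref{def:X(t)}, so that the left edge of $\mathbb{K}(t)$ moves left at speed exactly $\tfrac12\mathbf{F}(X(t))$, which by estimate (a) is a lower bound for the actual inward speed of the front of $\Omega(t)$; an application of Proposition \ref{prop:flow-map} relating the patch solution to the flow map lets me track boundary points via \eqref{eq:flow_map}. Let $\tau\triangleq\sup\{t\in[0,T_*]:\mathbb{K}(s)\subset\Omega(s)\ \forall s\in[0,t]\}$; by continuity $\tau>0$, and on $[0,\tau)$ all velocity estimates (a)--(c) apply. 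I then show each face of $\mathbb{K}(t)$ is transported strictly into the interior of $\Omega(t)$: the left face is overtaken by the (faster-moving) front of $\Omega(t)$ because of (a) versus the defining ODE for $X$; the slanted and vertical faces stay inside because of (b) and (c); hence $\mathbb{K}(\tau)$ is compactly contained in $\Omega(\tau)$, so $\tau=T_*$. This yields $\mathbb{K}(T_*)\subset\Omega(T_*)$, and since $X(T_*)=0$ the point $(0,x_2)$ for small $x_2>0$ lies in $\overline{\Omega(T_*)}\cap\overline{\widetilde{\Omega}(T_*)}$, the desired contradiction.

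The main obstacle is step (a): extracting from the \emph{implicit} kernel $G$ — via the primitive $R$ of $-G(\rho)/\rho$ appearing in the contour/velocity formula \eqref{eq:velocity_normal}--\eqref{eq:contour} — a clean lower bound $-u_1(x,t)\gtrsim \mathbf{F}(X(t))=c\,X(t)(\log X(t)^{-1})G(X(t))$ in the \eqref{A2a} regime (or $c\,X(t)G(X(t))$ in the \eqref{A2b} regime), uniformly in the geometry of $\Omega_0$ within the allowed range. In the $\alpha$-SQG case one computes this integral explicitly; here one must split the velocity integral into the contribution from $\mathbb{K}(t)$ itself (which, using monotonicity \eqref{cond:G1} or \eqref{eq:G2a} and the near-origin asymptotics \eqref{cond:G5}--\eqref{cond:G6} or \eqref{cond:M0}--\eqref{cond:M1}, produces the logarithmic — respectively power — factor after integrating $G(\rho)/\rho$ across scales from $X(t)$ up to $c_*$), the contribution from the reflected patch across the $x_2$-axis (same sign, only helps), and error terms from $\Omega(t)\setminus\mathbb{K}(t)$ and from the region $\rho\geq c_0$ (bounded by \eqref{eq:G-prop2}/\eqref{conds:G-l}, hence negligible after choosing $\epsilon$ small). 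Making the constants uniform and absorbing the error terms is the delicate part; the slope parameter $\mathrm{k}$ is chosen large to widen the cone enough that the dominant logarithmic/power gain survives while the $u_2$ bound from (b) still closes.
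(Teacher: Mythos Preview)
Your overall scheme is the right one and matches the paper: contradiction, trapezoid $\mathbb{K}(t)$, velocity estimates from Proposition~\ref{prop:es-u}, first-touching-time argument. Two points, however, are off and would lead you astray if you tried to execute the plan as written.

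\textbf{The sign of the $u_2$ estimate.} What you call (b) --- an \emph{upper} bound on $|u_2|$ on the slanted face --- is not what is needed or what Proposition~\ref{prop:es-u} delivers. On the slanted segment $\mathcal{I}_2=\{x_2=\mathrm{k}x_1\}$ the outward normal to $\mathbb{K}(t)$ is $\mathbf{n}=\tfrac{1}{\sqrt{\mathrm{k}^2+1}}(-\mathrm{k},1)$, and what prevents the complement of $\Omega(t)$ from touching this face is that $u\cdot\mathbf{n}$ is sufficiently \emph{positive}. Proposition~\ref{prop:es-u} gives both $u_1(x)\leq -\mathbf{F}(x_1)$ and the \emph{lower} bound $u_2(x)\geq \mathbf{F}(x_2/\mathrm{k})=\mathbf{F}(x_1)$ there; together these yield $u\cdot\mathbf{n}>-\tfrac{\mathrm{k}+1}{\sqrt{\mathrm{k}^2+1}}X'(t_0)$, which is exactly what the backward-in-time argument uses. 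An upper bound on $|u_2|$ alone would not close. Relatedly, $\mathbb{K}(t)$ is not transported by the flow --- only its left edge moves, via the prescribed ODE for $X(t)$ --- so the phrasing ``each face of $\mathbb{K}(t)$ is transported strictly into the interior'' is misleading; the object being transported is $\partial\Omega(t)$ (equivalently the complement, via Proposition~\ref{prop:flow-map}).

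\textbf{The outer faces.} Your estimate (c) is unnecessary and is not how the paper proceeds. The right vertical face $\{x_1=2c_*/\mathrm{k}\}$ and the upper part of the slanted face are handled by the crude $L^\infty$ bound $\|u\|_{L^\infty}\leq\overline{C}$ (Lemma~\ref{lem:u-point-es}) together with the choice of $\epsilon$ small enough that $T_*\leq\delta_G/(2\overline{C})$: the complement of $\Omega_0$ is initially at distance $\gtrsim c_*$ from those faces, and finite speed of propagation keeps it there on $[0,T_*]$. This is why the first-touching point must lie on $\mathcal{I}_1\cup\mathcal{I}_2$ with $x_1\leq\delta_G$, where Proposition~\ref{prop:es-u} applies. (Incidentally, in the \eqref{A2a} regime one takes $\mathrm{k}=1$; only in the \eqref{A2b} regime is $\mathrm{k}=5$ needed, to make the explicit constants $\Pi_1(\beta),\Pi_2(\beta)$ have the right signs.)
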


We give the detailed proof of Theorem \ref{thm:main_blowup} in the section \ref{subsec:singularity}. 
Before that, in the section \ref{subsec:es-veloc} we show the estimates of the
velocity fields which play a crucial role in the finite-time singularity analysis.

\subsection{Estimates on the velocity fields.}\label{subsec:es-veloc}

In order to show the singularity scenario that the patch $\Omega(t)$ and its reflection across the 
$x_2$-axis touch at the origin in finite time, we need to prove that in an appropriate subset of 
$(\mathbb{R}_+)^2$, 
the horizontal velocity $u_1$ is sufficiently negative and the vertical velocity $u_2$ is sufficiently positive 
(at least for some time). This subsection is devoted to showing this result, which corresponds to Proposition \ref{prop:es-u} below.

We start with some basic pointwise estimates on the velocity field $u$.
\begin{lemma}\label{lem:u-point-es}
Let $\mathbf{D}$ be either $\mathbb{R}^2$ or $\mathbb{R}^2_+$.
Let $\theta(\cdot,t)\in L^1(\mathbf{D})\cap L^\infty(\mathbf{D})$, and $u(\cdot,t)$ be defined by \eqref{eq:u-exp} 
with $G(\rho)$ satisfying \eqref{A1}
with $\alpha\in(0,1)$. Then we have
\begin{equation}\label{es:u-Linf}
  \lVert u(t)\rVert_{L^\infty}  \leq \frac{C}{1-\alpha} \lVert \theta(t)\rVert_{L^{\infty}}
  + C \lVert \theta(t)\rVert_{L^1},
\end{equation}
and
\begin{align}\label{es:u-Hold}
  \|u(t)\|_{C^{1-\alpha}} \leq \frac{C}{\alpha(1-\alpha)} \lVert \theta(t)\rVert_{L^\infty} + C \|\theta(t)\|_{L^1},
\end{align}
where $C>0$ is a universal constant.
Furthermore, if $\theta$ is weak-$*$ continuous as an $L^\infty(\mathbf{D})$-valued function on the time interval~$[t_1,t_2]$,
and is supported on a fixed compact subset of $\overline{\mathbf{D}}$ for every $t\in [t_1,t_2]$,
then $u$ is continuous on $\overline{\mathbf{D}} \times [t_1,t_2]$.
\end{lemma}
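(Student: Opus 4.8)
The plan is to split the kernel $K(x) = \frac{x^\perp}{|x|^2} G(|x|)$ into a singular near-field piece supported on $B_{c_0}$ and a bounded far-field piece, using the bounds in \eqref{A1}. For the $L^\infty$ estimate, on $B_{c_0}$ one has $|K(x)| \le C_0 |x|^{-1-\alpha}$... wait — more precisely $|K(x)| = |x|^{-1}|G(|x|)| \le C_0 |x|^{-1-\alpha}$ on $(0,c_0)$ and $|K(x)| \le C_0 |x|^{-1}$ on $[c_0,\infty)$. Hmm, the far-field kernel is not integrable at infinity, so the natural splitting is: write $u = \int_{|x-y|\le 1} K(x-y)\theta(y)\,\dd y + \int_{|x-y|>1} K(x-y)\theta(y)\,\dd y$. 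On the inner region, split further at $c_0$: the piece on $\{|x-y|\le c_0\}$ is controlled by $C_0 \|\theta\|_{L^\infty}\int_0^{c_0} r^{-1-\alpha} r\,\dd r = \frac{C_0}{1-\alpha} c_0^{1-\alpha}\|\theta\|_{L^\infty}$, and the piece on $\{c_0<|x-y|\le 1\}$ is bounded by $C_0\|\theta\|_{L^\infty}\int_{c_0}^1 r^{-1} r \,\dd r \le C\|\theta\|_{L^\infty}$. On the outer region $\{|x-y|>1\}$ we simply use $|K(x-y)| \le C_0$ and bound by $C_0\|\theta\|_{L^1}$. Adding these gives \eqref{es:u-Linf}. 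The half-plane case follows identically since $|K(x-y)-K(x-\overline y)| \le |K(x-y)| + |K(x-\overline y)|$ and $|x-\overline y| \ge |x-y|$ for $x,y$ in the upper half-plane, so the reflected term only improves the bound (and we may extend $\theta$ by zero, noting $\|e_o[\theta]\|_{L^1}=2\|\theta\|_{L^1}$, etc., which changes only the universal constant).

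For the H\"older estimate \eqref{es:u-Hold}, I would follow the standard approach: fix $x,x'$ with $|x-x'| = h$; if $h \ge 1$ the bound follows from \eqref{es:u-Linf} since $\|u\|_{C^{1-\alpha}}$ over distances $\ge 1$ reduces to $\|u\|_{L^\infty}$, so assume $h<1$ (and $h< c_0$ by shrinking if needed). Split $u(x)-u(x')$ into the contribution from $B_{2h}(x)$ and its complement. On $B_{2h}(x)$ (and the corresponding ball around $x'$, contained in $B_{3h}(x)$), bound each of $u(x)$, $u(x')$ separately by $C_0\|\theta\|_{L^\infty}\int_0^{3h} r^{-1-\alpha} r\,\dd r = \frac{C_0}{1-\alpha}(3h)^{1-\alpha}\|\theta\|_{L^\infty}$, giving the factor $\frac{1}{1-\alpha}$. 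On the complement, use the mean value theorem: $|K(x-y)-K(x'-y)| \le |x-x'| \sup_{z \in [x,x']} |\nabla K(z-y)|$, and from \eqref{conds:G-s}–\eqref{conds:G-l} one checks $|\nabla K(z)| \le C|z|^{-2-\alpha}$ for $|z| \le c_0$ and $|\nabla K(z)| \le C|z|^{-1}$ for $|z|\ge c_0$ — this requires the bound on $G'$ as well as on $G/\rho$. Then $\int_{|x-y|\ge 2h, |x-y|\le c_0} |x-y|^{-2-\alpha}\,\dd y = C h^{-\alpha}$ (here the $\frac{1}{\alpha}$ is born), and the region $|x-y|\ge c_0$ contributes at most $C\|\theta\|_{L^1} + C\|\theta\|_{L^\infty}$ using $|\nabla K(z)|\le C/|z|$ up to $|z|\le 1$ and $|\nabla K| \le C_0$ beyond... actually more carefully, $\int_{c_0 \le |x-y|}|x-y|^{-1}\,|\theta(y)|\dd y \le C\|\theta\|_{L^\infty} + C\|\theta\|_{L^1}$ by splitting at radius $1$ again. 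Multiplying the gradient integrals by $h = |x-x'|$ and dividing by $h^{1-\alpha}$ yields the claimed bound $\frac{C}{\alpha(1-\alpha)}\|\theta\|_{L^\infty} + C\|\theta\|_{L^1}$. The half-plane version again only adds a reflected kernel with a larger argument, which is harmless.

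For the continuity statement, the idea is that $u(x,t) = \int K(x-y)\theta(y,t)\,\dd y$ with $\theta(\cdot,t)$ uniformly bounded in $L^\infty$ and supported in a fixed compact set $\mathcal{K}\subset\overline{\mathbf{D}}$; from \eqref{es:u-Linf} and \eqref{es:u-Hold}, $u(\cdot,t)$ is uniformly bounded in $C^{1-\alpha}$, so it suffices to prove continuity in $t$ at each fixed $x$. For that, write $u(x,t) - u(x,s) = \int K(x-y)(\theta(y,t)-\theta(y,s))\,\dd y$; outside a small ball $B_\delta(x)$ the kernel $y \mapsto K(x-y)\mathbf{1}_{\mathcal{K}}(y)$ is in $L^1$, so testing it against $\theta(\cdot,t)-\theta(\cdot,s) \rightharpoonup^* 0$ handles that part, while the contribution from $B_\delta(x)$ is $\le \frac{C\delta^{1-\alpha}}{1-\alpha}\sup_\tau\|\theta(\tau)\|_{L^\infty}$ uniformly in $t,s$, which can be made arbitrarily small by choice of $\delta$. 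Combining the uniform spatial modulus of continuity (from $C^{1-\alpha}$) with this temporal continuity gives joint continuity on $\overline{\mathbf{D}}\times[t_1,t_2]$.

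The main obstacle is purely bookkeeping: there is no single clean bound on $K$ or $\nabla K$ valid on all of $\mathbb{R}^2$ because the near-field scaling ($\rho^{-\alpha}$) and far-field behavior ($O(1)$, but with the $1/\rho$ transition from $G'$) differ, so every estimate requires a three-zone split at radii $\sim h$, $\sim c_0$, and $\sim 1$, and one must track which zone produces the $\frac{1}{1-\alpha}$ factor (the innermost, from the $\rho^{-\alpha}$ singularity) versus the $\frac{1}{\alpha}$ factor (the intermediate zone, from integrating $\rho^{-2-\alpha}$ against $\dd y$ down to scale $h$). Care is also needed so that the constants multiplying $\|\theta\|_{L^1}$ and the lower-order $\|\theta\|_{L^\infty}$ terms remain \emph{universal} — i.e. depend only on $\alpha$, $c_0$, $C_0$ — and in particular do not blow up as $\alpha \to 0$ or $\alpha\to 1$ except through the two displayed prefactors.
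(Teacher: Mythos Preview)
Your proposal is correct and follows essentially the same approach as the paper: split the kernel integral into a near-field piece (radius $\lesssim c_0$, controlled by $\|\theta\|_{L^\infty}$ via the $\rho^{-1-\alpha}$ bound on $|K|$) and a far-field piece (controlled by $\|\theta\|_{L^1}$ via the boundedness of $K$), and for the H\"older estimate use the standard $B_{2h}(x)$ / complement decomposition with the mean value theorem on the complement. The paper's proof is marginally cleaner in two respects: it splits only at $c_0$ (your extra split at radius $1$ is harmless but unnecessary, since $|K(z)|\le C_0/c_0$ for all $|z|\ge c_0$), and it handles the half-plane case by the odd extension $e_o[\theta]$ (writing $u(x,t)=\int_{\mathbb{R}^2}K(x-y)e_o[\theta](y,t)\,\dd y$) rather than by comparing $|x-y|$ with $|x-\overline y|$---both are valid, and your observation $|x-\overline y|\ge|x-y|$ is exactly what makes the reflected term harmless. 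The continuity argument is identical in both.
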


\begin{proof}
Since the whole space case is easier, we only treat the case $\mathbf{D} = \mathbb{R}^2_+$.
Recall that for a function $g$ defined on $\mathbb{R}^2_+$, $e_o[g](\cdot)$ given by \eqref{def:ext-op} 
is the odd extension function in
$\mathbb{R}^2$. Then we have
\begin{align}\label{eq:exp-u-extend}
  u(x,t) = \int_{\mathbb{R}^2}\frac{(x-y)^{\perp}}{|x-y|^2}G(|x-y|) e_o[\theta](y,t)\dd y.
\end{align}
Using the conditions \eqref{conds:G-s}-\eqref{conds:G-l}, we infer that
\begin{align*}
  |u(x,t)| &\leq  \int_{|x-y|\leq c_0} \frac{|e_o[\theta](y,t)|}{|x-y|}|G(|x-y|)|\dd y
  +  \int_{|x-y|>c_0} \frac{|e_o[\theta](y,t)|}{|x-y|} |G(|x-y|)| \dd y \\
  &\leq  C\|e_o[\theta](\cdot,t)\|_{L^\infty} \int_{|x-y|\leq c_0}
  \frac{|G(|x-y|)|}{|x-y|} \dd y +  C\|e_o[\theta](\cdot,t)\|_{L^1} \\
  & \leq \frac{C}{1-\alpha} \|\theta(\cdot,t)\|_{L^\infty} + C \|\theta(\cdot,t)\|_{L^1}.
\end{align*}
To prove \eqref{es:u-Hold}, consider any $x,z \in \overline{\mathbb{R}^2_+}$ with $r \triangleq |x-z|$
(with no loss of generality assuming $r<c_0$),
then by virtue of the conditions \eqref{conds:G-s}-\eqref{conds:G-l}, we get
\begin{align*}
  & |u(x,t)-u(z,t)| \\
  & \leq  \int_{B(x,2r)} \frac{|G(|x-y|)|}{|x-y|} |e_o[\theta](y,t)|\,\dd y
  +  \int_{B(x,2r)} \frac{|G(|z-y|)|}{|z-y|} |e_o[\theta](y,t)|\,\dd y \\
  & \quad + \int_{\mathbb{R}^2 \setminus B(x,2r)} \left| \frac{(x-y)^\perp}{|x-y|^2}G(|x-y|)
  - \frac{(z-y)^\perp}{|z-y|^2}G(|z-y|) \right| |e_o[\theta](y,t)|\,\dd y  \\
  &\leq C\lVert e_o[\theta]\rVert_{L^\infty}\int_0^{3r}|G(s)| \dd s\\
  & \quad + C\int_{\mathbb{R}^2\setminus B(x,2r)}
  \left| \frac{(x-y)^\perp}{|x-y|^{2}}G(|x-y|) - \frac{(z-y)^\perp}{|z-y|^{2}}G(|z-y|)
  \right| |e_o[\theta] (y,t)|\,\dd y \\
  & \leq C \lVert \theta(\cdot,t)\rVert_{L^{\infty}}  \left(\int_0^{3r} |G(s)|\dd s
  + r \int_r^{c_0} \big(s^{-1}|G(s)|+|G'(s)| \big)\,\dd s \right) + C r \|\theta(\cdot,t)\|_{L^1} \\
  & \leq \Big(\frac{C}{\alpha (1-\alpha)} \|\theta(\cdot,t)\|_{L^\infty} + C \|\theta(\cdot,t)\|_{L^1}\Big) |x-z|^{1-\alpha},
\end{align*}
where the third inequality follows from the mean value theorem.
Hence, \eqref{es:u-Hold} can be deduced from this inequality and \eqref{es:u-Linf}.
\par

For the last assertion, since the kernel $\frac{x^\perp}{|x|^2} G(|x|)$ belongs to $L^1$ on any compact subset of $\overline{\mathbb{R}^2_+}$,
the assumptions yield that $u(x,t)$ is continuous in $t\in [t_1,t_2]$ for any fixed $x\in \overline{\mathbb{R}^2_+}$.
The wanted result now follows from the uniform continuity of $u$ in $x$ as shown in \eqref{es:u-Hold}.
\end{proof}

Below we drop the $t$-variable in the functions for brevity.
For every $y=(y_1,y_2)\in (\mathbb{R}_+)^2$, denote by $\widetilde{y} \triangleq (-y_1,y_2)$ and
$\overline{y} = (y_1, - y_2)$.
If $\theta(\cdot)\in L^\infty (\mathbb{R}^2_+)$ is odd in $x_1$, then from \eqref{eq:u-exp} and \eqref{def:w(0)},
we infer that $u(x) = \big(u_1(x),u_2(x)\big)$
and
\begin{align*}
  u_1(x)  & = \int_{\mathbb{R}^2_{+}} \left(\frac{x_2-y_2}{|x-y|^2}G(|x-y|)
  -\frac{x_2+y_2}{|x-\overline{y}|^2}G(|x-\overline{y}|)\right)\theta(y)\dd y \\
  & = -\int_{(\mathbb{R}_+)^2} K_1(x,y) \theta(y) \dd y,
\end{align*}
with
\begin{equation}\label{def:K1}
\begin{split}
  K_1(x,y) & =
  \frac{y_2-x_2}{|x-y|^{2}}G(|x-y|) -
  \frac{y_2-x_2}{|x-\widetilde y|^{2}}G(|x-\widetilde{y}|) -
  \frac{y_2+x_2}{|x+y|^{2}}G(|x+y|) +
  \frac{y_2+x_2}{|x-\overline y|^{2}}G(|x-\overline{y}|) \\
  & \triangleq K_{11}(x,y) - K_{12}(x,y) - K_{13}(x,y) + K_{14}(x,y),
\end{split}
\end{equation}
and
\begin{align*}
  u_2(x) & = \int_{\mathbb{R}^2_{+}} \left(\frac{y_1 -x_1}{|x-y|^2}G(|x-y|)
  -\frac{y_1 - x_1}{|x-\overline{y}|^2}G(|x-\overline{y}|)\right)\theta(y)\dd y \\
  & = \int_{(\mathbb{R}_+)^2} K_2(x,y) \theta(y)\dd y,
\end{align*}
with
\begin{equation}\label{def:K2}
\begin{split}
  K_2(x,y) & =
  \frac{y_1-x_1}{|x-y|^{2}}G(|x-y|) +
  \frac{y_1+x_1}{|x-\widetilde y|^{2}}G(|x-\widetilde{y}|) -
  \frac{y_1+x_1}{|x+y|^{2}}G(|x+y|) -
  \frac{y_1-x_1}{|x-\overline y|^{2}}G(|x-\overline{y}|) \\
  & \triangleq K_{21}(x,y) + K_{22}(x,y) - K_{23}(x,y) - K_{24}(x,y).
\end{split}
\end{equation}

Similar as \cite[Lemma 4.2]{KRYZ}, we state some useful properties about the kernel functions $K_1$ and $K_2$.
\begin{lemma}\label{lem:lmm_K}
Assume that $G(\rho)$ satisfies that
\begin{align}\label{cond:Gadd}
  \textrm{on $(0,c_0)$,\quad $G(\rho)>0$\; and\; $\frac{G(\rho)}{\rho}$ is non-increasing}.
\end{align}
For every $x,y\in (\mathbb{R}_+)^2$ such that $|x+y|\leq c_0$, the following statements hold true.
\begin{enumerate}
\item[$\mathrm{(i)}$] $K_1(x,y) \geq K_{11}(x,y) - K_{12}(x,y)$. 	
\item[$\mathrm{(ii)}$] ${\rm sgn}(y_2-x_2) \big(K_{11}(x,y) - K_{12}(x,y)\big) \geq 0$.
\item[$\mathrm{(iii)}$] $K_2(x,y) \geq K_{21}(x,y) - K_{24}(x,y)$.
\item[$\mathrm{(iv)}$] ${\rm sgn}(y_1-x_1) \big(K_{21}(x,y) - K_{24}(x,y) \big) \geq 0$.
\end{enumerate}
\end{lemma}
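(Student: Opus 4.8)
\textbf{Proof proposal for Lemma \ref{lem:lmm_K}.}

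The plan is to reduce all four claims to two elementary monotonicity facts about the function $\rho \mapsto \frac{G(\rho)}{\rho}$ on $(0,c_0)$, then exploit the geometry of the four reflected points $y$, $\widetilde{y}=(-y_1,y_2)$, $-y=(-y_1,-y_2)$, $\overline{y}=(y_1,-y_2)$ relative to $x\in(\mathbb{R}_+)^2$. First I would introduce the shorthand $g(\rho)\triangleq \frac{G(\rho)}{\rho^2}$ (so that each $K_{1i}$ has the form $\pm(\text{vertical displacement})\,g(\text{distance})$ and each $K_{2i}$ the form $\pm(\text{horizontal displacement})\,g(\text{distance})$), and record that $\rho\,g(\rho)=\frac{G(\rho)}{\rho}$ is nonincreasing and positive on $(0,c_0)$ by \eqref{cond:Gadd}. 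The key geometric observations, valid whenever $x,y\in(\mathbb{R}_+)^2$, are the distance orderings
\begin{align*}
  |x-y|\le |x-\widetilde y| = |x+\overline{y}|,\qquad |x-\overline y|\le |x+y|=|x-\widetilde{\overline y}|,\qquad |x-y|\le |x-\overline y|,\quad |x-\widetilde y|\le |x+y|,
\end{align*}
which follow from $x_1,x_2,y_1,y_2>0$ by squaring: e.g. $|x-\widetilde y|^2-|x-y|^2 = 4x_1y_1\ge0$, and $|x+y|^2-|x-\overline y|^2=4x_1y_1\ge 0$, etc. I also note the sign facts that $x-y$ and $x-\widetilde y$ have the same second coordinate $x_2-y_2$, while $x+y$ and $x-\overline y$ share the second coordinate $x_2+y_2>0$.

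For part (iv), I would pair the two ``outer'' kernels: $K_{21}(x,y)-K_{24}(x,y) = (y_1-x_1)\big(g(|x-y|)-g(|x-\overline y|)\big)\cdot$(after factoring; more precisely $(y_1-x_1)\frac{G(|x-y|)}{|x-y|^2}-(y_1-x_1)\frac{G(|x-\overline y|)}{|x-\overline y|^2}$). Since $|x-y|\le|x-\overline y|$ and $\rho\mapsto\frac{G(\rho)}{\rho}$ is nonincreasing while $\frac1\rho$ is decreasing, we get $\frac{G(|x-y|)}{|x-y|^2}\ge\frac{G(|x-\overline y|)}{|x-\overline y|^2}$, so $K_{21}-K_{24}$ has the sign of $y_1-x_1$; this is (iv). For part (iii), I would show $K_{22}(x,y)-K_{23}(x,y)\ge0$ by the same mechanism: both terms carry the factor $y_1+x_1>0$, and $|x-\widetilde y|\le|x+y|$ gives $\frac{G(|x-\widetilde y|)}{|x-\widetilde y|^2}\ge\frac{G(|x+y|)}{|x+y|^2}$, hence $K_{22}\ge K_{23}$; adding this nonnegative quantity to $K_{21}-K_{24}$ gives $K_2\ge K_{21}-K_{24}$. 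Parts (i) and (ii) are the analogues with the roles of the coordinates swapped: for (ii) the pair $K_{11}-K_{12}=(y_2-x_2)\big(\frac{G(|x-y|)}{|x-y|^2}-\frac{G(|x-\widetilde y|)}{|x-\widetilde y|^2}\big)$ has the sign of $y_2-x_2$ because $|x-y|\le|x-\widetilde y|$; for (i), I check $K_{14}(x,y)-K_{13}(x,y)=(y_2+x_2)\big(\frac{G(|x-\overline y|)}{|x-\overline y|^2}-\frac{G(|x+y|)}{|x+y|^2}\big)\ge0$ since $y_2+x_2>0$ and $|x-\overline y|\le|x+y|$, and then $K_1=K_{11}-K_{12}+(K_{14}-K_{13})\ge K_{11}-K_{12}$.

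The main thing to be careful about — the only place where any real work hides — is that the comparison $\frac{G(a)}{a^2}\ge\frac{G(b)}{b^2}$ for $0<a\le b<c_0$ must genuinely use \emph{both} that $\frac{G}{\rho}$ is nonincreasing \emph{and} that $\rho\mapsto\frac1\rho$ is decreasing and $G>0$; writing $\frac{G(\rho)}{\rho^2}=\frac1\rho\cdot\frac{G(\rho)}{\rho}$ as a product of two positive nonincreasing functions on $(0,c_0)$ makes this transparent. The hypothesis $|x+y|\le c_0$ is exactly what guarantees that all four distances $|x-y|,|x-\widetilde y|,|x-\overline y|,|x+y|$ lie in $(0,c_0)$, so that \eqref{cond:Gadd} applies to each of them; I would state this at the outset. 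Everything else is bookkeeping of signs, and I would present it compactly rather than expanding each of the four cases in full.
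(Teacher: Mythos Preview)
Your proof is correct and follows essentially the same approach as the paper: the paper observes that $\frac{G(\rho)}{\rho^2}$ is non-increasing on $(0,c_0)$ (exactly your factorization $\frac{1}{\rho}\cdot\frac{G(\rho)}{\rho}$), then deduces (i) from $|x-\overline y|\le|x+y|$ and (ii) from $|x-y|\le|x-\widetilde y|$, with (iii)--(iv) obtained by swapping $\overline y$ and $\widetilde y$. Your version simply spells out the bookkeeping more explicitly.
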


\begin{proof}
Due to \eqref{cond:Gadd}, 
$\frac{G(\rho)}{\rho^2}$ is non-increasing on $(0,c_0)$. 
Thus (i) follows directly from $|x-\overline{y}|\leq |x+y|$ and (ii) from $|x-y| \leq |x-\widetilde y|$.
The proofs of (iii) and (iv) are ensured by exchanging $\overline{y}$ and $\widetilde{y}$.
\end{proof}

In view of Lemma \ref{lem:lmm_K}-(ii), we shall separately estimate the "bad" part and "good" part of the integral
\begin{equation}\label{def:u1g-u1b}
\begin{split}
  u_1(x) & = - \int_{\mathbb{R}_+\times(0,x_2)} K_1(x,y) \theta(y) \dd y
  - \int_{\mathbb{R}_+\times(x_2,\infty)} K_1(x,y) \theta(y) \dd y \\
  & \triangleq u_1^{\textrm{bad}}(x)  + u_1^{\textrm{good}}(x).
\end{split}
\end{equation}
Analogously, we also have the splitting for $u_2$:
\begin{equation}\label{def:u2g-u2b}
\begin{split}
  u_2(x) & = \int_{(0,x_1)\times \mathbb{R}_+} K_2(x,y) \theta(y) \dd y
  + \int_{(x_1,\infty)\times \mathbb{R}_+} K_2(x,y) \theta(y) \dd y \\
  & \triangleq u_2^{\textrm{bad}}(x) + u_2^{\textrm{good}}(x).
\end{split}
\end{equation}

The following result is concerned with the estimation for "bad" parts of $u_1$ and $u_2$.
\begin{lemma}\label{lem:bound_u}
Let the condition \eqref{cond:Gadd} be satisfied. Assume that $\theta$ is odd in $x_1$, and $0\leq \theta \leq 1$ on $(\mathbb{R}_+)^2$
and $\theta(y)\equiv 0$ for $y\in (\mathbb{R}_+)^2 \setminus \big((0,c_0/2)\times (0,c_0/2)\big)$.
The following statements hold.
\begin{enumerate}
\item[$\mathrm{(i)}$] If $x\in \overline{(\mathbb{R}_+)^2}$ and $x_1,x_2\leq c_{\ast}$,
\begin{align*}
  u_1^{\mathrm{bad}}(x) \leq 2\int_0^{x_1}\int_0^{x_2} \frac{s_2}{s_1^2 + s_2^2} G\Big(\sqrt{s_1^2+s_2^2}\Big)\dd s_2\dd s_1.
\end{align*}
\item[$\mathrm{(ii)}$] If $x\in \overline{(\mathbb{R}_+)^2}$ and $x_1,x_2 \leq c_{\ast}$, then
\begin{align*}
  u_2^{\mathrm{bad}}(x) \geq -2\int_0^{x_2}\int_0^{x_1} \frac{s_1}{s_1^2 + s_2^2} G\Big(\sqrt{s_1^2+s_2^2}\Big) \dd s_1\dd s_2.
\end{align*}
\end{enumerate}
\end{lemma}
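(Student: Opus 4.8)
The plan is to exploit the sign structure provided by Lemma \ref{lem:lmm_K}. For part (i), I would start from the decomposition \eqref{def:u1g-u1b} and focus on $u_1^{\mathrm{bad}}(x) = -\int_{\mathbb{R}_+\times(0,x_2)} K_1(x,y)\theta(y)\,\dd y$. On this region $y_2 < x_2$, so $\mathrm{sgn}(y_2-x_2)<0$; combining Lemma \ref{lem:lmm_K}(ii) with (i) gives $K_1(x,y) \geq K_{11}(x,y)-K_{12}(x,y)$, and the latter quantity is $\leq 0$ here. Since $0\leq\theta\leq1$, dropping the non-negative ``good'' kernel pieces $-K_{13}+K_{14}$ (which combine to something non-negative, making $-K_1\theta$ only larger to bound from above) and using $\theta\leq 1$ on the region where $K_{11}-K_{12}\leq 0$, I get $u_1^{\mathrm{bad}}(x) \leq -\int_{\mathbb{R}_+\times(0,x_2)} \big(K_{11}(x,y)-K_{12}(x,y)\big)\,\dd y$, but restricted further to the support constraint on $\theta$; more carefully, I should write $u_1^{\mathrm{bad}}(x) \leq \int_{\mathbb{R}_+\times(0,x_2)} \big(K_{12}(x,y)-K_{11}(x,y)\big)\mathbf{1}_{\{K_{11}\leq K_{12}\}}\,\dd y$ and then bound this by an explicit integral.

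The computational heart is then to evaluate or bound $\int_{\mathbb{R}_+\times(0,x_2)}\big(K_{12}-K_{11}\big)\,\dd y$. Writing $K_{11}-K_{12} = (y_2-x_2)\big[\tfrac{G(|x-y|)}{|x-y|^2} - \tfrac{G(|x-\widetilde y|)}{|x-\widetilde y|^2}\big]$ with $\widetilde y = (-y_1,y_2)$, I would change variables. The natural substitution is to reflect: the difference of the two terms over $y_1\in(0,\infty)$ telescopes against an integral over the full line, and after the shift $s_1 = x_1 - y_1$, $s_2 = x_2 - y_2$ the region $y_1>0$, $0<y_2<x_2$ maps in a way that produces precisely an integral of $\tfrac{s_2}{s_1^2+s_2^2}G(\sqrt{s_1^2+s_2^2})$ over $s_1\in(x_1-\infty,x_1)$, $s_2\in(0,x_2)$; combining the two halves (from $K_{11}$ and $K_{12}$) restricts the effective $s_1$-range to $(0,x_1)$ and doubles the integrand, giving the factor $2$ and the stated bound $2\int_0^{x_1}\int_0^{x_2}\tfrac{s_2}{s_1^2+s_2^2}G(\sqrt{s_1^2+s_2^2})\,\dd s_2\,\dd s_1$. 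The constraints $x_1,x_2\leq c_*$ and $\mathrm{supp}\,\theta\subseteq(0,c_0/2)^2$ ensure all relevant distances stay below $c_0$, so the hypothesis \eqref{cond:Gadd} applies throughout. Part (ii) follows by the symmetric argument with the roles of the two coordinates swapped, using Lemma \ref{lem:lmm_K}(iii)–(iv) in place of (i)–(ii), which is exactly the swap $\overline y \leftrightarrow \widetilde y$ noted in the proof of Lemma \ref{lem:lmm_K}; the sign flips because $u_2^{\mathrm{bad}}$ carries the opposite overall sign, yielding a lower bound.

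I expect the main obstacle to be bookkeeping the reflection/change-of-variables carefully so that the four half-plane images of $y$ (namely $y$, $\widetilde y$, $-y$, $\overline y$) combine to give exactly the clean integral over the first quadrant $(0,x_1)\times(0,x_2)$ with the correct constant $2$, rather than some larger constant or a region mismatch. In particular one must check that the ``good'' terms $K_{13},K_{14}$ genuinely drop out with the right sign (they should, since on $0<y_2<x_2$ they contribute a term with favorable sign to the upper bound), and that restricting to the subregion where $K_{11}-K_{12}\leq 0$ — which by Lemma \ref{lem:lmm_K}(ii) is automatic on $y_2<x_2$ — does not lose the factor structure. Once the geometry is set up correctly, the rest is a direct estimate using only monotonicity of $\rho\mapsto G(\rho)/\rho$ and positivity of $G$ near $0$.
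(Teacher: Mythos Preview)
Your plan is essentially the paper's approach. One point to sharpen: after using Lemma~\ref{lem:lmm_K}(i)--(ii) and $0\leq\theta\leq 1$ to reduce to $-\int_{(0,c_0/2)\times(0,x_2)}(K_{11}-K_{12})\,\dd y$, the clean mechanism is not a ``full-line telescope'' but the change of variables $y_1\mapsto y_1+2x_1$, which converts the $K_{12}$-integral over $(0,c_0/2)$ into the $K_{11}$-integral over $(2x_1,c_0/2+2x_1)$; the difference then collapses to $\int_{(0,2x_1)\times(0,x_2)}\tfrac{x_2-y_2}{|x-y|^2}G(|x-y|)\,\dd y$ plus a tail on $(c_0/2,c_0/2+2x_1)$ with favorable sign that is simply dropped. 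The substitution $s=x-y$ maps $y_1\in(0,2x_1)$ to $s_1\in(-x_1,x_1)$, and evenness of the integrand in $s_1$ yields the factor $2$ and the range $(0,x_1)$. With this correction your bookkeeping concern disappears, and part~(ii) is indeed the symmetric argument with $\widetilde y\leftrightarrow\overline y$.
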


\begin{proof}
\textbf{(i)} Due to that $|x+y|\leq |x|+|y|\leq c_0$ for every $x,y\in\mathrm{supp}\,\theta\subset (0,c_0/2)\times (0,c_0/2)$,
it follows from Lemma \ref{lem:lmm_K} that for every $x_1,x_2\leq c_{\ast}=\frac{c_0}{4}$,
\begin{align*}
  u_1^{\textrm{bad}}(x)
  &\leq  -\int_{\mathbb{R}_+\times(0,x_2)} \left(\frac{y_2-x_2}{|x-y|^{2}}G(|x-y|)
  - \frac{y_2-x_2}{|x-\tilde y|^{2}}G(|x-\widetilde{y}|)\right) \theta(y) \dd y \\
  &\leq  -\int_{(0,c_0/2)\times(0,x_2)} \left(\frac{y_2-x_2}{|x-y|^{2}}G(|x-y|)
  - \frac{y_2-x_2}{|x-\widetilde{y}|^{2}}G(|x-\widetilde{y}|)\right) \dd y \\
  &\leq \int_{(0,2x_1)\times(0,x_2)} \frac{x_2-y_2}{|x-y|^2}G(|x-y|)  \dd y,
\end{align*}
where in the third inequality we have used the following identity
\begin{align*}
  \int_{(0,c_0/2)\times(0,x_2)} \frac{y_2-x_2}{|x-\widetilde y|^{2}}G(|x-\widetilde{y}|) \dd y
  = \int_{(2x_1,c_0/2+2x_1)\times(0,x_2)} \frac{y_2-x_2}{|x-y|^{2}}G(|x-y|)\dd y.
\end{align*}
Now, using the change of variables~$s = x-y$ and by symmetry, 
we find
\begin{align*}
  u_1^{\textrm{bad}}(x) \leq 2\int_0^{x_1}\int_0^{x_2} \frac{s_2}{s_1^2 + s_2^2} G\Big(\sqrt{s_1^2+s_2^2}\Big) \dd s_2\dd s_1.
\end{align*}
\textbf{(ii)} The proof of part (ii) is analogous to that of (i), and we omit the details.
\end{proof}

In the estimation of the "good" parts of $u_1$ and $u_2$, we shall additionally assume that
for some $x\in (\mathbb{R}_+)^2$ we have $\theta \equiv1$ 
on the following triangle:
\begin{equation}\label{eqdef:A}
  \mathbb{A}(x) \triangleq \Big\{ y=(y_1,y_2)\,: \,  y_1\in \left(x_1,x_1+\tfrac{c_{*}}{\mathrm{k}} \right),
  y_2 \in \big(x_2, x_2 + \mathrm{k} (y_1 - x_1)\big) \Big\},
\end{equation}
where $\mathrm{k}\in\mathbb{N}^\star$ is a positive integer to be fixed later.

\begin{lemma}\label{lem:es-good}
Let the condition \eqref{cond:Gadd} be satisfied. 
Assume that $\theta$ is odd in $x_1$, $0\leq \theta \leq 1$ on $(\mathbb{R}_+)^2$,
$\theta(y)\equiv 0$ for every $y\in (\mathbb{R}_+)^2 \setminus \big((0,c_0/2)\times (0,c_0/2)\big)$
and for some $x\in \overline{(\mathbb{R}_+)^2} $
we have $\theta\geq \mathbf{1}_{\mathbb{A}(x)}$ on $(\mathbb{R}_+)^2$,
with $\mathbb{A}(x)$ given by \eqref{eqdef:A}.
Then the following statements hold true.
\begin{enumerate}
\item[$\mathrm{(i)}$] If $x_1\leq \frac{c_*}{4 \mathrm{k}}$, $x_2\leq c_*$,  
we have
\begin{equation}\label{es:u1good}
\begin{aligned}
  u_1^{\mathrm{good}}(x) \leq & \, 2 \mathrm{k} G\Big(\frac{c_*}{\mathrm{k}}\Big) x_1
  - \frac{2x_1}{\frac{4}{\mathrm{k}^2}+1}  \int_{2\mathrm{k} x_1}^{c_{\ast}}  
  \frac{G\big(\sqrt{\frac{4}{\mathrm{k}^2}+1} \,s\big)}{s}  \dd s \\
  &-\int_0^{2x_1}\dd s_1\int_0^{\mathrm{k} s_1} \frac{s_2}{s_1^2 + s_2^2} 
  G\Big(\sqrt{s_1^2+s_2^2}\Big)\dd s_2\\
  &-\int_{2x_1}^{4x_1}\dd s_1\int_{\mathrm{k}(s_1-2 x_1)}^{2\mathrm{k} x_1} 
  \frac{s_2}{s_1^2+s_2^2} G\Big(\sqrt{s_1^2+s_2^2}\Big)\dd s_2.
\end{aligned}
\end{equation}
\item[$\mathrm{(ii)}$] If $x_2\leq \frac{c_*}{4\mathrm{k}^2}$, $x_1\leq c_*$, 
we have
\begin{equation}\label{es:u2good-b}
	\begin{aligned}
		u_2^{\mathrm{good}}(x)\geq \int_{\mathrm{k} x_2}^{\frac{c_\ast}{\mathrm{k}}} \int_0^{2x_2}
        \frac{s_1}{s_1^2+s_2^2}\bigg( G\Big(\sqrt{s_1^2+s_2^2}\Big) 
		- \frac{1}{\mathrm{k}^2+1} G\Big(\sqrt{\mathrm{k}^2+1} \sqrt{s_1^2+s_2^2}\Big)\bigg) \dd s_2\dd s_1.
	\end{aligned}
\end{equation}
\end{enumerate}
\end{lemma}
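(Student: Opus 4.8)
\emph{Proof proposal.} Both parts follow the same three–step scheme. First, since $|x+y|\le c_0$ for every $y\in\operatorname{supp}\theta$, Lemma~\ref{lem:lmm_K} lets us keep only the principal pieces of the kernels: from \eqref{def:u1g-u1b}, $\theta\ge0$, $\theta\ge\mathbf 1_{\mathbb A(x)}$ and Lemma~\ref{lem:lmm_K}(i)--(ii) (which give $K_1\ge K_{11}-K_{12}\ge0$ on $\{y_2>x_2\}$) we obtain
\[
u_1^{\mathrm{good}}(x)\ \le\ -\int_{\mathbb A(x)}\bigl(K_{11}(x,y)-K_{12}(x,y)\bigr)\,dy ,
\]
and likewise, using \eqref{def:u2g-u2b} and Lemma~\ref{lem:lmm_K}(iii)--(iv),
\[
u_2^{\mathrm{good}}(x)\ \ge\ \int_{\mathbb A(x)}\bigl(K_{21}(x,y)-K_{24}(x,y)\bigr)\,dy .
\]
Second, we change variables so that everything becomes an explicit double integral over the triangle $\mathbb T_{\mathrm k}:=\{(s_1,s_2):0<s_1<c_*/\mathrm k,\ 0<s_2<\mathrm k s_1\}=\mathbb A(x)-x$ and shifted copies of it. Third, we extract the asserted expression and discard sub-regions of the wrong sign. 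Throughout, the only structural facts used about $G$ are those from \eqref{cond:Gadd}: $G>0$ on $(0,c_0)$, and $G(\rho)/\rho$ — hence $G(\rho)/\rho^2$ — is non-increasing there; no explicit formula for $G$ is available, which is the novelty compared with \cite{KRYZ,GanP21}.

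For (ii): writing $s=y-x$ in $K_{21}$ and $y_2+x_2=(y_2-x_2)+2x_2$ in $K_{24}$, the two contributions become $\int_0^{c_*/\mathrm k}\!\int_0^{\mathrm k s_1}g(s_1,\sigma)\,d\sigma\,ds_1$ and $\int_0^{c_*/\mathrm k}\!\int_{2x_2}^{2x_2+\mathrm k s_1}g(s_1,\sigma)\,d\sigma\,ds_1$ with $g(s_1,\sigma):=\frac{s_1}{s_1^2+\sigma^2}G(\sqrt{s_1^2+\sigma^2})$; the overlap on $\sigma\in(2x_2,\mathrm k s_1)$ cancels, leaving $\int_0^{c_*/\mathrm k}\bigl[\int_0^{2x_2}g(s_1,\sigma)\,d\sigma-\int_{\mathrm k s_1}^{\mathrm k s_1+2x_2}g(s_1,\sigma)\,d\sigma\bigr]ds_1$, whose integrand is $\ge0$ because $g(s_1,\cdot)$ is non-increasing. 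Restricting $s_1$ to $(\mathrm k x_2,c_*/\mathrm k)$ — which stays inside $\mathbb T_{\mathrm k}$ since $x_2\le c_*/(4\mathrm k^2)$ forces $\mathrm k s_1>\mathrm k^2x_2\ge2x_2$ — and shifting the subtracted integral back by $\mathrm k s_1$, one must compare $\sqrt{s_1^2+(\sigma+\mathrm k s_1)^2}$ with $\sqrt{\mathrm k^2+1}\sqrt{s_1^2+\sigma^2}$. The exact identity
\[
s_1^2+(\sigma+\mathrm k s_1)^2-(\mathrm k^2+1)(s_1^2+\sigma^2)=\mathrm k\,\sigma\,(2s_1-\mathrm k\sigma)\ \ge\ 0,
\]
valid on the strip because there $\sigma<2x_2<2s_1/\mathrm k$, together with the monotonicity of $G(\rho)/\rho^2$ gives precisely the subtracted term in \eqref{es:u2good-b}; discarding the (non-negative) contribution from $s_1\in(0,\mathrm k x_2)$ completes (ii).

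For (i): writing $s=y-x$ in $K_{11}$ and $y_1+x_1=(y_1-x_1)+2x_1$ in $K_{12}$ turns the bound into $u_1^{\mathrm{good}}(x)\le-\int_{\mathbb T_{\mathrm k}}h(s_1,s_2)\,ds+\int_{\mathbb T_{\mathrm k}+2x_1 e_1'}h(s_1,s_2)\,ds$ with the \emph{same} integrand $h(s_1,s_2):=\frac{s_2}{s_1^2+s_2^2}G(\sqrt{s_1^2+s_2^2})$, where $\mathbb T_{\mathrm k}+2x_1e_1'=\{(s_1,s_2):s_1\in(2x_1,2x_1+c_*/\mathrm k),\ 0<s_2<\mathrm k(s_1-2x_1)\}$ is the triangle translated by $2x_1$. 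One then partitions $\mathbb T_{\mathrm k}$ by the value of $s_1$: on $\{s_1<2x_1\}$ the term $-\int h$ is kept as is (third term of \eqref{es:u1good}); on $\{2x_1<s_1<4x_1\}$ one writes $(0,\mathrm k s_1)=(0,\mathrm k(s_1-2x_1))\cup(\mathrm k(s_1-2x_1),2\mathrm k x_1)\cup(2\mathrm k x_1,\mathrm k s_1)$, the first slice cancelling exactly against the matching portion of the translated integral and the second slice giving the fourth term of \eqref{es:u1good}; on the core set $\{2x_1<s_1<4x_1,\ 2\mathrm k x_1<s_2<\mathrm k s_1\}\cup\{4x_1<s_1<c_*/\mathrm k,\ \mathrm k(s_1-2x_1)<s_2<\mathrm k s_1\}$ one checks (using $x_1\le c_*/(4\mathrm k)$) that $s_2>2\mathrm k x_1$ and $s_1<2s_2/\mathrm k$, hence $|s|\le\sqrt{4/\mathrm k^2+1}\,s_2$, so $h(s_1,s_2)\ge\frac{G(\sqrt{4/\mathrm k^2+1}\,s_2)}{(4/\mathrm k^2+1)\,s_2}$ by monotonicity of $G(\rho)/\rho^2$; integrating this lower bound in $s_1$ over an interval of length $2x_1$ for each $s_2$ produces exactly $-\frac{2x_1}{4/\mathrm k^2+1}\int_{2\mathrm k x_1}^{c_*}\frac{G(\sqrt{4/\mathrm k^2+1}\,s)}{s}\,ds$ (up to an $O(x_1^2)$ boundary error). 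Finally the portion of the translated integral not absorbed by these cancellations lives on $\{s_1>c_*/\mathrm k\}$, where $|x-\widetilde y|\asymp s_1\ge c_*/\mathrm k$ gives, via $G(\rho)/\rho^2\le\mathrm k^2 G(c_*/\mathrm k)/c_*^2$ and the $s_1$-extent $2x_1$, a bound by $2\mathrm k G(c_*/\mathrm k)x_1$; all remaining negative contributions are dropped.

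The main obstacle is precisely the bookkeeping in part (i): arranging the partition of $\mathbb T_{\mathrm k}$ so that the cancellations against the translated $K_{12}$–integral are \emph{exact}, verifying that every sub-region genuinely sits inside $\mathbb A(x)-x$ under the smallness constraints $x_1\le c_*/(4\mathrm k)$ (resp. $x_2\le c_*/(4\mathrm k^2)$), and making sure that each discarded piece carries the sign that preserves the inequality — all with only $G>0$ and $G(\rho)/\rho^2$ non-increasing at hand. The accompanying distance comparisons — $|x-\overline y|\ge\sqrt{\mathrm k^2+1}\,|x-y|$ on the strip in (ii), and $|x-y|\le\sqrt{4/\mathrm k^2+1}\,s_2$ on the core set in (i) — together with turning these into $G$-estimates through the monotonicity, form the delicate computational core; part (ii) is essentially transparent once the overlap-cancellation and the algebraic identity above are in place.
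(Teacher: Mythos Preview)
Your approach mirrors the paper's proof almost exactly: reduce via Lemma~\ref{lem:lmm_K} to $K_{11}-K_{12}$ (resp.\ $K_{21}-K_{24}$), translate the reflected term by $2x_1e_1$ (resp.\ $2x_2e_2$), and decompose the resulting symmetric difference. Part (ii) is correct and matches the paper's argument, including the key identity $s_1^2+(\sigma+\mathrm{k}s_1)^2-(\mathrm{k}^2{+}1)(s_1^2+\sigma^2)=\mathrm{k}\sigma(2s_1-\mathrm{k}\sigma)$; the parenthetical ``$\mathrm{k}^2x_2\ge 2x_2$'' is unnecessary (and fails for $\mathrm{k}=1$), but the overlap cancellation and sign argument go through regardless.

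Part (i), however, has a real gap. Your ``core set'' is $\mathbb{A}_{12}\cap\mathbb{T}_{\mathrm{k}}$, not the full parallelogram strip $\mathbb{A}_{12}=\{s_2\in(2\mathrm{k}x_1,c_*),\ s_1\in(s_2/\mathrm{k},\,s_2/\mathrm{k}+2x_1)\}$: for $s_2\in(c_*-2\mathrm{k}x_1,c_*)$ your $s_1$-slice has width $c_*/\mathrm{k}-s_2/\mathrm{k}<2x_1$. Consequently the step ``integrating this lower bound in $s_1$ over an interval of length $2x_1$ for each $s_2$'' overstates the negative contribution; the ``$O(x_1^2)$ boundary error'' you mention carries the \emph{wrong sign} for the claimed inequality, so as written you only obtain \eqref{es:u1good} plus a positive $O(x_1^2)$ remainder, which is strictly weaker than the lemma. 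The paper's fix is simple but essential: add and subtract the small triangle $\mathbb{A}_{12}\setminus\mathbb{T}_{\mathrm{k}}$ so that the negative integral is taken over the full parallelogram $\mathbb{A}_{12}$ (width exactly $2x_1$ for every $s_2$) and the positive ``leftover'' is correspondingly enlarged to the full rectangle $\mathbb{A}_2=(c_*/\mathrm{k},\,c_*/\mathrm{k}+2x_1)\times(0,c_*)$. Your pointwise bound $h\ge G(\sqrt{4/\mathrm{k}^2+1}\,s_2)/((4/\mathrm{k}^2+1)s_2)$ already holds on all of $\mathbb{A}_{12}$ (since $s_1<s_2/\mathrm{k}+2x_1<2s_2/\mathrm{k}$ there), and on $\mathbb{A}_2$ one bounds $h\le \mathrm{k}G(c_*/\mathrm{k})/c_*$ using $s_2/|s|\le 1$ together with $G(\rho)/\rho$ non-increasing (not $G(\rho)/\rho^2$, which would give $\mathrm{k}^2G(c_*/\mathrm{k})x_1$ rather than $2\mathrm{k}G(c_*/\mathrm{k})x_1$). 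With that reorganization the inequality \eqref{es:u1good} follows exactly, with no error term.
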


\begin{proof}
\textbf{(i)} Using Lemma \ref{lem:lmm_K} and the changing of variables $y_1\mapsto y_1+2x_1$, we obtain
\begin{align*}
  u_1^{\textrm{good}}(x)&\leq - \int_{\mathbb{A}(x)}
  \left(\frac{y_2-x_2}{|x-y|^{2}}G(|x-y|) -  \frac{y_2-x_2}{|x-\widetilde y|^{2}}G(|x-\widetilde{y}|)\right) \dd y \\
  &= -\int_{\mathbb{A}(x)} \frac{y_2-x_2}{|x-y|^{2}}G(|x-y|) \dd y +
  \int_{\mathbb{A}(x)+2x_1e_1} \frac{y_2-x_2}{|x-y|^{2}}G(|x-y|) \dd y,
\end{align*}
with $e_1 \triangleq (1,0)$.
After some cancellations, it directly leads to
\begin{align*}
  u_1^{\textrm{good}}(x) & \leq - \int_{\mathbb{A}_1}\frac{y_2-x_2}{|x-y|^{2}} G(|x-y|) \dd y
  + \int_{\mathbb{A}_2}\frac{y_2-x_2}{|x-y|^{2}}G(|x-y|) \dd y  \\
  & \triangleq \mathcal{T}_1 + \mathcal{T}_2,
\end{align*}
where
\begin{equation*}
\begin{split}
  \mathbb{A}_1 &\triangleq \big\{ y=(y_1,y_2) \,:\,  
  y_2 \in \left(x_2, x_2 + c_* \right), 
  y_1\in \left(x_1+\tfrac{y_2-x_2}{\mathrm{k}}, 3x_1+\tfrac{y_2-x_2}{\mathrm{k}} \right)  
  \big\},\\
  \mathbb{A}_2 & \triangleq \left(x_1+\tfrac{c_*}{\mathrm{k}}, 
  3 x_1+\tfrac{c_*}{\mathrm{k}}\right) \times 
  \left(x_2, x_2 + c_*\right).
\end{split}
\end{equation*}
Since for every $y\in \mathbb{A}_2$ we have $y_2-x_2\leq c_{*} 
\leq \mathrm{k}|x-y| \leq \mathrm{k} c_0$, we use \eqref{cond:Gadd} to infer that
\begin{align*}
   \mathcal{T}_2 \leq  |\mathbb{A}_2| \tfrac{\mathrm{k}}{c_*} 
   G(\tfrac{c_*}{\mathrm{k}}) = 2 \mathrm{k} G(\tfrac{c_*}{\mathrm{k}}) x_1.
\end{align*}
For $\mathcal{T}_1$, inspired by \cite{GanP21}, we split it into three parts
\begin{align*}
	\mathcal{T}_1=\mathcal{T}_{11}+\mathcal{T}_{12}+\mathcal{T}_{13},
\end{align*}
where $\mathcal{T}_{1i}$, $i=1,2,3$ is given by 
\begin{align*}
  \mathcal{T}_{1i}=-\int_{\mathbb{A}_{1i}} 
  \frac{y_2-x_2}{|x-y|^2} G(|x-y|) \dd y,
\end{align*}
with
\begin{align*}
  \mathbb{A}_{11}\triangleq \mathbb{A}_1\cap \big((x_1,3x_1)\times \mathbb{R}_+ \big),\quad
 \mathbb{A}_{12}\triangleq \mathbb{A}_1\cap \big(\mathbb{R}_+\times (x_2 + 2 \mathrm{k} x_1,+\infty)\big),
\end{align*}
and 
\begin{align*}
  \mathbb{A}_{13}\triangleq \big\{(y_1,y_2)\, : \, y_1\in (3x_1,5x_1),\; 
  y_2\in (x_2+ \mathrm{k} (y_1-3x_1), x_2 + 2 \mathrm{k} x_1)\big\}.
\end{align*}
One can see Figure \ref{fig:A} for the illustration of these domains.
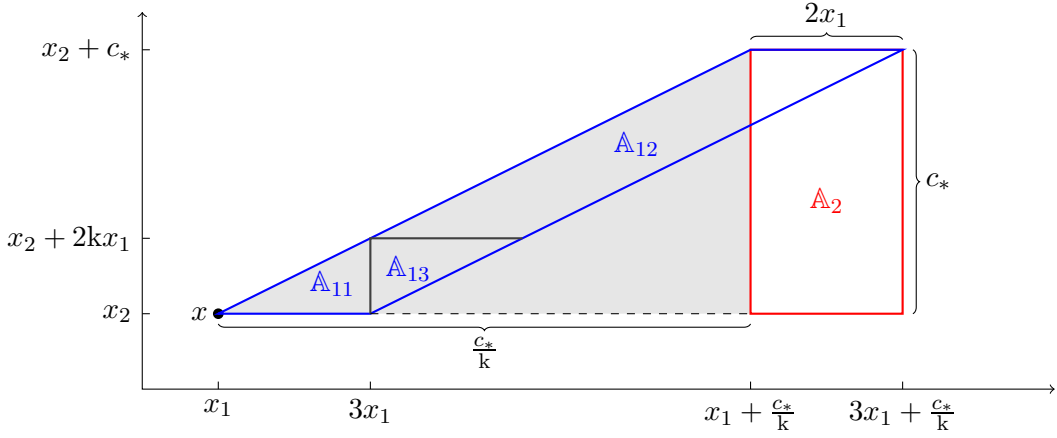
\begin{figure}[htbp] 
	\begin{tikzpicture}
        \fill[color=gray!20] (1,1) -- (8,1) -- (8,4.5);
        \draw[<->] (0,5) -- (0,0) -- (12,0);
		\node[left] () at (1,1) {$x$}; \fill (1,1) circle[radius=2pt];
	    \draw[red,thick] (8,1) rectangle (10,4.5);
		\draw[blue,thick] (1,1)--(3,1)--(10,4.5)--(8,4.5)--cycle;
		\draw[darkgray,thick] (3,1)--(3,2)--(5,2);
        \draw[dashed] (3,1)--(8,1);
  
		\node[anchor=center] () at (2.5,1.4) {\color{blue}{$\mathbb{A}_{11}$}}; 
		\node[anchor=center] () at (3.5,1.6) {\color{blue}{$\mathbb{A}_{13}$}}; 
		\node[anchor=center] () at (6.5,3.25) {\color{blue}{$\mathbb{A}_{12}$}}; 
		\node[anchor=center] () at (9,2.5) {\color{red}{$\mathbb{A}_{2}$}}; 
		
		\draw[decorate,decoration={brace,raise=1pt}] (8,0.9)--(1,0.9) node[below=2pt,midway] {$\tfrac{c_\ast}{\mathrm{k}}$};
		
		\draw[decorate,decoration={brace,raise=1pt}] (10.1,4.5)--(10.1,1) node[right=2pt,midway] {$c_{\ast}$};
		
		\draw[decorate,decoration={brace,raise=1pt}] (8,4.6)--(10,4.6) node[above=2pt,midway] {$2x_{1}$};

        \draw (1,0.1) -- (1,0) node[below]{$x_1$}; 
        \draw (3,0.1) -- (3,0) node[below]{$3x_1$};
        \draw (8,0.1) -- (8,0) node[below]{$x_1+\tfrac{c_\ast}{\mathrm{k}}$}; 
        \draw (10,0.1) -- (10,0) node[below]{$3x_1+\tfrac{c_\ast}{\mathrm{k}}$}; 
        \draw (0.1,1) -- (0,1) node[left]{$x_2$}; 
        \draw (0.1,2) -- (0,2) node[left]{$x_2+2{\mathrm{k}}x_1$}; 
        \draw (0.1,4.5) -- (0,4.5) node[left]{$x_2+c_{\ast}$};

	\end{tikzpicture}
\caption{A demonstration of the domains {\color{blue}$\mathbb{A}_1 = \mathbb{A}_{11}\cup \mathbb{A}_{12}\cup \mathbb{A}_{13}$} and {\color{red}$\mathbb{A}_2$}. The gray region represents $\mathbb{A}(x)$.}\label{fig:A}
\end{figure}

Via the change of variables, we can write $\mathcal{T}_{11}$ as 
\begin{align*}
  \mathcal{T}_{11}=
  &-\int_{x_1}^{3x_1}\dd y_1 \int_{x_2}^{x_2+ \mathrm{k}(y_1-x_1)} \frac{(y_2-x_2)}{|x-y|^2} G(|x-y|)\dd y_2 \\
  = & -\int_0^{2x_1}\dd s_1\int_0^{\mathrm{k} s_1} \frac{s_2}{s_1^2+s_2^2} G\Big(\sqrt{s_1^2+s_2^2}\Big) \dd s_2.
\end{align*}
Now we move to the estimate of $\mathcal{T}_{12}$. Since $(y_1,y_2)\in \mathbb{A}_{12}$ implies that 
$y_2-x_2\geq 2 \mathrm{k} x_1$ and $y_2-x_2\geq \mathrm{k}(y_1-3x_1) = \mathrm{k}(y_1-x_1)-2 \mathrm{k} x_1$, 
we see that $2(y_2-x_2)\geq \mathrm{k}(y_1-x_1)$, 
which leads to 
\begin{align*}
  |x-y|^2 \leq \big(\tfrac{4}{\mathrm{k}^2} +1\big) (y_2-x_2)^2.
\end{align*}
Thus by using the condition \eqref{cond:Gadd} we find
\begin{equation*}
\begin{split}
  \mathcal{T}_{12} &
  = - \int_{x_2+2 \mathrm{k} x_1}^{x_2+c_\ast} 
  \int_{x_1+\frac{y_2-x_2}{\mathrm{k}}}^{{3x_1+\frac{y_2-x_2}{\mathrm{k}}}} 
  \frac{y_2-x_2}{|x-y|^2} G(|x-y|) \dd y_1 \dd y_2 \\
  & \leq - \frac{2x_1}{\frac{4}{\mathrm{k}^2} + 1}  
  \int_{x_2 + 2 \mathrm{k} x_1}^{x_2+c_\ast}  
  \frac{G\Big(\sqrt{\frac{4}{\mathrm{k}^2}+1}(y_2-x_2) \Big)}{y_2-x_2}  \dd y_2 \\
  & = - \frac{2x_1}{\frac{4}{\mathrm{k}^2}+1}  \int_{2\mathrm{k} x_1}^{c_\ast}  
  \frac{G\Big(\sqrt{\frac{4}{\mathrm{k}^2}+1}\, s\Big)}{s}  \dd s.
\end{split}
\end{equation*}
For the term $\mathcal{T}_{13}$, we directly have
\begin{align*}
  \mathcal{T}_{13}  = &-\int_{3x_1}^{5x_1}\dd y_1\int_{x_2+\mathrm{k}(y_1-3x_1)}^{x_2+2 \mathrm{k} x_1} 
  \frac{y_2-x_2}{|x-y|^2}G(|x-y|)\dd y_2 \\
  = & - \int_{2x_1}^{4x_1}\dd s_1\int_{\mathrm{k}(s_1-2x_1)}^{2 \mathrm{k} x_1} 
  \frac{s_2}{s_1^2+s_2^2} G\Big(\sqrt{s_1^2+s_2^2}\Big) \dd s_2.
\end{align*}
Collecting the estimates of $\mathcal{T}_1$ and $\mathcal{T}_2$ yields \eqref{es:u1good}, as desired.
\vskip1mm

\textbf{(ii)} Using Lemma \ref{lem:lmm_K} and the change of variables $y_2\mapsto y_2+2x_2$, we deduce that
\begin{equation*}
\begin{split}
  u_2^{\textrm{good}}(x) &\geq  \int_{\mathbb{A}(x)}
  \left(\frac{y_1-x_1}{|x-y|^{2}}G(|x-y|) - \frac{y_1-x_1}{|x-\overline{y}|^{2}}G(|x-\overline{y}|)\right) \dd y \\
  &= \int_{\mathbb{A}(x)} \frac{y_1-x_1}{|x-y|^{2}}G(|x-y|) \dd y -
  \int_{\mathbb{A}(x) + 2x_2 e_2} \frac{y_1-x_1}{|x-y|^2}G(|x-y|) \dd y,
\end{split}
\end{equation*}
with $e_2 \triangleq (0,1)$.
After some cancellation it follows that
\begin{align*}
   u_2^{\textrm{good}}(x) \geq \int_{\mathbb{B}_1} \frac{y_1-x_1}{|x-y|^{2}}G(|x-y|)  \dd y
   - \int_{\mathbb{B}_2} \frac{y_1-x_1}{|x-y|^{2}}G(|x-y|)  \dd y,
\end{align*}
where
\begin{equation*}
\begin{split}
  \mathbb{B}_1 &\triangleq \left(x_1, x_1 + \tfrac{c_{*}}{\mathrm{k}} \right) \times \left(x_2, 3x_2\right), \\
  \mathbb{B}_2 &\triangleq \big\{ y=(y_1,y_2): y_1 \in \left(x_1, x_1 + \tfrac{c_*}{\mathrm{k}} \right),
  y_2\in \left(x_2+\mathrm{k}(y_1-x_1), 3x_2+\mathrm{k}(y_1-x_1) \right) \big\}.
\end{split}
\end{equation*}
One can see  Figure \ref{fig:B} for the illustration of $\mathbb{B}_1$ 
and $\mathbb{B}_2$.
\begin{figure}[htbp]
	\begin{tikzpicture}
         \fill[color=gray!20] (1.5,.8) -- (8.5,.8) -- (8.5,4.3);

		\draw[<->] (0,6.5) -- (0,0) -- (10,0);
		\node[below, xshift=-5pt] () at (1.5,.8) {$x$}; \fill (1.5,.8) circle[radius=2pt];
			
		\draw[red, thick] (1.5,.8)--(1.5,2.4)--(8.5,5.9)--(8.5,4.3)--cycle;
		\draw[blue, thick] (1.5,.8) rectangle (8.5,2.4);
        \draw[dashed] (8.5,4.3)--(8.5,2.4);
  
		\node[anchor=center] () at (5,1.6) {\color{blue}{$\mathbb{B}_{1}$}}; 
		\node[anchor=center] () at (5,3.35) {\color{red}{$\mathbb{B}_{2}$}}; 
  
		\draw[decorate,decoration={brace,raise=1pt}] (8.5,0.7)--(1.5,0.7) node[below=2pt,midway] {$\tfrac{c_{\ast}}{\mathrm{k}}$};
		\draw[decorate,decoration={brace,raise=1pt}] (8.6,4.3)--(8.6,0.8) node[right=2pt,midway]  {$c_{\ast}$};
		\draw[decorate,decoration={brace,raise=1pt}] (1.4,0.8)--(1.4,2.4) node[left=2pt,midway]  {$2x_{2}$};

        \draw (1.5,0.1) -- (1.5,0) node[below]{$x_1$}; 
        \draw (8.5,0.1) -- (8.5,0) node[below]{$x_1+\tfrac{c_\ast}{\mathrm{k}}$}; 
        \draw (0.1,.8) -- (0,.8) node[left]{$x_2$}; 
        \draw (0.1,2.4) -- (0,2.4) node[left]{$3x_2$}; 
        \draw (0.1,4.3) -- (0,4.3) node[left]{$x_2+c_{\ast}$};         
        \draw (0.1,5.9) -- (0,5.9) node[left]{$3x_2+c_{\ast}$};         

	\end{tikzpicture}
	\caption{A demonstration of the domains {\color{blue}$\mathbb{B}_{1}$} and {\color{red}$\mathbb{B}_2$}.  The gray region represents $\mathbb{A}(x)$.}\label{fig:B}
\end{figure}
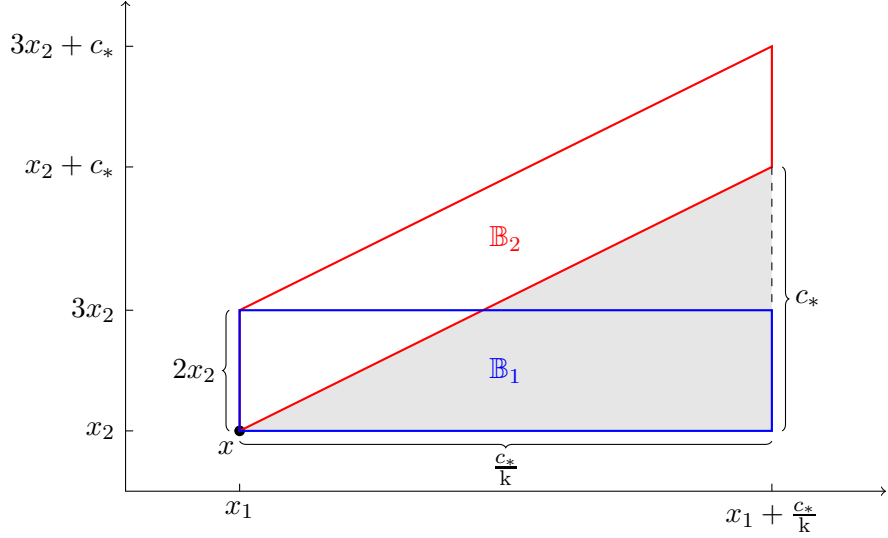

The change of variables $y_2\mapsto y_2-(y_1-x_1)$ in the above second integral gives
\begin{align*}
  &u_2^{\textrm{good}}(x) \geq\\
  &\int_{\mathbb{B}_1} \left( \frac{y_1-x_1}{|x-y|^{2}}G(|x-y|)
  - \frac{y_1-x_1}{|x-(y_1,y_2+\mathrm{k}(y_1-x_1))|^{2}}
  G\big(|x-(y_1,y_2+\mathrm{k} (y_1-x_1))|\big) \right) \dd y.
\end{align*}
Due to that the integrand is positive, and noting that for every $y\in (x_1+\mathrm{k}x_2, x_1 + \tfrac{c_*}{\mathrm{k}} ) \times (x_2, 3x_2)$,
$2(y_1-x_1)> 2\mathrm{k}x_2>\mathrm{k}(y_2-x_2)>0$ and also
\begin{align*}
  \big|x-\big(y_1,y_2+\mathrm{k}(y_1-x_1)\big)\big|^2 
  & = (\mathrm{k}^2+1)|x-y|^2+\mathrm{k}(y_2-x_2)
  \big[ 2(y_1-x_1)-\mathrm{k}(y_2-x_2)\big] \\
  & > (\mathrm{k}^2+1)|x-y|^2,
\end{align*}
we use the condition \eqref{cond:Gadd} to get
\begin{align*}
  u_2^{\textrm{good}}(x) & \geq \int_{(x_1+kx_2, x_1 + \frac{c_\ast}{\mathrm{k}} ) \times (x_2, 3x_2)}
  \frac{y_1-x_1}{|x-y|^2} \left(G(|x-y|)-\frac{1}{\mathrm{k}^2+1}
  G\big(\sqrt{\mathrm{k}^2+1}|x-y|\big) \right) \dd y \\
  &= \int_{(\mathrm{k}x_2, \frac{c_\ast}{\mathrm{k}} ) \times (0, 2x_2)}
  \frac{s_1}{s_1^{2}+s_2^2}\left( G\Big(\sqrt{s_1^2+s_2^2}\Big)-\frac{1}{\mathrm{k}^2+1}
  G\Big(\sqrt{\mathrm{k}^2+1}\sqrt{s_1^2+s_2^2}\Big) \right) \dd s_1 \dd s_2.
\end{align*}
\end{proof}

Based on Lemmas \ref{lem:bound_u} and \ref{lem:es-good}, we have the following crucial result 
on the control of the velocity field.
\begin{proposition}\label{prop:es-u}
Let the assumptions \eqref{A1}-\eqref{A2} be assumed.
Assume that $\theta$ is odd in $x_1$ and for some 
$x\in  \overline{(\mathbb{R}_+)^2} $ 
we have $\mathbf{1}_{\mathbb{A}(x)} \leq \theta \leq 1$ on 
$(\mathbb{R}_+)^2$,
and $\theta(y)=0$ if $y\in (\mathbb{R}_+)^2 \setminus (0,c_0/2)\times (0,c_0/2)$.
Then there exist a positive integer $\mathrm{k}$ and a small constant $\delta_G\in (0,c_*)$ 
such that for every $x_2\leq \mathrm{k}x_1\leq \delta_G$,
\begin{align}\label{eq:u1-bd}
  u_1(x)\leq - \mathbf{F}(x_1),
\end{align}
and for every $ \mathrm{k} x_1 \leq x_2 \leq \delta_G$, 
\begin{align}\label{eq:u2-bd}
  u_2(x) \geq \mathbf{F}(\tfrac{x_2}{\mathrm{k}}),
\end{align}
where $\mathbf{F}(\rho)$ is given by \eqref{def:F}.
\end{proposition}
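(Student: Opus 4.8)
The plan is to combine the two pointwise velocity estimates already established, namely Lemma \ref{lem:bound_u} (for the ``bad'' parts $u_1^{\mathrm{bad}},u_2^{\mathrm{bad}}$) and Lemma \ref{lem:es-good} (for the ``good'' parts), and then to show that the dominant contribution coming from the ``far'' part of the trapezoid $\mathbb{A}(x)$ — the term $\mathcal{T}_{12}$ in \eqref{es:u1good}, respectively the $\mathbb{B}_1\setminus\mathbb{B}_2$ contribution in \eqref{es:u2good-b} — overwhelms all remaining error terms, once the slope $\mathrm{k}$ and the size parameter $\delta_G$ are chosen suitably. First I would record that condition \eqref{cond:Gadd} holds both under \eqref{A2a} (by \eqref{cond:G1} and $G\geq\mathcal G>0$) and under \eqref{A2b} (by \eqref{eq:G2a}, which gives $G$ non-increasing, hence $G(\rho)/\rho$ non-increasing), so Lemmas \ref{lem:lmm_K}, \ref{lem:bound_u} and \ref{lem:es-good} are all applicable; I would also fix $\delta_G\le\min\{c_*/4,\;c_*/(4\mathrm{k}^2)\}$ (to be shrunk later) so that the size restrictions on $x$ in those lemmas are automatically met whenever $x_2\le\mathrm{k}x_1\le\delta_G$ or $\mathrm{k}x_1\le x_2\le\delta_G$.

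For \eqref{eq:u1-bd}, at a point with $x_2\le\mathrm{k}x_1\le\delta_G$ I would write $u_1=u_1^{\mathrm{bad}}+u_1^{\mathrm{good}}$, apply Lemma \ref{lem:bound_u}(i) and Lemma \ref{lem:es-good}(i), and absorb the $\mathcal{T}_{11}$-integral against the part of the ``bad'' integral supported where $s_2<\mathrm{k}s_1$. The remaining positive terms — the leftover ``bad'' integral over $\{s_1<x_1,\ s_2<x_2\}$, the linear term $2\mathrm{k}G(c_*/\mathrm{k})x_1$, and $\mathcal{T}_{13}$ — are each bounded by $C\,x_1G(x_1)$: one uses $|G|\le C_0\rho^{-\alpha}$ together with the monotonicity of $G$ and the splitting of $[0,x_1]\times[0,x_2]$ into $\{s_2<s_1\}$ and $\{s_2>s_1\}$, on which $\tfrac{s_2}{s_1^2+s_2^2}\le\min\{1/s_1,1/s_2\}$, reducing everything to one-dimensional integrals $\int_0^{x_1}G(s)\,\dd s$ and $x_1\int_{x_1}^{x_2}\tfrac{G(s)}{s}\,\dd s$. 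The dominant term $\mathcal{T}_{12}=-\tfrac{2x_1}{4/\mathrm{k}^2+1}\int_{2\mathrm{k}x_1}^{c_*}\tfrac{G(\sqrt{4/\mathrm{k}^2+1}\,s)}{s}\,\dd s$ is bounded from below (in absolute value) by replacing $\sqrt{4/\mathrm{k}^2+1}\,s$ with $2s$ (using $G$ monotone) and then invoking Karamata-type asymptotics: under \eqref{A2b}, conditions \eqref{cond:M0}--\eqref{cond:M1} give $\int_a^{c_0}\tfrac{G(s)}{s}\,\dd s\sim\beta^{-1}G(a)$ as $a\to0^+$, hence $|\mathcal{T}_{12}|\gtrsim c(\mathrm{k},\beta)\,x_1G(x_1)$; under \eqref{A2a}, conditions \eqref{cond:G5}--\eqref{cond:G6} (and \eqref{coro:cond-G6}) give $\int_a^{c_0}\tfrac{G(s)}{s}\,\dd s\sim(\gamma+1)^{-1}(\log a^{-1})G(a)$, hence $|\mathcal{T}_{12}|\gtrsim c(\mathrm{k})\,x_1(\log x_1^{-1})G(x_1)$. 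In the \eqref{A2a} regime the negative term is a full factor $\log x_1^{-1}$ larger than all error terms, so for any fixed $\mathrm{k}$ one obtains $u_1(x)\le-\mathbf{F}(x_1)=-c\,x_1(\log x_1^{-1})G(x_1)$ after shrinking $\delta_G$; in the \eqref{A2b} regime one must instead compare the $(\mathrm{k},\beta)$-dependent constant in $|\mathcal{T}_{12}|$ against the (absolute) constant in the ``bad'' bound, fix $\mathrm{k}$ large enough that the former exceeds, say, twice the latter, and then choose $c$ and $\delta_G$ small accordingly, getting $u_1(x)\le-\mathbf{F}(x_1)=-c\,x_1G(x_1)$.

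For \eqref{eq:u2-bd}, at a point with $\mathrm{k}x_1\le x_2\le\delta_G$ I would argue symmetrically: write $u_2=u_2^{\mathrm{bad}}+u_2^{\mathrm{good}}$, use Lemma \ref{lem:bound_u}(ii) to get $|u_2^{\mathrm{bad}}|\le C\,x_1G(x_1)\le C\,(x_2/\mathrm{k})\,G(x_2/\mathrm{k})$ (using that $\rho\mapsto\rho G(\rho)$ is non-decreasing near $0$, which follows from \eqref{cond:M1}, resp. \eqref{cond:G6}, and $x_1\le x_2/\mathrm{k}$), and use Lemma \ref{lem:es-good}(ii): on $(\mathrm{k}x_2,c_*/\mathrm{k})\times(0,2x_2)$ one has $s_2<2x_2\ll s_1$, so the integrand reduces to essentially $\tfrac{1}{s_1}\big(G(s_1)-\tfrac{1}{\mathrm{k}^2+1}G(\mathrm{k}s_1)\big)\gtrsim\tfrac{G(s_1)}{s_1}$ for $\mathrm{k}$ large, giving $u_2^{\mathrm{good}}(x)\gtrsim x_2\int_{\mathrm{k}x_2}^{c_*/\mathrm{k}}\tfrac{G(s_1)}{s_1}\,\dd s_1\gtrsim c(\mathrm{k})\,x_2G(\mathrm{k}x_2)$ by the same Karamata asymptotics, then relating $G(\mathrm{k}x_2)$ to $G(x_2/\mathrm{k})$ via \eqref{cond:M0} (resp. \eqref{cond:G5}). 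As before, in the \eqref{A2a} case the $\log$-factor wins for fixed $\mathrm{k}$; in the \eqref{A2b} case one tracks constants (here a large $\mathrm{k}$ is favourable since a factor $\mathrm{k}^{1-2\beta}$ appears, using $\beta<1/2$), and one finally picks $\mathrm{k}$, $c$, $\delta_G$ to meet the requirements of both \eqref{eq:u1-bd} and \eqref{eq:u2-bd} simultaneously.

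I expect the main obstacle to be the \eqref{A2b} regime, where $\mathbf{F}(\rho)=c\,\rho G(\rho)$ and the dominant velocity contribution is of the \emph{same} order $x_1G(x_1)$ as the error terms, so smallness of $x_1$ alone does not close the estimate; one must carry out a quantitative balance of the $\mathrm{k}$- and $\beta$-dependent constants produced by the asymptotics $\int_a^{c_0}\tfrac{G(s)}{s}\,\dd s\sim\beta^{-1}G(a)$ and $\int_0^aG(s)\,\dd s\sim(1-\beta)^{-1}aG(a)$ against those from the ``bad'' integrals, with the tension that $\mathrm{k}$ should be taken large for the $u_2$-estimate but not too large for the $u_1$-estimate. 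The secondary technical point is to derive these Karamata-type asymptotics (and their $\gamma$-analogues under \eqref{A2a}) purely from the limit relations \eqref{cond:M0}--\eqref{cond:M1}, \eqref{cond:G5}--\eqref{cond:G6}, since $G$ has no explicit formula; this is done by a routine $\varepsilon$-splitting of the integration interval, in the spirit of Remark \ref{rmk:m-grow}(i) and the derivation of \eqref{coro:cond-G6}.
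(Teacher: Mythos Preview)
Your treatment of the \eqref{A2a} case is essentially the paper's: with any fixed slope (the paper takes $\mathrm{k}=1$), the term $\mathcal{T}_{12}$ carries the extra $\log x_1^{-1}$ factor and swamps all $O\big(x_1G(x_1)\big)$ errors once $\delta_G$ is small. This part is fine.

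The gap is in the \eqref{A2b} case. Your plan is to make the dominant negative term $|\mathcal{T}_{12}|$ exceed an ``absolute'' error constant by taking $\mathrm{k}$ large, but this fails because $|\mathcal{T}_{12}|$ \emph{decreases} with $\mathrm{k}$: by the very asymptotics you quote, $\int_{2\mathrm{k}x_1}^{c_*}\frac{G(s)}{s}\,\dd s\sim\beta^{-1}G(2\mathrm{k}x_1)\sim \beta^{-1}(2\mathrm{k})^{-\beta}G(x_1)$, so $|\mathcal{T}_{12}|\sim c\,\mathrm{k}^{-\beta}x_1G(x_1)\to0$. Meanwhile the leftover errors, even after your $\mathcal{T}_{11}$-cancellation, do \emph{not} vanish: a direct computation of $B+\mathcal{T}_{11}$ in the borderline case $x_2=\mathrm{k}x_1$ (split into $s_2\lessgtr \mathrm{k}s_1$ and use $G_1(\rho)=\rho^\beta G(\rho)$ slowly varying) gives a coefficient tending to $\frac{2(1-2^{-\beta})}{\beta(1-\beta)}>0$ as $\mathrm{k}\to\infty$, and $\mathcal{T}_{13}$ (which is actually \emph{negative}, not positive as you wrote) also has coefficient $\to0$. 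Hence $\Pi_1(\beta)$ becomes positive for large $\mathrm{k}$, and \eqref{eq:u1-bd} cannot be reached this way. (Your $u_2$ argument with the factor $\mathrm{k}^{1-2\beta}$ is sound on its own, but it cannot be paired with a large-$\mathrm{k}$ choice for $u_1$.)

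What the paper does instead in the \eqref{A2b} case is to compute \emph{all four} contributions $U_1=u_1^{\mathrm{bad}}$, $U_2=\mathcal{T}_{12}$, $U_3=\mathcal{T}_{11}$, $U_4=\mathcal{T}_{13}$ (and analogously $V_1,V_2$ for $u_2$) to leading order via L'Hospital and the slowly-varying property of $G_1(\rho)=\rho^\beta G(\rho)$, obtaining the explicit constants $\Pi_1(\beta),\Pi_2(\beta)$ that already appeared in Gancedo--Patel for the pure $\alpha$-SQG kernel. One then invokes the specific choice $\mathrm{k}=5$, for which $\Pi_1(\beta)<0$ and $\Pi_2(\beta)>0$ hold for every $\beta\in(0,\tfrac13)$; no asymptotic-in-$\mathrm{k}$ argument is available. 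The upshot is that your ``quantitative balance'' intuition is right, but it must be carried out at the level of those exact coefficients and closed by importing the numerical fact from \cite{GanP21}, not by pushing $\mathrm{k}\to\infty$.
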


\begin{proof}
We first consider the case that $G(\rho)$ satisfies assumptions
\eqref{A1}-\eqref{A2a}. 
We can simply choose $\mathrm{k}=1$. 
Gathering Lemmas \ref{lem:bound_u} and \ref{lem:es-good} gives that for every $x_2\leq x_1\leq \frac{c_*}{4}$,
\begin{equation}\label{es:u1-upbd}
  u_1(x) \leq 2\int_{0}^{x_1}\int_0^{x_1}\frac{s_2}{s_1^2 + s_2^2}G(\sqrt{s_1^2+s_2^2})\dd s_2\dd s_1
  +2 G(c_{*}) x_1-\frac{2x_1}{5} \int_{2x_1}^{c_{*}}  \frac{G(\sqrt{5}s)}{s} \dd s.
\end{equation}
For the term $u_2^{\textrm{good}}$, it follows from \eqref{es:u2good-b} and \eqref{cond:Gadd} that 
\begin{align*}
  u_2^{\textrm{good}} \geq& 
  \Big(1- \frac{1}{\sqrt{\mathrm{k}^2+1}}\Big)\int_{(\mathrm{k}x_2, \frac{c_\ast}{\mathrm{k}} ) \times (0, 2x_2)}
  \frac{s_1}{s_1^{2}+s_2^2} G\Big(\sqrt{s_1^2+s_2^2}\Big)\dd s_1\dd s_2\\
  \geq& \frac{2\big(1- \frac{1}{\sqrt{\mathrm{k}^2+1}}\big)x_2}{\frac{4}{\mathrm{k}^2}+1}
  \int_{\mathrm{k}x_2}^{\frac{c_*}{\mathrm{k}}} \frac{G\big(\sqrt{\tfrac{4}{\mathrm{k}^2}+1}s\big)}{s} \dd s
\end{align*}
where we also use $\sqrt{\frac{4}{\mathrm{k}^2}+1}s_1\geq 
\sqrt{s_1^2+s_2^2}$ in the last inequality. 
Hence, choosing $\mathrm{k}=1$, together with Lemma \ref{lem:bound_u}, for every $x_1\leq x_2\leq \frac{c_*}{4}$,
\begin{equation}\label{es:u2-lobd}
  u_2(x) \geq -2\int_0^{x_2}\int_0^{x_2} \frac{s_1}{s_1^2 + s_2^2}G(\sqrt{s_1^2+s_2^2})\dd s_1\dd s_2 +
  \frac{2\big(1-\frac{1}{\sqrt{2}}\big)x_2}{5} \int_{x_2}^{c_*} 
  \frac{G(\sqrt{5}s)}{s} \dd s .
\end{equation}

Now we begin with the estimate of \eqref{es:u1-upbd}. Direct calculation yields
\begin{align}
  \int_0^{x_1} \int_0^{x_1} \frac{2s_2G(\sqrt{s_1^2 + s_2^2})}{s_1^2 + s_2^2} \dd s_2 \dd s_1 \nonumber 
  &\leq \int_0^{\sqrt{2}x_1} \int_0^{\frac{\pi}{2}} \frac{2G(\varrho)\sin \eta}{\varrho} \varrho\,
  \dd\eta \dd \varrho \nonumber \\
  & =\int_0^{\sqrt{2}x_1}2G(\varrho)\dd \varrho 
  \leq 2C_1\int_0^{\sqrt{2}x_1} \mathcal{G}(\varrho)\dd \varrho \nonumber\\
  & \approx 2\sqrt{2}C_1x_1 \mathcal{G}(x_1),\quad \textrm{as}\;\; x_1\rightarrow 0^+, \nonumber
\end{align}
where the last line is guaranteed by the following fact (owing to L'Hospital's rule, \eqref{cond:G5}-\eqref{cond:G6} and \eqref{coro:cond-G6})
\begin{align*}
  \lim_{\rho\to 0^+}\frac{\int_0^\rho \mathcal{G}(\varrho) \dd \varrho}{\rho \mathcal{G}(\rho)}
  = \lim_{\rho\to 0^+} \frac{ \mathcal{G}(\rho)}{\mathcal{G}(\rho) + \rho \mathcal{G}'(\rho)} = 1.
\end{align*}
On the other hand, observing that (using \eqref{cond:G5}-\eqref{cond:G6} again)
\begin{align*}
  \lim_{\rho\rightarrow 0^+} \frac{\int_{2\rho}^{c_*} s^{-1}\mathcal{G}(\sqrt{5}s) \dd s}{(\log \rho^{-1}) \mathcal{G}(\rho)}
  = \lim_{\rho\rightarrow 0^+} \frac{ \rho^{-1} \mathcal{G}(2\sqrt{5}\rho) }{\rho^{-1} \mathcal{G}(\rho) - (\log r^{-1}) \mathcal{G}'(r)}
  = \frac{1}{1+\gamma},
\end{align*}
we have
\begin{equation}\label{eq:Gintg-sim}
\begin{split}
  -\frac{2x_1}{5} \int_{2x_1}^{c_{*}}  \frac{G(\sqrt{5}s)}{s} \dd s
  & \leq -\frac{2C_1x_1}{5} \int_{2x_1}^{c_{*}}  \frac{\mathcal{G}(\sqrt{5}s)}{s} \dd s \\
  & \approx - \frac{2C_1}{5(1+\gamma)} x_1 (\log x_1^{-1}) \mathcal{G}(x_1),
  \quad \textrm{as}\;\; x_1\rightarrow 0^+ .
\end{split}
\end{equation}
Hence, by letting $\delta_G\in (0,c_*)$ small enough, the contribution from the integral term in \eqref{eq:Gintg-sim}
dominates and we can find a small positive constant $c$ such that
\begin{align*}
  \mathbf{F}(\rho) = c \,\rho (\log \rho^{-1}) G(\rho),\quad \textrm{if (\textbf{A}2a) is assumed},
\end{align*}
so that \eqref{eq:u1-bd} holds true. Similarly, in light of \eqref{es:u2-lobd}, by putting $\delta_G\in (0,c_*)$ and $c>0$ 
even smaller if necessary,  the estimate \eqref{eq:u2-bd} also holds with the above $\mathbf{F}(\rho)$.
\vskip1.5mm

Next we consider the case that $G(\rho)$ satisfies assumptions
\eqref{A1}-\eqref{A2b}.
Taking advantage of Lemma \ref{lem:bound_u} and Lemma \ref{lem:es-good}, we deduce that 
\begin{align*}
  u_1(x) & \leq  2 \mathrm{k} G\Big(\frac{c_*}{\mathrm{k}}\Big) x_1
  + 2\int_{0}^{x_1}\int_0^{x_2}\frac{s_2}{s_1^2 + s_2^2}G(\sqrt{s_1^2+s_2^2})\dd s_2\dd s_1
  - \frac{2x_1}{\frac{4}{\mathrm{k}^2}+1}  \int_{2\mathrm{k} x_1}^{c_\ast}  
  \frac{G\big(\sqrt{\frac{4}{\mathrm{k}^2}+1}\,s\big)}{s}  \dd s \\
  &-\int_0^{2x_1}\dd s_1 \int_0^{\mathrm{k} s_1} \frac{s_2}{s_1^2+s_2^2} 
  G\Big(\sqrt{s_1^2+s_2^2}\Big)\dd s_2
  - \int_{2x_1}^{4x_1}\dd s_1\int_{\mathrm{k}(s_1-2x_1)}^{2\mathrm{k}x_1} 
  \frac{s_2}{s_1^2+s_2^2} G\Big(\sqrt{s_1^2+s_2^2}\Big)\dd s_2 \\
  & \triangleq  2 \mathrm{k} G\Big(\frac{c_*}{\mathrm{k}}\Big) x_1 
  + U_1 + U_2 + U_3 + U_4.
\end{align*}

Denote by $G_1(\rho)\triangleq \rho^\beta G(\rho)$. Notice that the assumptions \eqref{cond:M0}-\eqref{cond:M1}
in \eqref{A2b} imply that
\begin{align}\label{eq:G_1-limit}
  \lim_{r\rightarrow 0^+} \frac{G_1(l r)}{G_1(r)} =1,\;\;\forall l > 0,
  \quad\textrm{and}\quad \lim_{r\rightarrow 0^+} \frac{r G_1'(r)}{G_1(r)} =0.
\end{align}
First, for $x_2\leq \mathrm{k}x_1$, we have
\begin{align*}
  U_1&=2\int_{0}^{x_1}\int_0^{x_2}\frac{s_2}{s_1^2 + s_2^2}G(\sqrt{s_1^2+s_2^2})\dd s_2\dd s_1= \int_0^{x_1} \int_{s_1^2}^{s_1^2+ x_2^2} 
  \frac{G_1(\sqrt{s})}{s^{1+ \beta/2}} \dd s\dd s_1\\
  &\leq \int_0^{x_1} \int_{s_1^2}^{(\mathrm{k}^2+1)x_1^2} 
  \frac{G_1(\sqrt{s})}{s^{1+\beta/2}} \dd s\dd s_1
  \triangleq \mathbf{H}_1(x_1).
\end{align*}
Let us find an equivalent explicit form of $\mathbf{H}_1(x_1)$ as $x_1\to 0^+$.
Note that
\begin{align*}
  \mathbf{H}_1'(x_1) &  = \int_{x_1^2}^{(\mathrm{k}^2+1)x_1^2}
  \frac{G_1(\sqrt{s})}{s^{1+\beta/2}} \dd s
  + \int_0^{x_1} \frac{2(\mathrm{k}^2+1)x_1}{(\mathrm{k}^2+1)^{1+\beta/2}} 
  \frac{G_1\big(\sqrt{\mathrm{k}^2+1}\,x_1\big)}{x_1^{2+\beta}} \dd z_1 \\
  & = \int_{x_1^2}^{(\mathrm{k}^2+1) x_1^2} \frac{G_1(\sqrt{s})}{s^{1+\beta/2}}\dd s
  + \frac{2}{(\mathrm{k}^2+1)^{\frac{\beta}{2}}}
  x_1^{-\beta} G_1\big(\sqrt{\mathrm{k}^2+1} x_1\big),
\end{align*}
and
\begin{align*}
  \lim_{x_1\rightarrow 0^+} \frac{\int_{x_1^2}^{(\mathrm{k}^2+1)x_1^2} 
  s^{-1-\frac{\beta}{2}} G_1(\sqrt{s}) \dd s}{x_1^{-\beta}G_1(x_1)}
  &= \lim_{x_1\rightarrow 0^+} \frac{2(\mathrm{k}^2+1)^{-\frac{\beta}{2}} 
  x_1^{-1-\beta}  G_1(\sqrt{\mathrm{k}^2+1}x_1) -
  2 x_1^{-1 -\beta} G_1(x_1)}{x_1^{-\beta -1} G_1(x_1) 
  \big(-\beta + x_1 G_1'(x_1)/ G_1(x_1)\big) } \\
  & = \frac{2}{\beta} \Big( 1- (\mathrm{k}^2+1)^{-\frac{\beta}{2}}\Big),
\end{align*}
we obtain that
\begin{align*}
  \lim_{x_1\rightarrow 0^+} \frac{\mathbf{H}_1(x_1)}{x_1^{1-\beta} G_1(x_1)}
  & = \lim_{x_1\rightarrow 0^+} \frac{\mathbf{H}'_1(x_1)}{x_1^{-\beta} G_1(x_1) 
  \big(1-\beta +  x_1 G_1'(x_1)/G_1(x_1) \big) } \\
  & = \frac{2}{\beta(1-\beta)}\big( 1- (\mathrm{k}^2+1)^{-\frac{\beta}{2}}\big) 
  + \frac{2(\mathrm{k}^2+1)^{-\beta/2}}{1-\beta} \\
  & = \frac{2}{\beta} \Big(\frac{1}{1-\beta}-(\mathrm{k}^2+1)^{-\frac{\beta}{2}}\Big).
\end{align*}
Hence, we get that 
\begin{align*}
  U_1 \leq \bigg(\frac{2}{\beta} \Big(\frac{1}{1-\beta}-(\mathrm{k}^2+1)^{-\frac{\beta}{2}}\Big) 
  + o(x_1) \bigg) x_1^{1-\beta} G_1(x_1),
  \quad \textrm{with}\;\;\lim_{x_1\rightarrow 0^+} o(x_1)=0.
\end{align*}
For the term $U_2$, we first consider the integral
\begin{equation*}
  \mathbf{H}_2(x_1)\triangleq\int_{2\mathrm{k} x_1}^{c_\ast}  
  \frac{G_1\big(s\sqrt{\frac{4}{\mathrm{k}^2}+1}\big)}{s^{1+\beta}}  \dd s. 
\end{equation*}
From the fact that 
\begin{align*}
  \mathbf{H}'_2(x_1)=-(2k)^{-\beta}x_1^{-1-\beta}
  G_1\Big(2\mathrm{k}x_1\sqrt{\tfrac{4}{\mathrm{k}^2}+1}\Big),
\end{align*}
it follows that 
\begin{align*}
  \lim_{x_1\to 0^+}\frac{\mathbf{H}_2(x_1)}{x_1^{-\beta}G_1(x_1)}
  =  \lim_{x_1\to 0^+} \frac{\mathbf{H}_2'(x_1)}{x_1^{-\beta-1}G_1(x_1)
  \big(-\beta+x_1\frac{G_1'(x_1)}{G_1(x_1)}\big)}  
  =  \frac{(2\mathrm{k})^{-\beta}}{\beta}.
\end{align*}
Hence we deduce that 
\begin{align*}
  U_2 = - \frac{2x_1}{(\frac{4}{\mathrm{k}^2}+1)^{1+ \beta/2}}
  \mathbf{H}_2(x_1)
  \leq \Big( -\frac{2^{1-\beta}\mathrm{k}^{-\beta}}{\beta(\frac{4}{k^2}+1)^{1+ \beta/2}}+o(x_1)\Big) 
  x_1^{1-\beta}G_1(x_1).
\end{align*}
For the term $U_3$, we have 
\begin{align*}
  U_3 = U_3(x_1) = -\int_0^{2x_1}\dd s_1\int_0^{\mathrm{k}s_1} \frac{s_2}{(s_1^2 + s_2^2)^{1+\beta/2}}
  G_1\Big(\sqrt{s_1^2+s_2^2}\Big) \dd s_2.
\end{align*}
It is easy to check that 
\begin{align*}
  U'_3(x_1) = -2 \int_0^{2\mathrm{k}x_1} 
  \frac{s_2 G_1\big(\sqrt{4x_1^2+s^2_2}\big)}{(4x_1^2+s_2^2)^{1+ \beta/2}} \dd s_2. 
\end{align*}
We claim that for every $s_2 \in [0,2\mathrm{k}x_1]$, 
\begin{align}\label{eq:G1-fact}
  G_1\Big(\sqrt{4 x_1^2 + s_2^2}\Big) = \big( 1+ o(x_1) \big) G_1(x_1), \quad \textrm{with}
  \;\; \lim_{x_1\rightarrow 0^+} o(x_1) = 0.
\end{align}
Indeed, this is a direct consequence of Newon-Lebniz's formula and \eqref{eq:G_1-limit}: 
\begin{align*}
  G_1\Big(\sqrt{4 x_1^2 + s_2^2}\Big) - G_1(x_1) 
  = \int_{x_1}^{\sqrt{4x_1^2 + s_2^2}} G_1'(s) \dd s,
\end{align*}
and for every $\varepsilon >0$ there exists a $\delta_0>0$ such that $|s\, G_1'(s) | \leq \varepsilon G_1(s)$ 
for every $x_1\in(0,\delta_0)$ and $s\in [x_1, \sqrt{4 x_1^2 + s_2^2}]\subset [x_1,2\sqrt{\mathrm{k}^2 +1}x_1]$, 
thus we deduce the desired result
\begin{align*}
  \Big| \int_{x_1}^{\sqrt{4x_1^2 + s_2^2}} G_1'(s) \dd s \Big| 
  & \leq \varepsilon \int_{x_1}^{\sqrt{4 x_1^2 + s_2^2}}  \frac{1}{s} G_1(s) \dd s \\
  & \leq \varepsilon 2\sqrt{\mathrm{k}^2 +1} 
  \Big(\sup_{s\in [x_1, 2\sqrt{\mathrm{k}^2 +1} x_1]} G_1(s)\Big) \\
  & \leq \varepsilon\,4 \sqrt{\mathrm{k}^2 +1}\,G_1(x_1),
\end{align*}
where in the last line we have used the fact that the supremum of $G_1(s)$ on compact set
$[x_1,2\sqrt{\mathrm{k}^2 +1}\,x_1]$ can be achieved on a point $l x_1$ with some 
$1\leq l\leq 2\sqrt{\mathrm{k}^2 +1}$ and also $G_1(l x_1) \approx G_1(x)$ for every $x_1\in (0,\delta_0)$.

Hence,
\begin{align*}
  U'_3(x_1) & = -2\big(1+o(x_1)\big) G_1(x_1) \int_0^{2\mathrm{k}x_1} 
  \frac{s_2}{(4x_1^2+s_2^2)^{1+ \beta /2}} \dd s_2\\
  & = -\frac{2^{1-\beta}}{\beta} \Big(1-(\mathrm{k}^2+1)^{-\frac{\beta}{2}}\Big) 
  \big(1+o(x_1)\big) x_1^{-\beta} G_1(x_1),
\end{align*}
where $\lim\limits_{x_1\rightarrow 0^+} o(x_1) =0$. 
We thus infer that 
\begin{align*}
  \lim_{x_1\to 0^+} \frac{U_3(x_1)}{x_1^{1-\beta} G_1(x_1)} 
  = & \lim_{x_1\to 0^+} \frac{U_3'(x_1)}{x_1^{-\beta} G_1(x_1) 
  \big(1-\beta+\frac{G_1'(x_1)}{G_1(x_1)}\big)}\\
  = & - \frac{2^{1-\beta}}{\beta(1-\beta)} \big(1-(\mathrm{k}^2+1)^{-\frac{\beta}{2}}\big),
\end{align*}
and as a result,
\begin{align*}
  U_3 \leq \Big(-\frac{2^{1-\beta}}{\beta(1-\beta)} \big(1-(\mathrm{k}^2+1)^{-\frac{\beta}{2}} \big) + o(x_1) \Big)
  x_1^{1-\beta} G_1(x_1), \quad \textrm{with}
  \;\; \lim_{x_1\rightarrow 0^+} o(x_1) = 0.
\end{align*}
For the term $U_4$, arguing as obtaining \eqref{eq:G1-fact} (the main difference lies on that 
$G_1(\sqrt{(s_1+2x_1)^2 + s_2^2})$ attains its supremum over 
$\{0\leq s_1\leq 2 x_1, \mathrm{k}s_1 \leq s_2 \leq 2\mathrm{k} x_1\}$ 
at the point $(l_1 x_1, \mathrm{k} l_2 x_1)$ with some $0\leq l_1, l_2\leq 2$), we find that 
\begin{align*}
  U_4 = & -\int_{2x_1}^{4x_1}\dd s_1\int_{\mathrm{k}(s_1-2x_1)}^{2\mathrm{k}x_1} 
 \frac{s_2}{(s_1^2+s_2^2)^{1+\beta/2}} G_1\Big(\sqrt{s_1^2+s_2^2}\Big) \dd s_2 \\
  = & -\big(1+o(x_1)\big) G_1(x_1) \int_{2x_1}^{4x_1}\dd s_1 \int_{\mathrm{k}(s_1-2x_1)}^{2\mathrm{k}x_1} 
  \frac{s_2}{(s_1^2+s_2^2)^{1+ \beta/2}} \dd s_2,
\end{align*}
with $\lim\limits_{x_1\rightarrow 0^+} o(x_1) =0$.
In addition, since the integrand is positive, we find that 
\begin{align*}
  - \int_{2x_1}^{4x_1}\dd s_1\int_{\mathrm{k}(s_1-2x_1)}^{2\mathrm{k}x_1} 
  \frac{s_2}{(s_1^2+s_2^2)^{1+\frac{\beta}{2}}}\dd s_2 
  \le & \frac{1}{\beta}\int_{2x_1}^{3x_1} 
  \Big( \big(s_1^2+(2\mathrm{k}x_1)^2\big)^{-\frac{\beta}{2}} 
  - \big(s_1^2+\mathrm{k}^2(s_1-2x_1)^2\big)^{-\frac{\beta}{2}}\Big)\dd s_1 \\
  \leq & \frac{1}{\beta} \int_{2x_1}^{3x_1} 
  \Big( \big(4x_1^2+4\mathrm{k}^2 x_1^2\big)^{-\frac{\beta}{2}}
  - \big(9x_1^2+\mathrm{k}^2x_1^2\big)^{-\frac{\beta}{2}} \Big) \dd s_1 \\
  = & \frac{1}{\beta} \Big((4+4\mathrm{k}^2)^{-\frac{\beta}{2}}-(9+\mathrm{k}^2)^{-\frac{\beta}{2}}\Big) 
  x_1^{1-\beta},
\end{align*}
which gives that 
\begin{align*}
  U_4\leq \big(1+o(x_1)\big) \frac{1}{\beta} 
  \Big((4+ 4\mathrm{k}^2)^{-\frac{\beta}{2}}
  -(9+\mathrm{k}^2)^{-\frac{\beta}{2}} \Big) 
  x_1^{1-\beta} G_1(x_1).
\end{align*}
Therefore one has that
\begin{align*}
  u_1(x) \leq \big(\Pi_1(\beta)+o(x_1)\big) x_1 G(x_1) + 2\mathrm{k} G\big(\tfrac{c_\ast}{\mathrm{k}}\big) x_1 
  \leq \big( \Pi_1(\beta) + o(x_1) \big) x_1 G_1(x_1),
\end{align*}
where $\lim\limits_{x_1\to 0^{+}}o(x_1)=0$ and 
\begin{equation}
\begin{aligned}
  \Pi_1(\beta) & \triangleq \frac{2}{\beta}\bigg(\frac{1}{1-\beta}-(\mathrm{k}^2+1)^{-\frac{\beta}{2}}
  -\frac{2^{-\beta}}{\mathrm{k}^\beta(\frac{4}{\mathrm{k}^2}+1)^{1+\beta/2}}  -\frac{2^{-\beta}}{(1-\beta)}
  \big(1-(\mathrm{k}^2+1)^{-\frac{\beta}{2}}\big) \\
  & \quad \qquad + 2^{-1} \Big((4+4\mathrm{k}^2)^{-\frac{\beta}{2}} -(9+\mathrm{k}^2)^{-\frac{\beta}{2}} \Big)\bigg).
\end{aligned}
\end{equation}

In the sequel, we focus on the estimation of $u_2(x)$ for every $\mathrm{k}x_1\leq x_2$ small. 
It follows from Lemmas \ref{lem:bound_u} and \ref{lem:es-good} that 
\begin{align*}
  u_2(x) & \geq -2\int_0^{x_2}\int_0^{x_1} \frac{s_1}{s_1^2 + s_2^2}G(\sqrt{s_1^2+s_2^2})\dd s_1\dd s_2  \\
  & \quad + \int_{\mathrm{k}x_2}^{\frac{c_\ast}{\mathrm{k}}}
  \int_0^{2x_2} \frac{s_1}{s_1^2+s_2^2} \bigg( G\Big(\sqrt{s_1^2+s_2^2}\Big) 
  -\frac{1}{\mathrm{k}^2+1} G\Big(\sqrt{\mathrm{k}^2+1}\sqrt{s_1^2+s_2^2}\Big) \bigg) \dd s_2\dd s_1 \\
  & \triangleq V_1 + V_2.
\end{align*}
For the term $V_1$, owing to $\mathrm{k} x_1 \leq x_2$ we have 
\begin{align*}
   V_1=-\int_0^{x_2}\int_{s_2^2}^{s_2^2+x_1^2} \frac{G(\sqrt{s})}{s}\dd s\dd s_2\geq -\int_0^{x_2}\int_{s_2^2}^{(1+\frac{1}{\mathrm{k}^2})x_2^2}
  \frac{G(\sqrt{s})}{s}\dd s\dd s_2 \triangleq \mathbf{H}_3(x_2).
\end{align*}
Note that
\begin{align*}
  \mathbf{H}'_3(x_2) = -\int_{x_2^2}^{(1+\frac{1}{\mathrm{k}^2})x_2^2}
  \frac{G_1(\sqrt{s})}{s^{1+\frac{\beta}{2}}} \dd s - 2 \Big(1+\frac{1}{\mathrm{k}^2}\Big)^{-\frac{\beta}{2}}
  x_2^{-\beta} G_1\Big(\sqrt{1+\tfrac{1}{\mathrm{k}^2}}x_2\Big),
\end{align*}
and 
\begin{align*}
  \lim_{x_2\to 0^+} \frac{\int_{x_2^2}^{(1+\frac{1}{\mathrm{k}^2})x_2^2}
  \frac{G_1(\sqrt{s})}{s^{1+ \beta/2}} \dd s}{x_2^{-\beta}G_1(x_2)}
  = & \lim_{x_2\to 0^+}\frac{2x_2^{-1-\beta} \Big( \big(1+\frac{1}{\mathrm{k}^2}\big)^{-\frac{\beta}{2}}
  G_1\Big(\sqrt{1+\frac{1}{\mathrm{k}^2}}x_2\Big) - G_1(x_2)\Big)}{x_2^{-1-\beta}
  G_1(x_2) \big(-\beta + x_2\frac{G'_1(x_2)}{G_1(x_2)}\big)} \\
  = & \frac{2}{\beta} \Big(1- \Big(1+\frac{1}{\mathrm{k}^2}\Big)^{-\frac{\beta}{2}}\Big).
\end{align*}
Thus we infer that 
\begin{align*}
  \lim_{x_2\to 0^+}\frac{\mathbf{H}_3(x_2)}{x_2^{1-\beta}G_1(x_2)} 
  & = \lim_{x_2\to 0^+}\frac{\mathbf{H}'_3(x_2)}{(1-\beta)x_2^{-\beta}G_1(x_2)} \\
  & = -\frac{2}{\beta(1-\beta)} \Big(1-\Big(1+\frac{1}{\mathrm{k}^2}\Big)^{-\frac{\beta}{2}}\Big)
  - \frac{2}{1-\beta} \Big(1+\frac{1}{\mathrm{k}^2}\Big)^{-\frac{\beta}{2}}\\
  & = \frac{2}{\beta}\Big(-\frac{1}{1-\beta}+ \Big(1+\frac{1}{\mathrm{k}^2}\Big)^{-\frac{\beta}{2}}\Big),
\end{align*}
which implies that 
\begin{align*}
  V_1\geq \frac{2}{\beta}\Big(-\frac{1}{1-\beta}+ \Big(1+\frac{1}{\mathrm{k}^2}\Big)^{-\frac{\beta}{2}}+o(x_2)\Big)
  x_2^{1-\beta}G_1(x_2),\quad \textrm{with\;  $\lim_{x_2\to 0^+}o(x_2)=0$}.
\end{align*}
It remains to consider the contribution from $V_2$.
Let $N_\mathrm{k} > \mathrm{k}$ be a fixed constant depending only on $\mathrm{k}$ and $\beta$
such that $N_\mathrm{k} x_2\leq \tfrac{c_\ast}{\mathrm{k}}$. 
Note that for fixed $\mathrm{k}\in \mathbb{N}^\star$, we can choose $N_\mathrm{k}$ 
sufficiently large by restricting $x_2>0$ small enough. 
By using \eqref{eq:G2a} and the notation  $G_1(\rho) = \rho^\beta G(\rho)$, 
the integrand in $V_2$ is positive and 
\begin{align*}
  V_2  & \geq  \int_{\mathrm{k} x_2}^{N_{\mathrm{k}}x_2} 
  \int_0^{2x_2} \frac{s_1}{(s_1^2+s_2^2)^{1+\beta/2}} \bigg(G_1\Big(\sqrt{s_1^2+s_2^2}\Big) -
  \frac{1}{(\mathrm{k}^2+1)^{1+\beta/2}}  G_1\Big(\sqrt{\mathrm{k}^2+1} \sqrt{s_1^2+s_2^2}\Big) 
  \bigg) \dd s_2\dd s_1 .
\end{align*}
In a similar way as deriving \eqref{eq:G1-fact}, we find that for every $s_1\in [\mathrm{k} x_2, N_{\mathrm{k}} x_2 ]$
and $s_2\in [0,2x_2]$, 
\begin{align*}
  G_1\Big(\sqrt{s_1^2 + s_2^2}\Big) = \big(1 + o(x_2)\big) G_1(x_2),\quad
  G_1\Big(\sqrt{\mathrm{k}^2 +1} \sqrt{s_1^2 + s_2^2}\Big) = \big(1 + o(x_2)\big) G_1(x_2),
\end{align*}
with $\lim\limits_{x_2\rightarrow 0^+} o(x_2) = 0$, thus
\begin{align*}
  V_2 & \geq G_1(x_2) \big(1+ o(x_2) \big)\Big( 1 - \frac{1}{(\mathrm{k}^2+1)^{1+\beta/2}}\Big)
  \int_{\mathrm{k}x_2}^{N_\mathrm{k} x_2} \int_0^{2x_2} 
  \frac{s_1}{(s_1^2+s_2^2)^{1+\beta/2}}  \dd s_2 \dd s_1  .
\end{align*} 
In addition, we see that 
\begin{align*}
  \int_{\mathrm{k}x_2}^{N_\mathrm{k} x_2} \int_0^{2x_2}
  \frac{s_1}{(s_1^2+s_2^2)^{1+\frac{\beta}{2}}} \dd s_2\dd s_1 
  = \frac{1}{\beta} \int_0^{2x_2} \Big(\big(\mathrm{k}^2x_2^2+s_2^2\big)^{-\frac{\beta}{2}}
  - \big(N^2_\mathrm{k}x^2_2 + s_2^2\big)^{-\frac{\beta}{2}} \Big) \dd s_2,
\end{align*}
and 
\begin{align*}
  \int_0^{2x_2}(\mathrm{k}^2 x_2^2 + s_2^2)^{-\frac{\beta}{2}} \dd s_2 
  & \geq \int_0^{x_2} \frac{1}{(\mathrm{k}^2+1)^{\frac{\beta}{2}} 
  x_2^\beta} \dd s_2 + \int_{x_2}^{2x_2} (\mathrm{k}^2+1)^{-\frac{\beta}{2}} 
  s_2^{-\beta}\dd s_2 \\
  & \geq \frac{1}{(\mathrm{k}^2+1)^{\beta/2}} 
  \Big(1+\frac{2^{1-\beta}-1}{1-\beta} \Big) x_2^{1-\beta}.
\end{align*}
It follows that 
\begin{align*}
  \int_{\mathrm{k}x_2}^{N_\mathrm{k} x_2} \int_0^{2x_2}
  \frac{s_1}{(s_1^2+s_2^2)^{1+\frac{\beta}{2}}} \dd s_2\dd s_1 
  \geq \frac{1}{\beta} \bigg(\frac{1}{(\mathrm{k}^2+1)^{\beta/2}} 
  \Big(1+\frac{2^{1-\beta}-1}{1-\beta}\Big) -\frac{2}{N_\mathrm{k}^\beta}\bigg) x_2^{1-\beta}.
\end{align*}
Consequently, 
\begin{align*}
  V_2 \geq \frac{1}{\beta} \Big(1-(k^2+1)^{-1-\frac{\beta}{2}}\Big) 
  \bigg(\frac{1}{(\mathrm{k}^2+1)^{\beta/2}}
  \Big(1+\frac{2^{1-\beta}-1}{1-\beta}\Big) - \frac{2}{N_\mathrm{k}^\beta}+o(x_2)  \bigg) x_2^{1-\beta} G_1(x_2).
\end{align*}

Therefore, we infer that for $x_2$ small enough,
\begin{align*}
  u_2(x) & \geq  \Big(\Pi_2(\beta) -\frac{2}{\beta \,N_\mathrm{k}^\beta} 
  + o(x_2) \Big) x_2^{1-\beta} G_1(x_2) \\
  & \geq \frac{\mathrm{k}^{1-\beta}}{2}  \Big(\Pi_2(\beta) -\frac{2}{\beta \,N_\mathrm{k}^\beta} 
  + o(x_2) \Big) \Big(\frac{x_2}{\mathrm{k}}\Big)^{1-\beta} G_1(\tfrac{x_2}{\mathrm{k}}),
  \quad \textrm{with}\;\;\lim_{x_2\rightarrow 0^+} o(x_2)=0,
\end{align*}
with
\begin{align*}
  \Pi_2(\beta) \triangleq \frac{2}{\beta} \bigg( -\frac{1}{1-\beta} + 
  \Big(1+\frac{1}{\mathrm{k}^2}\Big)^{-\frac{\beta}{2}} 
  + \frac{1-(\mathrm{k}^2+1)^{-1-\frac{\beta}{2}}}{2(\mathrm{k}^2+1)^{\beta/2}}
  \Big(1+\frac{2^{1-\beta}-1}{1-\beta}\Big)\bigg).
\end{align*}

Noticing that $\Pi_1(\beta)$ and $\Pi_2(\beta)$ are the same indexes appearing in
\cite{GanP21}, and by picking $\mathrm{k}=5$,
for every $0<\beta <\frac{1}{3}$ (i.e. the same range as in the local well-posedness part),
we find
\begin{align*}
  \textrm{$\Pi_1(\beta)<0$\;\; and \;\; $\Pi_2(\beta) >0$}.
\end{align*}
Moreover, we can choose $x_1$ and $x_2$ small enough and $N_\mathrm{k}>\mathrm{k}$
sufficiently large so that $\Pi_1(\beta) + o(x_1) \leq -c<0$ and
$ \frac{\mathrm{k}^{1-\beta}}{2}\big(\Pi_2(\beta) - \frac{2}{\beta N_\mathrm{k}^\beta} + o(x_2) \big) \geq c >0 $.
Consequently, we have
\begin{align*}
  u_1(x) & \leq - c\, x_1 G(x_1),\qquad\,  \forall \; x_2 \leq\mathrm{k} x_1 \leq \delta_G , \\
  u_2(x) &\geq c \tfrac{x_2}{\mathrm{k}} G(\tfrac{x_2}{\mathrm{k}}) , \qquad \quad  \forall \; \mathrm{k} x_1 \leq x_2 \leq \delta_G,
\end{align*}
which corresponds to \eqref{eq:u1-bd}-\eqref{eq:u2-bd} under the assumptions \eqref{A1}-\eqref{A2b}, as desired.
\end{proof}

\subsection{The finite-time singularity analysis}\label{subsec:singularity}
This subsection is devoted to the proof of Theorem \ref{thm:main_blowup}.
\begin{proof}[Proof of Theorem \ref{thm:main_blowup}]
The proof is via a contradiction argument introduced in \cite{KRYZ}.

We turn to the setting in Section \ref{subsec:equa}.
The initial condition we consider is odd in $x_1$,
then according to Theorem \ref{thm:loc-reg}, the resulting unique $H^2$ patch solution is also odd.
Assume that such a local $H^2$ patch solution exists on $[0,T_{\theta_0})$ 
with $T_{\theta_0}>0$ the maximal existence time, 
and let $ X(t)$, $\mathbb{K}(t)$ be given by
\eqref{def:X(t)}, \eqref{def:K(t)}, respectively.
Recall that $T_* \triangleq \int_0^{3\epsilon} \frac{2}{\mathbf{F}(\rho)}\dd \rho<\infty$ and $X(T_*)=0$.
In view of Theorem \ref{thm:loc-reg}, the solution has the patch form \eqref{def:w(t)},
and we shall show that
$\mathbb{K}(t)\subseteq \Omega(t)$ for each $t\in[0,T_*]$. This is a contradiction because then the patches
$\Omega(T_*)$ and $\widetilde{\Omega}(T_*)$ touch at $0$.
\par

As $|\Omega(t)|=|\Omega_0|\leq 16c_{*}^2$, Lemma \ref{lem:u-point-es} implies that for all $t\in[0,T_*]$,
\begin{equation} \label{bound:u}
  \|u(\cdot,t)\|_{L^{\infty}} \leq \overline{C},
\end{equation}
where $\overline{C}$ depends only on $G$.
Since $\partial\Omega(t)$ is continuous in $t\in[0,T_*]$ with respect to the Hausdorff distance of sets,
Lemma \ref{lem:u-point-es} also shows that $u$ is continuous on $\overline{\mathbb{R}^2_+}\times[0,T_*]$.\par

Consider $\delta_G\in(0,\frac{c_*}{\mathrm{k}})$ and $\mathrm{k}\in\mathbb{N}^\star$ 
the constants introduced in Proposition \ref{prop:es-u} and let the constant $\epsilon>0$ be small enough so that
\begin{align*}
  \epsilon <\frac{\delta_G}{4 \mathrm{k}} \quad\text{and}\quad 
  \int_0^{3\epsilon} \frac{2}{\mathbf{F}(\rho)} \dd \rho \leq \frac{\delta_G}{2\overline{C}}.
\end{align*}
From Lemma \ref{lem:u-point-es} we know that the function
$\mathtt{d}(t)\triangleq {\rm dist}\big((\mathbb{R}_+)^2 
\setminus\overline{\Omega(t)},\mathbb{K}(t)\big)$ is continuous on $[0,T_*]$.
Hence, if $\mathbb{K}(t)$ is not contained in $\Omega(t)$ at some $t\in[0,T_*]$,
then there is the first time $t_0\in[0,T_*]$ such that $\mathtt{d}(t_0)=0$.
Since $\mathtt{d}(0)\geq\epsilon>0$, we have $t_0>0$ and $\mathbb{K}(t_0)\subseteq \Omega(t_0)$.
\par

Now we assume that such a $t_0$ exists and set
\begin{align*}
  \Omega_3 \triangleq \big(\delta_G,\tfrac{5c_*}{2})\times(0,\tfrac{5c_*}{2}\big).
\end{align*}
Note that in view of Proposition \ref{prop:flow-map} (the assumptions are satisfied due to 
that $H^2(\mathbb{T})\subset C^{1,\frac{1}{2}}(\mathbb{T})$ and there exists some 
$\sigma\in (\frac{\alpha}{1-\alpha},\frac{1}{2}]$ for every 
$0<\alpha<\frac{1}{3}$), we have $(\mathbb{R}_+)^2 \setminus \overline{\Omega(t)}
= \Phi_t((\mathbb{R}_+)^2\setminus \overline{\Omega_0})$ for every $t\in [0,T_{\theta_0})$.
Then by using symmetry, the fact $T_*\leq \frac{\delta_G}{2\overline{C}}$, and the estimate \eqref{bound:u} 
and $2\epsilon< \frac{1}{2}\delta_G < \frac {c_*}{2\mathrm{k}}$, we find
\begin{align*}
  \big[(\mathbb{R}_+)^2 \setminus\overline{\Omega(t_0)}\big] \cap \Omega_3 = \emptyset.
\end{align*}
Due to that $t_0$ is the first time with $\mathtt{d}(t_0)=0$,
it yields that there exists some
\begin{equation*}
  x \in \partial[(\mathbb{R}_+)^2 \setminus\overline{\Omega(t_0)}] \cap [\mathcal{I}_1\cup \mathcal{I}_2],
\end{equation*}
where $\mathcal{I}_1=\{X(t_0)\}\times[0,\mathrm{k}X(t_0))$ and $\mathcal{I}_2$ is the closed straight segment connecting the points
$(X(t_0),\mathrm{k}X(t_0))$ and $(\delta_G,\mathrm{k}\delta_G)$ (see Figure \ref{fig:blow_up}).\par

\begin{figure}[htbp]
	\begin{tikzpicture}	
 
    \filldraw[fill=lightgray,draw=lightgray]  (2,0) rectangle (6,5);
		\draw[dashed] (0,5)--(2,5);
		\draw[line width=1pt,purple] (1,0)--(1,1)--(4,4)--(4,0);
		
        \draw[-stealth,line width=0.2pt] (0,0) -- (8,0);
		\draw[-stealth,line width=0.2pt] (0,0) -- (0,6.2);

		\node[anchor=center] () at (3,2) {\color{purple}{$\mathbb{K}(t_0)$}}; 
		\node[anchor=center] () at (2.5,4) {$\Omega_{3}$}; 
		
		\node[anchor=north] () at (1,0) {$X(t_0)$}; 
		\node[anchor=north] () at (2,0) {$\delta_G$}; 
		\node[anchor=north] () at (4,0) {$\frac{2c_\ast}{\mathrm{k}}$}; 
		\node[anchor=north] () at (6,0) {$\frac{5c_\ast}{2}$}; 
		\node[anchor=east] () at (0,5) {$\frac{5c_\ast}{2}$}; 
		\node[anchor=north] () at (7.7,0) {$x_1$}; 
		\node[anchor=east] () at (0,6) {$x_2$}; 
		
		\draw[decorate,decoration={brace,raise=1pt},color=brown,line width=1pt] (1,1)--(2,2); 
		\node[anchor=south east] () at (1.5,1.5) {\color{brown}{$\mathcal{I}_2$}};
		\draw[decorate,decoration={brace,raise=1pt},color=blue,line width=1pt] (1,0)--(1,1); 
		\node[anchor=east] () at (1,0.5) {\color{blue}{$\mathcal{I}_1$}};
		
	\end{tikzpicture}
	\caption{The segments $\mathcal{I}_{1}$, $\mathcal{I}_2$ and the sets $\Omega_3$, $\mathbb{K}(t_0)$.}\label{fig:blow_up}
\end{figure}
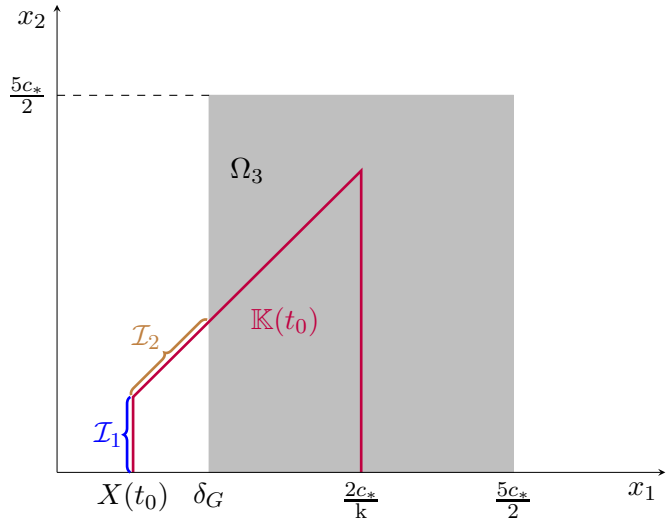

If $x\in \mathcal{I}_1$, in light of the fact $X(t_0)\leq 3\epsilon<\frac{\delta_G}{\mathrm{k}}<c_*$,
the triangle $\mathbb{A}(x)$ defined by \eqref{eqdef:A} satisfies
\begin{align*}
  \mathbb{A}(x)\subseteq \mathbb{K}(t_0)\subseteq \Omega(t_0).
\end{align*}
Consequently, Proposition \ref{prop:es-u} and $x_1=X(t_0)$ (recalling $X(t)$ satisfies \eqref{def:X(t)}) imply
\begin{align*}
  u_1(x,t_0)\leq - \mathbf{F}(x_1)<-\frac{1}{2}\mathbf{F}(x_1)= X'(t_0).
\end{align*}
Noting that $\Phi_{t_0}((\mathbb{R}_+)^2 \setminus \overline{\Omega_0}) \cap B(x,r')\neq\emptyset$
for any $r'>0$ and $u$ is continuous, we use this fact and \eqref{bound:u} to deduce
that for any sufficiently small~$s\in \big(0,\frac1{\overline{C}}[X(t_0)- \frac{x_2}{\mathrm{k}}]\big)$, 
\begin{align*}
  \Phi_{t_0-s}\big((\mathbb{R}_+)^2 \setminus \overline{\Omega_0}\big)
  \cap \Big( \big(X(t_0-s),\tfrac{2c_*}{\mathrm{k}}\big)\times \big(0,\mathrm{k} X(t_0)\big)\Big) \neq \emptyset.
\end{align*}
Since $(X(t_0-s),\frac{2c_*}{\mathrm{k}})\times(0,\mathrm{k}X(t_0))\subseteq \mathbb{K}(t_0-s)$,
we obtain $\mathtt{d}(t_0-s)=0$ for these $s$, which is a contradiction with the choice of $t_0$.

If $x\in \mathcal{I}_2$, then $\mathrm{k}x_1 = x_2\leq\delta_G$, and
Proposition \ref{prop:es-u} guarantees that
\begin{align*}
  u_1(x,t_0) & \leq - \mathbf{F}(x_1) < - \tfrac {1}{2} \mathbf{F}(x_1) =  X'(t_0), \\
  u_2(x,t_0) & \geq  \mathbf{F}(\tfrac{x_2}{\mathrm{k}}) > \tfrac{1}{2} \mathbf{F}(x_1) = - X'(t_0) , 
\end{align*}
and thus for $\mathbf{n} =\frac{1}{\sqrt{\mathrm{k}^2 +1}} (-\mathrm{k},1)$ 
the outer normal unit vector of the $\mathcal{I}_2$ segment,
\begin{align*}
  u(x, t_0) \cdot \mathbf{n} >  
  - \frac{\mathrm{k}+1}{\sqrt{\mathrm{k}^2 +1}} X'(t_0).
\end{align*}
Hence, by applying a similar argument, we infer that for any sufficiently small $s>0$,
\begin{align*}
  \Phi_{t_0-s}((\mathbb{R}_+)^2 \setminus \overline{\Omega_0})\cap
  \Big(\big(x_1+X(t_0-s)-X(t_0),\tfrac{2c_*}{\mathrm{k}}\big)
  \times \big(0,\mathrm{k} \big(x_1-X(t_0-s)+X(t_0)\big)\big)\Big)
  \neq \emptyset.
\end{align*}
Since $X(t_0)\leq x_1$ implies $(x_1+X(t_0-s)-X(t_0),\frac{2c_*}{\mathrm{k}})\times(0,\mathrm{k}x_1)
\subseteq \mathbb{K}(t_0-s)$,
we again get a contradiction. 

Therefore, we complete the proof of Theorem \ref{thm:main_blowup}.
%
%
%
%
%
%
%
%
%
%
\end{proof}

\appendix

\section{Proof of Propositions \ref{prop:flow-map}, \ref{pro:ap-es} and \ref{prop:reg-param}}\label{sec:appendix}

In this section, we present detailed proofs to Propositions \ref{prop:flow-map}, \ref{pro:ap-es} and \ref{prop:reg-param}.

\subsection{Proof of Proposition \ref{prop:flow-map}}\label{subsec:flow-map}
We only focus on the case $\mathbf{D} = \mathbb{R}^2_+$, 
since the whole space case is easier.


Denote by $d_t(x) \triangleq \mathrm{dist}\,(x,\partial D(t))$. 
To start with, for the patch solution $\theta$ given by \eqref{eq:the-patch-sol} 
and the velocity $u$ given by \eqref{eq:u-exp}, 
we claim that for any $x\in \overline{\mathbb{R}^2_+}\setminus \partial D(t)$ 
such that $d_t(x) \geq \mathrm{d}_0$, 
\begin{align}\label{eq:Lipschitz_est}
  |\nabla u(x,t)|\leq C \big( \mathrm{d}_0^{-\alpha} + 1\big),
\end{align}
which means that the velocity $u(x,t)$ is Lipschitzian about $x$ when $x$ is away from the boundary $\partial D(t)$.  
Indeed, recalling that $u(x,t)$ has the formula \eqref{eq:exp-u-extend} by odd extension and $\chi(x) = \chi(|x|)\in C^\infty_c(\mathbb{R}^2)$ 
is a cutoff function satisfying \eqref{eq:chi-prop}, and using the fact that $\theta \equiv \mathrm{const}$ on $B(x,\mathrm{d}_0)$ 
and the kernel $\frac{x^\perp}{|x|^2} G(|x|)$ is mean-zero, 
we have that for every $z\in B(x,\frac{\mathrm{d}_0}{2})$,
\begin{align*}
  u(z,t) =  \int_{\mathbb{R}^2}\frac{(z-y)^{\perp}}{|z-y|^2}G(|z-y|) 
  \Big(1 - \chi \Big( \frac{|z-y|}{\mathrm{d}_0/4}\Big) \Big) e_o[\theta](y,t)\dd y,
\end{align*}
which combined with \eqref{conds:G-s}-\eqref{conds:G-l} leads to
\begin{align*}
  |\nabla u(x,t)|  & \leq C 
  \int_{\mathbb{R}^2} \Big(\frac{|G(|x-y|)|}{|x-y|^2} + \frac{|G'(|x-y|)|}{|x-y|}  \Big) 
  \Big(1 - \chi \Big( \frac{|x-y|}{\mathrm{d}_0/4}\Big) |e_o[\theta](y,t)| \dd y \\
  & \quad +  C 
  \int_{\mathbb{R}^2} \frac{4 |G(|x-y|)|}{\mathrm{d}_0|x-y|} \Big| \chi'\Big( \frac{|x-y|}{\mathrm{d}_0/4}\Big) \Big|  |e_o[\theta](y,t)| \dd y \\
  & \leq C \Big( \|\theta(t)\|_{L^\infty} \int_{\frac{\mathrm{d}_0}{4}}^{c_0} \frac{1}{r^{1+\alpha}} \dd r + \|\theta(t)\|_{L^1} 
  + \|\theta(t)\|_{L^\infty} \int_{\frac{\mathrm{d}_0}{4}}^{\frac{\mathrm{d}_0}{2}} \frac{r^{-\alpha}+1}{\mathrm{d_0}}\dd r \Big) \\
  & \leq C \|\theta(t)\|_{L^1\cap L^\infty} \big(\mathrm{d}_0^{-\alpha} +1 \big),
\end{align*}
as desired.

For the proof of ($\mathbf{i}$), as discussed in \cite{KYZ17}, we mainly 
need to verify that there exists $0 < \mathrm{B} < +\infty$ 
such that for each $T'\in (0,T)$ and $(x,t)\in (\overline{\mathbb{R}^2_+}\setminus \partial D(0)) \times [0,T']$,
\begin{align}\label{eq:distance_claim}
  d_t(\Phi_t(x))\geq e^{-\mathrm{B} t}d_0(x),\quad \text{and}\quad \Phi_t^{(2)}(x) \geq e^{-\mathrm{B} t}x_2,
\end{align}
with $\Phi_t^{(2)}(x) = \Phi_t(x) \cdot (0,1)$.
In fact, according to \eqref{eq:Lipschitz_est} and \eqref{eq:distance_claim}, the particle trajectory $\Phi_t(x)$ 
solving ODE \eqref{eq:flow_map} is unique for each $(x,t)\in (\overline{\mathbb{R}^2_+}\setminus\partial D(0))\times [0,T)$, 
which implies that the map $\Phi_t \,:\, \overline{\mathbb{R}^2_+}\setminus \partial D(0) \rightarrow 
\overline{\mathbb{R}^2_+}\setminus \partial D(t)$ is injective for any $t\in [0,T)$; 
while for the surjectivity of the map $\Phi_t$, it follows by solving the ODE \eqref{eq:flow_map} backwards in time with 
any terminal point $y = \Phi_t(x) \in \overline{\mathbb{R}^2_+}\setminus \partial D(t)$. 
	
Next, note that the map $t\in [0,T']\mapsto d_t(\Phi_t(x))$ is Lipschitzian (by using the same argument as in \cite[Lemma 4.10]{KYZ17}). 
Thus in order to get \eqref{eq:distance_claim}, by virtue of Gr\"onwall's inequality and \eqref{eq:flow_map}, 
it suffices to prove that for any $t\in [0,T']$,
\begin{align}\label{eq:dist_diff_ineq}
  \frac{\dd d_t(\Phi_t(x))}{\dd t}\geq - \mathrm{B} d_t(\Phi_t(x)), \quad |u_2(x,t)|\leq \mathrm{B} x_2. 
\end{align}

We first prove the second inequality in \eqref{eq:dist_diff_ineq}. 
Consider $\mathbf{z} = (z_1,\cdots,z_N)$, and each $z_k(\zeta,t)$ a parametrization of the $C^{1,\sigma}$ patch boundary $\partial D_k(t)$ 
as in the subsection \ref{subsec:contour}, then analogous with \eqref{def:norm-whole}, we denote that 
\begin{align*}
  \|\mathbf{z}\|_{W_\sigma} \triangleq 
  \lVert \mathbf{z} \rVert_{C^{1,\sigma}} 
  + \sum_{k=1}^N\|F[z_k]\|_{L^\infty} + \delta[\mathbf{z}]^{-1} + 1.
\end{align*}
Since $\theta$ given by \eqref{eq:the-patch-sol} is a $C^{1,\sigma}$ patch solution to the equation
\eqref{eq:geSQG}$\&$\eqref{eq:u-exp} on $[0,T)$, we have $\|\mathbf{z}(t)\|_{W_\sigma} < +\infty$ for every $t\in (0,T)$.
Similar as deriving \eqref{eq:velocity_normal}, we deduce that
\begin{align*}
  u_2(x,t) & =  \sum_{j=1}^N a_j \int_{D_j(t)} \Big(\frac{y_1 -x_1}{|x-y|^2}G(|x-y|)
  -\frac{y_1-x_1}{|x-\overline{y}|^2}G(|x-\overline{y}|)\Big)  \dd y \\ 
  & = - \sum_{j=1}^N a_j \int_{D_j(t)} \partial_{y_1}\Big( R(|x-y|)
  - R(|x-\overline{y}|)\Big) 
  \dd y \\ 
  & = - \sum_{j=1}^N a_j \int_{\partial D_j(t)} n^{(1)}(y,t)  \Big( R(|x-y|)  - R(|x - \overline{y}|)
  \Big) \, \dd \sigma(y) \\
  & = \sum_{j=1}^{N}a_j \int_{\TT} \Big( R(|x-z_j(\eta,t)|)-R(|x - \overline{z}_j(\eta,t)|) 
  \Big)  \partial_\eta (z_j^{(2)}(\eta,t)) \, \dd \eta,
\end{align*}
where $R(\cdot)$ is defined by \eqref{def:R}, $n(y,t) = (n^{(1)}, n^{(2)})(y,t)$ for each $y\in \partial D_j(t)$ and $z_j = (z_j^{(1)}, z_j^{(2)})$. 
Thanks to \cite[Lemma 2.2]{KYZ17} (see \eqref{eq:fact-half-plane} below), we have 
\begin{align*}
  |\partial_\eta(z_j^{(2)}(\eta))|\leq C\lVert z_j\rVert^{\frac{1}{1+\sigma}}_{C^{1,\sigma}}(z_j^{(2)}(\eta))^{\frac{\sigma}{1+\sigma}}
  \leq C\|\mathbf{z}\|_{W_\sigma}^{\frac{1}{1+\sigma}} (z_j^{(2)}(\eta))^{\frac{\sigma}{1+\sigma}}. 
\end{align*}
This inequality and the mean value theorem lead to that 
\begin{align*}
  |u_2(x,t)| \leq 2 \|\mathbf{z}\|_{W_\sigma}^{\frac{1}{1+\sigma}} x_2 \sum_{j=1}^{N}|a_j| 
  \int_{\TT}\int_0^1 |R'(\tau|x-z_j(\eta)|+(1-\tau)|x-\bar{z}_j(\eta)|)|\, (z_j^{(2)}(\eta))^{\frac{\sigma}{1+\sigma}}\dd \tau \dd \eta.
\end{align*}
Observing that 
$|x-z_j|\leq |x-\overline{z}_j|$, $|x-\overline{z}_j|\geq z_j^{(2)}$,
together with \eqref{es:R-deri}, we have 
\begin{align*}
  |u_2(x,t)| & \leq C \|\mathbf{z}\|_{W_\sigma}^{\frac{1}{1+\sigma}} x_2 \sum_{j=1}^{N} 
  \bigg( \int_{\TT}\int_0^1 
  \frac{(z_j^{(2)}(\eta))^{\frac{\sigma}{1+\sigma}}}{\big(\tau|x-z_j(\eta)|+(1-\tau)|x-\overline{z}_j(\eta)|\big)^{1+\alpha}} \dd \tau\dd \eta  \\
  & \qquad\qquad \qquad \qquad  + \int_{\TT}\int_0^1 
  \frac{(z_j^{(2)}(\eta))^{\frac{\sigma}{1+\sigma}}}{\tau|x-z_j(\eta)|+(1-\tau)|x-\overline{z}_j(\eta)|} \dd \tau\dd \eta\bigg)\\
  & \leq C \|\mathbf{z}\|_{W_\sigma}^{\frac{1}{1+\sigma}} x_2 \sum_{j=1}^N \int_{\TT}\int_0^1 (1-\tau)^{-\frac{\sigma}{1+\sigma}} \dd \tau 
  \Big( |x-z_j(\eta)|^{-\frac{1}{1+\sigma}-\alpha}  + |x-z_j(\eta)|^{-\frac{1}{1+\sigma}} \Big)\dd \eta \\
  & \leq C \|\mathbf{z}\|_{W_\sigma}^{\frac{1}{1+\sigma}} x_2 \sum_{j=1}^{N} 
  \Big(\int_{\TT} |x-z_j(\eta)|^{-\frac{1}{1+\sigma}-\alpha}\dd \eta +  \int_{\TT} |x-z_j(\eta)|^{-\frac{1}{1+\sigma}}\dd \eta \Big).
\end{align*}
In addition, arguing as the estimation of $T_k$ in \cite[pp. 1309]{KYZ17}, we know that for every $\alpha\in (0,\frac{1}{2})$
and $\sigma\in (\frac{\alpha}{1-\alpha},1]$,
\begin{align*}
  \int_{\TT} |x-z_j(\eta)|^{-\frac{1}{1+\sigma}-\alpha}\dd \eta + \int_{\TT} |x-z_j(\eta)|^{-\frac{1}{1+\sigma}}\dd \eta 
  \leq C(\alpha,\sigma) \|\mathbf{z}\|_{W_\sigma},
\end{align*}
which implies the wanted estimate that for every $t\in [0,T']$, 
\begin{align*}
  |u_2(x,t)|\leq C N \|\mathbf{z}\|_{L^\infty([0,T']; W_\sigma)}^{\frac{2+\sigma}{1+\sigma}} x_2.
\end{align*}

For the first inequality in \eqref{eq:dist_diff_ineq}, as showed in \cite[Lemma 5.6]{KYZ17}, 
we only need to prove that for any $x\in \overline{\mathbb{R}^2_+}\setminus \partial D(t)$ and any $P\in \partial D(t)$ 
such that $|x-P| = d_t(x)$, we have that for every $t\in [0,T']$,
\begin{align}\label{eq:difference_est}
  \Big|(u(x)-u(P))\cdot \tfrac{x-P}{|x-P|}\Big|\leq C \|\mathbf{z}\|_{L^\infty([0,T'];W_\sigma)} |x-P|. 
\end{align}
We first prove 
\begin{align}\label{eq:gradient_est}
	\Big|\big(\tfrac{x-P}{|x-P|}\cdot\nabla\big) u(x,t)\cdot \tfrac{(x-P)}{|x-P|}\Big|\leq C \|\mathbf{z}\|_{L^\infty([0,T'];W_\sigma)}.
\end{align}
Define $\mathrm{R}_t \triangleq (4\|\mathbf{z}(t)\|_{W_\sigma})^{-\frac{1}{\sigma}-1}$. 
For every $x$ such that $|x-P|\geq \frac{\mathrm{R}_t}{2}$, according to \eqref{eq:Lipschitz_est}, and noting that 
$\alpha(\frac{1}{\sigma} + 1) < 1$, we find 
\begin{align*}
  \Big|\big(\tfrac{x-P}{|x-P|}\cdot\nabla\big) u(x)\cdot\tfrac{(x-P)}{|x-P|}\Big|\leq C \mathrm{R}_t^{-\alpha}
  \leq C \|\mathbf{z}(t)\|_{W_\sigma}.
\end{align*}
Suppose that $P\in \partial D_k(t)$ for some $k$ and consider each $x$ satisfying 
$|x-P| \leq \frac{\mathrm{R}_t}{2}$. We split $u(x,t)$ given by \eqref{eq:u-exp} as 
\begin{align*}
  u(x,t) = \sum_{j=1}^N a_j \big(\mathrm{u}^j(x,t) - \widetilde{\mathrm{u}}^j(x,t)\big),
\end{align*}
where $\widetilde{D}_j(t) = \{(x_1,x_2)|(x_1,-x_2)\in D_j(t)\}$ and 
\begin{align*}
  \mathrm{u}^j(x,t) = \int_{D_j(t)}\frac{(x-y)^\perp}{|x-y|^2}G(|x-y|)\dd y,\quad \widetilde{\mathrm{u}}^j(x,t)
  = \int_{\widetilde{D}_j(t)} \frac{(x-y)^{\perp}}{|x-y|^2}G(|x-y|)\dd y.
\end{align*}
For every $j\ne k$, we have $\mathrm{dist}\,(x,\partial D_j(t)) \geq \mathrm{dist}\,(\partial D_j(t),\partial D_k(t))
- \mathrm{dist}\,(x,\partial D_k(t)) \geq \tfrac{1}{\|\mathbf{z}(t)\|_{W_\sigma}}
-\tfrac{\mathrm{R}_t}{2}\geq \frac{\mathrm{R}_t}{2}$. 
In addition, we also see $\mathrm{dist}\,(x,\partial \widetilde{D}_j(t)) \geq \mathrm{dist}\,(x,\partial D_j(t))$. 
Thus, arguing as \eqref{eq:Lipschitz_est}, we find that for every $j\neq k$,
\begin{align*}
  \Big|\big(\tfrac{x-P}{|x-P|}\cdot\nabla\big) \mathrm{u}^j(x,t)\cdot\tfrac{(x-P)}{|x-P|}\Big|\leq |\nabla \mathrm{u}^j(x,t)|
  \leq C \mathrm{R}_t^{-\alpha} \leq C \|\mathbf{z}(t)\|_{W_\sigma},
\end{align*} 
and 
\begin{align*}
  \Big|\big(\tfrac{x-P}{|x-P|}\cdot\nabla\big) \widetilde{\mathrm{u}}^j(x,t)\cdot\tfrac{(x-P)}{|x-P|}\Big|
  \leq C \|\mathbf{z}(t)\|_{W_\sigma}. 
\end{align*} 
For the term involving $\mathrm{u}^k$, noting that $n_P(x)\triangleq\frac{x-P}{|x-P|}$ is the normal vector at 
$P\in \partial D_k(t)$, 
we have to develop some cancellation of the normal derivative of $\mathrm{u}^k$. By setting 
\begin{align*}
  S_P(x)\triangleq \Big\{ y\in \RR^2:(y-P)\cdot n_P\in (- \mathrm{R}_t,0),\;
  \big|(y-P)\cdot n^{\perp}_P\big|\le \mathrm{R}_t \Big\},
\end{align*}
and
\begin{align*}
  u_{S_P}(x) \triangleq \int_{S_P(x)} \tfrac{(x-y)^\perp}{|x-y|^2} G(|x-y|) \dd y,
\end{align*}
and using symmetry, 
we observe that
\begin{align*}
  n_P(x)\cdot u_{S_P}(x)=\tfrac{x-P}{|x-P|}\cdot \int_{S_P(x)} \tfrac{(x-y)^\perp}{|x-y|^2} G(|x-y|) \dd y = 0.
\end{align*}
Let $\mathrm{x}_r \triangleq  x+r\tfrac{x-P}{|x-P|}$ with $|r|\ll |x-P|$. 
Due to that $n_P(\mathrm{x}_r)$ is also the normal vector at $P\in \partial D_k(t)$ 
and $n_P(\mathrm{x}_r)=n_P(x)$, it follows that 
\begin{align}\label{eq:cancellations_r}
  0=n_P(\mathrm{x}_r)\cdot u_{S_P}(\mathrm{x}_r)=n_P(x)\cdot \int_{S_P(x)} 
  \tfrac{(\mathrm{x}_r-y)^\perp}{|\mathrm{x}_r-y|^2} G(|\mathrm{x}_r-y|) \dd y,\quad \forall |r|\ll |x-P|. 
\end{align} 
Differentiating \eqref{eq:cancellations_r} with respect to $r$ at $r=0$, it follows that 
\begin{align*}
  \Big((n_P(x)\cdot \nabla_{z})\int_{S_P(x)} \tfrac{(z-y)^\perp}{|z-y|^2} G(|z-y|) \dd y \Big)
  \Big|_{z=x}\cdot n_P(x)=0.
\end{align*}
Thus
\begin{align*}
  \Big|\big( n_P(x) \cdot \nabla_x\big) \mathrm{u}^k(x,t) \cdot n_P(x)\Big|
  & = \Big|\big( n_P(x) \cdot \nabla_z\big) \Big(\mathrm{u}^k(z,t) 
  - \int_{S_P(x)} \tfrac{(z-y)^\perp}{|z-y|^2} G(|z-y|) \dd y \Big)\Big|_{z=x} \cdot n_P(x)\Big| \\
  & \leq C \int_{D_k(t)\triangle S_P} \Big(\tfrac{|G(|x-y|)|}{|x-y|^2}  + \tfrac{|G'(|x-y|)|}{|x-y|}\Big) \dd y,
\end{align*}
where we have used the notation $A\triangle B = (A\setminus B ) \cup (B\setminus A)$. 
In view of \eqref{conds:G-s}-\eqref{conds:G-l} and without loss of generality assuming $\mathrm{R}_t\leq c_0$, 
we deduce that 
\begin{equation}\label{eq:u_k_est}
\begin{aligned}
  \int_{D_k(t)\triangle S_P} \Big(\tfrac{|G(|x-y|)|}{|x-y|^2}  + \tfrac{|G'(|x-y|)|}{|x-y|}\Big) \dd y
  & \leq C \int_{(D_k(t)\triangle S_P) \cap B(P,\mathrm{R}_t)}  \tfrac{1}{|x-y|^{2+\alpha}}  \dd y \\
  & \quad + C \int_{(D_k(t)\cup S_P) \setminus B(P,\mathrm{R}_t)} 
  \Big(\tfrac{1}{|x-y|}+\tfrac{1}{|x-y|^{2+\alpha}}\Big) \dd y.
\end{aligned}
\end{equation}
Since $|x-P|\leq \tfrac{\mathrm{R}_t}{2}$, $|D_k(t)|=|D_k(0)|\leq C$ and $|S_P|\leq C\mathrm{R}^2_t$, 
by using the rearrangement inequality, we infer that  
\begin{align*}
  \int_{(D_k(t)\cup S_P)\setminus B(P,\mathrm{R}_t)}\tfrac{1}{|x-y|} \dd y
  \leq \int_{B(x,\sqrt{|D_k(0)|/\pi})} \frac{1}{|x-y|} \dd y + \int_{B(x,C \mathrm{R}_t)} \frac{1}{|x-y|} \dd y 
  \leq C,
\end{align*}
and
\begin{align*}
  \int_{(D_k(t)\cup S_P)\setminus B(P,\mathrm{R}_t)} \frac{1}{|x-y|^{2+\alpha}}\dd y
  \leq \int_{\mathbb{R}^2\setminus B(x,\mathrm{R}_t)} \frac{1}{|x-y|^{2+\alpha}} \dd y 
  \leq C \mathrm{R}_t^{-\alpha} \leq C \|\mathbf{z}(t)\|_{W_\sigma}.
\end{align*}
The remaining term in \eqref{eq:u_k_est} is the same with $I_1$ in \cite[Lemma 5.1]{KYZ17}, hence we conclude that
\begin{align*}
  \Big|(\tfrac{x-P}{|x-P|}\cdot\nabla) \mathrm{u}^k(x,t)\cdot \tfrac{x-P}{|x-P|}\Big|\leq C \|\mathbf{z}(t)\|_{W_\sigma}. 
\end{align*}
For the term involving $\widetilde{\mathrm{u}}^k$, we can bound it as the estimation of $\nabla\tilde{v}(x) n_P$ 
in \cite[Proposition 5.3]{KYZ17}:
\begin{align*}
  \Big|\big(\tfrac{x-P}{|x-P|}\cdot\nabla\big) \widetilde{\mathrm{u}}^k(x,t)\cdot \tfrac{x-P}{|x-P|}\Big|\leq C \|\mathbf{z}(t)\|_{W_\sigma}. 
\end{align*}
Collecting the above estimates immediately yields \eqref{eq:gradient_est}. 
Next, noting that if $x_\tau= P+\tau(x-P)$ for $\tau\in [0,1]$, 
then $\mathrm{dist}\,(x_\tau,P) = \mathrm{dist}\,(x_\tau,\partial D(t))$, and by using the mean value theorem, 
there exists some $\tau\in (0,1)$ such that 
\begin{align*}
  \Big|(u(x,t)-u(P,t))\cdot \tfrac{x-P}{|x-P|}\Big|=&\Big|\big(\tfrac{x-P}{|x-P|}\cdot\nabla\big) u(x_\tau,t)\cdot \tfrac{x-P}{|x-P|}\Big||x-P|\\
  =&\Big|\big(\tfrac{x_\tau - P}{|x_\tau-P|}\cdot\nabla\big) u(x_\tau,t)\cdot \tfrac{x_{\tau}-P}{|x_{\tau}-P|}\Big||x-P|.
\end{align*}
Hence, this equality and \eqref{eq:gradient_est} imply \eqref{eq:difference_est}. 

The proof of $\mathbf{(ii)}$ is exactly the same with the proof of \cite[Proposition 1.3 (b)]{KYZ17}, and we omit the details. 

\subsection{Proof of Proposition \ref{pro:ap-es}}\label{subsec:ap-es}
Taking the second derivative with respect to $\zeta$ in both sides of \eqref{eq:main-eq-GP}
and making a dot product with $\partial^2_\zeta z_k$,
we obtain that
\begin{align}\label{eq:z_k-H2}
  \frac{1}{2}\frac{\dd }{\dd t}\lVert z_k\rVert^2_{\dot{H}^2} = \int_{\TT}\partial^2_\zeta z_k(\zeta)\cdot
  \partial^2_\zeta \Big(\mathrm{NL}_{j=k}(\zeta,t) + \mathrm{NL}_{j\ne k}(\zeta,t)
  + \lambda_k(\zeta,t) \partial_\zeta z_k(\zeta,t)\Big) \dd \zeta,
\end{align}
where $\mathrm{NL}_{j=k}$ is the term with $j = k$ in the sum of \eqref{eq:NL-k-GP}
and $\mathrm{NL}_{j\ne k}$ are the other terms in the sum of \eqref{eq:NL-k-GP}.
In the sequel, we shall separately estimate the nonlinear terms $\mathrm{NL}_{j=k}$, $\mathrm{NL}_{j\ne k}$
and the tangential term $\lambda_{k}(\zeta,t)$.

\subsubsection{Estimation of the $\mathrm{NL}_{j=k}$ term in \eqref{eq:z_k-H2}}

First, $\mathrm{NL}_{j=k}$ can be split into the following
\begin{align}\label{eq:NLj=k}
  \mathrm{NL}_{j=k}(\zeta,t) =\, \mathrm{O}_k(\zeta,t) + \, \mathrm{N}_k(\zeta,t),
\end{align}
where
\begin{align*}
  \mathrm{O}_k(\zeta,t) \triangleq  a_k\int_{\mathbb{T}}\big(\partial_\zeta z_k(\zeta,t)
  - \partial_\zeta z_k(\zeta - \eta, t)\big) R\big(| z_k(\zeta,t)  - z_k(\zeta-\eta, t)|\big)  \dd\eta,
\end{align*}
\begin{align*}
  \mathrm{N}_k(\zeta,t) \triangleq a_k\int_{\mathbb{T}}\big(\partial_\zeta z_k(\zeta,t)
  - \partial_\zeta \overline{z}_k(\zeta - \eta, t)\big) R\big(| z_k(\zeta,t)  - \overline{z}_k(\zeta-\eta, t)|\big)  \dd\eta.
\end{align*}
We shall focus on $N_k$, which contains the additional term from the 
half-plane setting and is more singular.
By virtue of Leibniz's rule, we have
\begin{align*}
  \int_{\TT}\partial^2_\zeta z_k(\zeta)\cdot \partial^2_\zeta \mathrm{N}_k(\zeta)\, \dd \zeta = I_1 + I_2 + I_3,
\end{align*}
where
\begin{align*}
  I_1 & \triangleq \int_{\mathbb{T}}\int_{\mathbb{T}} \partial^2_\zeta z_k(\zeta)\cdot \big(\partial^3_\zeta z_k(\zeta)
  - \partial^3_\zeta \overline{z}_k(\zeta - \eta)\big) R\big(| z_k(\zeta)  - \overline{z}_k(\zeta -\eta)|\big)  \dd\eta \dd \zeta,\\
  I_2 & \triangleq 2\int_{\mathbb{T}}\int_{\mathbb{T}} \partial^2_\zeta z_k(\zeta)\cdot \big(\partial^2_\zeta z_k(\zeta)
  - \partial^2_\zeta \overline{z}_k(\zeta - \eta)\big) \partial_\zeta R\big(| z_k(\zeta)  - \overline{z}_k(\zeta-\eta)|\big)
  \dd\eta \dd \zeta, \\
  I_3 & \triangleq \int_{\mathbb{T}}\int_{\mathbb{T}} \partial^2_\zeta z_k(\zeta)\cdot \big(\partial_\zeta z_k(\zeta)
  - \partial_\zeta \overline{z}_k(\zeta - \eta)\big) \partial^2_\zeta R\big(| z_k(\zeta)  - \overline{z}_k(\zeta-\eta)|\big)
  \dd\eta \dd \zeta.
\end{align*}
Noting that
\begin{equation}\label{eq:I1-symm}
\begin{split}
  I_1 & = \int_{\mathbb{T}}\int_{\mathbb{T}} \partial_\zeta^2 \overline{z}_k(\zeta) \cdot
  \big( \partial_\zeta^3 \overline{z}_k(\zeta) - \partial_\zeta^3 z_k(\zeta -\eta) \big)
  R\big( |z_k(\zeta) - \overline{z}_k(\zeta-\eta)|\big) \dd \eta \dd \zeta \\
  & = \int_{\mathbb{T}} \int_{\mathbb{T}} \partial_\zeta^2 \overline{z}_k(\zeta -\eta)
  \cdot \big(\partial_\zeta^3 \overline{z}_k(\zeta -\eta) - \partial_\zeta^3 z_k(\zeta) \big)
  R\big(|z_k(\zeta) - \overline{z}_k(\zeta -\eta)| \big) \dd \eta \dd \zeta,
\end{split}
\end{equation}
we use the integration by parts to deduce
\begin{equation}\label{eq:GP-I_1}
\begin{split}
  I_1  = - \frac{1}{4}\int_{\mathbb{T}}\int_{\mathbb{T}}|\partial^2_\zeta z_k(\zeta)-
  \partial^2_\zeta \overline{z}_k(\zeta - \eta)|^2
  \partial_\zeta R\big(| z_k(\zeta)  - \overline{z}_k(\zeta-\eta)|\big)  \dd\eta \dd \zeta .
\end{split}
\end{equation}
For every $k,j\in\{1,\cdots,N\}$, we denote
$Z_{k,j}(\zeta,\eta,t) \triangleq z_k(\zeta,t)  - \overline{z}_j(\zeta-\eta, t)$,
and we also abbreviate $Z_{k,j}(\zeta,\eta,t)$ as $Z_{k,j}$ if the variables $(\zeta,\eta,t)$ are clear in the context.
Notice that
\begin{align*}
  \partial_\zeta R(|Z_{k,j}|) = R'(|Z_{k,j}|) \frac{ \partial_\zeta Z_{k,j}(\zeta,\eta,t)
  \cdot Z_{k,j}(\zeta,\eta,t)}{|Z_{k,j}(\zeta,\eta,t)|}.
\end{align*}
In order to control $|\partial_\zeta Z_{k,k}(\zeta,\eta)|$, 
we recall an important observation in \cite[Lemma 2.2]{KYZ17} 
that for every $g(\zeta)\geq 0$,
\begin{align}\label{eq:fact-half-plane}
  |\partial_\zeta g(\zeta)| \leq 2 \|\partial_\zeta g\|_{C^\sigma}^{\frac{1}{1+\sigma}} g(\zeta)^{\frac{\sigma}{1+\sigma}},
  \quad \forall \zeta\in\mathbb{T}, \sigma\in[0,1].
\end{align}
Thus, in combination with the following facts that
\begin{align}\label{eq:Zkk-fact}
  |\eta| \leq F[z_k](\zeta,\eta) |z_k(\zeta) - z_k(\zeta -\eta)|
  \leq \|F[z_k]\|_{L^\infty} |Z_{k,k}(\zeta,\eta)|,
\end{align}
and (recalling $z_k = (z_k^{(1)}, z_k^{(2)})^T$)
\begin{equation}\label{eq:Zkk-fact2}
\begin{split}
  2 z_k^{(2)}(\zeta -\eta) & \leq |z_k(\zeta) -z_k(\zeta -\eta)| + |Z_{k,k}(\zeta,\eta)| \\
  & \leq \|z_k\|_{C^1} |\eta| + |Z_{k,k}(\zeta,\eta)|
  \leq \big(\|z_k\|_{C^1} \|F[z_k]\|_{L^\infty} + 1 \big) |Z_{k,k}(\zeta,\eta)|,
\end{split}
\end{equation}
we find the crucial inequality as follows (see also \cite[Eq.(18)]{GanP21}),
\begin{align}\label{eq:diff-pointwise-es}
  |\partial_\zeta Z_{k,k}(\zeta,\eta)|
  & \leq |\partial_\zeta z_k(\zeta) - \partial_\zeta z_k(\zeta  -\eta)| + 2 |\partial_\zeta z_k^{(2)}(\zeta-\eta)| \nonumber \\
  & \leq \|\partial_\zeta z_k\|_{C^{\frac{1}{3}}} |\eta|^{\frac{1}{3}}
  + 4 \|\partial_\zeta z_k\|_{C^{\frac{1}{2}}}^{\frac{2}{3}} \big(z_k^{(2)}(\zeta-\eta) \big)^{\frac{1}{3}} \nonumber \\
  & \leq C \big(\lVert z_k\rVert_{H^2}
  \lVert F[z_k]\rVert^{\frac{1}{3}}_{L^\infty}
  + \lVert z_k\rVert^{\frac{2}{3}}_{H^2}\big)|Z_{k,k}(\zeta,\eta)|^{\frac{1}{3}}.
\end{align}
In addition, from \eqref{es:R-deri} we see that
$|R'(|Z_{k,j}|)| \leq C \big(|Z_{k,j}|^{-1-\alpha} + |Z_{k,j}|^{-1} \big)$.
Hence, we use the above estimates to obtain that
\begin{align}\label{es:I1}
  & |I_1| + |I_2| \leq C \int_{\mathbb{T}} \int_{\mathbb{T}} \big(|\partial_\zeta^2 z_k(\zeta)|^2
  + |\partial_\zeta^2 \overline{z}_k(\zeta -\eta)|^2\big) |R'(|Z_{k,k}|)|\, |\partial_\zeta Z_{k,k}(\zeta,\eta)| \dd \eta \dd \zeta \nonumber \\
  & \leq C  \big(\lVert z_k\rVert_{H^2} \lVert F[z_k]\rVert^{\frac{1}{3}}_{L^\infty}
  + \lVert z_k\rVert^{\frac{2}{3}}_{H^2}\big) \int_{\mathbb{T}} \int_{\mathbb{T}}
  \big( |\partial_\zeta^2 z_k(\zeta)|^2 + |\partial_\zeta^2 \overline{z}_k(\zeta -\eta)|^2 \big)
  \big( |Z_{k,k}|^{-\frac{2}{3}-\alpha}  + |Z_{k,k}|^{-\frac{2}{3}}\big) \dd \eta \dd \zeta \nonumber \\
  & \leq C  \big(\lVert z_k\rVert_{H^2} \lVert F[z_k]\rVert_{L^\infty}^{\frac{1}{3}}
  + \lVert z_k\rVert^{\frac{2}{3}}_{H^2}\big) \big(\lVert F[z_k]\rVert_{L^\infty}^{\frac{2}{3}+\alpha} + 1\big)
  \|z_k\|_{H^2}^2 \int_{\mathbb{T}} \big( |\eta|^{-\frac{2}{3}-\alpha}  + |\eta|^{-\frac{2}{3}}\big) \dd \eta \nonumber  \\
  & \leq C \big(\lVert F[z_k]\rVert^{1+\alpha}_{L^\infty} + 1 \big) \big(\lVert z_k\rVert_{H^2}  + 1 \big)
  \lVert z_k\rVert^2_{H^2}.
\end{align}


By direct computation, $I_3$ can be rewritten as
\begin{align}
  I_3 & = \int_{\TT} \int_{\TT} \partial^2_\zeta z_k(\zeta)\cdot \partial_\zeta Z_{k,k}(\zeta,\eta)\, R'(|Z_{k,k}|)
  \frac{Z_{k,k}\cdot \big(\partial^2_\zeta z_k(\zeta) - \partial_\zeta^2 \overline{z}_k(\zeta -\eta) \big)}
  {|Z_{k,k}|} \dd \eta \dd \zeta \nonumber \\
  & \quad + \int_{\mathbb{T}}\int_{\TT} \partial_\zeta^2 z_k(\zeta)
  \cdot  \partial_\zeta Z_{k,k}(\zeta,\eta)  R'(|Z_{k,k}|)
  \frac{|\partial_\zeta Z_{k,k}|^2}{|Z_{k,k}|} \dd \eta \dd \zeta \nonumber \\
  & \quad + \int_{\mathbb{T}}\int_{\TT} \partial_\zeta^2 z_k(\zeta)
  \cdot \partial_\zeta Z_{k,k} \Big(-R'(|Z_{k,k}|)
  \frac{(Z_{k,k}\cdot \partial_\zeta Z_{k,k})^2}{|Z_{k,k}|^3}
  + R''(|Z_{k,k}|) \frac{(Z_{k,k}\cdot \partial_\zeta Z_{k,k})^2}{|Z_{k,k}|^2}\Big) \dd \eta \dd \zeta \nonumber \\
  & \triangleq I_{31} + I_{32} + I_{33}.
\end{align}
The term $I_{31}$ can be estimated exactly the same as $I_2$.
By using the symmetrization trick as in \eqref{eq:I1-symm}, we  have
\begin{align*}
  I_{32} & = \frac{1}{2}\int_\TT \int_{\TT} \partial^2_\zeta Z_{k,k} \cdot \partial_\zeta Z_{k,k} R'(|Z_{k,k}|)
  \frac{|\partial_\zeta Z_{k,k}|^2}{|Z_{k,k}|}\dd \eta \dd \zeta \\
  & = \frac{1}{4}\int_{\TT}\int_{\TT} \partial_\zeta \big(| \partial_\zeta Z_{k,k}|^2\big)
  R'(|Z_{k,k}|) \frac{|\partial_\zeta Z_{k,k}|^2}{|Z_{k,k}|} \dd \eta \dd \zeta.
\end{align*}
Then through the integration by parts, we find
\begin{align*}
  I_{32} = -\frac{1}{8}\int_{\TT}\int_{\TT}| \partial_\zeta Z_{k,k}|^4
  \frac{Z_{k,k}\cdot \partial_\zeta Z_{k,k}}{|Z_{k,k}|}
  \Big(\frac{R''(|Z_{k,k}|)}{|Z_{k,k}|} - \frac{R'(|Z_{k,k}|)}{|Z_{k,k}|^2}\Big)\dd \eta \dd \zeta.
\end{align*}
Hence, in view of Lemma \ref{lem:G-lemma} and \eqref{eq:Zkk-fact}, \eqref{eq:diff-pointwise-es},
we can deduce that for every $\epsilon \in (0,\frac{1}{3}-\alpha)$,
\begin{align*}
  |I_{32}| & \leq C
  \int_{\TT} \int_{\TT} | \partial_\zeta Z_{k,k}|^5
  \Big(\frac{| R''(|Z_{k,k}|)|}{|Z_{k,k}|} + \frac{|R'(|Z_{k,k}|)|}{|Z_{k,k}|^2}\Big)\dd \eta\dd \zeta \\
  & \leq C \big(\|z_k\|_{H^2} \|F[z_k]\|_{L^\infty}^{\frac{1}{3}}
  + \|z_k\|_{H^2}^{\frac{1}{3}} \big) \int_{\mathbb{T}} \int_{\mathbb{T}} |\partial_\zeta Z_{k,k}|^4
  \Big(\frac{1}{|Z_{k,k}|^{8/3 +\alpha}} + \frac{1}{|Z_{k,k}|^{5/3}} \Big) \dd \eta \dd \zeta \\
  & \leq C \big(\lVert z_k\rVert_{H^2} \lVert F[z_k]\rVert^{\frac{4}{3}-\epsilon}_{L^\infty}
  + \lVert z_k\rVert^{\frac{2}{3}}_{H^2} \lVert F[z_k] \rVert^{1-\epsilon}_{L^\infty}\big) \int_{\TT}\int_{\TT}|\eta|^{\epsilon-1}
  \frac{| \partial_\zeta Z_{k,k}|^4}{|Z_{k,k}|^{\alpha+ 5/3 + \epsilon}} \dd \eta \dd \zeta \\
  & \quad + C \big(\lVert z_k\rVert_{H^2} \lVert F[z_k]\rVert^{\frac{2}{3}}_{L^\infty}
  + \lVert z_k\rVert^{\frac{2}{3}}_{H^2} \lVert F[z_k] \rVert^{\frac{1}{3}}_{L^\infty}\big) \int_{\TT}\int_{\TT}|\eta|^{-\frac{1}{3}} \frac{|\partial_\zeta Z_{k,k}|^4}{|Z_{k,k}|^{4/3}}
  \dd \eta \dd \zeta .
\end{align*}
Now we recall the following crucial result (see \cite[Lemma 7]{GanP21})
that for every $2\pi$-periodic positive function $0< g \in H^2$,
\begin{align}\label{eq:f-fact}
  \int_{\mathbb{T}} \frac{|g'(\mu)|^4}{g(\mu)^\beta} \dd \mu
  \leq C_\beta \|g\|_{H^2}^{4-\beta},\quad \forall \beta\in (1,2],
\end{align}
and in combination with \eqref{eq:Zkk-fact}-\eqref{eq:Zkk-fact2},
we find that for every $\alpha \in (0,\frac{1}{3})$,
\begin{align*}
  \int_{\TT}\int_{\TT}|\eta|^{\epsilon -1} & \frac{| \partial_\zeta Z_{k,k}|^4}{|Z_{k,k}|^{\alpha+ 5/3 +\epsilon}}
   \dd \eta \dd \zeta \leq C \int_{\mathbb{T}} \int_{\TT} |\eta|^{\epsilon-1}
  \frac{|\partial_\zeta z_k(\zeta) - \partial_\zeta z_k(\zeta -\eta)|^4
  + |\partial_\zeta z^{(2)}_k(\zeta -\eta)|^4 }{|Z_{k,k}(\zeta,\eta)|^{\alpha+5/3+\epsilon}} \dd \eta \dd \zeta \\
  & \leq C \|F[z_k]\|_{L^\infty}^{\alpha+\frac{5}{3} +\epsilon} \int_{\TT} \int_{\TT}
  \frac{|\partial z_k(\zeta) - \partial_\zeta z_k(\zeta -\eta)|^4}{|\eta|^{\alpha+8/3}} \dd \eta \dd \zeta \\
  & \quad + C \big(\|z_k\|_{H^2} \|F[z_k]\|_{L^\infty} +1 \big)^{\alpha+\frac{5}{3} +\epsilon}
  \int_{\TT} \int_{\TT} |\eta|^{\epsilon -1}
  \frac{|\partial_\zeta z_k^{(2)}(\zeta -\eta)|^4}{\big(z_k^{(2)}(\zeta -\eta)\big)^{\alpha + 5/3 + \epsilon}} \dd \eta \dd \zeta \\
  & \leq C  \big( (\|z_k\|_{H^2} + 1) \|F[z_k]\|_{L^\infty}  +1 \big)^{\alpha+\frac{5}{3} +\epsilon}
  \Big(\lVert z_k\rVert^{\frac{7}{3}-\alpha-\epsilon}_{H^2}
  + \lVert z_k\rVert^{4}_{H^2}\Big),
\end{align*}
and
\begin{align*}
  \int_{\TT}\int_{\TT} |\eta|^{-\frac{1}{3}} \frac{|\partial_\zeta Z_{k,k}|^4}{|Z_{k,k}|^{4/3}} \dd \eta \dd \zeta
  \leq C \big( (\|z_k\|_{H^2} +1) \|F[z_k]\|_{L^\infty}  +1 \big)^{\frac{4}{3} }
  \Big(\lVert z_k\rVert^{\frac{8}{3}}_{H^2}
  + \lVert z_k\rVert^{4}_{H^2}\Big).
\end{align*}
Hence combining the above estimates leads to
\begin{align*}
  |I_{32}| \leq C \big(\|F[z_k]\|_{L^\infty}^{3+\alpha} +1 \big) \big(\|z_k\|_{H^2}^{\frac{7}{3}} +1 \big)
  \Big(\|z_k\|_{H^2}^2 + \|z_k\|_{H^2}^{\frac{14}{3}} \Big) .
\end{align*}
The term $I_{33}$ can be estimated in a similar manner: by using Lemma \ref{lem:G-lemma}, \eqref{eq:Zkk-fact}-\eqref{eq:diff-pointwise-es} and \eqref{eq:f-fact} again, we find
\begin{align*}
  |I_{33}| & \leq \| z_k\|_{H^2} \Big\|\int_{\TT} |\partial_\zeta Z_{k,k}|^3 \Big(\frac{|R'(Z_{k,k})|}{|Z_{k,k}|}
  + |R''(|Z_{k,k}|)| \Big) \dd \eta \Big\|_{L^2} \\
  & \leq C \|z_k\|_{H^2} \big(\lVert z_k\rVert_{H^2} \lVert F\rVert^{\frac{1}{3}}_{L^\infty}
  + \lVert z_k\rVert^{\frac{2}{3}}_{H^2}\big)
  \Big\|\int_{\TT} |\partial_\zeta Z_{k,k}|^2 \Big( \frac{1}{|Z_{k,k}|^{5/3+\alpha}} + \frac{1}{|Z_{k,k}|^{2/3}} \Big)\dd \eta
  \Big\|_{L^2},
\end{align*}
and for every $\epsilon \in (0,\frac{1}{3}-\alpha)$,
\begin{align*}
  \Big\|\int_{\TT} & \frac{|\partial_\zeta Z_{k,k}|^2}{|Z_{k,k}|^{5/3 +\alpha}}\dd \eta \Big\|_{L^2}
  \leq C \Big\| \int_{\TT} \frac{|\partial_\zeta z_k(\zeta) - \partial_\zeta z_k(\zeta -\eta)|^2 +
  |\partial_\zeta z_k^{(2)}(\zeta -\eta)|^2 }{|Z_{k,k}(\zeta,\eta)|^{5/3 +\alpha}} \dd \eta\Big\|_{L^2} \\
  & \leq C \|F[z_k]\|_{L^\infty}^{\alpha + \frac{5}{3}} \Big\| \int_{\TT}
  \frac{|\partial_\zeta z_k(\zeta) - \partial_\zeta z_k(\zeta -\eta)|^2}{|\eta|^{5/3 +\alpha}} \dd \eta \Big\|_{L^2} \\
  & \quad + C \big((\|z_k\|_{H^2} + 1) \|F[z_k]\|_{L^\infty}  +1 \big)^{\alpha+\frac{5}{3}}
  \Big\|\int_{\TT} |\eta|^{\epsilon -1} \frac{|\partial_\zeta z_k^{(2)}(\zeta -\eta)|^2}
  {\big(z_k^{(2)}(\zeta -\eta)\big)^{\alpha+ 2/3 + \epsilon}} \dd \eta \Big \|_{L^2} \\
  & \leq C \|F[z_k]\|_{L^\infty}^{\alpha + \frac{5}{3}} \int_{\TT}
  \frac{\|\partial_\zeta z_k(\zeta) - \partial_\zeta z_k(\zeta -\eta)\|_{L^4}^2}{|\eta|^{5/3 +\alpha}} \dd \eta \\
  & \quad + C \big((\|z_k\|_{H^2} + 1) \|F[z_k]\|_{L^\infty}  +1 \big)^{\alpha+\frac{5}{3}}
  \Big( \int_{\TT} \frac{|\partial_\zeta z_k^{(2)}(\zeta)|^4}
  {\big(z_k^{(2)}(\zeta)\big)^{2\alpha+ 4/3 + 2\epsilon}} \dd \zeta \Big)^{1/2} \\
  & \leq C \big((\|z_k\|_{H^2} + 1) \|F[z_k]\|_{L^\infty}  +1 \big)^{\alpha+\frac{5}{3}}
  \big(\|z_k\|_{H^2}^2 + \|z_k\|_{H^2}^{\frac{4}{3}-\alpha -\epsilon} \big),
\end{align*}
and
\begin{align*}
  \Big\|\int_{\TT} \frac{|\partial_\zeta Z_{k,k}|^2}{|Z_{k,k}|^{2/3}}\dd \eta \Big\|_{L^2}
  & \leq C \|F[z_k]\|_{L^\infty}^{\frac{2}{3}}  \int_{\TT}
  \frac{\|\partial_\zeta z_k(\zeta) - \partial_\zeta z_k(\zeta-\eta) \|_{L^4}^2}{|\eta|^{2/3}} \dd \eta  \\
  & \quad + C \big((\|z_k\|_{H^2} + 1) \|F[z_k]\|_{L^\infty}  +1 \big)^{\frac{2}{3}}
  \Big\|\int_{\TT} |\eta|^{-\frac{1}{9}} \frac{|\partial_\zeta z_k^{(2)}(\zeta -\eta)|^2}
  {\big(z_k^{(2)}(\zeta -\eta)\big)^{ 5/9 }} \dd \eta \Big\|_{L^2} \\
  & \leq C \big((\|z_k\|_{H^2} + 1) \|F[z_k]\|_{L^\infty}  +1 \big)^{\frac{2}{3}}
  \big( \|z_k\|_{H^2}^2 + \|z_k\|_{H^2}^{\frac{13}{9}} \big).
\end{align*}
Thus, gathering the above estimates yields
\begin{align*}
  |I_{33}| \leq C \big((\|z_k\|_{H^2} + 1) \|F[z_k]\|_{L^\infty}  +1 \big)^{\alpha+2}
  \big( \|z_k\|_{H^2}^{\frac{8}{3}} + \|z_k\|_{H^2}^{\frac{11}{3}} \big),
\end{align*}
and moreover,
\begin{align}
  \Big|\int_{\TT}\partial^2_\zeta z_k(\zeta)\cdot \partial^2_\zeta \mathrm{N}_k(\zeta)\, \dd \zeta\Big|
  \leq C \big(\|F[z_k]\|_{L^\infty}^{3+\alpha} +1 \big) \big(\|z_k\|_{H^2}^{\frac{7}{3}} +1 \big)
  \Big(\|z_k\|_{H^2}^2 + \|z_k\|_{H^2}^{\frac{14}{3}} \Big) .
\end{align}

\subsubsection{Estimation of the $\mathrm{NL}_{j\ne k}$ term in \eqref{eq:z_k-H2}}
First, for $\delta[\mathbf{z}]$ defined by \eqref{def:del-z} we claim that
\begin{equation}
\begin{split}
  \frac{\dd }{\dd t}\Big(\delta[\mathbf{z}]^{-1}\Big)
  \leq C \Big(\sum^N_{k=1} \lVert F[z_k]\rVert^\alpha_{L^\infty}
  + \delta[\mathbf{z}]^{-\alpha}\Big) \lVert \mathbf{z}\rVert^3_{H^2}.
\end{split}
\end{equation}

Indeed, from Lemma \ref{lem:G-lemma} and \eqref{eq:Zkk-fact}, notice that
\begin{align*}
  & \Big\lVert\int_{\TT}\big(\partial_\zeta z_k(\zeta)- \partial_\zeta \overline{z}_j(\zeta-\eta)\big)
  R(|z_k(\zeta)-\overline{z}_j(\zeta-\eta)|)\dd \eta \Big\rVert_{L^\infty} \\
  & \leq C \Big\| \int_{\TT} \big|\partial_\zeta z_k(\zeta) - \partial_\zeta\overline{z}_j(\zeta -\eta)\big|
  \Big(|z_k(\zeta) -\overline{z}_j(\zeta-\eta)|^{-\alpha} + |z_k(\zeta) -\overline{z}_j(\zeta -\eta)| \Big) \dd \eta \Big\|_{L^\infty} \\
  & \leq
  \begin{cases}
    C \lVert \mathbf{z}\rVert_{C^1} \big(\delta[\mathbf{z}]^{-\alpha} + \|\mathbf{z}\|_{L^\infty} \big),\quad & \textrm{for}\;\; j\neq k, \\
    C \|\mathbf{z}\|_{C^1} \big( \|F[z_k]\|_{L^\infty}^\alpha + \|\mathbf{z}\|_{L^\infty} \big), \quad &
    \textrm{for}\;\; j=k,
  \end{cases}
\end{align*}
and the same upper bound holds for estimating
$\lVert\int_{\TT}\big(\partial_\zeta z_k(\zeta)- \partial_\zeta z_j(\zeta-\eta)\big)
  R(|z_k(\zeta)- z_j(\zeta-\eta)|)\dd \eta \rVert_{L^\infty}$,
thus, recalling $\mathrm{NL}_k$ given by \eqref{eq:NL-k-GP}, we obtain that
\begin{align*}
  \lVert \mathrm{NL}_k\rVert_{L^\infty} \leq C \lVert \mathbf{z}\rVert_{C^1} \big(\lVert F[z_k]\rVert^\alpha_{L^\infty}
  + \delta[\mathbf{z}]^{-\alpha} + \|\mathbf{z}\|_{L^\infty} \big);
\end{align*}
in addition, via integrating by parts in the expression of $\lambda_k$ (see \eqref{eq:lambda-def}), we have
\begin{align*}
  \lVert \lambda_k(\zeta) \partial_\zeta z_k(\zeta)\rVert_{L^\infty}
  & \leq C \lVert \mathrm{NL}_k\rVert_{L^\infty} \lVert z_k\rVert_{H^2} \lVert z_k\rVert_{C^1} \\
  & \leq C \big(\lVert F[z_k]\rVert^\alpha_{L^\infty}
  + \delta[\mathbf{z}]^{-\alpha} + \|\mathbf{z}\|_{H^2} \big)
  \lVert \mathbf{z}\rVert^3_{H^2};
\end{align*}
hence, from the contour equation \eqref{eq:contour} we find that
\begin{align*}
  \lVert \partial_t z_k\rVert_{L^\infty} \leq
  C \big(\lVert F[z_k] \rVert^\alpha_{L^\infty}
  + \delta[\mathbf{z}]^{-\alpha} + \|\mathbf{z}\|_{H^2}\big)
  \big(\lVert \mathbf{z}\rVert_{H^2} + \|\mathbf{z}\|_{H^2}^3 \big),
\end{align*}
and thus,
\begin{align*}
  \frac{\dd }{\dd t}\Big(\delta[\mathbf{z}]^{-1}\Big) = \delta[\mathbf{\mathbf{z}}]^{-2} \frac{\dd }{\dd t}\delta[\mathbf{z}](t)
  \leq C  \big(\lVert F[z_k] \rVert^\alpha_{L^\infty}
  + \delta[\mathbf{z}]^{-\alpha} + \|\mathbf{z}\|_{H^2}\big)
  \big(\lVert \mathbf{z}\rVert_{H^2} + \|\mathbf{z}\|_{H^2}^3 \big).
\end{align*}

Now we start the estimation of the term $\mathrm{NL}_{j\ne k}$ in \eqref{eq:z_k-H2}.
For every $j\ne k$, we split it as
\begin{align}\label{eq:NLjnek}
  \mathrm{NL}_{j\ne k}(\zeta,t) = \mathrm{O}_{j\ne k}(\zeta,t) + \mathrm{N}_{j\ne k}(\zeta,t),
\end{align}
where
\begin{align*}
  \mathrm{O}_{j\ne k} (\zeta,t) \triangleq \sum_{j\neq k} a_j \int_{\mathbb{T}}(\partial_\zeta z_k(\zeta,t)
  - \partial_\zeta z_j(\zeta - \eta, t)) R(| z_k(\zeta,t)  - z_j(\zeta - \eta, t)|)  \dd\eta,
\end{align*}
\begin{align*}
  \mathrm{N}_{j\ne k} (\zeta,t) \triangleq \sum_{j\neq k} a_j \int_{\mathbb{T}} (\partial_\zeta z_k(\zeta,t)
  - \partial_\zeta \overline{z}_j(\zeta - \eta, t)) R(| z_k(\zeta,t)  - \overline{z}_j(\zeta - \eta, t)|)  \dd\eta.
\end{align*}
We only consider the estimation of the term $N_{j\ne k}$, since the treating of $O_{j\ne k}$ is easier and can be implemented in the same way.

By using the notation \eqref{def:Zkj0}, we have
\begin{align*}
  \int_{\TT} \partial_\zeta^2 z_k(\zeta) \cdot \partial_\zeta^2 N_{j\ne k } \dd \zeta & =
  \sum_{j\neq k} a_j \int_{\TT} \int_{\TT} \partial_\zeta^2 z_k(\zeta) \cdot
  \partial_\zeta^2 \big(\partial_\zeta Z_{k,j}(\zeta,\eta) \, R(|Z_{k,j}|)\big)
  \dd \eta \dd \zeta \\
  &= \sum_{j\neq k} a_j\int_{\TT} \int_{\TT} \partial_\zeta^2 z_k(\zeta)
  \cdot \partial^3_\zeta Z_{k,j}(\zeta,\eta)\, R(|Z_{k,j}|) \dd \eta\dd \zeta \\
  & \quad + \sum_{j\neq k} a_j \int_{\TT} \int_{\TT} \partial_\zeta^2 z_k(\zeta) \cdot \partial^2_\zeta Z_{k,j}(\zeta,\eta)
  \frac{Z_{k,j} \cdot \partial_\zeta Z_{k,j}}{|Z_{k,j}(\zeta,\eta)|} R'(|Z_{k,j}|) \dd \eta \dd \zeta \\
  & \quad + \sum_{j\neq k} a_j\int_{\TT} \int_{\TT} \partial_\zeta^2 z_k(\zeta) \cdot \partial_\zeta Z_{k,j}(\zeta,\eta)\,
  \partial^2_\zeta R(|Z_{k,j}|) \dd \eta \dd \zeta \\
  & \triangleq J_1 + J_2 + J_3.
\end{align*}

Through the integration by parts, observe that
\begin{align*}
  J_1 &  = \sum_{j\neq k} a_j \int_{\TT} \int_{\TT} \partial^2_\zeta z_k(\zeta)
  \cdot \big(\partial^3_\zeta z_k(\zeta,t)-\partial^3_\zeta \overline{z}_j(\zeta-\eta,t)\big)
  R(|Z_{k,j}|) \dd \eta\dd \zeta \\
  & = \frac{1}{2} \sum_{j\neq k} a_j \int_{\TT} \int_{\TT} \partial_\zeta (|\partial^2_\zeta z_k(\zeta)|^2)
  R(|Z_{k,j}|) \dd \eta\dd \zeta  \\
  & \quad + \sum_{j\neq k} a_j \int_{\TT} \int_{\TT} \partial^2_\zeta z_k(\zeta)\cdot
  \partial_{\eta}\partial^2_\zeta \overline{z}_j(\zeta-\eta) R(|Z_{k,j}|)
  \dd \eta\dd \zeta \\
  & = -\frac{1 }{2} \sum_{j\neq k} a_j \int_{\TT} \int_{\TT}
  |\partial^2_\zeta z_k(\zeta)|^2 \frac{Z_{k,j}\cdot \partial_\zeta Z_{k,j}}{|Z_{k,j}|}
  R'(|Z_{k,j}|) \dd \eta\dd \zeta \\
  & \quad - \sum_{j\neq k} a_j \int_{\TT} \int_{\TT} \partial_\zeta^2 z_k(\zeta)
  \cdot \partial_\zeta^2 \overline{z}_j(\zeta -\eta) \frac{Z_{k,j}\cdot \partial_\eta Z_{k,j}}{|Z_{k,j}|} R'(|Z_{k,j}|)
  \dd \eta \dd \zeta.
\end{align*}
Thus, by using \eqref{es:R-deri}, we obtain that
\begin{align*}
  |J_1| + |J_2|  \leq C \lVert z_k\rVert^2_{H^2} \lVert \mathbf{z}\rVert_{C^1} \big(1+ \delta[\mathbf{z}]^{-2-\alpha} \big) .
\end{align*}
For $J_3$, we see that
\begin{align*}
  |J_3| \leq  & \sum_{j\neq k} a_j \int_{\TT} \int_{\TT} |\partial_\zeta^2 z_k(\zeta)|\,
  |\partial_\zeta Z_{k,j}|^3\, |R''(|Z_{k,j}|)|
  \dd \eta \dd \zeta \\
  & +  2 \sum_{j\neq k} a_j \int_{\TT} \int_{\TT} |\partial_\zeta^2 z_k(\zeta)| \, | R'(|Z_{k,j}|)|
  \frac{|\partial_\zeta Z_{k,j}|^3}{|Z_{k,j}|} \dd \eta \dd \zeta \\
  & + \sum_{j\neq k} a_j \int_{\TT} \int_{\TT} |\partial_\zeta^2 z_k(\zeta)| \, |R'(|Z_{k,j}|)|
  \frac{|\partial_\zeta Z_{k,j}|^2 |\partial^2_\zeta Z_{k,j}|}{|Z_{k,j}|}\dd \eta \dd \zeta.
\end{align*}
Hence, by virtue of Lemma \ref{lem:G-lemma}, we have
\begin{align*}
  |J_3| & \leq C  \Big(\lVert \mathbf{z}\rVert_{C^1}^3
  \| \mathbf{z} \|_{H^2} + \|\mathbf{z}\|_{C^1}^2 \|\mathbf{z}\|_{H^2}^2 \Big) \,
  \sum_{j\neq k} \sup_{(\zeta,\eta)\in \mathbb{T}^2}
  \Big(\frac{1}{|Z_{k,j}|^{2+\alpha}} + \frac{1}{|Z_{k,j}|} \Big) \\
  & \leq C \lVert \mathbf{z}\rVert_{H^2}^4 \big( 1 + \delta[\mathbf{z}]^{-2-\alpha}\big).
\end{align*}
Collecting the above estimates gives
\begin{align*}
  \Big|\int_{\TT} \partial_\zeta^2 z_k(\zeta) \cdot \partial_\zeta^2 N_{j\ne k } \dd \zeta \Big|
  \leq C \big(\|\mathbf{z}\|_{H^2}^3 +  \lVert \mathbf{z}\rVert_{H^2}^4 \big)
  \big( 1 + \delta[\mathbf{z}]^{-2-\alpha}\big) .
\end{align*}

\subsubsection{Estimation of the tangential term in \eqref{eq:z_k-H2}}\label{subsec:es-tang}
Now, we deal with the tangential term in \eqref{eq:z_k-H2}.
We have
\begin{align*}
  \int_{\TT} \partial_\zeta^2 z_k(\zeta) \cdot \partial^2_\zeta \big(\lambda_k(\zeta) \partial_\zeta z_k(\zeta)\big)\dd \zeta
  & = \int_{\TT} \partial_\zeta^2 z_k(\zeta) \cdot \partial^3_\zeta z_k(\zeta)\lambda_k(\zeta) \dd \zeta
  + 2\int_{\TT} \partial_\zeta^2 z_k(\zeta) \cdot
  \partial^2_\zeta z_k(\zeta)\partial_\zeta \lambda_k(\zeta) \dd \zeta \\
  & \quad + \int_{\TT} \partial_\zeta^2 z_k(\zeta)\cdot\partial_{\zeta} z_k(\zeta) \,\partial^2_\zeta \lambda_k(\zeta) \dd \zeta \\
  & = \frac{3}{2} \int_{\TT} |\partial_\zeta^2 z_k(\zeta)|^2 \partial_\zeta \lambda_k(\zeta)\dd \zeta
  + \int_{\TT}\partial_\zeta^2 z_k(\zeta)\cdot
  \partial_\zeta z_k(\zeta) \, \partial^2_\zeta \lambda_k(\zeta)\dd \zeta .
\end{align*}
Since $A_k(t)=|\partial_\zeta z_k|^2$ depends only on $t$, we have
$\partial^2_\zeta z_k(\zeta) \cdot \partial_\zeta z_k(\zeta)
  = \frac{1}{2}\partial_\zeta A_k(t) = 0$,
and thus
\begin{align}\label{eq:tang-term1}
  \int_{\TT} \partial_\zeta^2 z_k(\zeta) \cdot \partial^2_\zeta \big(\lambda_k(\zeta) \partial_\zeta z_k(\zeta) \big)\dd \zeta
  =  \frac{3}{2} \int_{\TT} |\partial_\zeta^2 z_k(\zeta)|^2 \partial_\zeta \lambda_k(\zeta)\dd \zeta
  \leq \frac{3}{2} \|z_k\|_{H^2}^2 \|\partial_\zeta\lambda_k(\zeta)\|_{L^\infty} .
\end{align}
From \eqref{eq:lambda-def}, we see that
\begin{align*}
  \partial_\zeta \lambda_k(\zeta)  = -\frac{\partial_\zeta z_k(\zeta) \cdot \partial_\zeta \mathrm{NL}_k(\zeta)}{A_k(t)}
  + \frac{1}{2\pi} \int_{\TT}\frac{\partial_\zeta z_k(\eta)\cdot \partial_\zeta \mathrm{NL}_k(\eta)}{A_k(t)}\dd \eta ,
\end{align*}
with $A_k(t) = |\partial_\zeta z_k(\zeta,t)|^2$ for every $k\in\{1,\cdots, N\}$.

It remains to estimate $\frac{1}{A_k(t)} \|\partial_\zeta z_k(\zeta) \cdot \partial_\zeta \mathrm{NL}_k(\zeta)\|_{L^\infty}$.
We only consider those conjugate terms of $\mathrm{NL}_k$, as the other terms in $\mathrm{NL}_k$ are similar and with more cancellations.
Recalling that $\mathrm{NL}_{j=k}(\zeta)$ is given by \eqref{eq:NLj=k},  the conjugate term in $\partial_\zeta \mathrm{NL}_{j=k}(\zeta)$
is decomposed as follows
\begin{align*}
  \partial_\zeta \int_{\TT}\partial_\zeta Z_{k,k}(\zeta,\eta,t) R(|Z_{k,k}|)\dd \eta
  = K_1(\zeta) + K_2(\zeta),
\end{align*}
where
\begin{align*}
  K_1 \triangleq \int_{\TT}\partial_\zeta Z_{k,k}\, R'(|Z_{k,k}|)
  \frac{Z_{k,k}\cdot \partial_\zeta Z_{k,k}}{|Z_{k,k}|}\dd \eta, \quad
  K_2 \triangleq \int_{\TT} \partial^2_\zeta Z_{k,k} \,
  R(|Z_{k,k}|)\dd \eta,
\end{align*}
with $Z_{k,k}$ defined by \eqref{def:Zkj0}.
Using \eqref{eq:Zkk-fact}, \eqref{eq:diff-pointwise-es},
and noting that 
\[|A(t)|^{-\frac{1}{2}} = |\partial_\zeta z_k(\zeta,t)|^{-1} \leq \|F[z_k]\|_{L^\infty},\]
we get
\begin{align*}
  \frac{1}{A_k(t)}|\partial_\zeta z_k(\zeta)\cdot K_1 (\zeta)| & \leq C
  \frac{1}{A(t)^{1/2}} \big(\lVert z_k\rVert_{H^2} \lVert F[z_k]\rVert^{\frac{1}{3}}_{L^\infty}
  + \lVert z_k\rVert^{\frac{2}{3}}_{H^2}\big)^2 \int_{\mathbb{T}} \Big(\frac{1}{|Z_{k,k}|^{\alpha + 1/3}}
  + \frac{1}{|Z_{k,k}|^{1/3}}\Big) \dd \eta \\
  & \leq C \|F[z_k]\|_{L^\infty} \|z_k\|_{H^2}^{\frac{4}{3}} \big(\|z_k\|_{H^2}^{\frac{2}{3}} +1 \big)
  \big( \lVert F[z_k]\rVert_{L^\infty}^{1+\alpha} + 1\big) .
\end{align*}
Notice that $\partial_\zeta z_k(\zeta)\cdot \partial^2_\zeta z_k(\zeta) = \partial_\zeta A_k(t)=0$,
we apply \eqref{es:R-deri} and \eqref{eq:Zkk-fact} to deduce that
\begin{align*}
  \frac{1}{A_k(t)} |\partial_\zeta z_k(\zeta) \cdot K_2(\zeta)|
  & = \frac{1}{A_k(t)} \Big|\int_{\TT}\partial_\zeta z_k(\zeta)
  \cdot \partial^2_\zeta \overline{z}_k(\zeta-\eta) R(|Z_{k,k}(\zeta,\eta,t)|)\dd \eta\Big|\\
  &\leq C \frac{1}{A(t)^{1/2}} \lVert \mathbf{z}\rVert_{H^2}
  \int_{\mathbb{T}} \Big( \frac{1}{|Z_{k,k}|^\alpha} + |Z_{k,k}| \Big) \dd \eta \\
  & \leq C \|F[z_k]\|_{L^\infty} \|\mathbf{z}\|_{H^2}  \big(\lVert F[z_k]\rVert^\alpha_{L^\infty} + \|z_k\|_{L^\infty} \big).
\end{align*}
For $\mathrm{NL}_{j\ne k}$ given by \eqref{eq:NLjnek}, the derivative of the conjugate term satisfies
\begin{align*}
  \partial_\zeta \int_{\TT}\partial_\zeta Z_{k,j}(\zeta,\eta,t) R(|Z_{k,j}|)\dd \eta
  = K_3(\zeta) + K_4(\zeta)
\end{align*}
where
\begin{align*}
  K_3 \triangleq \int_{\TT} \partial_\zeta Z_{k,j}\,R'(|Z_{k,j}|)
  \frac{Z_{k,j}\cdot \partial_\zeta Z_{k,j}}{|Z_{k,j}|}\dd \eta, \quad
  K_4 \triangleq \int_{\TT} \partial^2_\zeta Z_{k,j}\, R(|Z_{k,j}|)\dd \eta.
\end{align*}
By virtue of Lemma \ref{lem:G-lemma}, we immediately have
\begin{align*}
  \frac{1}{A_k(t)} |\partial_\zeta z_k(\zeta) \cdot K_3(\zeta)| \leq C \|F[z_k]\|_{L^\infty} \lVert \mathbf{z}\rVert_{H^2}^2
  \big(\delta[\mathbf{z}]^{-1-\alpha} + 1 \big),
\end{align*}
and
\begin{align*}
  \frac{1}{A_k(t)} |\partial_\zeta z_k(\zeta)\cdot K_4(\zeta)|
  & = \frac{1}{A_k(t)} \Big| \int_{\TT}\partial_\zeta z_k(\zeta)\cdot \partial^2_\zeta \overline{z}_j(\zeta-\eta)R(|Z_{k,j}|)\dd \eta \Big| \\
  &\leq C \|F[z_k]\|_{L^\infty} \|\mathbf{z} \|_{H^2} \big(\delta[\mathbf{z}]^{-\alpha}
  + \|\mathbf{z}\|_{L^\infty} \big).
\end{align*}
Gathering the above estimates, we find
\begin{align}\label{eq:par-lambda}
  \|\partial_\zeta \lambda_k\|_{L^\infty} \leq C \|F[z_k]\|_{L^\infty}
  \|\mathbf{z}\|_{H^2} \big(\|\mathbf{z}\|_{H^2} +1 \big)
  \big(\|F[z_k]\|_{L^\infty}^{1+\alpha} + \delta[\mathbf{z}]^{-1-\alpha} + \|\mathbf{z}\|_{L^\infty}
  + 1 \big).
\end{align}

Hence, the tangential term in \eqref{eq:z_k-H2} can be bounded by combining \eqref{eq:tang-term1} with \eqref{eq:par-lambda}.

\subsubsection{Estimation of $\lVert F[z_k]\rVert_{L^\infty}$}

Recalling that $F[z_k]$ is given by \eqref{def:F[z_k]}, we infer that
\begin{align*}
  \partial_t F[z_k] = -\frac{|\eta|(z_k(\zeta) - z_k(\zeta-\eta)) \cdot
  (\partial_t z_k(\zeta) - \partial_t z_k(\zeta-\eta))}{|z_k(\zeta)- z_k(\zeta-\eta)|^3}.
\end{align*}
From the contour equation \eqref{eq:main-eq-GP}-\eqref{eq:lambda-def}, and using the notations \eqref{def:Zkj0},
we can write 
\[\partial_t z_k(\zeta) - \partial_t z_k(\zeta - \eta) = \sum_{i=1}^7 N_j,\]
where
\begin{align*}
  N_1 & \triangleq \sum_{j=1}^N a_j \big(\partial_\zeta z_k(\zeta) - \partial_\zeta z_k(\zeta-\eta)\big)
  \int_{\TT}R\big(|\mathbf{Z}_{k,j}(\zeta,\mu)| \big) \dd \mu, \\
  N_2 & \triangleq - \sum_{j=1}^N a_j \int_{\TT} \big(\partial_\zeta z_k(\zeta-\mu) - \partial_\zeta z_k(\zeta-\mu-\eta)\big)
  R\big(|\mathbf{Z}_{k,j}(\zeta,\mu)|\big) \dd \mu, \\
  N_3 & \triangleq \sum_{j=1}^N a_j \int_{\TT} \big(\partial_\zeta z_k(\zeta-\eta) - \partial_\zeta z_k(\zeta-\eta-\mu)\big)
  \Big(R\big(|\mathbf{Z}_{k,j}(\zeta,\mu)|\big) - R\big(|\mathbf{Z}_{k,j}(\zeta-\eta,\mu)|\big)\Big) \dd \mu,
\end{align*}
and
\begin{align*}
  N_4 & \triangleq \sum_{j=1}^N a_j \big(\partial_\zeta z_k(\zeta) - \partial_\zeta z_k(\zeta-\eta)\big)
  \int_{\TT}R\big(|Z_{k,j}(\zeta,\mu)| \big) \dd \mu, \\
  N_5 & \triangleq -\sum_{j=1}^N a_j \int_{\TT} \big(\partial_\zeta \overline{z}_k (\zeta - \mu)
  - \partial_\zeta \overline{z}_k(\zeta -\mu-\eta)\big)  R\big(|Z_{k,j}(\zeta,\mu)|\big) \dd \mu, \\
  N_6 & \triangleq \sum_{j=1}^N a_j \int_{\TT} \partial_\zeta Z_{k,k}(\zeta-\eta,\mu)
  \Big(R\big(|Z_{k,j}(\zeta,\mu)|\big) - R\big(|Z_{k,j}(\zeta-\eta,\mu)|\big)\Big) \dd \mu,
\end{align*}
and
\begin{align*}
  N_7 \triangleq \lambda_k(\zeta) \partial_\zeta z_k(\zeta)-\lambda_k(\zeta-\eta)\partial_\zeta z_k(\zeta-\eta).
\end{align*}
Thus, we have
\begin{align*}
  \partial_t F[z_k] = - \sum_{i=1}^7 \frac{|\eta|\big(z_k(\zeta) - z_k(\zeta-\eta)\big)}{|z_k(\zeta) - z_k(\zeta-\eta)|^3}
  \cdot  N_i.
\end{align*}
Since the estimation of the terms involving $N_i$ ($i=1,2,3$) is similar to (and easier than)
that of the terms involving
$N_{i+3}$ ($i=1,2,3$), below we mainly focus on the terms containing $N_4$-$N_7$.
For $N_4$, we have
\begin{align*}
  - & \frac{|\eta|\big(z_k(\zeta) - z_k(\zeta-\eta)\big)}{|z_k(\zeta) - z_k(\zeta-\eta)|^3}
  \cdot  N_4 \\
  & \quad= \frac{|\eta|\big(z_k(\zeta) - z_k(\zeta - \eta)\big) \cdot \big(\partial_\zeta z_k(\zeta)
  - \partial_\zeta z_k(\zeta-\eta)\big)}{|z_k(\zeta) - z_k(\zeta - \eta)|^3}
  \sum_{j=1}^N \theta_j \int_{\TT} R\big(|Z_{k,j}(\zeta,\mu)|\big) \dd \mu \\
  & \quad = N_{41} + N_{42},
\end{align*}
where
\begin{align*}
  N_{41} \triangleq & \frac{|\eta|\big(z_k(\zeta) - z_k(\zeta-\eta) - \eta \partial_\zeta z_k(\zeta)\big)\cdot
  \big(\partial_\zeta z_k(\zeta) - \partial_\zeta z_k(\zeta - \eta)\big)}{|z_k(\zeta) - z_k(\zeta-\eta)|^3}
  \sum_{j=1}^N a_j \int_{\TT} R\big(|Z_{k,j}(\zeta,\mu)|\big) \dd \mu,
\end{align*}
\begin{align*}
  N_{42} \triangleq \frac{|\eta|\big(\eta\partial_\zeta z_k(\zeta)\big)\cdot
  \big(\partial_\zeta z_k(\zeta) - \partial_\zeta z_k(\zeta-\eta)\big)}{|z_k(\zeta)- z_k(\zeta-\eta)|^3}
  \sum_{j=1}^N a_j \int_{\TT} R\big(|Z_{k,j}(\zeta,\mu)|\big) \dd \mu.
\end{align*}
Noting that
\begin{align*}
  \Big|\sum_{j=1}^N a_j \int_{\mathbb{T}} R\big(|Z_{k,j}(\zeta,\mu)|\big)\dd \mu\Big|
  & \leq C \sum_{j=1}^N\int_{\mathbb{T}} \Big(\frac{1}{|Z_{k,j}(\zeta,\mu)|^\alpha} +
  |Z_{k,j}(\zeta,\mu)|\Big) \dd \mu \\
  & \leq C \big(\|F[z_k]\|_{L^\infty}^\alpha + \delta[\mathbf{z}]^{-\alpha}
  + \|\mathbf{z}\|_{L^\infty}  \big),
\end{align*}
we can bound $N_{41}$ as follows:
\begin{align*}
  |N_{41}| & \leq C \lVert F[z_k]\rVert^3_{L^\infty}
  \lVert \partial_\zeta z_k\rVert^2_{C^{\frac{1}{2}}}
  \Big|\sum_{j=1}^N \theta_j \int_{\mathbb{T}} R\big(|Z_{k,j}(\zeta,\mu)|\big)\dd \mu\Big| \\
  & \leq C \lVert F[z_k]\rVert^{3}_{L^\infty} \lVert z_k\rVert^2_{H^2}
  \big(\|F[z_k]\|_{L^\infty}^\alpha + \delta[\mathbf{z}]^{-\alpha}
  + \|\mathbf{z}\|_{L^\infty}  \big) .
\end{align*}
Using the fact
$ |\partial_\zeta z_k(\zeta)|^2 = A_k(t)= \frac{1}{2}|\partial_\zeta z_k(\zeta)|^2
  + \frac{1}{2} |\partial_\zeta z_k(\zeta-\eta)|^2$,
we can establish the following identity
\begin{align}\label{eq:id-diff-1}
  \partial_\zeta z_k(\zeta) \cdot (\partial_\zeta z_k(\zeta)- \partial_\zeta z_k(\zeta-\eta))
  = &\, \frac{1}{2} \Big(|\partial_\zeta z_k(\zeta)|^2-2\partial_\zeta z_k(\zeta)\cdot
  \partial_\zeta z_k(\zeta-\eta) + |\partial_\zeta z_k(\zeta-\eta)|^2\Big) \nonumber \\
  = &\, \frac{1}{2} |\partial_\zeta z_k(\zeta)-\partial_\zeta z_k(\zeta-\eta)|^2.
\end{align}
Then the term $N_{42}$ can be estimated as follows
\begin{align*}
  |N_{42}| & \leq \Big|\frac{|\eta|\big(\eta\partial_\zeta z_k(\zeta)\big) \cdot
  \big(\partial_\zeta z_k(\zeta) - \partial_\zeta z_k(\zeta-\eta)\big)}{|z_k(\zeta) - z_k(\zeta-\eta)|^3}
  \sum_{j=1}^N a_j \int_{\TT} R\big(|Z_{k,j}(\zeta,\mu)|\big) \dd \mu \Big| \\
  & \leq C  \lVert F[z_k]\rVert^3_{L^\infty}
  \big(\|F[z_k]\|_{L^\infty}^\alpha + \delta[\mathbf{z}]^{-\alpha}
  + \|\mathbf{z}\|_{L^\infty}  \big) \frac{|\partial_\zeta z_k(\zeta) - \partial_\zeta z_k(\zeta - \eta)|^2}{|\eta|} \\
  & \leq C \lVert F[z_k]\rVert^3_{L^\infty} \lVert \partial_\zeta z_k\rVert^2_{C^{\frac{1}{2}}} \big(\|F[z_k]\|_{L^\infty}^\alpha + \delta[\mathbf{z}]^{-\alpha}
  + \|\mathbf{z}\|_{L^\infty}  \big).
\end{align*}
Now we move to the term involving $N_5$.
Direct computation leads to
\begin{align*}
  \Big|& \frac{|\eta| \big(z_k(\zeta) - z_k(\zeta-\eta)\big) \cdot N_5}{|z_k(\zeta) - z_k(\zeta-\eta)|^3} \Big|
  \leq C \lVert F[z_k] \rVert^2_{L^\infty}
  \sum_{j=1}^N  \int_0^1 \dd s \int_{\TT} |\partial^2_\zeta \overline{z}_k(\zeta-\mu- s \eta)|\,
  \big|R \big(|Z_{k,j}(\zeta,\mu)|\big)\big| \dd \mu \\
  &\qquad \leq C \lVert F[z_k]\rVert^2_{L^\infty} \sum_{j=1}^N \int_0^1 \dd s
  \int_{\mathbb{T}} |\partial^2_\zeta \overline{z}_k(\zeta-\mu - s \eta)|\, \Big(\frac{1}{|Z_{k,j}(\zeta,\mu)|^\alpha}
  + |Z_{k,j}(\zeta,\mu)|\Big) \dd \mu  \\
  &\qquad \leq C \lVert F[z_k]\rVert^2_{L^\infty} \lVert z_k\rVert_{H^2}
  \big( \|F[z_k]\|_{L^\infty}^\alpha + \delta[\mathbf{z}]^{-\alpha} + \|\mathbf{z}\|_{L^\infty} \big).
\end{align*}
For the term involving $N_6$,
using the fact that
\begin{align*}
  R\big(|Z_{k,j}(\zeta,\mu)|\big) - R& \big(|Z_{k,j}(\zeta-\eta,\mu)|\big)
  = - \eta \int_0^1 \frac{\partial }{\partial \zeta} R\big(|Z_{k,j}(\zeta-s\eta,\mu)|\big) \dd s \\
  & = - \eta \int_0^1 R'\big(|Z_{k,j}(\zeta-s\eta,\mu)|\big) \frac{Z_{k,j}(\zeta-s\eta,\mu)
  \cdot \partial_\zeta Z_{k,j}(\zeta-s\eta,\mu)}{|Z_{k,j}(\zeta-s\eta,\mu)|} \dd s ,
\end{align*}
together with \eqref{eq:Zkk-fact}, \eqref{eq:diff-pointwise-es}, we deduce that
\begin{align*}
  |N_6| & \leq C |\eta|
  \sum_{j=1}^N \int_0^1 \dd s \int_{\TT}|\partial_\zeta Z_{k,k}(\zeta-\eta,\mu)|
  \, \big|R'(|Z_{k,j}(\zeta -s\eta,\mu)|) \big|  |\partial_\zeta Z_{k,j}(\zeta -s\eta,\mu)|\dd \mu \\
  & \leq C |\eta| \|\mathbf{z}\|_{C^1}
  \int_0^1 \dd s \int_{\TT}
  \, \Big(\frac{1}{|Z_{k,k}(\zeta-s\eta,\mu)|^{1+\alpha}} + \frac{1}{|Z_{k,k}(\zeta-s\eta,\mu)|} \Big)
  |\partial_\zeta Z_{k,k}(\zeta-s \eta,\mu)|  \dd \mu \\
  & \quad + C |\eta| \|\mathbf{z}\|_{C^1}^2 \sum_{j\neq k} \int_0^1 \dd s \int_{\TT}
  \, \Big(\frac{1}{|Z_{k,j}(\zeta-s\eta,\mu)|^{1+\alpha}} + \frac{1}{|Z_{k,j}(\zeta-s\eta,\mu)|} \Big) \dd \mu \\
  &\leq C |\eta| \lVert \mathbf{z}\rVert_{C^1}  \big(\lVert z_k\rVert_{H^2} + 1 \big)
  \Big(\lVert F[z_k]\rVert_{L^\infty}^{1+\alpha}
  + \delta[\mathbf{z}]^{-1-\alpha} + 1 \Big).
\end{align*}
Hence, we have
\begin{align*}
  \Big|\frac{\eta \big(z_k(\zeta) - z_k(\zeta-\eta)\big)\cdot N_6}{|z_k(\zeta)-z_k(\zeta-\eta)|^3} \Big|
  \leq C \lVert \mathbf{z} \rVert_{H^2} \big(\lVert z_k\rVert_{H^2}
  + 1 \big)\lVert F[z_k]\rVert^2_{L^\infty}
  \Big(\lVert F[z_k]\rVert_{L^\infty}^{1+\alpha}
  + \delta[\mathbf{z}]^{-1-\alpha} + 1 \Big).
\end{align*}

Finally, we deal with the term involving $N_7$, which is decomposed as
\begin{align*}
  & \frac{|\eta|\big(z_k(\zeta)-z_k(\zeta-\eta)\big)\cdot N_7}{|z_k(\zeta)-z_k(\zeta-\eta)|^3}  \\
  & = \frac{|\eta|\big(z_k(\zeta)-z_k(\zeta-\eta)\big)}{|z_k(\zeta)-z_k(\zeta-\eta)|^3}
  \cdot \Big(\lambda_k(\zeta)\big(\partial_\zeta z_k(\zeta) - \partial_\zeta z_k(\zeta-\eta)\big)
  + \big(\lambda_k(\zeta)-\lambda_k(\zeta - \eta)\big) \partial_\zeta z_k(\zeta) \Big) \\
  & = \frac{|\eta|\big(z_k(\zeta) - z_k(\zeta-\eta) - \eta \partial_\zeta z_k(\zeta)\big)}{|z_k(\zeta) - z_k(\zeta - \eta)|^3}
  \cdot \big(\partial_\zeta z_k(\zeta) - \partial_\zeta z_k(\zeta - \eta)\big) \lambda_k(\zeta) \\
  & \quad +  \frac{|\eta|\eta\,}{|z_k(\zeta) - z_k(\zeta-\eta)|^3}
  \partial_\zeta z_k(\zeta) \cdot \big(\partial_\zeta z_k(\zeta) - \partial_\zeta z_k(\zeta - \eta)\big) \lambda_k(\zeta) \\
  & \quad + \frac{|\eta|\big(z_k(\zeta)-z_k(\zeta-\eta)\big)}{|z_k(\zeta)-z_k(\zeta-\eta)|^3}\cdot
  \partial_\zeta z_k(\zeta) \big(\lambda_k(\zeta)-\lambda_k(\zeta - \eta)\big) \\
  & \triangleq N_{71} + N_{72} + N_{73}.
\end{align*}

For $N_{71}$, it is obvious to see that
\begin{align*}
  |N_{71}| \leq C \lVert F[z_k]\rVert^3_{L^\infty} \lVert \partial_\zeta z_k\rVert^2_{C^{\frac{1}{2}}} |\lambda_k(\zeta)|
  \leq C \lVert F[z_k]\rVert^3_{L^\infty} \lVert z_k\rVert^2_{H^2} \|\lambda_k\|_{L^\infty}.
\end{align*}
For $N_{72}$, in view of \eqref{eq:id-diff-1}, it yields
\begin{align*}
  |N_{72}|\leq C \lVert F[z_k]\rVert^3_{L^\infty} \lVert \partial_\zeta z_k\rVert^2_{C^{\frac{1}{2}}} \|\lambda_k\|_{L^\infty}.
\end{align*}
In addition, from \eqref{eq:NL-k-GP}-\eqref{eq:lambda-def} we have the following bound:
\begin{align*}
  \|\lambda_k\|_{L^\infty} \leq \frac{C}{A(t)^{1/2}} \|z_k\|_{C^1}
  \int_{-\pi}^\pi |\partial_\zeta \mathrm{NL}_k(\zeta)| \dd \zeta.
\end{align*}
For the estimation of $\mathrm{NL}_k = \mathrm{NL}_{j=k} + \mathrm{NL}_{j\neq k}$,
we only treat the conjugate terms since the other terms can be handled similarly.
For $\mathrm{NL}_{j=k}$, using \eqref{eq:Zkk-fact} and \eqref{eq:diff-pointwise-es}, we have
\begin{align*}
  \int_{\TT}|\partial_\zeta \mathrm{NL}_{j=k}| \dd \zeta
  \leq & C \int_{\TT} \int_{\TT} |\partial^2_\zeta Z_{k,k}(\zeta,\mu)| |R(|Z_{k,k}(\zeta,\mu)|) |\dd \mu \dd \zeta \\
  & + C \int_{\TT} \int_{\TT} |\partial_\zeta Z_{k,k}(\zeta,\mu)|^2 |R'(|Z_{k,k}(\zeta,\mu)|)|\dd \mu \dd \zeta \\
  \leq & C \big(\lVert F[z_k]\rVert^\alpha_{L^\infty} + \|z_k\|_{L^\infty} \big) \lVert z_k\rVert_{H^2}
  + C \big(\lVert z_k\rVert_{H^2}  + \lVert z_k\rVert^{\frac{2}{3}}_{H^2}\big)^2
  \big(\lVert F[z_k]\rVert^{1 +\alpha}_{L^\infty} + 1 \big) .
\end{align*}
For $\mathrm{NL}_{j\ne k}$, we deduce that
\begin{align*}
  \int_{\TT}|\partial_\zeta \mathrm{NL}_{j\ne k}|\dd \zeta
  \leq & C \sum_{j\neq k} \int_{\TT} \int_{\TT}|\partial^2_\zeta Z_{k,j}(\zeta,\mu)|R(|Z_{k,j}(\zeta,\mu)|)|\dd \mu \dd \zeta \\
  & + C \sum_{j\neq k} \int_{\TT} \int_{\TT} |\partial_\zeta Z_{k,j}(\zeta,\mu)|^2|R'(|Z_{k,j}(\zeta,\mu)|)|\dd \mu \dd \zeta \\
  \leq &  C \lVert \mathbf{z}\rVert_{H^2} \big(\delta[\mathbf{z}]^{-\alpha} + \|\mathbf{z}\|_{L^\infty} \big)
  + C \lVert \mathbf{z}\rVert_{C^1}^2 \big(\delta[\mathbf{z}]^{-1-\alpha} + 1 \big).
\end{align*}
Hence, we obtain that
\begin{equation}\label{eq:lambda_k-infty-bound}
\begin{aligned}
  \|\lambda_k\|_{L^\infty} \leq & C \|F[z_k]\|_{L^\infty}  \|\mathbf{z}\|_{H^2}
  \big(\|\mathbf{z}\|_{H^2}^2 + 1  \big)\big( \lVert F[z_k]\rVert^{1+\alpha}_{L^\infty}
  +\delta[\mathbf{z}]^{-1-\alpha} + \|\mathbf{z}\|_{L^\infty} +  1\big),
\end{aligned}
\end{equation}
and thus the terms $N_{71}$ and $N_{72}$ are under control.
For the remaining term $N_{73}$, it follows from the Newton-Leibniz's formula
$\lambda_k(\zeta) -\lambda_k(\zeta-\eta) = \eta \int_0^1 \partial_\zeta \lambda_k(\zeta -s \eta) \dd s$ that
\begin{align*}
  \frac{|\eta|(z_k(\zeta)-z_k(\zeta-\eta))\cdot N_{73}}{|z_k(\zeta)-z_k(\zeta-\eta)|^3}
  \leq C \lVert F[z_k]\rVert^2_{L^\infty}
  \lVert z_k\rVert_{C^1}\lVert \partial_\zeta \lambda_k\rVert_{L^\infty}.
\end{align*}
In addition, $\lVert \partial_\zeta\lambda_k\rVert_{L^\infty}$ has already been estimated in the section \ref{subsec:es-tang}.

To conclude, gathering the estimates in the above subsections, we obtain the desired estimate \eqref{ineq:main-estimate}.

\subsection{Proof of Proposition \ref{prop:reg-param}}\label{subsec:reg-param}

From \eqref{eq:zkyk-exp} we get
\begin{align*}
  \partial_t z_k(\zeta,t)  = \partial_t y_k\big(\phi_k(\zeta,t),t\big) + \partial_\mu y_k(\phi_k(\zeta,t),t) \cdot \partial_t \phi_k(\zeta,t).
\end{align*}
By using the contour equations of $y_k(\mu,t)$ and $z_k(\zeta,t)$ as in \eqref{eq:contour}-\eqref{eq:main-eq-GP}
(exactly arguing as the computation in \cite[p.28]{GanP21}), we have
\begin{equation}\label{eq:phi-k-def}
\begin{aligned}	
  \partial_t \phi_k(\zeta,t)
  =& \sum_{j=1}^N a_j \int_{\TT} \big(\partial_\zeta \phi_k(\zeta,t) - \partial_\zeta \phi_j(\zeta-\eta,t)\big)
  R(|z_k(\zeta,t) - z_j(\zeta - \eta,t)|)\dd \eta \\	
  & + \sum_{j=1}^N a_j \int_{\TT} \big(\partial_\zeta \phi_k(\zeta,t) - \partial_\zeta \phi_j(\zeta -\eta,t)\big)
  R(|z_k(\zeta,t)-\overline{z}_j(\zeta-\eta,t)|)\dd \eta\\
  & + \lambda_k(\zeta)\, \partial_\zeta \phi_k(\zeta,t) \\
  \triangleq & \, H_1 + H_2 + H_3.
\end{aligned}
\end{equation}

From the equation \eqref{eq:phi-k-def} we have
\begin{align*}
  \frac{1}{2} \frac{\dd }{\dd t} \big(\lVert \phi_k-\zeta\rVert_{H^2}^2 + \|\phi_k\|_{H^2}^2 \big)
  = \int_{\TT} (\phi_k(\zeta)-\zeta) \partial_t\phi_k\dd \zeta
  + \int_{\TT} \phi_k(\zeta) \partial_t\phi_k\dd \zeta
  + 2 \int_{\TT} \partial^2_\zeta \phi_k(\zeta) \, \partial_t\partial^2_\zeta \phi_k(\zeta) \dd \zeta.
\end{align*}
It follows that
\begin{align*}
  \Big|\int_{\TT}(\phi_k(\zeta)-\zeta) \partial_t\phi_k\dd \zeta \Big|
  + \Big|\int_{\TT} \phi_k(\zeta)\,\partial_t\phi_k(\zeta) \dd \zeta \Big|
  \leq C \big(\lVert \phi_k(\zeta)-\zeta\rVert_{L^2} + \|\phi_k\|_{L^2} \big)
  \lVert \partial_t\phi_k\rVert_{L^\infty}.
\end{align*}
In addition, in view of \eqref{eq:lambda_k-infty-bound}, it is easy to see that
\begin{align*}
  \lVert \partial_t \phi_k \rVert_{L^\infty} & \leq C \big(\lVert F[z_k]\rVert^\alpha_{L^\infty}
  + \delta[\mathbf{z}]^{-\alpha} + \|\mathbf{z}\|_{L^\infty} + 1\big) \lVert \phi_k\rVert_{C^1}
  + C \lVert \lambda_k\rVert_{L^\infty} \lVert \phi_k\rVert_{C^1} \\
  & \leq C \|\phi_k\|_{C^1}
  \big(\|\mathbf{z}\|_{H^2}^3 + 1  \big)\big( \lVert F[z_k]\rVert^{1+\alpha}_{L^\infty}
  +\delta[\mathbf{z}]^{-1-\alpha} + \|\mathbf{z}\|_{L^\infty} +  1\big).
\end{align*}

Since the estimation of $H_1$ in \eqref{eq:phi-k-def} is much easier than that of $H_2$ in
\eqref{eq:phi-k-def}, we shall only deal with the last two terms in \eqref{eq:phi-k-def}.
Differentiating two times on the term $H_2$, we find
\begin{align*}
  \partial^2_\zeta H_2 = H_{21} + H_{22} + H_{23} + H_{24} + H_{25},
\end{align*}
where (recalling that $Z_{k,j} = Z_{k,j}(\zeta,\eta,t) = z_k(\zeta,t)- \overline{z}_j(\zeta - \eta,t)$)
\begin{align*}
  H_{21} &\triangleq \sum_{j=1}^N a_j \int_{\TT}\big(\partial^3_\zeta \phi_k(\zeta,t)
  - \partial^3_\zeta \phi_j(\zeta -\eta,t)\big) R(|Z_{k,j}|) \dd \eta, \\	
  H_{22} & \triangleq 2 \sum_{j=1}^N a_j \int_{\TT} \big(\partial^2_\zeta \phi_k(\zeta,t)
  - \partial^2_\zeta \phi_j(\zeta - \eta,t)\big)
  R'(|Z_{k,j}|) \frac{\partial_\zeta Z_{k,j}\cdot Z_{k,j}}{|Z_{k,j}|} \dd \eta, \\		
  H_{23} & \triangleq \sum_{j=1}^N a_j \int_{\TT}\big(\partial_\zeta \phi_k(\zeta,t)
  - \partial_\zeta \phi_j(\zeta - \eta,t)\big)
  \Big(R''(|Z_{k,j}|) - \frac{R'(|Z_{k,j}|)}{|Z_{k,j}|} \Big)
  \Big(\frac{\partial_\zeta Z_{k,j} \cdot Z_{k,j}}{|Z_{k,j}|}\Big)^2 \dd \eta, \\
  H_{24} & \triangleq \sum_{j=1}^N a_j \int_{\TT} \big(\partial_\zeta \phi_k(\zeta,t) - \partial_\zeta
  \phi_j(\zeta- \eta,t)\big) R'(|Z_{k,j}|) \frac{\partial^2_\zeta Z_{k,j} \cdot Z_{k,j}}{|Z_{k,j}|} \dd \eta, \\
  H_{25} & \triangleq \sum_{j=1}^N a_j \int_{\TT} \big(\partial_\zeta \phi_k(\zeta,t) - \partial_\zeta \phi_j(\zeta-\eta,t)\big)
  R'(|Z_{k,j}|)\frac{|\partial_\zeta Z_{k,j}|^2}{|Z_{k,j}|}\dd \eta.
\end{align*}
For $H_{21}$, we separately consider $H_{21}|_{j=k}$ (containing only $j=k$ term)
and $H_{21}|_{j\neq k}$ (the summation over all $j\neq k \in \{1,\cdots,N\}$).
By a symmetrization argument, we see that
\begin{align*}
  \int_{\TT}\partial^2_\zeta \phi_k(\zeta) \cdot H_{21}|_{j=k} \dd\zeta
  & =\frac{a_k}{4} \int_{\TT} \int_{\TT} \partial_\zeta
  \big(|\partial^2_\zeta \phi_k(\zeta) - \phi_k(\zeta-\eta)|^2\big) R(|Z_{k,k}|) \dd \eta \dd \zeta \\
  & = -\frac{a_k}{4} \int_{\TT} \int_{\TT} |\partial^2_\zeta \phi_k(\zeta) - \phi_k(\zeta-\eta)|^2
  R'(|Z_{k,k}|) \frac{Z_{k,k} \cdot \partial_\zeta Z_{k,k}}{|Z_{k,k}|} \dd \eta \dd \zeta,
\end{align*}
which gives
\begin{align*}
  \Big|\int_{\TT} \partial^2_\zeta \phi_k(\zeta) \cdot H_{21}|_{j=k} \dd\zeta \Big|
  \leq C \big( \|\mathbf{z}\|_{H^2} + 1\big)  \big(\lVert F[z_k] \rVert^{1+\alpha}_{L^\infty} + 1\big) \lVert \phi_k \rVert^2_{H^2}.
\end{align*}
On the other hand, for $H_{21}|_{j\neq k}$, we observe that
\begin{align*}
  \int_{\TT}\partial^2_\zeta \phi_k(\zeta)\cdot H_{21}|_{j\neq k} \dd \zeta
  = & \sum_{j\neq k} a_j \int_{\TT} \int_{\TT} \partial^2_\zeta \phi_k(\zeta)
  \cdot \partial^3_\zeta \phi_k(\zeta) R(|Z_{k,j}|)\dd \eta\dd \zeta \\
  & - \sum_{j\neq k} a_j \int_{\TT} \int_{\TT} \partial^2_\zeta \phi_k(\zeta) \cdot \partial^3_\zeta \phi_k(\zeta - \eta)
  R(|Z_{k,j}|) \dd \eta\dd \zeta \\
  = & - \frac{1}{2} \sum_{j\neq k} a_j \int_{\TT} \int_{\TT} | \partial^2_\zeta \phi_k(\zeta)|^2
  R'(|Z_{k,j}|) \frac{Z_{k,j}\cdot \partial_\zeta Z_{k,j}}{|Z_{k,j}|} \dd \eta\dd \zeta \\
  & - \sum_{j\neq k} a_j \int_{\TT} \int_{\TT} \partial^2_\zeta \phi_k(\zeta) \cdot
  \partial^2_\zeta \phi_k(\zeta - \eta) R'(|Z_{k,j}|) \frac{Z_{k,j}\cdot \partial_\eta Z_{k,j}}{|Z_{k,j}|}  \dd \eta \dd \zeta,
\end{align*}
which yields
\begin{align*}
  \Big|\int_{\TT}\partial^2_\zeta \phi_k(\zeta)\cdot H_{21}|_{j\neq k} \dd \zeta \Big|
  \leq \, C \big( \delta[\mathbf{z}]^{-1-\alpha} + 1 \big) \|\mathbf{z}\|_{H^2} \lVert \phi_k\rVert^2_{H^2} .
\end{align*}
The term $H_{22}$ can be bounded in a similar way, namely,
\begin{align*}
  \Big|\int_{\TT} \partial^2_\zeta \phi_k(\zeta) \cdot H_{22} \dd\zeta\Big|
  \leq C \big(\lVert F[z_k] \rVert^{1+\alpha}_{L^\infty} + \delta[\mathbf{z}]^{-1-\alpha}  + 1\big) \big(\|\mathbf{z}\|_{H^2} + 1 \big)
  \lVert \phi_k \rVert^2_{H^2}.
\end{align*}
For $H_{23}$ and $H_{25}$, by applying Lemma \ref{lem:G-lemma} and \eqref{eq:Zkk-fact}, \eqref{eq:diff-pointwise-es}, we have
\begin{align*}
  & \Big|\int_{\TT} \partial^2_\zeta \phi_k(\zeta) \cdot \big(H_{23} + H_{25} \big) \dd \zeta\Big| \\
  & \leq C \sum_{j=1}^N \int_0^1 \int_{\TT} \int_{\TT} |\partial^2_\zeta \phi_k(\zeta)| |\partial^2_\zeta
  \phi_k(\zeta -s\eta)| |\eta| |\partial_\zeta Z_{k,j}|^2 \Big( |R''(|Z_{k,j}|)|
  + \frac{|R'(|Z_{k,j}|)|}{|Z_{k,j}|} \Big) \dd \eta\dd \zeta \dd s \\
  & \leq C \big(\|\mathbf{z}\|_{H^2}^2 + 1 \big)
  \big(\lVert F[z_k]\rVert^{\alpha+2}_{L^{\infty}} + \delta[\mathbf{z}]^{-2-\alpha} + 1  \big) \lVert \phi_k\rVert^2_{H^2}.
\end{align*}
For $H_{24}$, using the fact that $H^2\hookrightarrow C^{1,\frac{1}{2}}$,
we obtain that
\begin{align*}
  \Big| \int_{\TT} \partial^2_\zeta \phi_k(\zeta) \cdot H_{24} \dd \zeta \Big|
  \leq &  C \lVert \partial_\zeta \phi_k \rVert_{C^{\frac{1}{2}}}
  \sum_{j=1}^N \int_{\TT} \int_{\TT} |\partial^2_\zeta \phi_k(\zeta)| \big|\partial^2_\zeta Z_{k,j}(\zeta,\eta)\big| |\eta|^{\frac{1}{2}} \,|R'(|Z_{k,j}|)|
  \dd \eta \dd \zeta \\
  \leq & C \lVert \phi_k\rVert^2_{H^2} \|\mathbf{z}\|_{H^2}
  \big( \lVert F(z)\rVert^{\alpha+1}_{L^\infty} + \delta[\mathbf{z}]^{-1-\alpha} + 1 \big).
\end{align*}

Next, we move to the estimation of the last term $H_3 = \lambda_k(\zeta) \partial_\zeta \phi_k(\zeta)$.
By Leibniz's rule, we have
\begin{align*}
  \int_{\TT}\partial^2_\zeta \phi_k(\zeta)\cdot & \, \partial^2_\zeta \big(\lambda_k(\zeta) \partial_\zeta \phi_k(\zeta)\big) \dd \zeta  =
  \int_{\TT}\partial^2_\zeta \phi_k(\zeta)\cdot \partial_\zeta \phi_k(\zeta) \, \partial^2_\zeta \lambda_k(\zeta)\dd \zeta \\
  & \quad + 2 \int_{\TT} \partial^2_\zeta \phi_k(\zeta)\cdot \partial^2_\zeta \phi_k(\zeta)\,\partial_\zeta \lambda_k(\zeta)\dd \zeta 
  + \int_{\TT} \partial^2_\zeta \phi_k(\zeta) \cdot \partial^3_\zeta \phi_k(\zeta)\, \lambda_k(\zeta) \dd \zeta \\
  & \triangleq H_{31} + H_{32} + H_{33}.
\end{align*}
Noticing that
\begin{align*}
  H_{33} = \frac{1}{2}\int_{\TT} \partial_\zeta \big(|\partial^2_\zeta \phi_k(\zeta)|^2\big)  \lambda_k(\zeta) \dd \zeta
  = - \frac{1}{2} \int_{\TT} |\partial^2_\zeta \phi_k(\zeta)|^2 \partial_\zeta \lambda_k(\zeta)\dd \zeta,
\end{align*}
and recalling the estimate of $\lVert \partial_\zeta \lambda_k\rVert_{L^\infty}$ in \eqref{eq:par-lambda}, we have
\begin{align*}
  |H_{32}|+|H_{33}| \leq C \lVert \phi_k\rVert^2_{H^2}
  \|F[z_k]\|_{L^\infty}
  \|\mathbf{z}\|_{H^2} \big(\|\mathbf{z}\|_{H^2} +1 \big)
  \big(\|F[z_k]\|_{L^\infty}^{1+\alpha} + \delta[\mathbf{z}]^{-1-\alpha} + \|\mathbf{z}\|_{L^\infty}
  + 1 \big).
\end{align*}
In the sequel, we focus on the estimation of $H_{31}$. By differentiating $\lambda_k(\zeta)$ given by \eqref{eq:lambda-def}, we have
\begin{align*}
  \partial^2_\zeta \lambda_k(\zeta) =  - \partial_\zeta \Big(\frac{\partial_\zeta z_k(\zeta)
  \cdot \partial_\zeta \mathrm{NL}_k (\zeta)}{A_k(t)} \Big)
  = \partial_\zeta \Big(  C_1 + C_2 + C_3 + C_4 \Big),
\end{align*}
where
\begin{align*}
  C_1 \triangleq & - \sum_{j=1}^N a_j \frac{\partial_\zeta z_k(\zeta)}{A_k(t)} \cdot
  \int_{\TT}\partial^2_\zeta Z_{k,j}(\zeta,\eta,t) R(|Z_{k,j}(\zeta,\eta,t)|) \dd \eta,
\end{align*}
\begin{align*}
  C_2 \triangleq & - \sum_{j=1}^N a_j \frac{\partial_\zeta z_k(\zeta)}{A_k(t)} \cdot
  \int_{\TT}\partial_\zeta Z_{k,j}(\zeta,\eta,t) R'(|Z_{k,j}(\zeta,\eta,t)|)
  \frac{Z_{k,j}\cdot \partial_\zeta Z_{k,j}}{|Z_{k,j}|}\dd \eta,
\end{align*}
and $C_3$ and $C_4$ are those non-conjugate terms (with $Z_{k,j}$ replaced by $\mathbf{Z}_{k,j}$ in definitions of $C_1$ and $C_2$).
We only deal with those conjugate terms $C_1$ and $C_2$ since the other terms are much easier.
For  $C_1$, using the fact that $\partial_\zeta z_k(\zeta) \cdot \partial^2_\zeta z_k(\zeta) = 0$, we have
\begin{align*}
  \partial_\zeta C_1 = \partial_\zeta \bigg(\sum_{j=1}^N a_j \frac{\partial_\zeta z_k(\zeta)}{A_k(t)} \cdot
  \int_{\TT}\partial^2_\zeta \overline{z}_j(\zeta - \eta,t) R(|Z_{k,j}(\zeta,\eta,t)|) \dd \eta \bigg)
  = C_{11} + C_{12} + C_{13},
\end{align*}
where
\begin{align*}
  C_{11} & \triangleq \sum_{j=1}^N a_j \frac{\partial^2_\zeta z_k(\zeta)}{A_k(t)}\cdot
  \int_{\TT}\partial^2_\zeta \overline{z}_j(\zeta - \eta,t) R(|Z_{k,j}(\zeta,\eta,t)|) \dd \eta,\\
  C_{12} & \triangleq  \sum_{j=1}^N a_j \frac{\partial_\zeta z_k(\zeta)}{A_k(t)}\cdot
  \int_{\TT}\partial^2_\zeta \overline{z}_j (\zeta-\eta,t) R'(|Z_{k,j}(\zeta,\eta,t)|)
  \frac{Z_{k,j} \cdot \partial_\zeta Z_{k,j}}{|Z_{k,j}|}\dd \eta, \\
  C_{13} & \triangleq \sum_{j=1}^N a_j \frac{\partial_\zeta z_k(\zeta)}{A_k(t)}\cdot
  \int_{\TT}\partial^3_\zeta \overline{z}_j(\zeta - \eta,t) R(|Z_{k,j}(\zeta,\eta,t)|)\dd \eta.
\end{align*}
The term $C_{11}$ can be treated as
\begin{align*}
  \lVert C_{11}\rVert_{L^2} & \leq C \lVert z_k\rVert_{H^2}\|F[z_k]\|_{L^\infty}^2 \sum_{j=1}^N
  \Big\lVert \int_{\TT} \partial^2_\zeta \overline{z}_j(\zeta - \eta,t) R(|Z_{k,j}(\zeta,\eta,t)|) \dd \eta
  \Big\rVert_{L^\infty} \\
  & \leq C \lVert z_k\rVert^2_{H^2} \lVert F[z_k]\rVert^2_{L^\infty}
  \big( \|F[z_k]\|_{L^\infty}^\alpha + \delta[\mathbf{z}]^{-\alpha} + \|\mathbf{z}\|_{L^\infty} \big).
\end{align*}
For $C_{12}$, using \eqref{eq:Zkk-fact} and \eqref{eq:diff-pointwise-es}, we infer that
\begin{align*}
  \lVert C_{12}\rVert_{L^2} \leq C  \|F[z_k]\|_{L^\infty}  \lVert \mathbf{z} \rVert_{H^2} \big(\|\mathbf{z}\|_{H^2} + 1 \big)
  \big(\|F[z_k]\|_{L^\infty}^{1+\alpha} + \delta[\mathbf{z}]^{-1-\alpha}  + 1 \big).
\end{align*}
For $C_{13}$, we integrate by parts to obtain that
\begin{align*}
  \lVert C_{13} \rVert_{L^{2}} & \leq C \sum_{j=1}^N  \Big \lVert\frac{\partial_\zeta z_k(\zeta)}{A_k(t)}\cdot
  \int_{\TT}\partial_\eta \partial^2_\zeta \overline{z}_j (\zeta-\eta,t)
  R(|Z_{k,j}(\zeta,\eta,t)|)\dd \eta \Big\rVert_{L^2} \\
  & = C \sum_{j=1}^N \Big \lVert\frac{\partial_\zeta z_k(\zeta)}{A_k(t)}\cdot
  \int_{\TT}\partial^2_\zeta \overline{z}_j(\zeta-\eta,t) R'(|Z_{k,j}(\zeta,\eta,t)|)
  \frac{Z_{k,j} \cdot \partial_\zeta \overline{z}_j(\zeta-\eta)}{|Z_{k,j}|} \dd \eta \Big\rVert_{L^2} \\
  & = C \Big \lVert\int_{\TT}\frac{\big(\partial_\zeta z_k(\zeta) - \partial_\zeta \overline{z}_k(\zeta-\eta)\big)\cdot
  \partial^2_\zeta \overline{z}_k(\zeta-\eta,t)}{A_k(t)} R'(|Z_{k,k}|)
  \frac{Z_{k,k}\cdot \partial_\zeta \overline{z}_k(\zeta-\eta)}{|Z_{k,k}|}\dd \eta \Big\rVert_{L^2} \\
  & \quad + C \sum_{j\neq k} \Big \lVert\frac{\partial_\zeta z_k(\zeta)}{A_k(t)}\cdot
  \int_{\TT}\partial^2_\zeta \overline{z}_j(\zeta-\eta,t) R'(|Z_{k,j}|)
  \frac{Z_{k,j} \cdot \partial_\zeta \overline{z}_j(\zeta-\eta)}{|Z_{k,j}|} \dd \eta \Big\rVert_{L^2}  \\
  & \leq C \lVert \mathbf{z}\rVert_{H^2}^2 \big(\|\mathbf{z}\|_{H^2} + 1 \big) \big(\|F[z_k]\|_{L^\infty}^2 + \|F[z_k]\|_{L^\infty} \big)
  \big(\|F[z_k]\|_{L^\infty}^{1+\alpha} + \delta[\mathbf{z}]^{-1-\alpha} + 1 \big),
\end{align*}
where in the third line we have used the fact that  $\partial_\zeta \overline{z}_k(\zeta - \eta)\cdot \partial^2_\zeta \overline{z}_k(\zeta-\eta)=0$.
	
Next, for $\partial_\zeta C_2$, we have
\begin{align*}
  \partial_\zeta C_2 & = - \sum_{j=1}^N a_j \frac{\partial_\zeta^2 z_k(\zeta)}{A_k(t)} \cdot
  \int_{\TT}\partial_\zeta Z_{k,j}(\zeta,\eta,t) R'(|Z_{k,j}|)
  \frac{Z_{k,j}\cdot \partial_\zeta Z_{k,j}}{|Z_{k,j}|}\dd \eta \\
  & \quad - \sum_{j=1}^N a_j \frac{\partial_\zeta z_k(\zeta)}{A_k(t)} \cdot
  \int_{\TT}\partial_\zeta^2 Z_{k,j}(\zeta,\eta,t) R'(|Z_{k,j}|)
  \frac{Z_{k,j}\cdot \partial_\zeta Z_{k,j}}{|Z_{k,j}|}\dd \eta \\
  & \quad - \sum_{j=1}^N a_j \frac{\partial_\zeta z_k(\zeta)}{A_k(t)} \cdot
  \int_{\TT}\partial_\zeta Z_{k,j}(\zeta,\eta,t) R''(|Z_{k,j}|)
  \Big(\frac{Z_{k,j}\cdot \partial_\zeta Z_{k,j}}{|Z_{k,j}|}\Big)^2 \dd \eta \\
  & \quad - \sum_{j=1}^N a_j \frac{\partial_\zeta z_k(\zeta)}{A_k(t)} \cdot
  \int_{\TT}\partial_\zeta Z_{k,j}(\zeta,\eta,t) R'(|Z_{k,j}|)
  \bigg( \partial_\zeta \Big(\frac{Z_{k,j}}{|Z_{k,j}|}\Big)\cdot \partial_\zeta Z_{k,j} 
  + \frac{Z_{k,j}\cdot \partial_\zeta^2 Z_{k,j}}{|Z_{k,j}|} \bigg) \dd \eta \\
  & \triangleq C_{21} + C_{22} + C_{23} + C_{24} .
\end{align*}
For the term $C_{23}$,  by using the fact that (similarly as deriving \eqref{eq:id-diff-1})
\begin{align*}
  \partial_\zeta z_k(\zeta) \cdot \partial_\zeta Z_{k,k}(\zeta,\eta)
  = \partial_\zeta z_k(\zeta) \cdot (\partial_\zeta z_k(\zeta) - \partial_\zeta \overline{z}_k(\zeta - \eta))
  = \frac{1}{2}| \partial_\zeta z_k(\zeta) - \partial_\zeta z_k(\zeta - \eta)|^2,
\end{align*}
we deduce that
\begin{align*}
  \|C_{23}\|_{L^2} & \leq C \|F[z_k]\|_{L^\infty}^2
  \Big\|\int_{\TT} |\partial_\zeta z_k(\zeta) - \partial_\zeta z_k(\zeta-\eta)|^2 \,
  |R''(|Z_{k,k}|)|\, |\partial_\zeta Z_{k,k}|^2 \dd \eta \Big\|_{L^2} \\
  &  \quad + C \|F[z_k]\|_{L^\infty} \|\mathbf{z}\|_{H^2}^3 \big(\delta[\mathbf{z}]^{-2-\alpha} + 1 \big) \\
  & \leq C \big(\|F[z_k]\|_{L^\infty}^2 + \|F[z_k]\|_{L^\infty}\big) \|\mathbf{z}\|_{H^2}^2 \big(\|\mathbf{z}\|_{H^2}^2 +1 \big)
   \big(\|F[z_k]\|_{L^\infty}^{2+\alpha} + \delta[\mathbf{z}]^{-2-\alpha} + 1 \big).
\end{align*}
The remaining terms $C_{21}$, $C_{22}$, and $C_{24}$ can be estimated as above (in a much easier way), thus we omit the details.
Then we conclude that
\begin{align*}
  \lVert \partial_\zeta C_2 \rVert_{L^2} + \|\partial_\zeta^2 \lambda_k\|_{L^\infty} \leq  C ,
\end{align*}
with $C>0$ depending on $\|\mathbf{z}\|_{H^2}$, $\delta[\mathbf{z}]^{-1}$ and $\|F[z_k]\|_{L^\infty}$.

Therefore, collecting the above estimates, we obtain the desired result
\begin{align*}
  \frac{\dd }{\dd t}\Big(\sum_{k=1}^N \big(\lVert \phi_k-\zeta\rVert^2_{H^2} + \lVert \phi_k\rVert^2_{H^2} \big) \Big)
  \leq C  \sum_{k=1}^N \big(\lVert \phi_k -\zeta \rVert^2_{H^2} + \lVert \phi_k\rVert^2_{H^2}\big).
\end{align*}

\bigskip
\subsection*{Acknowledgments}
The authors were partially supported by National Key Research and Development Program of China No. 2020YFA0712900 (LX and ZX), 
National Natural Science Foundation of China Nos. 12371199 (QM), 12271045 (LX), and  National Science Foundation Grants DMS-2108264, 
DMS-2238219 (CT).
ZX was supported by a scholarship (No. 202306040124) of China Scholarship Council. Part of the work was done when ZX was a visiting Ph.D. student from Oct. 2023 to Oct. 2024 in University of Seville, and he would like to express his deep gratitude for the hospitality by University of Seville.
\bigskip

\end{document}